\documentclass[11pt]{amsbook}

\usepackage{amssymb}

\def\Re{\textrm{Re}} 
\def\11{{\rm 1~\hspace{-1.4ex}l} }
\def\R{\mathbb R}
\def\C{\mathbb C}
\def\Z{\mathbb Z}
\def\N{\mathbb N}
\def\T{\mathbb T}
\def\E{\mathbb E}

\newcommand{\ft}{\widehat}

\def\beq{\begin{equation}}
\def\eeq{\end{equation}}

\newtheorem{prop}{{ Proposition}}
\newtheorem{theo}[prop]{{ Theorem}}
\newtheorem{cor}[prop]{{ Corollary}}
\newtheorem{lemm}[prop]{{ Lemma}}
\newtheorem{defi}[prop]{{ Definition}}
\newtheorem{rema}[prop]{{ Remark}}

\numberwithin{equation}{chapter}
\numberwithin{prop}{chapter}

\begin{document}

\title[Random data wave equations ]
{
Random data wave equations
}

\author{Nikolay Tzvetkov}

\address{Universit{\'e} de Cergy Pontoise} 
\email{nikolay.tzvetkov@u-cergy.fr}


\begin{abstract} 
{\large
Nowadays we have many methods allowing to exploit the regularising properties of the linear part of a nonlinear dispersive equation (such as the KdV equation, the nonlinear wave or the nonlinear Schr\"odinger equations) in order to prove well-posedness in low regularity Sobolev spaces.  By  well-posedness in low regularity Sobolev spaces we mean that less regularity than the one imposed by the energy methods is required (the energy methods do not exploit the dispersive properties of the linear part of the equation). In many cases these methods 
to prove well-posedness in low regularity Sobolev spaces
lead to optimal results in terms of the regularity of the initial data.  By optimal we mean that if one requires slightly less regularity then the corresponding Cauchy problem becomes ill-posed in the Hadamard sense. 
We call the Sobolev spaces in which these ill-posedness results hold {\it spaces of supercritical regularity}.
More recently, methods to prove  {\it probabilistic well-posedness}  in Sobolev spaces of supercritical regularity were developed. More precisely, 
by probabilistic well-posedness we mean that one endows the corresponding Sobolev space of supercritical regularity with a non degenerate probability measure and then one shows that almost surely with respect to this measure one can define a (unique) global flow. However, in most of the cases when the methods to prove  
 probabilistic well-posedness apply, there is no information about the measure transported by the flow. 
 Very recently, a method to prove that the transported measure is absolutely continuous with respect to the initial measure was developed. In such a situation, we have a measure which is quasi-invariant under the corresponding flow.
 \\
 
 The aim of these lectures is to present all of the above described developments in the context of the nonlinear wave equation.  
}

\end{abstract}

\maketitle

\tableofcontents

\chapter
[The deterministic Cauchy problem] 
{Deterministic Cauchy theory for the  $3d$ cubic wave equation }\label{Chapter1}
\section{Introduction}
In this chapter, we consider the cubic defocusing wave equation
\begin{equation}\label{3dNLW}
(\partial_t^2 -\Delta) u+u^3=0,
\end{equation}
where $u=u(t,x)$ is real valued, $t\in\R$, $x\in\T^3=(\R\slash (2\pi\Z))^3$ (the $3d$ torus).
In \eqref{3dNLW}, $\Delta$ denotes the Laplace operator, namely
$$
\Delta=\partial_{x_1}^2+\partial_{x_2}^2+\partial_{x_3}^2\,.
$$
Since \eqref{3dNLW} is  of second order in time, it is natural to complement it with two initial conditions 
\begin{equation}\label{data}
u(0,x)=u_{0}(x),\quad \partial_t u(0,x)=u_1(x)\,.
\end{equation}
In this chapter,  we will be studying the local and global well-posedness of the initial value problem \eqref{3dNLW}-\eqref{data} in Sobolev spaces via deterministic methods. 

The Sobolev spaces $H^s(\T^3)$ are defined as follows. For a function $f$ on $\T^3$ given by its Fourier series
$$
f(x)=\sum_{n\in\Z^3} \hat{f}(n)\, e^{in\cdot x},
$$
we define the Sobolev norm $H^s(\T^3)$ of $f$ as
$$
\| f \|_{H^s}^2=\sum_{n\in \Z^3}\langle n\rangle^{2s}\ |\hat{f}(n)|^2,
$$
where $\langle n\rangle=(1+|n|^2)^{1/2}$.  On has that
$$
\| f \|_{H^s}\approx \|D^s f\|_{L^2},\quad D\equiv (1-\Delta)^{1/2}\,.
$$
For integer values of $s$ one can also give an equivalent norm in the physical space as follows
$$
\|f\|_{H^s(\T^3)}\approx \sum_{|\alpha|\leq s}\| \partial_{x_1}^{\alpha_1}\partial_{x_2}^{\alpha_2}\partial_{x_3}^{\alpha_3}f\|_{L^2(\T^3)}\,,
$$
where the summation is taken over all multi-indexes $\alpha=(\alpha_1,\alpha_2,\alpha_3)\in \N^3$.

As we shall see, it will be of importance to understand the interplay between the linear and the nonlinear part of \eqref{3dNLW}.
Indeed, let us first consider the Cauchy problem 
$$
\partial_t^2 u+u^3=0,\quad u(0,x)=u_{0}(x),\quad \partial_t u(0,x)=u_1(x)
$$
which is obtained from \eqref{3dNLW} by neglecting the Laplcian. If we set 
$$
U=(U_1,U_2)\equiv (u,\partial_t u)^t
$$ 
than the last problem can be written as 
$$
\partial_t U=F(U), \quad F(U)=(U_2,-U_1^3)^t\,.
$$
On may wish to solve, at least locally, the last problem via the Cauchy-Lipschitz argument in the spaces  $H^{s_1}(\T^3)\times H^{s_2}(\T^3)$.
For such a purpose one should check that the vector field $F(U)$ is locally Lipschitz on these spaces.
Thanks to the Sobolev embedding $H^s(\T^3)\subset L^\infty(\T^3)$, $s>3/2$ we can see that the map $U_1\mapsto U_1^3$  is locally Lipschitz on $H^s(\T^3)$, $s>3/2$. It is also easy to check that the map
 $U_1\mapsto U_1^3$ is not continuous on $H^s(\T^3)$, $s<3/2$. A  more delicate argument shows that it is not continuous on $H^{3/2}(\T^3)$ either. Therefore, if we impose that $F(u)$ is locally Lipschitz on 
 $H^{s_1}(\T^3)\times H^{s_2}(\T^3)$ than we necessarily need to impose a regularity assumption $s_1>3/2$. As we shall see bellow the term containing the Laplacian in \eqref{3dNLW} will allow as to significantly relax this regularity assumption. 

On the other hand if we neglect the nonlinear term $u^3$ in \eqref{3dNLW}, we get the linear wave equation which is well-posed in $H^s(\T^3)\times H^{s-1}(\T^3)$ for {\it any} $s\in \R$, as it can be easily seen by the Fourier series description of the solutions of the linear wave equation (see the next section).  In other words the absence of a nonlinearity allows us to solve the problem in arbitrary singular Sobolev spaces. 

In summary, we expect that the Laplacian term in \eqref{3dNLW} will help us to prove the well-posedness of the problem \eqref{3dNLW} in singular Sobolev spaces while the nonlinear term $u^3$ will be responsible for the lack of well-posedness in singular spaces.
\section{Local and global well-posedness in $H^1\times L^2$}
\subsection{The free evolution}
We first define the free evolution, i.e. the map defining the solutions of the linear wave equation 
\begin{equation}\label{linear_wave}
(\partial_t^2-\Delta)u=0,\quad  u(0,x)=u_0(x), \quad \partial_t u(0,x)=u_1(x).
\end{equation}
Using the Fourier transform and solving the corresponding second order linear ODE's, we obtain that the solutions of \eqref{linear_wave} are generated by the map $S(t)$, defined as follows 
\begin{equation*}
S(t)(u_0,u_1)\equiv
\cos(t\sqrt{-\Delta})(u_0)+\frac{\sin(t\sqrt{-\Delta})}{\sqrt{-\Delta}}(u_1),
\end{equation*}
where
$$
\cos(t\sqrt{-\Delta})(u_0)\equiv
\sum_{n\in\Z^3} \cos(t|n|) \widehat{u_0}(n)\, e^{in\cdot x}
$$
and
$$
\frac{\sin(t\sqrt{-\Delta})}{\sqrt{-\Delta}}(u_1)
\equiv t\widehat{u_1}(0)+
\sum_{n\in\Z^3_{\star}} 
\frac{\sin(t|n|)}{|n|}\widehat{u_1}(n)\, e^{in\cdot x}\,,\quad  \Z^3_{\star}=\Z^3\backslash\{0\}\,.
$$
We have that $S(t)(u_0,u_1)$ solves \eqref{linear_wave} and if $(u_0,u_1)\in H^s\times H^{s-1}$, $s\in \R$ then $S(t)(u_0,u_1)$ is the unique solution of \eqref{linear_wave} in 
$C(\R;H^s(\T^3))$ such that its time derivative is in  $C(\R;H^{s-1}(\T^3))$. 
It follows directly from the definition that the operator 
$\bar{S}(t)\equiv (S(t),\partial_t S(t))$ is bounded on $H^s\times H^{s-1}$, $\bar{S}(0)={\rm Id}$ and $\bar{S}(t+\tau)=\bar{S}(t) \circ \bar{S}(\tau)$, for every real numbers $t$ and $\tau$.
In the proof of the boundedness on $H^s\times H^{s-1}$, we only use the boundedness of $\cos(t|n|)$ and $\sin(t|n|)$. As we shall see below one may use the oscillations of 
$\cos(t|n|)$ and $\sin(t|n|)$ for $|n|\gg 1$ in order to get more involved $L^p$, $p>2$ properties of the map $S(t)$.

Let us next consider the non homogeneous problem 
\begin{equation}\label{linear_wave_inhom}
(\partial_t^2-\Delta)u=F(t,x),\quad  u(0,x)=0, \quad \partial_t u(0,x)=0.
\end{equation}
Using the variation of the constants method, we obtain that the solutions of \eqref{linear_wave_inhom} are given by 
\begin{equation*}
u(t)=\int_{0}^t \frac{\sin((t-\tau)\sqrt{-\Delta})}{\sqrt{-\Delta}}((F(\tau))d\tau\,.
\end{equation*}
As a consequence, we obtain that the solution of the non homogeneous problem  \eqref{linear_wave_inhom} is one derivative smoother than the source term $F$. More precisely,  for every $s\in\R$, the solution of 
\eqref{linear_wave_inhom} satisfies the bound
\begin{equation}\label{wave_regularity}
\|u\|_{L^\infty([0,1];H^{s+1}(\T^3))}\leq C\|F\|_{L^1([0,1];H^s(\T^3))}\,.
\end{equation}
\subsection{The local well-posedness}
We state the local well-posedness result. 
\begin{prop}[local well-posedness]\label{prop.local}
Consider the cubic defocusing wave equation
\begin{equation}\label{model}
(\partial_t^2-\Delta)u+u^3=0\,,
\end{equation}
posed on $\T^3$.
There exist constants  $c$ and $C$ such that for every  $a\in\R$, 
every $\Lambda\geq 1$, every 
$$
(u_0,u_1)\in H^1(\T^3)\times L^2(\T^3)
$$
satisfying
\begin{equation}\label{uppp}
\|u_0\|_{H^1}+\|u_1\|_{L^2}\leq \Lambda
\end{equation}
there exists a unique solution of \eqref{model} on the time interval $[a,a+c \Lambda^{-2}]$ of \eqref{model} with initial data
$$
u(a,x)=u_0(x), \quad \partial_t u(a,x)=u_1(x)\,.
$$
Moreover the solution satisfies
$$
\|(u,\partial_t u)\|_{L^\infty([a,a+c \Lambda^{-2}],H^1(\T^3)\times L^2(\T^3))}\leq C\Lambda,
$$
$(u,\partial_t u)$ is unique in the class $L^\infty([a,a+c \Lambda^{-2}],H^1(\T^3)\times L^2(\T^3))$ and the dependence with respect to the initial data and with respect to the time is continuous.
Finally, if 
$$
(u_0,u_1)\in H^s(\T^3)\times H^{s-1}(\T^3)
$$
for some $s\geq 1$ then there exists $c_s>0$ such that
$$
(u,\partial_t u)\in
C([a,a+c_s \Lambda^{-2}];H^s(\T^3)\times H^{s-1}(\T^3))\,.
$$
\end{prop}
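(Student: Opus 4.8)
The plan is to rewrite \eqref{model} with data posed at time $a$ as the Duhamel integral equation
\[
u(t) = S(t-a)(u_0,u_1) - \int_a^t \frac{\sin((t-\tau)\sqrt{-\Delta})}{\sqrt{-\Delta}}\bigl(u(\tau)^3\bigr)\, d\tau \;=:\; \Phi(u)(t),
\]
and to solve it by a contraction mapping argument. The natural space is
\[
X_T=\bigl\{u\in C([a,a+T];H^1(\T^3)):\ \partial_t u\in C([a,a+T];L^2(\T^3))\bigr\},\qquad \|u\|_{X_T}=\|u\|_{L^\infty([a,a+T];H^1)}+\|\partial_t u\|_{L^\infty([a,a+T];L^2)},
\]
where $\partial_t u$ is the function obtained by differentiating the integral equation in $t$; one runs the fixed point on the closed ball $B_R\subset X_T$ of radius $R=C_0\Lambda$ for a suitable absolute constant $C_0$, over the interval of length $T=c\Lambda^{-2}$ (which is $\le 1$ since $\Lambda\ge1$, so \eqref{wave_regularity} applies after a time translation).

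The two linear inputs are already at hand: the boundedness of $\bar S(t)=(S(t),\partial_t S(t))$ on $H^1\times L^2$, uniform in $t$, bounds the free term $\bar S(t-a)(u_0,u_1)$ in $H^1\times L^2$ by $C\Lambda$; and \eqref{wave_regularity} with $s=0$ (together with the fact that differentiating the Duhamel term in $t$ produces $\cos((t-\tau)\sqrt{-\Delta})$ acting on the source, with no boundary term) shows that the Duhamel map sends $F\in L^1([a,a+T];L^2)$ into $X_T$ with $\|\,\cdot\,\|_{X_T}\le C\|F\|_{L^1([a,a+T];L^2)}$. The only nonlinear ingredient needed is
\[
\|u^3\|_{L^2(\T^3)}=\|u\|_{L^6(\T^3)}^3\lesssim \|u\|_{H^1(\T^3)}^3,
\]
which is precisely the three–dimensional Sobolev embedding $H^1(\T^3)\hookrightarrow L^6(\T^3)$, together with its difference form $\|u^3-v^3\|_{L^2}\lesssim (\|u\|_{H^1}^2+\|v\|_{H^1}^2)\|u-v\|_{H^1}$. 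Plugging these in gives $\|\Phi(u)\|_{X_T}\le C\Lambda+CT\|u\|_{X_T}^3$ and $\|\Phi(u)-\Phi(v)\|_{X_T}\le CT(\|u\|_{X_T}^2+\|v\|_{X_T}^2)\|u-v\|_{X_T}$; choosing $R=2C\Lambda$ and then $c$ small enough that $8C^3c\le\tfrac12$ turns $\Phi$ into a contraction of $B_R$ into itself (here $\Lambda\ge1$ is used only to absorb lower powers of $\Lambda$ into higher ones). Banach's fixed point theorem then produces the solution, its uniqueness in $B_R$, and the stated bound $\|(u,\partial_t u)\|_{L^\infty([a,a+c\Lambda^{-2}],H^1\times L^2)}\le C\Lambda$.

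Uniqueness in the full class $L^\infty([a,a+c\Lambda^{-2}];H^1\times L^2)$, not just in $B_R$, follows from a standard argument: two such solutions are bounded in $H^1$, so the difference estimate above, applied on subintervals short enough relative to that bound and then iterated across $[a,a+c\Lambda^{-2}]$, forces them to coincide. Continuous dependence on $(u_0,u_1)$ and on time is obtained by applying the same estimates to $\Phi_{(u_0,u_1)}(u)-\Phi_{(\tilde u_0,\tilde u_1)}(\tilde u)$ and absorbing the difference of the solutions; the continuity of $t\mapsto(u(t),\partial_t u(t))$ into $H^1\times L^2$ is read directly off the integral equation, using strong continuity of $t\mapsto\bar S(t)$ and dominated convergence for the Duhamel term. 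For the persistence of regularity, rather than redoing a fixed point in $H^s\times H^{s-1}$ I would use \eqref{wave_regularity} with $s-1$ in place of $s$ to get
\[
\|u(t)\|_{H^s}+\|\partial_t u(t)\|_{H^{s-1}}\le C_s\bigl(\|u_0\|_{H^s}+\|u_1\|_{H^{s-1}}\bigr)+C_s\int_a^t\|u(\tau)^3\|_{H^{s-1}}\,d\tau,
\]
and then estimate $\|u^3\|_{H^{s-1}}\lesssim \|D^{s-1}u\|_{L^6}\|u\|_{L^6}^2\lesssim \|u\|_{H^s}\|u\|_{H^1}^2$ via the fractional Leibniz rule, Hölder ($\tfrac16+\tfrac16+\tfrac16=\tfrac12$) and the embedding $H^1(\T^3)\hookrightarrow L^6(\T^3)$ applied to $D^{s-1}u$; since $\|u\|_{H^1}$ is already controlled on $[a,a+c\Lambda^{-2}]$, Gronwall's inequality closes the bound on a (possibly shorter) interval $[a,a+c_s\Lambda^{-2}]$, and continuity in time at the $H^s$ level again follows from the integral equation.

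The step I expect to be the main obstacle is not any single estimate — the scheme above is the standard energy/Duhamel iteration — but making it genuinely uniform in $\Lambda\ge1$ with one fixed pair of constants $(c,C)$, i.e. checking that the nonlinear terms feel $\Lambda$ only through $R^3=(2C\Lambda)^3$, exactly compensated by $T=c\Lambda^{-2}$; on the technical side, the upgrade from uniqueness in the ball $B_R$ to unconditional uniqueness in $L^\infty([a,a+c\Lambda^{-2}];H^1\times L^2)$ is the point that does not come for free from the contraction and needs the separate subinterval argument.
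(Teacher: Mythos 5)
Your proposal is correct and follows essentially the same route as the paper: Duhamel formulation, contraction in a ball of radius $\sim\Lambda$ over a time interval of length $c\Lambda^{-2}$ using the embedding $H^1(\T^3)\subset L^6(\T^3)$, uniqueness in the full class $L^\infty([a,a+c\Lambda^{-2}];H^1\times L^2)$ by iterating the difference estimate over short subintervals, and the product bound $\|u^3\|_{H^{s-1}}\lesssim \|D^s u\|_{L^2}\|u\|_{H^1}^2$ for persistence of regularity. The only cosmetic deviations are that the paper contracts in $C([0,T];H^1(\T^3))$ alone and recovers $\partial_t u$ by differentiating the Duhamel formula, and it closes the $H^s$ bound by an absorption argument combined with a bootstrap from the short-time $H^s$ fixed point, whereas you use Gronwall; both variants rest on the same ingredients and require the same (routine) justification that the solution remains in $H^s$ before the a priori estimate is applied.
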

\begin{proof}
If $u(t,x)$ is a solution of \eqref{model} then so is $u(t+a,x)$.
Therefore, it suffices to consider the case $a=0$.

Thanks to the analysis of the previous section, we obtain that we should solve the integral equation
\begin{equation}\label{Duhamel}
u(t)=S(t)(u_0,u_1)-\int_{0}^t \frac{\sin((t-\tau)\sqrt{-\Delta})}{\sqrt{-\Delta}}((u^3(\tau))d\tau\,.
\end{equation}
Set
$$
\Phi_{u_0,u_1}(u)\equiv  S(t)(u_0,u_1)-\int_{0}^t \frac{\sin((t-\tau)\sqrt{-\Delta})}{\sqrt{-\Delta}}((u^3(\tau))d\tau.
$$
Then for $T\in (0,1]$,  we define $X_T$ as
$$
X_{T}\equiv C([0,T];H^1(\T^3)),
$$
endowed with the natural norm
$$
\|u\|_{X_T}=\sup_{0\leq t\leq T}\|u(t)\|_{H^1(\T^3)}\,.
$$
Using  the boundedness properties of $\bar{S}$ on $H^s\times H^{s-1}$ explained in the previous section and using  
the Sobolev embedding $H^1(\T^3)\subset L^6(\T^3)$, we get
\begin{eqnarray*}
\|\Phi_{u_0,u_1}(u)\|_{X_T} & \leq &  C\big(\|u_0\|_{H^1}+\|u_1\|_{L^2}+T\sup_{\tau\in[0,T]}\|u(\tau)\|_{L^6}^3\big)
\\
& \leq &
C\big(\|u_0\|_{H^1}+\|u_1\|_{L^2}+CT\|u\|^3_{X_T}\big).
\end{eqnarray*}
It is now clear that for $T= c \Lambda^{-2}$ , $c\ll 1$ the map $\Phi_{u_0,u_1}$ sends the ball
$$
B\equiv (u:\|u\|_{X_T)}
\leq 2C\Lambda)
$$
into itself.  Moreover, by a similar arguments involving the the Sobolev embedding $H^1(\T^3)\subset L^6(\T^3)$ and the H\"older inequality, we obtain the estimate
 \begin{equation}\label{difference}
 \|\Phi_{u_0,u_1}(u)-\Phi_{u_0,u_1}(\tilde{u})\|_{X_T}  \leq    CT\|u-\tilde{u}\|_{X_T} \big(\|u\|^2_{X_T} +\|\tilde{u}\|^2_{X_T}\big).
 \end{equation}
 Therefore, with our choice of $T$, we get that
 $$
 \|\Phi_{u_0,u_1}(u)-\Phi_{u_0,u_1}(\tilde{u})\|_{X_T}  \leq    \frac{1}{2}\|u-\tilde{u}\|_{X_T}\,,\quad u,\tilde{u}\in B\,.
 $$
Consequently the map $\Phi_{u_0,u_1}$ is a contraction on $B$. The fixed point of this contraction defines the solution $u$ on $[0,T]$ we are looking for.
The estimate of $\|\partial_t u\|_{L^2}$ follows by differentiating
in $t$ the Duhamel formula \eqref{Duhamel}.  
Let us now turn to the uniqueness. 
Let $u,\tilde{u}$ be two solutions of \eqref{model} with the same initial data in the space $X_T$ for some $T>0$. Then for $\tau\leq T $, we can write similarly to  \eqref{difference}
\begin{equation}\label{diff-bis}
 \|\Phi_{u_0,u_1}(u)-\Phi_{u_0,u_1}(\tilde{u})\|_{X_\tau}  \leq    C\tau \|u-\tilde{u}\|_{X_\tau} \big(\|u\|^2_{X_T} +\|\tilde{u}\|^2_{X_T}\big).
\end{equation}
Let us take $\tau$ such that
$$
 C\tau \big(\|u\|^2_{X_T} +\|\tilde{u}\|^2_{X_T}\big)<\frac{1}{2}.
 $$
 This fixes the value of $\tau$. 
 Thanks to \eqref{diff-bis}, we obtain that $u$ and $\tilde{u}$ are the same on $[0,\tau]$.  Next, we cover the interval $[0,T]$ by intervals of size $\tau$ and we inductively obtain that $u$ and $\tilde{u}$ are the same on each interval of size $\tau$.
 This yields the uniqueness statement.  
 
 The continuous dependence with respect to time follows from the Duhamel formula representation of the solution of \eqref{Duhamel}. 
 The continuity with respect to the initial data follows from the estimates on the difference of two solutions we have just performed. Notice that we also obtain uniform continuity of the map data-solution on bounded subspaces of $H^1\times L^2$.
 
 Let us finally turn to the propagation of higher regularity. 
 Let $(u_0,u_1)\in H^1\times L^2$ such that \eqref{uppp} holds satisfy the additional regularity property 
 $(u_0,u_1)\in H^s\times H^{s-1}$ for some $s>1$.
 We will show that the corresponding solution remains in $H^s\times H^{s-1}$ {\it in the (essentially) whole} time of existence. For $s\geq 1$, we define 
 $X^s_T$ as
$$
X^s_{T}\equiv C([0,T];H^s(\T^3)),
$$
endowed with the norm
$$
\|u\|_{X^s_T}=\sup_{0\leq t\leq T}\|u(t)\|_{H^s(\T^3)}\,.
$$
We have that the solution with data $(u_0,u_1)\in H^s\times H^{s-1}$  remains in this space for time intervals of order 
$
 (1+\|u_0\|_{H^s}+\|u_1\|_{H^{s-1}})^{-2}
$
by a fixed point argument,  similar to the one we performed for data in $H^{1}\times L^2$.
We now show that the regularity is preserved for (the longer) time intervals of order 
$
(1+\|u_0\|_{H^1}+\|u_1\|_{L^{2}})^{-2}\,.
$
Coming back to \eqref{Duhamel}, we  can write 
 $$
 \|\Phi_{u_0,u_1}(u)\|_{X^s_T} \leq   C\big(\|u_0\|_{H^s}+\|u_1\|_{H^{s-1}}+
 T\sup_{\tau\in[0,T]}\|u^3(\tau)\|_{H^{s-1}}\big).
 $$
 Now using  the Kato-Ponce product inequality, we can obtain that for $\sigma\geq 0$, one has the bound 
\begin{equation}\label{L6}
\|v^3\|_{H^\sigma(\T^3)}\leq C\|D^\sigma v\|_{L^6(\T^3)}\, \|v\|_{L^6(\T^3)}^2\,.
\end{equation}
Using \eqref{L6} and applying the Sobolev embedding $H^1(\T^3)\subset L^6(\T^3)$, we infer that
$$
\|u^3(\tau)\|_{H^{s-1}}\lesssim \|D^{s-1} u(\tau)\|_{L^6}\|u(\tau)\|_{L^6}^2\lesssim
\|D^s u(\tau)\|_{L^2}\|u(\tau)\|_{H^1}^2\,.
$$
Therefore, we arrive at the bound 
$$
 \|\Phi_{u_0,u_1}(u)\|_{X^s_T} \leq   C\big(\|u_0\|_{H^s}+\|u_1\|_{H^{s-1}}+
 C_{s} T\sup_{\tau\in[0,T]} \|D^s u(\tau)\|_{L^2}\|u(\tau)\|_{H^1}^2\big)\,.
 $$
By construction of the solution we infer that if $T\leq c_s\Lambda^{-2}$ with $c_s$ small enough, we have that 
$$
\|u\|_{X^s_T}=
\|\Phi_{u_0,u_1}(u)\|_{X^s_T} \leq   C\big(\|u_0\|_{H^s}+\|u_1\|_{H^{s-1}}\big)+
 \frac{1}{2}\|u\|_{X^s_T}
 $$
which implies the propagation of the regularity statement for $u$. 
Strictly speaking, one should apply a bootstrap argument starting from the propagation of the regularity on times of order 
$
(1+\|u_0\|_{H^s}+\|u_1\|_{H^{s-1}})^{-2}
$
and then extend the regularity propagation to the longer interval $[0,c_s\Lambda^{-2}]$.
One estimates similarly $\partial_t u$ in $H^{s-1}$ by differentiating the Duhamel formula with respect to $t$. 
The continuous dependence with respect to time in $H^s\times H^{s-1}$ follows once again from the Duhamel formula \eqref{Duhamel}. 
This completes the proof of Proposition~\ref{prop.local}.
\end{proof}
\begin{theo}[global well-posedness]\label{prop.global}
For every 
$
(u_0,u_1)\in H^1(\T^3)\times L^2(\T^3)
$
the local solution of  the cubic defocusing wave equation 
\begin{equation*}
(\partial_t^2-\Delta)u+u^3=0\,,\quad
u(0,x)=u_0(x), \quad \partial_t u(0,x)=u_1(x)
\end{equation*}
can be extended globally in time. 
It is unique in the class $C(\R ;H^1(\T^3)\times L^{2}(\T^3))$ and there exists a constant $C$  depending only on $\|u_0\|_{H^1}$ and $\|u_1\|_{L^2}$  such that for every $t\in\R$,
$$
\|u(t)\|_{H^1(\R)}\leq C.
$$
If in addition 
$
(u_0,u_1)\in H^s(\T^3)\times H^{s-1}(\T^3)
$
for some $s\geq 1$ then 
$$
(u,\partial_t u)\in
C(\R ;H^s(\T^3)\times H^{s-1}(\T^3))\,.
$$
\end{theo}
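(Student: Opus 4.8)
The plan is to extend the local solution of Proposition~\ref{prop.local} globally in time by means of the conserved energy
\[ E\big(u(t)\big)=\frac12\int_{\T^3}\Big(|\nabla u(t,x)|^2+|\partial_t u(t,x)|^2\Big)dx+\frac14\int_{\T^3}u(t,x)^4\,dx\,. \]
Because the equation is \emph{defocusing}, all three terms are nonnegative, so a bound on $E$ controls $\|\nabla u(t)\|_{L^2}$, $\|\partial_t u(t)\|_{L^2}$ and $\|u(t)\|_{L^4}$; since $\T^3$ has finite measure, Hölder's inequality gives $\|u(t)\|_{L^2}\leq|\T^3|^{1/4}\|u(t)\|_{L^4}$, so $E$ in fact controls the full norm $\|u(t)\|_{H^1}$ and nothing beyond $E$ is needed. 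The conservation of $E$ is, however, only a formal identity at the $H^1\times L^2$ regularity level, and giving it a rigorous meaning there is the main obstacle; I would get around this by approximation with smoother solutions, using the continuous dependence on the data supplied by Proposition~\ref{prop.local}.

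I would first treat data $(u_0,u_1)\in H^2(\T^3)\times H^1(\T^3)$. By Proposition~\ref{prop.local} with $s=2$ the local solution satisfies $(u,\partial_t u)\in C\big([0,T];H^2(\T^3)\times H^1(\T^3)\big)$, and from $\partial_t^2 u=\Delta u-u^3$ one checks that this is enough regularity to differentiate $t\mapsto E(u(t))$ and to perform the integration by parts, which gives $\frac{d}{dt}E(u(t))=\int_{\T^3}\partial_t u\,(\partial_t^2 u-\Delta u+u^3)\,dx=0$. Hence $E(u(t))=E(u_0,u_1)$, so $\|(u,\partial_t u)(t)\|_{H^1\times L^2}\leq M:=C\big(E(u_0,u_1)\big)$ for as long as the solution exists. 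By the propagation-of-regularity part of Proposition~\ref{prop.local}, whenever $\|(u,\partial_t u)(t)\|_{H^1\times L^2}\leq M$ the $H^2\times H^1$ regularity persists for a further time $\geq c_2M^{-2}$, a length that does not shrink along the iteration; combined with the energy bound this shows that the solution is global and that $\|(u,\partial_t u)(t)\|_{H^1\times L^2}\leq M$ for all $t\in\R$.

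For general $(u_0,u_1)\in H^1\times L^2$ I would pick $(u_0^{(n)},u_1^{(n)})\in H^2\times H^1$ with $(u_0^{(n)},u_1^{(n)})\to(u_0,u_1)$ in $H^1\times L^2$ (e.g. by truncating Fourier series) and denote by $u^{(n)}$ the associated global solutions, which satisfy a uniform bound $\|(u^{(n)},\partial_t u^{(n)})(t)\|_{H^1\times L^2}\leq M'$ with $M'$ depending only on $\sup_n E(u_0^{(n)},u_1^{(n)})<\infty$. Fixing $T>0$ and covering $[0,T]$ by finitely many intervals of length $cM'^{-2}$, one uses on each of them the uniform continuity of the data-to-solution map on balls of $H^1\times L^2$ (Proposition~\ref{prop.local}) to conclude that $(u^{(n)},\partial_t u^{(n)})$ is Cauchy in $C([0,T];H^1)\times C([0,T];L^2)$; its limit solves the Duhamel equation \eqref{Duhamel} with data $(u_0,u_1)$ and still obeys the bound $M'$. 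As $T$ is arbitrary and the equation is reversible in time, the solution is global, and uniqueness in $C(\R;H^1\times L^2)$ follows from the uniqueness statement of Proposition~\ref{prop.local} applied on bounded intervals. Passing to the limit in the identity $E(u^{(n)}(t),\partial_t u^{(n)}(t))=E(u_0^{(n)},u_1^{(n)})$, which is legitimate since $E$ is continuous on $H^1\times L^2$ (here one uses $H^1(\T^3)\hookrightarrow L^4(\T^3)$), gives conservation of $E$ for the limit solution and hence the uniform bound $\|u(t)\|_{H^1}\leq C$ via the Hölder estimate above. Finally, if in addition $(u_0,u_1)\in H^s\times H^{s-1}$ for some $s\geq1$, then, having the global $H^1$ solution and its uniform bound $M$ at hand, applying the propagation-of-regularity statement of Proposition~\ref{prop.local} on successive intervals of the fixed length $c_sM^{-2}$ yields $(u,\partial_t u)\in C(\R;H^s\times H^{s-1})$.
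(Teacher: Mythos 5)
Your proof is correct and follows essentially the same route as the paper: energy conservation is justified by approximating with smoother data and exploiting the continuous dependence and propagation-of-regularity statements of Proposition~\ref{prop.local}, and globalization then comes from iterating the local theory on intervals of uniform length dictated by the energy bound, with time reversibility handling negative times. The only (immaterial) difference is organizational: you globalize the $H^2\times H^1$ solutions first and pass to the limit on all of $[0,T]$, whereas the paper passes to the limit in the energy identity already on the local existence interval (Lemma~\ref{vg}, with $H^{10}\times H^{9}$ approximations) and then iterates.
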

\begin{rema}
{\rm One may obtain global weak solutions of the cubic defocusing wave equation for data in $H^1\times L^2$ via compactness arguments.
The uniqueness and the propagation of regularity statements of Theorem~\ref{prop.global} are the major differences with respect to the weak solutions.
}
\end{rema}
\begin{proof}[Proof of  Theorem~\ref{prop.global}]
The key point is the conservation of the energy displayed in the following lemma.
\begin{lemm}\label{vg}
There exist $c>0$ and $C>0$ such that for every $(u_0,u_1)\in H^1(\T^3)\times L^2(\T^3)$
the local solution of  the  cubic defocusing wave equation, with data $(u_0,u_1)$, constructed in Proposition~\ref{prop.local}
is defined on $[0,T]$ with 
$$
T=c(1+\|u_0\|_{H^1(\T^3)}+\|u_1\|_{L^2(\T^3)})^{-2}
$$
and
\begin{multline}\label{island}
\int_{\T^3}  \big((\partial_t u(t,x))^2+|\nabla_x u(t,x)|^2+\frac{1}{2}u^4(t,x)\big)dx
\\
=
\int_{\T^3}  \big((u_1(x))^2+|\nabla_x u_0(x)|^2+\frac{1}{2}u_0^4(x)\big)dx,\quad t\in [0,T].
\end{multline}
As a consequence, for $t\in[0,T]$,
$$
\|u(t)\|_{H^1(\T^3)}+\|\partial_t u(t)\|_{L^2(\T^3)}\leq C
\big(1+\|u_0\|_{H^1(\T^3)}^2+\|u_1\|_{L^2(\T^3)}
\big).
$$
\end{lemm}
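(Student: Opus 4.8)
The plan is to derive the lemma from Proposition~\ref{prop.local}, the only new ingredient being the conservation law \eqref{island}, which I would establish first for regular solutions and then obtain in general by a limiting argument. Set $\Lambda\equiv 1+\|u_0\|_{H^1(\T^3)}+\|u_1\|_{L^2(\T^3)}\ge 1$, and fix $c$ as the smallest of the constant $c$ furnished by Proposition~\ref{prop.local} for data in $H^1\times L^2$ and the constant $c_2$ appearing there for $s=2$. With this choice $T=c\Lambda^{-2}$, and Proposition~\ref{prop.local} directly gives the solution on $[0,T]$ with $\|(u,\partial_t u)\|_{L^\infty([0,T];H^1\times L^2)}\le C\Lambda$; if moreover $(u_0,u_1)\in H^2(\T^3)\times H^1(\T^3)$ then $(u,\partial_t u)\in C([0,T];H^2(\T^3)\times H^1(\T^3))$ on the same interval.

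First I would prove \eqref{island} assuming $(u_0,u_1)\in H^2\times H^1$. Then $u\in C([0,T];H^2)$, $\partial_t u\in C([0,T];H^1)$, and the equation gives $\partial_t^2 u=\Delta u-u^3\in C([0,T];L^2)$, using $H^2(\T^3)\subset L^\infty(\T^3)$ for the cubic term. This regularity is enough to differentiate the energy $E(u(t))\equiv\int_{\T^3}\big((\partial_t u)^2+|\nabla_x u|^2+\frac{1}{2}u^4\big)\,dx$ under the integral sign, integrate by parts in $x$, and use \eqref{model}:
\[
\frac{d}{dt}E(u(t))=2\int_{\T^3}\partial_t u\,\big(\partial_t^2 u-\Delta u+u^3\big)\,dx=0 ,
\]
which after integration in time is exactly \eqref{island} for such data.

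To remove the extra regularity, I would approximate a general $(u_0,u_1)\in H^1\times L^2$ by its Fourier truncations $(u_0^{(N)},u_1^{(N)})\in C^\infty(\T^3)\times C^\infty(\T^3)$, which converge to $(u_0,u_1)$ in $H^1\times L^2$ and satisfy $\|u_0^{(N)}\|_{H^1}+\|u_1^{(N)}\|_{L^2}\le\Lambda$. By Proposition~\ref{prop.local} the solutions $u^{(N)}$ all exist on the common interval $[0,T]$, and by the locally uniform continuity of the data-to-solution map asserted there, $u^{(N)}\to u$ in $C([0,T];H^1(\T^3))$ and $\partial_t u^{(N)}\to\partial_t u$ in $C([0,T];L^2(\T^3))$. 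Since $E$ is continuous on $H^1\times L^2$ — the only nontrivial term being $\int u^4$, controlled through $\|u\|_{L^4}^4\lesssim\|u\|_{H^1}^4$ by the Sobolev embedding $H^1(\T^3)\subset L^6(\T^3)$ — passing to the limit in $E(u^{(N)}(t))=E(u^{(N)}(0))$ yields \eqref{island} in general.

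Finally, the a priori bound uses the defocusing sign in an essential way: all three terms on the left of \eqref{island} are nonnegative, so for $t\in[0,T]$
\[
\|\partial_t u(t)\|_{L^2}^2+\|\nabla_x u(t)\|_{L^2}^2\le\|u_1\|_{L^2}^2+\|\nabla_x u_0\|_{L^2}^2+\frac{1}{2}\|u_0\|_{L^4}^4\le C\big(1+\|u_1\|_{L^2}^2+\|u_0\|_{H^1}^4\big),
\]
again by Sobolev; to recover the full $H^1$ norm I would bound $\|u(t)\|_{L^2}\le\|u_0\|_{L^2}+\int_0^t\|\partial_t u(s)\|_{L^2}\,ds$ and use $T\le c\le 1$, then take square roots. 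I expect the only genuine difficulty to be the limiting step: ensuring that the regularised solutions live on one common interval and converge in the energy space, and verifying the continuity of $E$ there. The differentiation of $E$ for regular solutions and the final Sobolev bookkeeping are routine.
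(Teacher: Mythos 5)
Your proof is correct and follows essentially the same route as the paper: regularise the data, derive the energy identity for the regular solutions by pairing the equation with $\partial_t u$, pass to the limit using the (uniform on bounded sets) continuity of the flow map from Proposition~\ref{prop.local}, and then exploit the nonnegativity of the defocusing energy together with Sobolev embedding. The only deviations are cosmetic: you regularise at the $H^2\times H^1$ level (with a correct justification of the differentiation of the energy) where the paper uses $H^{10}\times H^9$, and you recover $\|u(t)\|_{L^2}$ by integrating $\partial_t u$ in time rather than via the H\"older/$L^4$ bound used in the paper.
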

\begin{rema}
{\rm
Using the invariance with respect to translations in time, we can state  Lemma~\ref{vg} with initial data at an arbitrary initial time.
}
\end{rema}
\begin{proof}[Proof of Lemma~\ref{vg}]
We apply  Proposition~\ref{prop.local} with $\Lambda= \|u_0\|_{H^1}+\|u_1\|_{L^2}$ and we take $T=c_{10}\Lambda^{-2}$, where $c_{10}$ is the small constant involved in the propagation of the $H^{10}\times H^9$ regularity.
Let $(u_{0,n},u_{1,n})$ be a sequence in $H^{10}\times H^{9}$ which converges to $(u_0,u_1)$ in $H^1\times L^2$ and such that 
$$
  \|u_{0,n}\|_{H^1}+\|u_{1,n}\|_{L^2}\leq 
 \|u_0\|_{H^1}+\|u_1\|_{L^2}\,.
 $$
Let $u_n(t)$ be the solution of  the  cubic defocusing wave equation, with data $(u_{0,n},u_{1,n})$.
By  Proposition~\ref{prop.local} these solutions are defined on $[0,T]$ and they keep their  $H^{10}\times H^{9}$ regularity {\it on the same time interval}. 
We multiply the equation 
$$
(\partial_t^2-\Delta)u_n+u_n^3=0
$$
by $\partial_t u_n$. Using the  regularity properties of $u_n(t)$, after integrations by parts, we arrive at
$$
\frac{d}{dt}\Big[\int_{\T^3}  \big((\partial_t u_n(t,x))^2+|\nabla_x u_n(t,x)|^2+\frac{1}{2}u_n^4(t,x)\big)dx\Big]=0
$$
which implies the identity 
\begin{multline}\label{unn}
\int_{\T^3}  \big((\partial_t u_n(t,x))^2+|\nabla_x u_n(t,x)|^2+\frac{1}{2}u_n^4(t,x)\big)dx
\\
=
\int_{\T^3}  \big((u_{1,n}(x))^2+|\nabla_x u_{0,n}(x)|^2+\frac{1}{2}u_{0,n}^4(x)\big)dx,\quad t\in [0,T].
\end{multline}
We now pass to the limit $n\longrightarrow \infty$ in \eqref{unn}. 
The right hand-side converges to 
$$
\int_{\T^3}  \big((u_1(x))^2+|\nabla_x u_0(x)|^2+\frac{1}{2}u_0^4(x)\big)dx
$$
by the definition of $(u_{0,n},u_{1,n})$ (we invoke the Sobolev embedding for the convergence of the $L^4$ norms) . 
The right hand-side of \eqref{unn} converges to
$$
\int_{\T^3}  \big((\partial_t u(t,x))^2+|\nabla_x u(t,x)|^2+\frac{1}{2}u^4(t,x)\big)dx
$$
by the continuity of the flow map established in Proposition~\ref{prop.local}.
Using the compactness of $\T^3$ and the H\"older inequality, we have that
$$
\|u\|_{L^2(\T^3)}\leq C\|u\|_{L^4(\T^3)}\leq C(1+\|u\|^2_{L^4(\T^3)})
$$
and therefore 
$$
\|u(t)\|^2_{H^1(\T^3)}+\|\partial_t u(t)\|^2_{L^2(\T^3)}
$$
is bounded by
$$
C\,\int_{\T^3}  \big(1+
(\partial_t u(t,x))^2+|\nabla_x u(t,x)|^2+\frac{1}{2}u^4(t,x)\big)dx\,.
$$
Now, using \eqref{island} and the Sobolev inequality 
$$
\|u\|_{L^4(\T^3)}\leq C\|u\|_{H^1(\T^3)}\, ,
$$
we obtain that for $t\in [0,T]$,
$$
\|u(t)\|^2_{H^1(\T^3)}+\|\partial_t u(t)\|^2_{L^2(\T^3)}\leq C
\big(
1+\|u_0\|_{H^1(\T^3)}^4+\|u_1\|^2_{L^2(\T^3)}
\big).
$$
This completes the proof of Lemma~\ref{vg}.
\end{proof}
Let us now complete the proof of Theorem~\ref{prop.global}.
Let  $(u_0,u_1)\in H^1(\T^3)\times L^2(\T^3)$.
Set
$$
T=
c\big(C\big(1+\|u_0\|_{H^1(\T^3)}^2+\|u_1\|_{L^2(\T^3)}
\big)
\big)^{-2},
$$
where the constants $c$ and $C$ are defined in Lemma~\ref{vg}.
We now observe that we can use Proposition~\ref{prop.local} and  Lemma~\ref{vg} on the intervals $[0,T]$, $[T,2T]$, $[2T,3T]$, and so on and therefore we extend the solution with data $(u_0,u_1)$ on $[0,\infty)$.
By the time reversibility of the wave equation we similarly can construct the solution for negative times. 
More precisely, the free evolution $S(t)(u_0,u_1)$  well-defined for all $t\in\R$ and one can prove in the same way the natural counterparts of Proposition~\ref{prop.local} and  Lemma~\ref{vg} for negative times. 
The propagation of higher Sobolev regularity globally in time follows from  Proposition~\ref{prop.local} while the $H^1$ a priori bound on the solutions follows from  Lemma~\ref{vg}.
This completes the proof of Theorem~\ref{prop.global}.
\end{proof}
\begin{rema}
{\rm
One may proceed slightly differently in the proof of  Theorem~\ref{prop.global} by observing that as a consequence of Proposition~\ref{prop.local}, if a local solution with $H^1\times L^2$ data blows-up at time $T^\star<\infty$
then 
\begin{equation}\label{b-upp}
\lim_{t\rightarrow T^{\star}}
\|(u(t),\partial_t u(t))\|_{H^1(\T^3)\times L^{2}(\T^3)}
=\infty.
\end{equation}
The statement \eqref{b-upp} is in contradiction with the energy conservation law. 
}
\end{rema}
\begin{rema}
{\rm Observe that the nonlinear problem 
\begin{equation}\label{eng1}
(\partial_t^2-\Delta)u+u^3=0
\end{equation}
behaves better than the linear problem 
\begin{equation}\label{eng2}
(\partial_t^2-\Delta)u=0
\end{equation}
with respect to the $H^1$ global in time bounds. 
Indeed, Theorem~\ref{prop.global} establishes that the solutions of \eqref{eng1} are bounded in $H^1$ as far as the initial data is in $H^1\times L^2$.
On the other hand one can consider 
$
u(t,x)=t
$ 
which is a solution of the linear wave equation \eqref{eng2} on $\T^3$ with data in  $H^1\times L^2$ and its $H^1$ norm is  clearly growing in time.
}
\end{rema}
\begin{rema}
{\rm
The sign in front of the nonlinearity is not of importance for Proposition~\ref{prop.local}.
One can therefore obtain the local well-posedness of the cubic focusing wave equation
\begin{equation}\label{focusing}
(\partial_t^2-\Delta)u-u^3=0,
\end{equation}
posed on $\T^3$, with data in $H^1(\T^3)\times L^2(\T^3)$. 
However, the sign in front of the nonlinearity is of crucial importance in the proof of Theorem~\ref{prop.global}.
Indeed, one has that
$$
u(t,x)=\frac{\sqrt{2}}{1-t}
$$
is a solution of \eqref{focusing}, posed on $\T^3$  with data $(\sqrt{2},-\sqrt{2})$ which is not defined globally in time (it blows-up in $H^1\times L^2$ at $t=1$).
}
\end{rema}
\section{The Strichartz estimates}
In the previous section, we solved globally in time the cubic defocusing wave equation in $H^1\times L^2$. 
One may naturally ask whether it is possible to extend these results to the more singular Sobolev spaces $H^s\times H^{s-1}$ for some $s<1$. It turns out that this is possible by invoking more refined properties of the map $S(t)$ defining the free evolution. The proof of these properties uses in an essential way the time oscillations in $S(t)$ and can be quantified as the $L^p$, $p>2$ mapping properties of $S(t)$ (cf. \cite{GiVe,LS}).
\begin{theo}[Strichartz inequality for the wave equation]\label{Strichartz}
Let $(p,q)\in \R^2$ be such that $2<p\leq \infty$ and 
$
\frac{1}{p}+\frac{1}{q}=\frac{1}{2}.
$ 
Then we have the estimate 
$$
\|S(t)(u_0,u_1)\|_{L^p([0,1];L^q(\T^3))}\leq C\big(\|u_0\|_{H^{\frac{2}{p}}(\T^3)}+\|u_1\|_{H^{\frac{2}{p}-1}(\T^3)}\big).
$$
\end{theo}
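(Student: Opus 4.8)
The plan is to prove the Strichartz inequality on $\T^3$ by a Littlewood--Paley decomposition, reducing to a dyadic piece where the frequency $|n|\sim N$ is localized, and then invoking the corresponding estimate on $\R^3$ via a finite-speed-of-propagation / short-time argument. First I would decompose $u_0=\sum_N P_N u_0$ and $u_1=\sum_N P_N u_1$ with $P_N$ a Littlewood--Paley projector to frequencies $|n|\sim N$ (dyadic $N\geq 1$). By the Minkowski / triangle inequality in $L^p_t L^q_x$ and the fact that $\ell^2 \hookrightarrow \ell^p$ is replaced by the need for a genuine square-function estimate, I would first establish a frequency-localized bound
$$
\|S(t)P_N(u_0,u_1)\|_{L^p([0,1];L^q(\T^3))}\leq C N^{\frac{2}{p}}\big(\|P_N u_0\|_{L^2(\T^3)}+\|P_N u_1\|_{L^2(\T^3)}\big),
$$
and then sum over $N$ using the Littlewood--Paley square function theorem on $L^q(\T^3)$ together with the embedding $\ell^2\hookrightarrow\ell^p$ (valid since $p\ge 2$) to absorb the $L^p_t$ norm; the $N^{2/p}$ factors are exactly converted into the $H^{2/p}$ and $H^{2/p-1}$ norms on the right.

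The heart of the matter is the dyadic estimate. Here I would pass from the torus to Euclidean space: on the time interval $[0,1]$, by finite speed of propagation, $S(t)P_N(u_0,u_1)$ near any point depends only on the data in a ball of radius $\lesssim 1$, so after cutting off with a smooth bump and periodizing one reduces (at the cost of harmless constants) to the estimate for the wave propagator on $\R^3$. On $\R^3$ the frequency-localized dispersive estimate $\|e^{\pm it\sqrt{-\Delta}}P_N f\|_{L^\infty}\lesssim N^2 |t|^{-1}\|P_N f\|_{L^1}$ (scaling of the $\R^3$ kernel), interpolated with the trivial $L^2$ bound, plugged into the abstract $TT^*$ / Keel--Tao machinery with decay exponent $\sigma=1$, yields the full range $2<p\le\infty$, $\frac1p+\frac1q=\frac12$, with the gain $N^{2/p}$ coming from the $2/p = 2\cdot\frac1p$ derivatives predicted by scaling. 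One must be a little careful that the $\R^3$ Strichartz estimates are usually stated with the homogeneous $\dot H^{2/p}$ norm; the frequency localization makes $\dot H^{2/p}$ and $H^{2/p}$ comparable on the piece $|n|\sim N$ (uniformly in $N$), so nothing is lost.

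The main obstacle I expect is the transfer from $\R^3$ to $\T^3$: the naive periodization argument for the dispersive estimate fails because the torus kernel does not enjoy the $|t|^{-1}$ decay for all $t$ (the ``Knapp'' and arithmetic obstructions reappear at times comparable to $2\pi$), so one genuinely needs the time interval to be bounded and one must handle the periodization of the cutoff data carefully to keep constants uniform in $N$. An alternative to sidestep this is to prove the dyadic estimate directly on $\T^3$ via a stationary-phase / van der Corput analysis of the kernel $\sum_{|n|\sim N} e^{i(n\cdot x \pm t|n|)}$ over the short time $[0,1]$, but this is more delicate arithmetically; I would only fall back on it if the finite-speed-of-propagation reduction proves awkward to make rigorous. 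Once the dyadic bound is in hand, the Littlewood--Paley summation is routine, and one should also remark that the $t$-interval $[0,1]$ can be replaced by any fixed finite interval by covering it with finitely many unit intervals and using the group property $\bar S(t+\tau)=\bar S(t)\circ\bar S(\tau)$.
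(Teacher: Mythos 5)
Your proposal is correct and follows essentially the same route as the paper: finite speed of propagation together with a cutoff/partition of unity to transfer the problem to $\R^3$, a Littlewood--Paley square-function reduction, the frequency-localized dispersive bound $N^{2}|t|^{-1}$ for the Euclidean half-wave kernel (the paper proves it via a ``soft'' stationary phase lemma), and a $TT^{\star}$ argument (which the paper carries out by hand with Riesz--Thorin interpolation and the Hardy--Littlewood--Sobolev inequality in time rather than citing the abstract Keel--Tao machinery), the only structural difference being that you localize in frequency before transferring to $\R^3$ while the paper transfers first and then decomposes. One small correction: your displayed dyadic estimate should carry the weight $N^{\frac{2}{p}-1}$ on the $\|P_N u_1\|_{L^2}$ term (coming from the factor $|n|^{-1}$ in $\sin(t|n|)/|n|$), since summing the version you wrote would only give $\|u_1\|_{H^{2/p}}$, not the claimed $\|u_1\|_{H^{2/p-1}}$.
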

We shall use that the solutions of the wave equation satisfy a finite propagation speed property which will allow us to deduce the result of Theorem~\ref{Strichartz} from the corresponding Strichartz estimate for the wave equation on the euclidean space.
Consider therefore the wave equation
\begin{equation}\label{linear_wave_e}
(\partial_t^2-\Delta)u=0,\quad  u(0,x)=u_0(x), \quad \partial_t u(0,x)=u_1(x),
\end{equation}
where now the spatial variable $x$ belongs to $\R^3$ and the initial data $(u_0,u_1)$ belong to $H^s(\R^3)\times H^{s-1}(\R^3)$.
Using the Fourier transform on $\R^3$, we can solve \eqref{linear_wave_e} and obtain that the solutions are  generated by the map $S_{e}(t)$, defined as  
\begin{equation*}
S_{e}(t)(u_0,u_1)\equiv
\cos(t\sqrt{-\Delta_{\R^3}})(u_0)+\frac{\sin(t\sqrt{-\Delta_{\R^3}})}{\sqrt{-\Delta_{\R^3}}}(u_1),
\end{equation*}
where for $u_0$ and $u_1$ in the Schwartz class,
$$
\cos(t\sqrt{-\Delta_{\R^3}})(u_0)\equiv
\int_{\R^3}\cos(t|\xi|) \widehat{u_0}(\xi)\, e^{i\xi\cdot x}d\xi
$$
and
$$
\frac{\sin(t\sqrt{-\Delta_{\R^3}})}{\sqrt{-\Delta_{\R^3}}}(u_1)
\equiv 
\int_{\R^3} 
\frac{\sin(t|\xi|)}{|\xi|}\widehat{u_1}(\xi)\, e^{i\xi\cdot x}d\xi\,,
$$
where $ \widehat{u_0}$  and $\widehat{u_1}$ 
are the Fourier transforms of $u_0$  and $u_1$ respectively.
By density, one then extends $S_{e}(t)(u_0,u_1)$ to a bounded map from $H^s(\R^3)\times H^{s-1}(\R^3)$  to $H^s(\R^3)$  for any $s\in\R$. 
The next lemma displays the finite propagation speed property of $S_{e}(t)$.
\begin{prop}[finite propagation speed]\label{FPS}
Let $(u_0,u_1)\in H^s(\R^3)\times H^{s-1}(\R^3)$ for some $s\geq 0$ be such that 
$$
{\rm supp}(u_0)\cup {\rm supp}(u_1)\subset \{x\in\R^3\,:\, |x-x_0|\leq R\},
$$
for some $R>0$ and $x_0\in\R^3$. Then for $t\geq 0$,
$$
{\rm supp}(S_{e}(t)(u_0,u_1))\subset \{x\in\R^3\,:\, |x-x_0|\leq t+R\}.
$$
\end{prop}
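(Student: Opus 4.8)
The plan is to establish the contrapositive-flavored statement via a local energy estimate on backward light cones. Fix a point $x_1\in\R^3$ and a time $T>0$ with $|x_1-x_0|>T+R$; it suffices to show that $S_e(T)(u_0,u_1)$ vanishes near $x_1$. For this I would introduce the backward cone $K=\{(t,x)\,:\,0\le t\le T,\ |x-x_1|\le T-t\}$ and the truncated energy
\[
E(t)=\frac{1}{2}\int_{|x-x_1|\le T-t}\big((\partial_t u(t,x))^2+|\nabla_x u(t,x)|^2\big)\,dx,
\]
where $u(t)=S_e(t)(u_0,u_1)$. First I would reduce to smooth compactly supported data by density (the map $S_e(t)$ being bounded on $H^s\times H^{s-1}$), so that all the manipulations below are justified classically; the support conclusion then passes to the limit since $L^2$ convergence forces convergence of supports in the relevant sense (if $v_n\to v$ in $L^2$ and each $v_n$ vanishes on an open set $U$, so does $v$).

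The key computation is to differentiate $E(t)$ in time. The interior contribution is $\int_{|x-x_1|\le T-t}(\partial_t u\,\partial_t^2 u+\nabla_x u\cdot\nabla_x\partial_t u)\,dx$, and using the equation $\partial_t^2 u=\Delta u$ together with integration by parts on the ball $B(x_1,T-t)$ this becomes a boundary term $\int_{|x-x_1|=T-t}\partial_t u\,\partial_\nu u\,d\sigma$. The shrinking of the domain contributes, via the coarea/Reynolds transport formula, the term $-\frac{1}{2}\int_{|x-x_1|=T-t}\big((\partial_t u)^2+|\nabla_x u|^2\big)\,d\sigma$. Combining, and using Cauchy–Schwarz $|\partial_t u\,\partial_\nu u|\le\frac12((\partial_t u)^2+|\nabla_x u|^2)$ pointwise on the sphere, one gets $E'(t)\le 0$. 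Hence $E(t)\le E(0)$ for $t\in[0,T]$.

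Now I invoke the support hypothesis: since $|x_1-x_0|>T+R$, the ball $B(x_1,T)$ is disjoint from $B(x_0,R)\supset\operatorname{supp}(u_0)\cup\operatorname{supp}(u_1)$, so $u_0$ and $u_1$ vanish identically on $B(x_1,T)$, giving $E(0)=0$. Therefore $E(t)=0$ for all $t\in[0,T]$, which forces $\partial_t u$ and $\nabla_x u$ to vanish on each ball $B(x_1,T-t)$; in particular $u(t)$ is constant on these balls, and since $u(0)=u_0=0$ there and $u$ is continuous in time with values in $H^s$, we conclude $u(T,\cdot)=0$ on a neighborhood of $x_1$ (indeed $u(T,x_1)=0$, and varying $x_1$ slightly within the open condition $|x_1-x_0|>T+R$ gives an open set). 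Since $x_1$ was an arbitrary point with $|x_1-x_0|>T+R$, this shows $\operatorname{supp}(S_e(T)(u_0,u_1))\subset\{|x-x_0|\le T+R\}$, and relabeling $T$ as $t$ finishes the proof.

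The main obstacle is the differentiation of the truncated energy $E(t)$: one must correctly account for the moving boundary of the integration domain and verify that the boundary term from integration by parts exactly cancels (up to a favorable sign) against the flux term coming from the shrinking ball. Making this rigorous for merely $H^s$ data is what the density reduction is for; for smooth data it is a clean application of the divergence theorem and the transport formula, but one should be careful that the unit normal $\nu$ on $\partial B(x_1,T-t)$ is the outward spatial normal and track the sign bookkeeping so that the inequality $E'(t)\le 0$ genuinely holds rather than its reverse.
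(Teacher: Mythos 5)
Your argument is correct and is essentially the paper's own proof: a truncated energy on backward light cones, $\dot E(t)\le 0$ via the divergence theorem, the moving-boundary flux term and Cauchy--Schwarz, hence vanishing in the cone, and then varying the vertex point to get the support statement. The only step to make precise is your density reduction: the paper implements it by mollification, using $\rho_{\varepsilon}\star S_{e}(t)(u_0,u_1)=S_{e}(t)(\rho_{\varepsilon}\star u_0,\rho_{\varepsilon}\star u_1)$, which is exactly what guarantees the smooth approximating data still satisfy the support hypothesis (up to an $\varepsilon$-enlarged ball), so that $E(0)=0$ holds for the approximations before you pass to the limit.
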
 
\begin{proof}
The statement of Proposition~\ref{FPS} (and even more precise localisation property) follows from the Kirchoff formula representation of the solutions of the three dimensional wave equation. 
Here we will present another proof which has the advantage to extend to an arbitrary dimension and to variable coefficient settings. 
By the invariance of the wave equation with respect to spatial translations, we can assume that $x_0=0$. 
We need to prove Proposition~\ref{FPS} only for (say) $s\geq 100$ which ensures by the Sobolev embedding that the solutions we study are of class $C^2(\R^4)$. 
We than can treat the case of an arbitrary $(u_0,u_1)\in H^s(\R^3)\times H^{s-1}(\R^3)$, $s\geq 0$ by observing that 
\begin{equation}\label{convolution}
\rho_{\varepsilon}\star  S_{e}(t)(u_0,u_1) =S_{e}(t)(\rho_{\varepsilon}\star u_0,\rho_{\varepsilon }\star u_1),
\end{equation}
where $\rho_{\varepsilon}(x)=\varepsilon^{-3}\rho(x/\varepsilon)$, $\rho\in C^\infty_0(\R^3)$, $0\leq \rho\leq 1$, $\int\rho =1$.
It suffices then to pass to the limit $\varepsilon\rightarrow 0$ in \eqref{convolution}.
Indeed, for $\varphi \in C^\infty_0(|x|>t+R)$, $S_{e}(t)(\rho_{\varepsilon}\star u_0,\rho_{\varepsilon }\star u_1)(\varphi)$ is zero for $\varepsilon$ small enough while 
$\rho_{\varepsilon}\star  S_{e}(t)(u_0,u_1)(\varphi)$ converges to $S_{e}(t)(u_0,u_1)(\varphi)$.

Therefore, in the remaining of the proof of  Proposition~\ref{FPS}, we shall assume that $S_{e}(t)(u_0,u_1)$ is a $C^2$ solution of the $3d$ wave equation.
The main point in the proof is the following lemma.
\begin{lemm}\label{cone}
Let $x_0\in\R^3$, $r>0$ and let $S_{e}(t)(u_0,u_1)$ be a $C^2$ solution of the $3d$  linear wave equation.
Suppose that $u_0(x)=u_1(x)=0$ for $|x-x_0|\leq r$.
Then $S_{e}(t)(u_0,u_1)=0$ in the cone $C$ defined by
$$
C=\{(t,x)\in \R^{4}\,:\, 0\leq t\leq r,\, \, |x-x_0|\leq r-t  \}.
$$
\end{lemm}
\begin{proof}
Let $u(t,x)=S_{e}(t)(u_0,u_1)$.  For $t\in [0,r]$, we set
$$
E(t)\equiv \frac{1}{2}\int_{B(x_0,r-t)}\big(
(\partial_t u)^2(t,x)+|\nabla_x u(t,x)|^2
\big)dx,
$$
where $B(x_0,r-t)=\{x\in\R^3\,:\, |x|\leq r-t \}$.  Then using the Gauss-Green theorem and the equation solved by $u$, we obtain that
$$
\dot{E}(t)= -\frac{1}{2}\int_{\partial B}\big(
(\partial_t u)^2(t,y)+|\nabla_x u(t,y)|^2-2\partial_t u(t,y)\nabla_x u(t,y)\cdot \nu(y)
\big)dS(y),
$$
where $\partial B\equiv \{x\in\R^3\,:\, |x|= r-t \}$, $dS(y)$ is the volume element associated with $\partial B$ and $\nu(y)$ is the outer unit normal to  $\partial B$.
We clearly have 
$$
2\partial_t u(t,y)\nabla_x u(t,y)\cdot \nu(y)
\leq 
(\partial_t u)^2(t,y)+|\nabla_x u(t,y)|^2,
$$
which implies that $\dot{E}(t)\leq 0$. Since $E(0)=0$ we obtain that $E(t)=0$ for every $t\in [0,r]$. 
This in turn implies that $u(t,x)$ is a constant in $C$. We also know that $u(0,x)=0$ for $|x-x_0|\leq r$. Therefore $u(t,x)=0$ in $C$.
This completes the proof of Lemma~\ref{cone}. 
\end{proof}
Let us now complete the proof of Proposition~\ref{FPS}.
Let $t_0\in \R$ and $y\in\R^3$ such that $|y|>R+t_0$. We need to show that $u(t_0,y)=0$.
Consider the cone $C$ defined by 
$$
C=\{(t,x)\in \R^{4}\,:\, 0\leq t\leq t_0,\,\, |x-y|\leq t_0-t  \}.
$$
Set $B\equiv C\cap \{(t,x)\in\R^4: t=0 \}$.
We have that 
$$
B=\{(t,x)\in \R^{4}\,:\, t=0,\,\, |x-y|\leq t_0  \}
$$
and therefore by the definition of $t_0$ and $y$ we have that  
\begin{equation}\label{empty}
B\cap \{(t,x)\in \R^{4}\,:\, t=0,\,\, |x|\leq R \}=\emptyset.
\end{equation}
Therefore $u(0,x)=\partial_t u(0,x)$ for $|x-y|\leq t_0$. 
Using Lemma~\ref{cone}, we obtain that $u(t,x)=0$ in $C$. In particular $u(t_0,y)=0$. 
This completes the proof of Proposition~\ref{FPS}.
\end{proof}
Using  Proposition~\ref{FPS} and a decomposition of the initial data associated with
a partition of unity corresponding to a covering of $\T^3$ by sufficiently small balls, we obtain that the result of Theorem~\ref{Strichartz} is a consequence of the following statement. 
\begin{prop}[local in time Strichartz inequality for the wave equation on $\R^3$]
\label{Strichartz_R3}
Let $(p,q)\in \R^2$ be such that $2<p\leq \infty$ and $\frac{1}{p}+\frac{1}{q}=\frac{1}{2}$. Then we have the estimate
$$
\|S_{e}(t)(u_0,u_1)\|_{L^p([0,1];L^q(\R^3))}\leq C\big(\|u_0\|_{H^{\frac{2}{p}}(\R^3)}+\|u_1\|_{H^{\frac{2}{p}-1}(\R^3)}\big).
$$
\end{prop}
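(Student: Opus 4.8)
\emph{Proof strategy.} The endpoint $p=\infty$ (hence $q=2$) is immediate from Plancherel's theorem: $|\cos(t|\xi|)|\le1$ and, for $|t|\le1$, $|\xi|^{-1}|\sin(t|\xi|)|\le\min(1,|\xi|^{-1})\lesssim\langle\xi\rangle^{-1}$, so that $\|S_{e}(t)(u_0,u_1)\|_{L^\infty([0,1];L^2(\R^3))}\lesssim\|u_0\|_{L^2}+\|u_1\|_{H^{-1}}$. From now on $2<p<\infty$, so $q=2p/(p-2)\in(2,\infty)$ is finite. Writing $\cos(t\sqrt{-\Delta})=\tfrac12(e^{it\sqrt{-\Delta}}+e^{-it\sqrt{-\Delta}})$ and $\tfrac{\sin(t\sqrt{-\Delta})}{\sqrt{-\Delta}}=\tfrac1{2i}(e^{it\sqrt{-\Delta}}-e^{-it\sqrt{-\Delta}})\tfrac1{\sqrt{-\Delta}}$, and splitting off the part of the $u_1$-term with $|\xi|\lesssim1$ (harmless since there $\tfrac1{\sqrt{-\Delta}}$ is bounded on $[0,1]$ and $\|\cdot\|_{L^q}\lesssim\|\cdot\|_{L^2}$ by Bernstein), it suffices to prove
\[
\|e^{\pm it\sqrt{-\Delta}}f\|_{L^p([0,1];L^q(\R^3))}\lesssim\|f\|_{H^{2/p}(\R^3)},
\]
together with its variant carrying an extra factor $\tfrac1{\sqrt{-\Delta}}$ on high frequencies, which merely lowers the Sobolev index on $f$ by one.

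Next I would decompose $f=P_{\le1}f+\sum_{N\ge1}P_Nf$ into dyadic Littlewood--Paley pieces. The low piece obeys $\|e^{\pm it\sqrt{-\Delta}}P_{\le1}f\|_{L^\infty_tL^q_x}\lesssim\|P_{\le1}f\|_{L^2}$ by Bernstein and conservation of the $L^2$ norm. For the high pieces the goal is the frequency-localized estimate
\[
\|e^{\pm it\sqrt{-\Delta}}P_Nf\|_{L^p([0,1];L^q(\R^3))}\lesssim N^{2/p}\|P_Nf\|_{L^2},\qquad N\ge1.
\]
Granting this, since $e^{\pm it\sqrt{-\Delta}}P_Nf$ has $x$-frequency support of size $\sim N$, the Littlewood--Paley inequality in $x$ (valid for $1<q<\infty$) and Minkowski's inequality in $L^{p/2}_tL^{q/2}_x$ (valid since $p,q\ge2$) give
\[
\|e^{\pm it\sqrt{-\Delta}}f\|_{L^p_tL^q_x}\lesssim\Big(\sum_{N\ge1}\|e^{\pm it\sqrt{-\Delta}}P_Nf\|_{L^p_tL^q_x}^2\Big)^{1/2}\lesssim\Big(\sum_{N\ge1}N^{4/p}\|P_Nf\|_{L^2}^2\Big)^{1/2}\lesssim\|f\|_{\dot H^{2/p}},
\]
which combined with the low piece yields the claim; the $\tfrac1{\sqrt{-\Delta}}$-variant is identical with $N^{4/p}$ replaced by $N^{4/p-2}$, i.e. an $H^{2/p-1}$ norm of the datum.

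The analytic core is the dispersive estimate. The convolution kernel of $e^{it\sqrt{-\Delta}}P_N$ is $\int_{\R^3}e^{i(x\cdot\xi+t|\xi|)}\chi(\xi/N)\,d\xi$ with $\chi$ supported in a fixed annulus. Because the spheres $\{|\xi|=r\}$ have non-vanishing Gaussian curvature, a stationary phase / van der Corput analysis (the two-dimensional sphere contributes a gain $|t|^{-(3-1)/2}=|t|^{-1}$, while the bounded radial integration is controlled by non-stationary phase) gives the classical bound
\[
\|e^{it\sqrt{-\Delta}}P_Nh\|_{L^\infty(\R^3)}\lesssim N^2(|t|+N^{-1})^{-1}\|h\|_{L^1(\R^3)},
\]
which may also simply be quoted. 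Interpolating this with the trivial bound $\|e^{it\sqrt{-\Delta}}P_Nh\|_{L^2}\lesssim\|h\|_{L^2}$ (Riesz--Thorin with exponent $\theta=2/p$) yields
\[
\|e^{it\sqrt{-\Delta}}P_Nh\|_{L^q(\R^3)}\lesssim N^{4/p}(|t|+N^{-1})^{-2/p}\|h\|_{L^{q'}(\R^3)}.
\]

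Finally I would close the frequency-localized estimate by a $TT^\ast$ argument. Set $T_Nf=e^{it\sqrt{-\Delta}}P_Nf$, viewed as a map $L^2(\R^3)\to L^p([0,1];L^q(\R^3))$; by duality $\|T_N\|^2=\|T_NT_N^\ast\|_{L^{p'}_tL^{q'}_x\to L^p_tL^q_x}$, where $T_NT_N^\ast G(t)=\int_0^1 e^{i(t-s)\sqrt{-\Delta}}\widetilde P_NG(s)\,ds$ with $\widetilde P_N$ a mildly fattened projector. Minkowski's inequality together with the previous display bound $\|T_NT_N^\ast G(t)\|_{L^q_x}$ by $N^{4/p}\int_{|t-s|\le1}|t-s|^{-2/p}\|G(s)\|_{L^{q'}_x}\,ds$. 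Since $|t|^{-2/p}\mathbf{1}_{\{|t|\le1\}}$ lies in weak $L^{p/2}(\R)$ and $1+\tfrac1p=\tfrac2p+\tfrac1{p'}$ with $1<p'<p<\infty$, the Hardy--Littlewood--Sobolev inequality in $t$ gives $\|T_NT_N^\ast G\|_{L^p_tL^q_x}\lesssim N^{4/p}\|G\|_{L^{p'}_tL^{q'}_x}$, hence $\|T_N\|\lesssim N^{2/p}$, which is what we needed. I expect the stationary-phase dispersive estimate to be the genuine obstacle, the rest being soft; note that the hypothesis $p>2$ enters precisely in the Hardy--Littlewood--Sobolev step, which is exactly why the harder endpoint $p=2$ (which in fact fails for the $3d$ wave equation, where it would correspond to $q=\infty$) must be excluded.
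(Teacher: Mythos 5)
Your proposal is correct and follows essentially the same route as the paper: reduction to the half-wave estimate after splitting off the low frequencies of $u_1$, Littlewood--Paley decomposition with the square function theorem, a frequency-localized bound $N^{2/p}\|f\|_{L^2}$ proved by a $TT^{\ast}$ argument, the dispersive kernel bound $\lesssim N^{2}|t|^{-1}$ from stationary phase on the annulus, Riesz--Thorin interpolation, and the Hardy--Littlewood--Sobolev inequality in time (where $p>2$ is used). The only difference is cosmetic: you quote the classical dispersive estimate, whereas the paper proves it via its soft stationary phase lemma (Proposition~\ref{st_faza_gen}).
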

\begin{proof}
Let $\chi\in C^\infty_0(\R^3)$ be such that $\chi(x)=1$ for $|x|<1$. We then define the Fourier multiplier $\chi(D_x)$ by
\begin{equation}\label{hagel}
\chi(D_x)(f)=\int_{\R^3}\chi(\xi) \widehat{f}(\xi)\, e^{i\xi\cdot x}d\xi.
\end{equation}
Using a suitable Sobolev embedding in $\R^3$, we obtain that  for every $\sigma\in \R$,
$$
\big\|
\frac{\sin(t\sqrt{-\Delta_{\R^3}})}{\sqrt{-\Delta_{\R^3}}}(\chi(D_x) u_1)
\big\|_{L^p([0,1];L^q(\R^3))}
\leq  C\|u_1\|_{H^{\sigma}(\R^3)}\,.
$$
Therefore, by splitting $u_1$ 
as 
$$
u_1=\chi(D_x)(u_1)+(1-\chi(D_x))(u_1)
$$
and by expressing the $\sin$ and $\cos$ functions as combinations of exponentials, we observe that Proposition~\ref{Strichartz_R3} follows from the following statement.
\begin{prop}\label{simm}
Let $(p,q)\in \R^2$ be such that $2<p\leq \infty$ and 
$
\frac{1}{p}+\frac{1}{q}=\frac{1}{2}.
$
Then we have the estimate
$$
\big\|e^{\pm it\sqrt{-\Delta_{\R^3}}}(f)\big\|_{L^p([0,1];L^q(\R^3))}\leq  C\|f\|_{H^{\frac{2}{p}}(\R^3)}\,.
$$
\end{prop}
\begin{rema}\label{beat_sob}
{\rm
Let us make an important remark.
As a consequence of Proposition~\ref{simm} and a suitable Sobolev embedding, we obtain the estimate
\begin{equation}\label{bla}
\big\|e^{\pm it\sqrt{-\Delta_{\R^3}}}(f)\big\|_{L^2([0,1];L^\infty(\R^3))}\leq  C\|f\|_{H^{s}(\R^3)}\,, \quad s>1.
\end{equation}
Therefore, we obtain that for $f\in H^s(\R^3)$, $s>1$, the function
$e^{ it\sqrt{-\Delta_{\R^3}}}(f)$ which is a priori defined as an element of $C([0,1];H^s(\R^3))$ has the remarkable property that
$$
e^{ it\sqrt{-\Delta_{\R^3}}}(f)\in L^\infty(\R^3)
$$
for almost every $t\in [0,1]$.
Recall that the Sobolev embedding requires the condition $s>3/2$ in order to ensure that an $H^s(\R^3)$ function  is in $L^\infty(\R^3)$. 
Therefore, one may wish to see \eqref{bla} as an almost sure in $t$ improvement  (with $1/2$ derivative) of the Sobolev  embedding $H^{\frac{3}{2}+}(\R^3)\subset L^\infty(\R^3)$, under the evolution of the linear wave equation. 
}
\end{rema}
\begin{proof}[Proof of Proposition~\ref{simm}]
Consider a Littlewood-Paley decomposition of the unity
\begin{equation}\label{lp1}
{\rm Id}= P_{0}+\sum_{N}P_{N},
\end{equation}
where the summation is taken over the dyadic values of $N$, i.e. $N=2^j$, $j=0,1,2,\dots$ and  $P_0$, $P_{N}$ are 
Littlewood-Paley projectors. More precisely they are defined as Fourier multipliers by $\Delta_0=\psi_0(D_x)$ and for $N\geq 1$,
$P_N=\psi(D_x/N)$, where $\psi_0\in C_0^\infty(\R^3)$ and $\psi\in C_0^\infty(\R^3\backslash\{0\})$ are suitable functions such that
\eqref{lp1} holds. The maps $\psi(D_x/N)$ are defined similarly to \eqref{hagel} by
$$
\psi(D_x/N)(f)=\int_{\R^3}\psi(\xi/N) \widehat{f}(\xi)\, e^{i\xi\cdot x}d\xi.
$$
Set 
$$
u(t,x)\equiv e^{\pm it\sqrt{-\Delta_{\R^3}}}(f)\,.
$$
Our goal is to evaluate $\|u\|_{L^p([0,1]L^q(\R^3))}$. Thanks to the  Littlewood-Paley square function theorem, we have that
\begin{equation}\label{square_function}
\|u\|_{L^q(\R^3)}\approx 
\Big\|
\big(|P_0 u|^2+\sum_{N}|P_N u|^2\big)^{\frac{1}{2}}
\Big\|_{L^q(\R^3)}\,.
\end{equation}
The proof of \eqref{square_function} can be obtained as a combination of the Mikhlin-H\"ormander multiplier theorem and the Khinchin inequality for Bernouli variables\footnote{Interestingly, variants of the Khinchin inequality will be essentially used in our probabilistic approach to the cubic defocusing wave equation with data of super-critical regularity.}.  
Using the Minkowski inequality, since $p\geq 2$ and $q\geq 2$, we can write
\begin{equation}\label{debut}
\|u\|_{L^p_{t}L^q_{x}}\lesssim \|P_0 u\|_{L^p_t L^q_{x}}+\|P_N u\|_{L^p_t L^q_x l^2_{N}}\leq 
 \|P_0 u\|_{L^p_t L^q_{x}}+\|P_N u\|_{l^2_N L^p_t L^q_x }
 \end{equation}
Therefore, it suffices to prove that 
for every  $\psi\in C_0^\infty(\R^3\backslash\{0\})$ 
there exists $C>0$ such that for every $N$ and every $f\in L^2(\R^3)$,
\begin{equation}\label{strichartz_localisee}
\| \psi(D_x/N) e^{\pm it\sqrt{-\Delta_{\R^3}}}(f)\|_{L^p([0,1];L^q(\R^3))}\leq CN^{\frac{2}{p}}\|f\|_{L^2(\R^3)}\, .
\end{equation}
Indeed, suppose that \eqref{strichartz_localisee} holds true. Then, we define $\tilde{P}_{N}$ as 
$\tilde{P}_N=\tilde{\psi}(D_x/N)$, where $\tilde{\psi}\in C_0^\infty(\R^3\backslash\{0\})$ is such that $\tilde{\psi}\equiv 1$ on the support of $\psi$. Then $P_N=\tilde{P}_{N} P_{N}$. 
Now, coming back to \eqref{debut}, using the Sobolev inequality to evaluate $ \|P_0 u\|_{L^p_t L^q_{x}}$ and \eqref{strichartz_localisee} to evaluate $\|P_N u\|_{l^2_N L^p_t L^q_x }$, we arrive at the bound
$$
\|u\|_{L^p_{t}L^q_{x}}\lesssim \|f\|_{L^2}+
\|N^{\frac{2}{p}}\|P_{N} f\|_{ L^2_x}\|_{l^2_N}\lesssim \|f\|_{H^{\frac{2}{p}}}\,.
$$
Therefore, it remains to prove  \eqref{strichartz_localisee}.
Set 
$$
T\equiv \psi(D_x/N) e^{\pm it\sqrt{-\Delta_{\R^3}}}\,.
$$
Our goal is to study the mapping properties of $T$ from $L^2_{x}$ to $L^p_tL^q_{x}$.
We can write
\begin{equation}\label{dual}
\|Tf\|_{L^p_tL^q_{x}}=\sup_{\|G\|_{L^{p'}_tL^{q'}_{x}}\leq 1}\big |
\int_{t,x}Tf\overline{G}
\big|,
\end{equation}
where $\frac{1}{p}+\frac{1}{p'}=\frac{1}{q}+\frac{1}{q'}=1$.
Note that in order to write \eqref{dual} the values $1$ and $\infty$ of $p$ and $q$ are allowed.
Next, we can write
\begin{equation}\label{star}
\int_{t,x}Tf\overline{G}=\int_{x}f\overline{T^\star G},
\end{equation}
where $T^{\star}$ is defined by
$$
T^{\star}G\equiv \int_{0}^1
 \psi(D_x/N) e^{\mp i\tau\sqrt{-\Delta_{\R^3}}}G(\tau)d\tau\,.
$$
Indeed, we have
\begin{eqnarray*}
\int_{t,x}Tf\overline{G} & = &\int_{0}^{1}\int_{\R^3} 
 \psi(D_x/N) e^{\pm it\sqrt{-\Delta_{\R^3}}} f \,
\overline{G(t)}dxdt
\\
& = & \int_{0}^{1}\int_{\R^3} f\, 
\overline{ \psi(D_x/N) e^{\mp it\sqrt{-\Delta_{\R^3}}}G(t)}
dxdt
\\
& = &
\int_{\R^3} f\, \int_{0}^1 \overline{ \psi(D_x/N) e^{\mp it\sqrt{-\Delta_{\R^3}}}G(t)}dt\, dx\,.
\end{eqnarray*}
Therefore \eqref{star} follows.
But thanks to the Cauchy-Schwarz inequality we can write 
$$
|\int_{x}f\overline{T^\star G}|\leq \|f\|_{L^2_{x}}\|T^\star G\|_{L^2_{x}}\,.
$$
Therefore, in order to prove  \eqref{strichartz_localisee}, it suffices to prove the bound 
$$
\|T^\star G\|_{L^2_{x}}\lesssim N^{\frac{2}{p}} \|G\|_{L^{p'}_t L^{q'}_{x}}\,.
$$
Next, we can write
\begin{eqnarray*}
\|T^\star G\|^2_{L^2_{x}} & = & \int_{x} T^\star G\,\overline{T^\star G}
\\
& = &
\int_{t,x}T(T^{\star}(G))\overline{G}
\\
& \leq &
\|T(T^{\star}(G))\|_{L^p_t L^{q}_{x}}   \|G\|_{L^{p'}_t L^{q'}_{x}}\,.
\end{eqnarray*}
Therefore, estimate   \eqref{strichartz_localisee} would follow from the estimate
\begin{equation}\label{stri-1-bis}
\|T(T^{\star}(G))\|_{L^p_t L^{q}_{x}}  \lesssim  N^{\frac{4}{p}}\|G\|_{L^{p'}_t L^{q'}_{x}}\,.
\end{equation}
An advantage of \eqref{stri-1-bis} with respect to \eqref{strichartz_localisee}
is that we have the same number of variables in both sides of the estimates.
Coming back to the definition of $T$ and $T^{\star}$, we can write
$$
T(T^{\star}(G))=
 \int_{0}^1
 \psi^2(D_x/N) e^{\pm i(t-\tau)\sqrt{-\Delta_{\R^3}}}G(\tau)d\tau\,.
$$
Now by using the triangle inequality,  for a fixed $t\in[0,1]$, we can write 
\begin{equation}\label{TT}
\|T(T^{\star}(G))\|_{L^q_{x}}\leq \int_{0}^1\
\big\| 
 \psi^2(D_x/N) e^{\pm i(t-\tau)\sqrt{-\Delta_{\R^3}}}G(\tau)
\big\|_{L^q_{x}}d\tau.
\end{equation}
On the other hand, using the Fourier transform, we can write 
$$
\psi^2(D_x/N) e^{\pm it\sqrt{-\Delta_{\R^3}}}(f)
=\int_{\R^3}
\psi^2(\xi/N)e^{\pm it |\xi|}e^{ix\cdot\xi}\hat{f}(\xi)d\xi\,.
$$
Therefore,
$$
\psi^2(D_x/N) e^{\pm it\sqrt{-\Delta_{\R^3}}}(f)
=\int_{\R^3} K(t,x-x')f(x')dx,
$$
where 
$$
K(t,x-x')=\int_{\R^3}
\psi^2(\xi/N)e^{\pm it |\xi|}e^{i(x-x')\cdot\xi}d\xi\,.
$$
A simple change of variable leads to
$$
K(t,x-x')= N^3\int_{\R^3}
\psi^2(\xi)e^{\pm it N|\xi|}e^{iN(x-x')\cdot\xi}d\xi\,.
$$
In order to estimate $K(t,x-x')$, we invoke the following proposition. 
\begin{prop}[soft stationary phase estimate estimate]\label{st_faza_gen}
Let $d\geq 1$.
For every $\Lambda>0$, $N\geq 1$ there exists $C>0$ such that for every 
$\lambda\geq 1$, every $a\in C^{\infty}_0(\R^d)$, 
satisfying
$$
\sup_{|\alpha|\leq 2N}\sup_{x\in\R^d}|\partial^{\alpha}a(x)|\leq \Lambda,
$$
every $\varphi\in C^{\infty}({\rm supp}(a))$ satisfying
\beq
\label{up2}
\sup_{2\leq |\alpha|\leq 2N+2}\,
\sup_{x\in
{{\rm supp}(a)}
}|\partial^{\alpha}\varphi(x)|\leq \Lambda
\eeq
one has the bound
\beq
\label{estphase}
\Big|\int_{\R^d}e^{i\lambda\varphi(x)}a(x)dx\Big|\leq 
C\int_{{\rm supp}(a)}\frac{dx}{(1+\lambda|\nabla\varphi(x)|^{2})^{N}}\,.
\eeq
\end{prop}
\begin{rema}
{\rm Observe that in \eqref{up2}, we do not require upper bounds for the first derivatives of $\varphi$. }
\end{rema}
We will give the proof of Proposition~\ref{st_faza_gen} later. 
Let us first show how to use it in order to complete the proof of \eqref{strichartz_localisee}.
We claim that
\begin{equation}\label{K(t)}
|K(t,x-x')|\lesssim N^3 (tN)^{-1}=N^2t^{-1}\,.
\end{equation}
Estimate \eqref{K(t)} trivially follows from the expression defining $K(t,x-x')$ for $|tN|\leq 1$ (one simply ignores the oscillation term).
For $|Nt|\geq 1$, using Proposition~\ref{st_faza_gen} (with $a=\psi^2$, $N=2$ and $d=3$), we get the bound 
$$
|K(t,x-x')|\lesssim N^3
\int_{{\rm supp}(\psi)}\frac{d\xi}{(1+|tN||\nabla\varphi(\xi)|^{2})^{2}}\,,
$$
where
$$
\varphi(\xi)=\pm |\xi|+\frac{N(x-x')\cdot\xi}{t}\,.
$$
Observe that $\varphi$ is $C^{\infty}$ on the support of $\psi$ and moreover it satisfies the assumptions of  Proposition~\ref{st_faza_gen}.
We next observe that
\begin{equation}\label{vvv}
\int_{{\rm supp}(\psi)}\frac{d\xi}{(1+|tN||\nabla\varphi(\xi)|^{2})^{2}}
\lesssim  (tN)^{-1}\,.
\end{equation}
Indeed,  since $\nabla\varphi(\xi)=\pm\frac{\xi}{|\xi|}+t^{-1}N(x-x')$ we obtain that
one can split the support of integration in regions such that there are two different $j_1,j_2\in \{1,2,3\}$ such that 
one can perform the change of variable 
$$
\eta_{j_1}=\partial_{\xi_{j_1}}\varphi(\xi),\quad 
\eta_{j_2}=\partial_{\xi_{j_2}}\varphi(\xi),
$$
with a non-degenerate Hessian. More precisely, we have 
$$
\det\left(
\begin{array}{ll}
\partial_{\xi_1}^2\varphi(\xi)  & \partial_{\xi_1,\xi_2}^2\varphi(\xi)
\\
 \partial_{\xi_1,\xi_2}^2\varphi(\xi)  & \partial_{\xi_2}^2\varphi(\xi)
\end{array}
\right)=\frac{\xi_3^2}{|\xi|^4}
$$
which is not degenerate for $\xi_3\neq 0$. Therefore for $\xi_3\neq 0$, we can choose $j_1=1$ and $j_2=2$.
Similarly,  $\xi_1\neq 0$, we can choose $j_1=2$ and $j_2=3$ and for  $\xi_2\neq 0$, we can choose $j_1=1$ and $j_2=3$. 
Therefore, 
using that the support of $\psi$ does not meet zero, after splitting the support of the integration in three regions, 
by choosing the two "good" variables and by neglecting the integration with respect to the remaining variable, we obtain that 
$$
\int_{{\rm supp}(\psi)}\frac{d\xi}{(1+|tN||\nabla\varphi(\xi)|^{2})^{2}}
\lesssim  
\int_{\R^2}
\frac{d \eta_{j_1}  d \eta_{j_2}}{(1+|tN|\, (|\eta_{j_1}|^{2}
+|\eta_{j_2}|^2)^{2}}
\lesssim
(tN)^{-1}\,.
$$
Thus, we have \eqref{vvv} which in turn implies \eqref{K(t)}.

Thanks to \eqref{K(t)}, we arrive at the estimate 
$$
\big\|  \psi^2(D_x/N) e^{\pm i(t-\tau)\sqrt{-\Delta_{\R^3}}}G(\tau)\big\|_{L^\infty_{x}}\lesssim N^2|t-\tau|^{-1}\|G(\tau)\|_{L^1_{x}}\,.
$$
On the other hand, we also have the trivial bound 
$$
\big\|  \psi^2(D_x/N) e^{\pm i(t-\tau)\sqrt{-\Delta_{\R^3}}}G(\tau)\big\|_{L^2_{x}}\lesssim \|G(\tau)\|_{L^2_{x}}\,.
$$
Therefore using the basic Riesz-Torin interpolation theorem, we arrive at the bound
$$
\big\|  \psi^2(D_x/N) e^{\pm i(t-\tau)\sqrt{-\Delta_{\R^3}}}G(\tau)\big\|_{L^q_{x}}\lesssim 
\frac{N^{\frac{4}{p}}}{|t-\tau|^{\frac{2}{p}}}
\|G(\tau)\|_{L^{q'}_{x}}\,.
$$
Therefore coming back to \eqref{TT}, we get
$$
\|T(T^{\star}(G))\|_{L^q_{x}}\lesssim \int_{0}^1
\frac{N^{\frac{4}{p}}}{|t-\tau|^{\frac{2}{p}}}
\big\| G(\tau)\big\|_{L^{q'}_{x}}d\tau\,.
$$
Therefore, the estimate \eqref{stri-1-bis} would follow from the one dimensional estimate 
\begin{equation}\label{HLS}
\big\|\int_{\R}
\frac{f(\tau)}{|t-\tau|^{\frac{2}{p}}}
d\tau\big\|_{L^p(\R)}\lesssim \|f\|_{L^{p'}(\R)}\,.
\end{equation}
Thanks to our assumption, one has $\frac{2}{p}<1$ and also
$$
1+\frac{1}{p}=\frac{1}{p'}+\frac{2}{p}\,.
$$
Therefore estimate \eqref{HLS} is precisely the Hardy-Littlewood-Sobolev inequality (cf. \cite{LL}). 
This completes the proof of  \eqref{strichartz_localisee}, once we provide the proof of Proposition~\ref{st_faza_gen}.
\begin{proof}[Proof of Proposition~\ref{st_faza_gen}]
We follow \cite{FRT}.  Consider the first order differential operator defined by 
$$
L\equiv 
\frac{1}{i(1+\lambda|\nabla \varphi|^2)}\sum_{j=1}^{d}\partial_{j}\varphi 
\partial_{j}
+
\frac{1}{1+\lambda|\nabla \varphi|^2}\,.
$$
which satisfies
$
L(e^{i\lambda\varphi})=e^{i\lambda\varphi}\,.
$
We have that 
$$
\int_{\R^d}e^{i\lambda\varphi(x)}a(x)dx=
\int_{\R^d}L(e^{i\lambda\varphi(x)})a(x)dx
=
\int_{\R^d}e^{i\lambda\varphi(x)}
\tilde{L}(a(x))dx,
$$
where $\tilde{L}$ is defined by
\begin{multline*}
\tilde{L}(u)=-\sum_{j=1}^{d}\frac{\partial_{j}\varphi}{i(1+\lambda|\nabla\varphi|^2)}\partial_{j}u+
\Big(
-\sum_{j=1}^{d}\frac{\partial_{j}^{2}\varphi}{i(1+\lambda|\nabla\varphi|^2)}+
\\
\sum_{j=1}^{d}\frac{2\lambda\partial_{j}\varphi\,(\nabla\varphi\cdot\nabla\partial_{j}\varphi)}{i(1+\lambda|\nabla\varphi|^2)^2}\Big)u +\frac{1}{1+\lambda|\nabla\varphi|^2}u\,.
\end{multline*}
As a consequence, we get the bound
\beq
\label{LN}\Big|\int_{\R^d}e^{i\lambda\varphi(x)}a(x)dx\Big|\leq 
\int_{\R^d} | \tilde{L}^N a|,
\eeq
where $N\in \mathbb{N}$.
To conclude, we need to estimate the coefficients of $\tilde{L}.$ We shall use the notation $\langle u\rangle= ( 1 + |u|^2)^{\frac12}$ and we set  $\lambda = \mu^2$. 
 At first, we consider
 $$ F(x)= Q( \mu^2 | \nabla \varphi(x) |^2), \quad Q(u)= { 1 \over 1 + u }, \, u\geq 0.$$
 We clearly have
 \begin{equation}\label{F1}
  F \lesssim   \langle \mu \nabla \varphi\rangle^{-2}
  \end{equation}
 and we shall estimate the derivatives of $F$.
 Set 
$$
 \Lambda^k(x) =  \sup_{ 2 \leq |\alpha | \leq k}
     |\partial^\alpha \varphi(x) |.
$$
 We have the following statement.
 \begin{lemm}\label{berlin1}
 For  $|\alpha|=k\geq 1$, we have the bound
   \begin{equation}
   \label{dF}
   | \partial^\alpha F(x) | \lesssim C( \Lambda^{k+1}(x)) \Big( 
 { 1 \over \langle\mu \nabla \varphi(x)\rangle^2} +
  { \mu^k \over \langle\mu \nabla \varphi(x)\rangle^{k+2}}\Big),
  \end{equation}    
where $C:\R^{+}\rightarrow \R^{+}$ is a suitable continuous increasing function (which can change from line to line and can always be taken of the form $C(t)=(1+t)^M$ for a sufficiently large $M$).  
 \end{lemm}
 \begin{proof}
  Using an induction on $k$,  we get that
  $\partial^\alpha F$  for $| \alpha |  = k\geq 1$ is a linear combination of terms under the form
  $$ T_{q}=  Q^{(m)}( \mu^2 |\nabla \varphi|^2)
   \Big(\partial^{\gamma_{1}} ( \mu^2 |\nabla \varphi|^2) \Big)^{q_{1}}
    \cdots \Big( \partial^{\gamma_{k}} (\mu^2 |\nabla \varphi |^2)\Big)^{q_{k}}$$
     where
   \begin{equation}
   \label{rel}
    q_{1}+ \cdots + q_{k}= m \quad \textrm{and} \quad
     \sum |\gamma_{i}| q_{i}= k,\quad q_i\geq 0.
     \end{equation}
   Since 
   $|Q^{(m)}(u)|  \lesssim   \langle u\rangle^{ -m- 1} ,$
   we get 
    $$ |T_{q}|\lesssim 
     {1 \over \langle\mu \nabla \varphi \rangle^{ 2}}
      \left( {\mu  \over \langle\mu \nabla \varphi \rangle}\right)^{2m}
   \Big|
   \Big(\partial^{\gamma_{1}} (  |\nabla \varphi|^2) \Big)^{q_{1}}
    \cdots \Big( \partial^{\gamma_{k}} ( |\nabla \varphi |^2)\Big)^{q_{k}}
    \Big|.$$
   Moreover, by the Leibnitz formula
   \begin{equation*}
\partial^{\gamma_{i}} (|\nabla \varphi|^2)\leq \left\{
\begin{array}{l l }
C(\Lambda^{2})|\nabla \varphi|, & \textrm{ if } |\gamma_i|=1,\\
C(\Lambda^{|\gamma_i|+1})(|\nabla \varphi|+1), &  \textrm{ if } |\gamma_i|>1.
\end{array}
\right.
\end{equation*}     
We therefore have the following bound for $T_q$
  \begin{eqnarray*} |T_{q}|
  & \lesssim  & C(\Lambda^{k+1})  {1 \over \langle\mu \nabla \varphi \rangle^{ 2}}
      \left( {\mu \over \langle\mu \nabla \varphi \rangle}\right)^{2m} \left( |\nabla \varphi|^{m} +  |\nabla \varphi|^{\sum_{|\gamma_i|=1}q_i}\right)\\
   &\lesssim& C(\Lambda^{k+1}){1 \over \langle\mu \nabla \varphi \rangle^{ 2}}\left[
      \left( {\mu \over \langle\mu \nabla \varphi \rangle}\right)^{m}  + \left({\mu  \over \langle\mu \nabla \varphi \rangle} \right)^{m+\sum_{|\gamma_i|>1}q_i}\right].
\end{eqnarray*}
Next, by using \eqref{rel}, we note that
$$ m+\sum_{|\gamma_i|>1}q_i =  \sum_{|\gamma_i|>1}2q_i + \sum_{|\gamma_i|=1}q_i \leq  \sum |\gamma_{i}| q_{i}= k.$$
Therefore,  we get
 \begin{equation*}
 |T_{q}| \lesssim C(\Lambda^{k+1}) \Big( { 1 \over \langle\mu \nabla \varphi\rangle^2} +
  { \mu^k \over \langle\mu \nabla \varphi\rangle^{k+2}}\Big).
  \end{equation*}
   This completes the proof of Lemma~\ref{berlin1}.
  \end{proof}
We are now in position to prove the following statement.
\begin{lemm}\label{berlin2}
For $N\in \mathbb{N}$, we can write $\tilde{L}^N$ under the form
\beq 
\label{formeLN}\tilde{L}^Nu=
 \sum_{ |\alpha | \leq N} a^{(N)}_{\alpha} \partial^\alpha u
 \eeq
 with the estimates
 \begin{equation}
 \label{aaN}
  |a_{\alpha}^{(N)}(x)| \lesssim C(\Lambda^{ N+ 2}(x) ) { 1 \over \langle \mu \nabla \varphi(x) \rangle^N }
  \end{equation}
 and more generally for $ |\beta|=k$,
 \beq
 \label{daaN}
|\partial^\beta a_{\alpha}^{(N)}(x)| \lesssim C(\Lambda^{ N+ k + 2}(x) )\Big( { 1 \over \langle \mu \nabla \varphi(x) \rangle^N }  + {\mu^k \over\langle \mu \nabla \varphi(x)\rangle^{N+k}} \Big).
\eeq
\end{lemm}
\begin{proof}
  We reason by induction on $N$. First, we notice that $\tilde{L}$ is under the form
 $$ \tilde{L}=  \sum_{j=1}^d a_{j } \partial_{j} + b,$$
 where 
 $$ a_{j}=  i \partial_{j} \varphi F, \quad
  b= F+ i \sum_{j=1}^d \partial_{j}\big( \partial_{j}\varphi F \big)= F+ \sum_{j=1}^d \partial_{j} a_{j}.$$
 Consequently, by using \eqref{F1},  we get that
 \begin{eqnarray}
 \label{aj} & &  |a_{j}| \lesssim  { 1 \over \mu } { 1 \over \langle \mu \nabla \varphi \rangle}
 \end{eqnarray}
 and by the Leibnitz formula, since $\partial^\alpha a_{j}$ for $|\alpha | \geq 1$
  is a linear combination of terms under the form
  $$ ( \partial^{\beta}\partial_{j} \varphi)  \partial^\gamma F, \quad |\beta|+ |\gamma| =
   |\alpha|, 
   $$ 
   we get by using \eqref{dF} that  for $|\alpha |=k\geq 1 $,
\begin{equation}\label{da}
  |\partial^\alpha a_{j} | 
\lesssim     C(\Lambda^{ k  + 1}) \Big(
     { 1 \over \langle \mu \nabla \varphi \rangle} + { \mu^{k-1} \over \langle \mu \nabla \varphi \rangle^{k+1}}
\Big).
 \end{equation}
 Consequently, we also find thanks to \eqref{da}, \eqref{dF} that for
  $|\alpha |=k\geq 0,$
 \begin{equation}
 \label{db}
 |\partial^\alpha b | \lesssim C(\Lambda^{k+2})
  \Big(  { 1 \over \langle \mu \nabla \varphi \rangle} + { \mu^{k} \over \langle \mu \nabla \varphi \rangle^{k+2}}
  \Big).
\end{equation}
Using  \eqref{da}  \eqref{db}, we obtain that the assertion of the lemma holds true for $N=1$. 
Next, let us assume that it is true at the order $N$.
  We have
 $$ (\tilde{L})^{N+1} u =
    \sum_{j=1}^d \sum_{ |\alpha |\leq N}
     \Big(a_{j} a^{(N)}_{\alpha} \partial_{j} \partial^\alpha u 
      + a_{j}  \partial_{j} a^{(N)}_{\alpha}
       \partial^\alpha u\Big) +  \sum_{ |\alpha | \leq N }  b a^{(N)}_{\alpha}  \partial^\alpha u.$$
 Consequently, we get that the coefficients are under the form 
 \begin{eqnarray*}
& &  a_{\alpha}^{(N+1)}=  a_{j} a_{\beta}^{(N)}
, \quad |\alpha|=N+1, \, |\beta|=N, \\
& & a_{\alpha}^{(N+1)} = a_{j} \partial_{j} a_{\beta}^{(N)} + a_{j} a_{\gamma}^{(N)}
 + b a_{\delta}^{(N)}, \quad
   |\beta|= |\delta |=  |\alpha|, \,  |\gamma|= |\alpha |-1.
 \end{eqnarray*}
 Therefore, by using \eqref{aj} and \eqref{daaN}, we get
  that \eqref{aaN} is true for $N+1$. 
  In order to prove  \eqref{daaN} for $N+1$, we need to evaluate  $\partial^\gamma  a_{\alpha}^{(N+1)}$. 
  The estimate of the contribution of all terms except  $\partial^\gamma(a_{j} \partial_{j} a_{\beta}^{(N)})$  
  follows directly from the induction hypothesis.  
  In order to estimate $\partial^\gamma( a_{j} \partial_{j} a_{\beta}^{(N)})$, we need to invoke \eqref{aj} and \eqref{da} and the induction hypothesis. This completes the proof of Lemma~\ref{berlin2}.
     \end{proof}
  Finally, thanks to \eqref{LN} and  Lemma~\ref{berlin2}, we get
  $$
  \Big| \int_{\mathbb{R}^d}  e^{i \lambda \varphi(x)} a(x)\, dx \Big|
   \lesssim K    \int_{{\rm supp(a)}} { dx \over  {( 1 + \lambda |\nabla \varphi |^2) }^{N\over 2} }\, dx,
   $$
   where 
   $$
   K\equiv 
   (\sup_{x\in
   {\rm supp(a)}
   } \Lambda^{N+2}(x))
   \big(\sup_{x\in\R^d} \sup_{ | \alpha | \leq N} | \partial^\alpha a(x)|\big).
   $$
    This completes the proof of Proposition~\ref{st_faza_gen}.
  \end{proof}    
This completes the proof of Proposition~\ref{simm}.
\end{proof}
This completes the proof of Proposition~\ref{Strichartz_R3}.
\end{proof}
\begin{rema}
{\rm
If in the proof of the Strichartz estimates, we use the triangle inequality instead of the square function theorem and the Young inequality instead of the Hardy-Littlewood-Sobolev inequality, we would obtain slightly less precise estimates.
These estimates are sufficient to get all sub-critical well-posedness results. However in the case of initial data with critical Sobolev regularity the finer arguments using the square function and the  
 Hardy-Littlewood-Sobolev inequality are essentially needed. 
}
\end{rema}
\section{Local well-posedness in $H^s\times H^{s-1}$, $s\geq 1/2$}
In this section, we shall use the Strichartz estimates in order to improve the well-posedness result of
Proposition~\ref{prop.local}. 
We shall be able to consider initial data in the more singular Sobolev spaces  $H^s\times H^{s-1}$, $s\geq 1/2$.
We start by a definition.
\begin{defi}\label{admi}
For $0\leq s < 1$, a couple of real numbers $(p,q), \frac 2 s\leq p\leq + \infty$ is $s$-admissible if
$$\frac{1}{p}+\frac{3}{q}=\frac{3}{2}-s.
$$
For $T>0$, $0\leq s < 1$, we define the spaces 
\begin{equation}\label{eq.espstri}
 X^s_T= C ([0, T]; H^s( \T^3)) \bigcap_{(p,q) \text{ $s$- admissible}} L^p((0, T) ; L^q(\T^3))
\end{equation}
and its "dual space"
\begin{equation}\label{eq.spdual}
 Y^s_T= 
 \bigcup_ {(p,q) \text{ $s$- admissible}} L^{p'}((0, T) ; L^{q'}(\T^3))
\end{equation}
$(p',q')$ being the conjugate couple of $(p,q)$, equipped with their natural norms (notice that to define these spaces, we  can keep only the extremal couples corresponding to $p= 2/s$ and $p= + \infty$ respectively).
\end{defi} 
We can now state the non homogeneous Strichartz estimates for the three dimensional wave equation on the torus $\T^3$.
\begin{theo}\label{cor.stri}
For every $0< s <1 $, every $s$-admissible couple $(p,q)$, there exists $C>0$ such that
for every $T\in]0,1]$, every $F\in Y^{1-s}_T$, every $(u_0,u_1)\in H^s(\T^3)\times H^{s-1}(\T^3)$ one has
\begin{equation}\label{Xs}
\|S(t)(u_0,u_1)\|_{X^s_T}\leq C(\|u_0\|_{H^s(\T^3)}+\|u_1\|_{H^{s-1}(\T^3)})
\end{equation}
and
\begin{equation}\label{eq.inhomog}
\Big\|\int_0^t \frac {\sin((t-\tau)
\sqrt{ -  \Delta})}{\sqrt{- \Delta}} (F(\tau)) d\tau\Big\|_{X^s_T}\leq C \|F\|_{Y^{1-s}_T}
\end{equation} 
\end{theo}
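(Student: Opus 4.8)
The plan is to obtain both \eqref{Xs} and \eqref{eq.inhomog} from the homogeneous Strichartz inequality of Theorem~\ref{Strichartz} by a duality ($TT^\star$) argument; the one delicate point will be the causal truncation $\int_0^t$ in \eqref{eq.inhomog}, which I will handle with the Christ--Kiselev lemma. Throughout I write $W_\pm(t)\equiv e^{\pm it\sqrt{-\Delta}}$ for the half--wave groups and use freely that, on the nonzero Fourier modes, $S(t)(f,\pm i\sqrt{-\Delta}f)=W_\pm(t)f$ and $\frac{\sin(\theta\sqrt{-\Delta})}{\sqrt{-\Delta}}=\frac1{2i}\bigl(W_+(\theta)-W_-(\theta)\bigr)(-\Delta)^{-1/2}$, the zero Fourier mode being a constant function that is estimated trivially in each instance.

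\emph{Homogeneous estimate.} By the last remark in Definition~\ref{admi} it suffices to control $\|S(t)(u_0,u_1)\|_{C([0,T];H^s)}$ together with $\|S(t)(u_0,u_1)\|_{L^{2/s}((0,T);L^q(\T^3))}$ for the corresponding $q$ (the one with $\frac1q=\frac{1-s}{2}$). The first is bounded by $\|u_0\|_{H^s}+\|u_1\|_{H^{s-1}}$ because $\bar S(t)$ is bounded on $H^s\times H^{s-1}$ and $S(t)(u_0,u_1)\in C(\R;H^s)$, both from Section~2.1. For the $s$-admissible couple with $p=2/s$ one checks $\frac1p+\frac1q=\frac12$ and $\frac2p=s$, so this is exactly a couple covered by Theorem~\ref{Strichartz} with $H^{2/p}=H^s$; the remaining $s$-admissible couples then follow by interpolating these two, using $C([0,T];H^s)\subset L^\infty((0,T);L^{q_0}(\T^3))$, $\frac3{q_0}=\frac32-s$, which is the borderline (hence, for $s<3/2$, admissible) Sobolev embedding on $\T^3$. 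Applying the same reasoning to $W_\pm(t)f=S(t)(f,\pm i\sqrt{-\Delta}f)$ and using $\|\sqrt{-\Delta}f\|_{H^{\sigma-1}}\lesssim\|f\|_{H^\sigma}$ on mean--zero $f$, I record that $\|W_\pm(t)f\|_{X^\sigma_T}\le C\|f\|_{H^\sigma}$ for all $\sigma\in(0,1)$; this proves \eqref{Xs}.

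\emph{Duality.} Taking $\sigma=1-s$ in the last bound and passing to adjoints for the space--time $L^2$ pairing (identifying $(H^{1-s})^*$ with $H^{s-1}$), and using that $Y^{1-s}_T$ embeds continuously into the dual of $X^{1-s}_T$ since each $L^pL^q$ factor of $X^{1-s}_T$ has dual $L^{p'}L^{q'}$, I obtain
\[
\Bigl\|\int_0^T W_\pm(-\tau)\,G(\tau)\,d\tau\Bigr\|_{H^{s-1}}\le C\,\|G\|_{Y^{1-s}_T},\qquad G\in Y^{1-s}_T.
\]

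\emph{Inhomogeneous estimate.} Set $v(t)\equiv\int_0^t\frac{\sin((t-\tau)\sqrt{-\Delta})}{\sqrt{-\Delta}}F(\tau)\,d\tau$. Its zero Fourier mode is $\int_0^t(t-\tau)\widehat F(\tau,0)\,d\tau$, with $X^s_T$ norm $\lesssim T\|F\|_{L^1((0,T);L^1(\T^3))}\lesssim\|F\|_{Y^{1-s}_T}$, hence negligible. On the nonzero modes, writing $B_\pm\equiv\mp\frac1{2i}(-\Delta)^{-1/2}$ (so $\|B_\pm h\|_{H^s}\lesssim\|h\|_{H^{s-1}}$ since $(-\Delta)^{-1/2}\approx\langle\nabla\rangle^{-1}$ there), it remains to bound $v_\pm(t)\equiv\int_0^t W_\pm(t-\tau)B_\pm F(\tau)\,d\tau$ in $X^s_T$. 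For the \emph{non-retarded} operator $\widetilde v_\pm(t)\equiv W_\pm(t)\int_0^T W_\pm(-\tau)B_\pm F(\tau)\,d\tau$, the homogeneous estimate and then the duality estimate give $\|\widetilde v_\pm\|_{X^s_T}\le C\bigl\|\int_0^T W_\pm(-\tau)B_\pm F(\tau)\,d\tau\bigr\|_{H^s}\lesssim\bigl\|\int_0^T W_\pm(-\tau)F(\tau)\,d\tau\bigr\|_{H^{s-1}}\le C\|F\|_{Y^{1-s}_T}$. To recover $v_\pm$ from $\widetilde v_\pm$: for the $C([0,T];H^s)$ component this is immediate, replacing $T$ by $t$ and $F$ by $F\mathbf 1_{[0,t]}$ in the bound just obtained (the truncation not increasing the $Y^{1-s}_T$ norm) and using that $W_\pm(t)$ is an $H^s$-isometry; for each $L^p((0,T);L^q(\T^3))$ component with $(p,q)$ $s$-admissible, I invoke the Christ--Kiselev lemma, whose hypothesis holds because the source time exponent $a'$ of the $(1-s)$-admissible couple carrying $F$ always satisfies $a'\le\frac2{1+s}<\frac2s\le p$, so the forbidden double endpoint never occurs. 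This last point — reinstating the causality of the Duhamel term, which the clean factorisation ``homogeneous propagator $\circ$ adjoint of homogeneous propagator'' only gives for the full time integral $\int_0^T$ — is the main obstacle of the argument. Collecting the pieces yields \eqref{eq.inhomog}.
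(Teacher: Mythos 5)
Your proposal is correct and follows essentially the same route as the paper: the homogeneous bound via the two extremal admissible exponents (Theorem~\ref{Strichartz} for $p=2/s$, Sobolev embedding for $p=\infty$, the rest by interpolation), then for the Duhamel term the splitting off of the zero Fourier mode, the reduction to the half-wave groups, the $K\circ(-\Delta)^{-1/2}\Pi_0^\perp\circ K^{*}$ duality factorisation giving the untruncated $\int_0^T$ estimate, and finally the Christ--Kiselev lemma (with the same exponent check $p_2'<2<p_1$) to restore the causal truncation. The only difference is presentational: you quote Christ--Kiselev as a black box, while the paper reproduces its proof via the dyadic decomposition of the set $\{\tau<t\}$.
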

\begin{proof}
Thanks to the H\"older inequality,
in order to prove \eqref{Xs}, it suffices the consider the two end point cases for $p$, i.e. $p=2/s$ and $p=\infty$ (the estimate in $C ([0, T]; H^s( \T^3))$ is straightforward). 
The case $p=2/s$ follows from Theorem~\ref{Strichartz}. The case $p=\infty$ results from the Sobolev embedding. This ends the proof of \eqref{Xs}.

Let us next turn to \eqref{eq.inhomog}.  We first observe that
\begin{equation}\label{5h}
\Big\|\int_0^t \frac {\sin((t-\tau)
\sqrt{ -  \Delta})}{\sqrt{- \Delta}} (F(\tau)) d\tau\Big\|_{
C ([0, T]; H^s( \T^3))
}\leq C \|F\|_{Y^{1-s}_T}
\end{equation} 
follows by duality from \eqref{Xs}.
Thanks to \eqref{5h}, we obtain that it suffices to show
\begin{equation}\label{sans_zero}
\Big\|\int_0^t \frac {\sin((t-\tau)
\sqrt{ -  \Delta})}{\sqrt{- \Delta}} (F(\tau)) d\tau\Big\|_{
 L^{p_1}_T L^{q_1}}\leq C 
 \|F\|_{ L^{p_2'}_T L^{q_2'}},
\end{equation} 
where $(p_1,q_1)$ is $s$-admissible and $(p_2',q_2')$ are such that $(p_2,q_2)$ are $(1-s)$- admissible and where for shortness we set
$$
L^p_T L^q \equiv  L^{p}((0, T) ; L^{q}(\T^3)).
$$
Denote by $\Pi_0$ the projector on the zero Fourier mode on $\T^3$, i.e. 
$$
\Pi_0(f)=(2\pi)^{-3}\int_{\T^3}f(x)dx\,.
$$
We have the bound 
$$
\Big\|\int_0^t \frac {\sin((t-\tau)
\sqrt{ -  \Delta})}{\sqrt{- \Delta}} (\Pi_0 F(\tau)) d\tau\Big\|_{L^p_T L^q}
\leq C\|F\|_{L^1((0, T) ; L^1(\T^3))}\,.
$$
By the H\"older inequality 
\begin{equation*}
\|F\|_{L^1((0, T) ; L^1(\T^3))}\leq C \|F\|_{ L^{p_2'}_T L^{q_2'}}
\end{equation*}
and therefore, it suffices to show the bound 
\begin{equation}\label{4h}
\Big\|\int_0^t \frac {\sin((t-\tau)
\sqrt{ -  \Delta})}{\sqrt{- \Delta}} (\Pi_0^{\perp}F(\tau)) d\tau\Big\|_{L^{p_1}_TL^{q_1}}\leq C \|F\|_{ L^{p_2'}_T L^{q_2'}  },
\end{equation} 
where 
$$
\Pi_0^{\perp}\equiv 1-\Pi_0\,.
$$
By writing the $\sin$ function as a sum of exponentials, we obtain that \eqref{4h} follows from
\begin{equation}\label{sans_zero_bis}
\Big\|\int_0^t 
e^{\pm i(t-\tau)\sqrt{-\Delta}}
((- \Delta)^{-\frac{1}{2}}\Pi_0^{\perp}F(\tau)) d\tau\Big\|_{
 L^{p_1}_T L^{q_1}
}\leq C \|F\|_{ L^{p_2'}_T L^{q_2'}
}\,.
\end{equation} 
Observe that $(- \Delta)^{-\frac{1}{2}}\Pi_0^{\perp}$ is well defined as a bounded operator from $H^s(\T^3)$ to $H^{s+1}(\T^3)$.
Set 
$$
K\equiv 
e^{\pm it \sqrt{-\Delta}}\Pi_{0}^\perp\,.
$$
Thanks to \eqref{Xs},  by writing
$$
e^{\pm it \sqrt{-\Delta}}\Pi_{0}^\perp=
\cos(t\sqrt{-\Delta})\Pi_{0}^\perp\pm i\sin( t\sqrt{-\Delta})
 (- \Delta)^{-\frac{1}{2}}
\Pi_{0}^\perp     (- \Delta)^{\frac{1}{2}}\, ,
$$
we see that the map $K$ is bounded from $H^s(\T^3)$ to $X^s_T$. Consequently, the dual map  $K^*$, defined by 
$$
 K^*(F) = 
\int_{0}^T e^{\mp i\tau \sqrt{-\Delta}}\Pi_{0}^\perp(F(\tau))d\tau
$$
is bounded from $Y^s$ to $H^{-s}(\T^3)$.
Using the last property with $s$ replaced by $1-s$ (which remains in $]0,1[$ if $s\in]0,1[$), we obtain the following sequence of continuous mappings
\begin{equation}\label{reditza}
 L^{p_2'}_T L^{q_2'}
\stackrel{K^{\star}}{\longrightarrow}H^{s-1}(\T^3)
\stackrel{ (- \Delta)^{-\frac{1}{2}}\Pi_0^{\perp}}{\longrightarrow}H^{s}(\T^3)
\stackrel{K}{\longrightarrow}
 L^{p_1}_T L^{q_1}\,.
\end{equation}
On the other hand, we have
$$ 
\big(K
\circ
 ((- \Delta)^{-\frac{1}{2}}\Pi_0^{\perp})
\circ
K^*\big)(F) =
\int_0^T
e^{\pm i(t-\tau)\sqrt{-\Delta}}
((- \Delta)^{-\frac{1}{2}}\Pi_0^{\perp}F(\tau)) d\tau\
$$
Therefore, we obtain the bound 
\begin{equation}\label{sans_zero_tris}
\Big\|\int_0^T
e^{\pm i(t-\tau)\sqrt{-\Delta}}
((- \Delta)^{-\frac{1}{2}}\Pi_0^{\perp}F(\tau)) d\tau\Big\|_{
 L^{p_1}_T L^{q_1}
 }\leq C \|F\|_{L^{p_2'}_T L^{q_2'}}\,.
\end{equation} 
The passage from \eqref{sans_zero_tris} to \eqref{sans_zero_bis} can be done by using the Christ-Kiselev \cite{Christ} argument, as we explain below.
By a density argument it suffices to prove \eqref{sans_zero_bis} for $F\in C^{\infty}( [0,T]\times\T^3 )$. 
We can of course also assume that 
$$
\|F\|_{L^{p_2'}_T L^{q_2'}}=1.
$$
For $n\geq 1$ an integer and $m=0,1,\cdots, 2^{n}$, we define $t_{n,m}$ as
$$
\int_{0}^{t_{n,m}}\|F(\tau)\|_{L^{q_2'}(\T^3)}^{p_2'}d\tau=m2^{-n}\,.
$$
Of course $0=t_{n,0}\leq t_{n,1}\leq\cdots\leq t_{n,2^n}=T$.
Next, we observe that for $0\leq \alpha<\beta\leq 1$ there is a unique $n$ such that $\alpha\in [2m2^{-n},(2m+1)2^{-n})$ and $\beta\in [(2m+1)2^{-n},(2m+2)2^{-n})$ for some  $m\in \{0,1,\cdots, 2^{n-1}-1\}$.
Indeed, this can be checked by writing the representations of $\alpha$ and $\beta$ in base $2$ (the number $n$ corresponds to the first different digit of $\alpha$ and $\beta$).
Therefore, if we denote by $\chi_{\tau<t}(\tau,t)$ the characteristic function of the set $\{(\tau,t):0\leq \tau<t\leq T\}$ then we can write
\begin{equation}\label{christ}
\chi_{\tau<t}(\tau,t)=\sum_{n=1}^\infty\sum_{m=0}^{2^{n-1}-1}
\chi_{n,2m}(\tau)\chi_{n,2m+1}(t),
\end{equation}
where $\chi_{n,m}$ ($m=0,1,\cdots, 2^n$) denotes the characteristic function of the interval $[t_{n,m},t_{n,(m+1)})$.
Indeed, in order to achieve \eqref{christ}, it suffices to apply the previous observation for every $:0\leq \tau<t\leq T$ with $\alpha$ and $\beta$ defined as 
$$
\alpha=\int_{0}^{\tau}\|F(s)\|_{L^{q_2'}(\T^3)}^{p_2'}ds,\quad
\beta=\int_{0}^t\|F(s)\|_{L^{q_2'}(\T^3)}^{p_2'}ds\,.
$$
Therefore,  thanks to \eqref{christ}, we can write
$$
\int_0^t
e^{\pm i(t-\tau)\sqrt{-\Delta}}
((- \Delta)^{-\frac{1}{2}}\Pi_0^{\perp}F(\tau)) d\tau
$$
as
$$
\sum_{n=1}^\infty\sum_{m=0}^{2^{n-1}-1}
\chi_{n,2m+1}(t)
\int_0^{T}
e^{\pm i(t-\tau)\sqrt{-\Delta}}
((- \Delta)^{-\frac{1}{2}}\Pi_0^{\perp}\chi_{n,2m}(\tau)F(\tau)) d\tau\,.
$$
 The goal is to evaluate the  $L^{p_1}_T L^{q_1}$ norm of the last expression. 
 Using that for a fixed $n$, $\chi_{n,2m+1}(t)$ have disjoint supports, we obtain that the  $L^{p_1}_T L^{q_1}$ norm of the last expression can be estimated by
 $$
\sum_{n=1}^\infty
\Big(
\sum_{m=0}^{2^{n-1}-1}
\big\|
\int_0^{T}
e^{\pm i(t-\tau)\sqrt{-\Delta}}
((- \Delta)^{-\frac{1}{2}}\Pi_0^{\perp}\chi_{n,2m}(\tau)F(\tau)) d\tau
\big\|^{p_1}_{L^{p_1}_T L^{q_1}}
\Big)^{\frac{1}{p_1}}\,.
$$
Now, using  \eqref{sans_zero_tris}, we obtain that the last expression is bounded by
\begin{equation}\label{chch}
C
 \sum_{n=1}^\infty
\Big(
\sum_{m=0}^{2^{n-1}-1}
\big\|
\chi_{n,2m}(\tau)F(\tau)
\big\|^{p_1}_{L^{p_2'}_T L^{q_2'}}
\Big)^{\frac{1}{p_1}}\,.
\end{equation}
 By definition
 $$
 \big\|
\chi_{n,2m}(\tau)F(\tau)
\big\|^{p_2'}_{L^{p_2'}_T L^{q_2'}}=2^{-n}
$$
and therefore \eqref{chch} equals to
$$
C
 \sum_{n=1}^\infty
\Big(
\sum_{m=0}^{2^{n-1}-1}
2^{-\frac{n p_1}{p_2'}}
\Big)^{\frac{1}{p_1}}
\leq
C \sum_{n=1}^\infty 2^{n(\frac{1}{p_1}-\frac{1}{p_2'})}
\,.
$$
The last series is convergent since by the definition of admissible pairs  it follows that $p_2'<2<p_1$. 
Therefore we proved that 
 \eqref{sans_zero_tris} indeed implies  \eqref{sans_zero_bis}.
This completes the proof of Theorem~\ref{cor.stri}.
\end{proof}
%
%
We can now use Theorem~\ref{cor.stri} in order to get the following improvement of Proposition~\ref{prop.local}.
\begin{theo}[low regularity local well-posedness]\label{prop.local.low}
Let $s>1/2$.
Consider the cubic defocusing wave equation
\begin{equation}\label{model_bis}
(\partial_t^2-\Delta)u+u^3=0\,,
\end{equation}
posed on $\T^3$.
There exist positive  constants  $\gamma$, $c$ and $C$ such that for every $\Lambda\geq 1$, every 
$$
(u_0,u_1)\in H^s(\T^3)\times H^{s-1}(\T^3)
$$
satisfying
\begin{equation}\label{berlin3}
\|u_0\|_{H^s}+\|u_1\|_{H^{s-1}}\leq \Lambda
\end{equation}
there exists a unique solution of \eqref{model_bis} on the time interval $[0, T]$, 
$
T\equiv c\Lambda^{-\gamma}
$ 
with initial data
$$
u(0,x)=u_0(x), \quad \partial_t u(0,x)=u_1(x)\,.
$$
Moreover the solution satisfies
$$
\|(u,\partial_t u)\|_{L^\infty([0,T],H^s(\T^3)\times H^{s-1}(\T^3))}\leq C\Lambda,
$$
$u$ is unique in the class $X^s_T$ described in Definition~\ref{admi} and the dependence with respect to the initial data and with respect to the time is continuous.
More precisely, if $u$ and $\tilde{u}$ are two solutions of \eqref{model_bis} with initial data satisfying \eqref{berlin3} then 
\begin{multline}\label{mouh}
\|(u-\tilde{u},\partial_t u-\partial_t \tilde{u})\|_{L^\infty([0,T],H^s(\T^3)\times H^{s-1}(\T^3))}\leq 
\\
C\big(
\|u(0)-\tilde{u}(0)\|_{H^s(\T^3)}+\|\partial_t u(0)-\partial_t \tilde{u}(0)\|_{H^{s-1}(\T^3)}
\big).
\end{multline}
Finally, if 
$$
(u_0,u_1)\in H^\sigma(\T^3)\times H^{\sigma-1}(\T^3)
$$
for some $\sigma\geq s$ then there exists $c_\sigma>0$ such that
$$
(u,\partial_t u)\in
C([0,c_{\sigma} \Lambda^{-\gamma}];H^\sigma(\T^3)\times H^{\sigma-1}(\T^3))\,.
$$
\end{theo}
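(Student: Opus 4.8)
Since for $\sigma\ge s\ge 1$ the statement is already contained in Proposition~\ref{prop.local}, I will assume throughout that $1/2<s<1$, which is precisely the range covered by Theorem~\ref{cor.stri}; as before it is enough to treat initial time $a=0$. The plan is to run a contraction mapping argument for the Duhamel map
\[
\Phi_{u_0,u_1}(u)=\cos(t\sqrt{-\Delta})u_0+\frac{\sin(t\sqrt{-\Delta})}{\sqrt{-\Delta}}u_1-\int_0^t\frac{\sin((t-\tau)\sqrt{-\Delta})}{\sqrt{-\Delta}}\big(u^3(\tau)\big)\,d\tau
\]
in the space $X^s_T$ of Definition~\ref{admi}, thus replacing the elementary energy bounds used in Proposition~\ref{prop.local} by the homogeneous and inhomogeneous Strichartz estimates \eqref{Xs}, \eqref{eq.inhomog}, which immediately give
\[
\|\Phi_{u_0,u_1}(u)\|_{X^s_T}\le C\big(\|u_0\|_{H^s}+\|u_1\|_{H^{s-1}}\big)+C\,\|u^3\|_{Y^{1-s}_T}.
\]

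The heart of the matter is the trilinear estimate
\[
\|u^3\|_{Y^{1-s}_T}\le C\,T^{\theta}\,\|u\|_{X^s_T}^3,\qquad \theta=\theta(s)>0,
\]
together with its difference version
\[
\|u^3-\tilde u^3\|_{Y^{1-s}_T}\le C\,T^{\theta}\big(\|u\|_{X^s_T}^2+\|\tilde u\|_{X^s_T}^2\big)\|u-\tilde u\|_{X^s_T},
\]
the second following from the first by the factorisation $u^3-\tilde u^3=(u-\tilde u)(u^2+u\tilde u+\tilde u^2)$ and H\"older's inequality. To prove the first one I would test $Y^{1-s}_T$ against $L^{p_2'}_TL^{q_2'}_x$ with $(p_2,q_2)$ the extremal $(1-s)$-admissible pair, $p_2=2/(1-s)$; H\"older in $x$ gives $\|u^3\|_{L^{q_2'}_x}=\|u\|_{L^{3q_2'}_x}^3$, one checks that there is an $s$-admissible pair of the form $(p_1,3q_2')$, and then H\"older in time yields $\|u\|_{L^{3p_2'}_TL^{3q_2'}_x}\le T^{\frac1{3p_2'}-\frac1{p_1}}\|u\|_{L^{p_1}_TL^{3q_2'}_x}\le T^{\frac1{3p_2'}-\frac1{p_1}}\|u\|_{X^s_T}$; the bookkeeping of the admissibility relations shows that $3p_2'<p_1$, hence $\theta=3(\frac1{3p_2'}-\frac1{p_1})>0$, exactly when $s>1/2$. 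Choosing $\gamma=\gamma(s)>0$ so large that $C\,T^{\theta}\Lambda^{2}\le\tfrac12$ whenever $T=c\Lambda^{-\gamma}$, the map $\Phi_{u_0,u_1}$ sends the ball $\{\,\|u\|_{X^s_T}\le 2C\Lambda\,\}$ into itself and is a contraction there. Its fixed point $u$ is the asserted solution; the bound $\|(u,\partial_t u)\|_{L^\infty_T(H^s\times H^{s-1})}\le C\Lambda$ follows from the above estimate for $u$ (since $X^s_T\hookrightarrow C([0,T];H^s)$) and, for $\partial_t u$, from differentiating the Duhamel formula and invoking \eqref{Xs}, \eqref{eq.inhomog} once more.

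The remaining assertions are then obtained as in Proposition~\ref{prop.local}. Uniqueness in $X^s_T$ follows by subdividing $[0,T]$ into finitely many intervals on which $C\tau^{\theta}(\|u\|_{X^s_T}^2+\|\tilde u\|_{X^s_T}^2)<1/2$ and applying the difference estimate on each. The Lipschitz dependence \eqref{mouh} is a direct consequence of that same difference estimate (the relevant radii being $\lesssim\Lambda$), together with the corresponding bound for $\partial_t u-\partial_t\tilde u$ obtained from the Duhamel formula. For the propagation of $H^\sigma\times H^{\sigma-1}$ regularity: when $s\le\sigma<1$ one reruns the fixed-point argument in $X^\sigma_T$ and then bootstraps to the time $c_\sigma\Lambda^{-\gamma}$, using the fractional Leibniz (Kato--Ponce) rule in place of \eqref{L6} to bound $\|u^3\|_{Y^{1-\sigma}_T}$ by $T^{\theta}\|u\|_{X^\sigma_T}\|u\|_{X^s_T}^2$; when $\sigma\ge1$ one first reaches $H^1\times L^2$ by this procedure and then invokes the propagation statement of Proposition~\ref{prop.local}.

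The single genuine difficulty is the trilinear estimate of the second paragraph: one must verify that the interplay between H\"older in time and the two admissibility constraints $\frac1p+\frac3q=\frac32-s$, $p\ge2/s$ (and their $(1-s)$-analogues) really produces a strictly positive power of $T$, and this is exactly what forces $s>1/2$ and fixes the admissible value of $\gamma$. A minor point needing attention is the zero Fourier mode, where $(-\Delta)^{-1/2}$ is singular; this is, however, already dealt with inside the proof of Theorem~\ref{cor.stri} and causes no further trouble here.
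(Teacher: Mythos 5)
Your proposal is correct and follows essentially the same route as the paper: a contraction in $X^s_T$ via Theorem~\ref{cor.stri}, with the nonlinearity placed in the dual of the endpoint $(1-s)$-admissible pair so that $\|u^3\|_{L^{2/(1+s)}_TL^{2/(2-s)}_x}=\|u\|^3_{L^{6/(1+s)}_TL^{6/(2-s)}_x}$, followed by H\"older in time against the $s$-admissible pair with $q^\star=6/(2-s)$, which yields exactly the factor $T^{2s-1}$ (your $\theta=2s-1$) and the restriction $s>1/2$. The uniqueness, Lipschitz dependence, and propagation-of-regularity arguments you outline coincide with the paper's treatment, which itself refers back to Proposition~\ref{prop.local} for these points.
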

\begin{proof}
We shall suppose that $s\in (1/2,1)$, the case $s\geq 1$ being already treated in Proposition~\ref{prop.local}.
As in the proof of Proposition~\ref{prop.local}, we solve the integral equation
\begin{equation*}
u(t)=S(t)(u_0,u_1)-\int_{0}^t \frac{\sin((t-\tau)\sqrt{-\Delta})}{\sqrt{-\Delta}}((u^3(\tau))d\tau
\end{equation*}
by a fixed point argument.
Recall that
$$
\Phi_{u_0,u_1}(u)=  S(t)(u_0,u_1)-\int_{0}^t \frac{\sin((t-\tau)\sqrt{-\Delta})}{\sqrt{-\Delta}}((u^3(\tau))d\tau.
$$
We shall estimate $\Phi_{u_0,u_1}(u)$ in the spaces $X_T^s$ introduced in Definition~\ref{admi}. 
Thanks to Theorem~\ref{cor.stri}
$$
\|S(t)(u_0,u_1)\|_{X^s_T}\leq  C(\|u_0\|_{H^s(\T^3)}+\|u_1\|_{H^{s-1}(\T^3)})\,.
$$
Another use of Theorem~\ref{cor.stri} gives
$$
\Big\|
\int_{0}^t \frac{\sin((t-\tau)\sqrt{-\Delta})}{\sqrt{-\Delta}}((u^3(\tau))d\tau
\Big\|_{X^s_T}
\leq C\|u^3\|_{L^\frac{2}{1+s}_T L^{\frac{2}{2-s}}}
=
C\|u\|^3_{L^\frac{6}{1+s}_T L^{\frac{6}{2-s}}}
\,.
$$
Observe that the couple $(\frac{2}{1+s}, \frac{2}{2-s})$ is the dual of  $(\frac{2}{1-s}, \frac{2}{s})$ which is the end point $(1-s)$- admissible couple. 
We also observe that if $(p,q)$ is an $s$- admissible couple then $\frac{1}{q}$ ranges in the interval
$
[\frac{1}{2}-\frac{s}{2},\frac{1}{2}-\frac{s}{3}].
$
The assumption  $s\in (1/2,1)$ implies 
$$
\frac{1}{2}-\frac{s}{2}<\frac{2-s}{6}<\frac{1}{2}-\frac{s}{3}\,.
$$
Therefore $q^{\star}\equiv \frac{6}{2-s}$ is such that there exists $p^{\star}$ such that
$(p^{\star},q^{\star})$ is an $s$- admissible couple.
By definition $p^{\star}$ is such that
$$
\frac{1}{p^{\star}}+\frac{3}{q^{\star}}=\frac{3}{2}-s\,.
$$
The last relation implies that
$$
\frac{1}{p^{\star}}=\frac{1}{2}-\frac{s}{2}\,.
$$
Now, using the H\"older inequality in time, we obtain
$$
\|u\|_{L^\frac{6}{1+s}_T L^{\frac{6}{2-s}}}\leq 
T^{\frac{2s-1}{3}}
\|u\|_{L^{p^{\star}}_T L^{q^{\star}}}
$$
which in turn implies 
$$
\Big\|
\int_{0}^t \frac{\sin((t-\tau)\sqrt{-\Delta})}{\sqrt{-\Delta}}((u^3(\tau))d\tau
\Big\|_{X^s_T}
\leq CT^{2s-1}\|u\|_{X^s_T}^3\,.
$$
Consequently
$$
\|\Phi_{u_0,u_1}(u)\|_{X^s_T}\leq 
 C(\|u_0\|_{H^s(\T^3)}+\|u_1\|_{H^{s-1}(\T^3)})
 + CT^{2s-1}\|u\|_{X^s_T}^3\,.
 $$
 A similar argument yields 
 \begin{equation}\label{mouh2}
\|\Phi_{u_0,u_1}(u)
-\Phi_{u_0,u_1}(v)
\|_{X^s_T}\leq 
  CT^{2s-1}
  \big(\|u\|_{X^s_T}^2+\|v\|^2_{X^s_T}\big)\|u-v\|_{X^s_T} \,.
 \end{equation}
Now, one obtains the existence and the uniqueness statements as in the proof of Proposition~\ref{prop.local}.
Estimate \eqref{mouh} follows from \eqref{mouh2} and a similar estimate obtained after differentiation of the Duhamel formula with respect to $t$.
The propagation of regularity statement can be obtained as in the proof of Proposition~\ref{prop.local}.
This completes the proof of Theorem~\ref{prop.local.low}.
\end{proof}
Concerning the uniqueness statement, we also have the following corollary which results from the proof of Theorem~\ref{prop.local.low}.
\begin{cor}\label{sun}
Let $s>1/2$. Let $(p^{\star},q^{\star})$ be the $s$- admissible couple defined by
$$
p^\star=\frac{2}{1-s},\quad q^{\star}\equiv \frac{6}{2-s}\,.
$$
Then the solutions constructed in Theorem~\ref{prop.local.low} is unique in the class 
$$
L^{p^\star}([0,T];L^{q^{\star}}(\T^3))\,.
$$
\end{cor}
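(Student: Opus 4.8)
The plan is to prove the slightly stronger, \emph{unconditional} statement: any two functions $u,\tilde u\in L^{p^\star}([0,T];L^{q^\star}(\T^3))$ which solve the Duhamel equation associated with \eqref{model_bis} and share the same data $(u_0,u_1)$ must coincide on $[0,T]$. Since the solution produced by Theorem~\ref{prop.local.low} lies in $X^s_T\subset L^{p^\star}_TL^{q^\star}$ (the couple $(p^\star,q^\star)$ being $s$-admissible), the corollary follows at once. Note first that for $u\in L^{p^\star}_TL^{q^\star}$ one has $u^3\in L^{p^\star/3}_TL^{q^\star/3}\subset L^1([0,T];L^1(\T^3))$ — using $s>1/2$, which makes $p^\star/3\ge 4/3$ — so the Duhamel integral is well defined. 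Subtracting the two Duhamel formulas, the free evolutions cancel and, with $w=u-\tilde u$,
$$
w(t)=-\int_0^t\frac{\sin((t-\tau)\sqrt{-\Delta})}{\sqrt{-\Delta}}\big((u^3-\tilde u^3)(\tau)\big)\,d\tau,\qquad t\in[0,T].
$$

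The point — and the reason the weaker class $L^{p^\star}_TL^{q^\star}$ already determines the solution — is that the inhomogeneous Strichartz estimate \eqref{eq.inhomog} leaves a positive power of the time length to spare precisely when $s>1/2$. Writing $u^3-\tilde u^3=w\,(u^2+u\tilde u+\tilde u^2)$, applying H\"older in space exactly as in the proof of Theorem~\ref{prop.local.low} (three factors in $L^{q^\star}$ produce $L^{2/(2-s)}$, since $3/q^\star=(2-s)/2$), and then H\"older in time, one gets, for any interval $I\subset[0,T]$ of length $|I|$,
$$
\|u^3-\tilde u^3\|_{Y^{1-s}_I}\le\|u^3-\tilde u^3\|_{L^{2/(1+s)}(I;L^{2/(2-s)})}\le C\,|I|^{2s-1}\,\|w\|_{L^{p^\star}(I;L^{q^\star})}\Big(\|u\|^2_{L^{p^\star}(I;L^{q^\star})}+\|\tilde u\|^2_{L^{p^\star}(I;L^{q^\star})}\Big),
$$
where $(2/(1+s),2/(2-s))$ is the conjugate of the endpoint $(1-s)$-admissible couple $(2/(1-s),2/s)$, and the time-H\"older exponent equals $\tfrac{1+s}{2}-\tfrac{3(1-s)}{2}=2s-1>0$.

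Combining this with \eqref{eq.inhomog} (and its time translates, legitimate since $T\le1$ keeps the constant uniform) on a subinterval $I=[a,a+\tau]\subset[0,T]$ over which $w$ is represented by a Duhamel integral issued from $a$ with zero data, and using $\|w\|_{L^{p^\star}(I;L^{q^\star})}\le\|w\|_{X^s_I}$, one arrives at
$$
\|w\|_{X^s_I}\le C\,\tau^{2s-1}\,\big(\|u\|^2_{L^{p^\star}_TL^{q^\star}}+\|\tilde u\|^2_{L^{p^\star}_TL^{q^\star}}\big)\,\|w\|_{X^s_I},
$$
the left-hand side being finite because $u^3-\tilde u^3\in Y^{1-s}_I$ by the previous display. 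Choosing $\tau>0$ small, depending only on the fixed quantities $\|u\|_{L^{p^\star}_TL^{q^\star}}$ and $\|\tilde u\|_{L^{p^\star}_TL^{q^\star}}$, so that the constant is $\le\tfrac12$, forces $w\equiv0$ on $[0,\tau]$ (taking $a=0$, where the data agree).

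One then iterates: since $u^3-\tilde u^3$ vanishes on $[0,\tau]$, the Duhamel integral for $w$ on $[\tau,2\tau]$ effectively starts at $\tau$ with zero data, so the same estimate gives $w\equiv0$ there; after finitely many steps $[0,T]$ is covered and $u=\tilde u$. The only point requiring care — and the main (mild) obstacle — is exactly this bookkeeping: an element of $L^{p^\star}_TL^{q^\star}$ need not be continuous in time, so one must not speak of the values $w(k\tau)$, but argue, as above, directly from the vanishing of the source term on the portion of $[0,T]$ already treated. No genuinely new ingredient is needed beyond the estimates already used for Theorem~\ref{prop.local.low}; the substance is the exploitation of the $\tau^{2s-1}$ gain, which is available because $s>1/2$.
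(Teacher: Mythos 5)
Your argument is correct and is essentially the paper's own (only sketched) proof: the corollary is stated as "resulting from the proof of Theorem~\ref{prop.local.low}", namely from the difference estimate \eqref{mouh2}, whose right-hand side only involves the $L^{p^\star}_T L^{q^\star}$ norms of the two solutions together with the factor $T^{2s-1}$, combined with the interval-covering iteration already used for uniqueness in Proposition~\ref{prop.local}. Your additional bookkeeping (membership of $u^3-\tilde u^3$ in $Y^{1-s}$, time translation of \eqref{eq.inhomog}, and restarting the Duhamel integral on each subinterval where the difference already vanishes) just makes explicit what the paper leaves implicit, so no new ingredient or deviation is involved.
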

\begin{rema}
{\rm 
As a consequence of  Theorem~\ref{prop.local.low}, we have that for each 
$
(u_0,u_1)\in H^s(\T^3)\times H^{s-1}(\T^3)
$
there is a solution with a maximum time existence $T^{\star}$ and if $T^{\star}<\infty$ than necessarily 
\begin{equation}\label{b-up}
\lim_{t\rightarrow T^{\star}}
\|(u(t),\partial_t u(t))\|_{H^s(\T^3)\times H^{s-1}(\T^3)}
=\infty.
\end{equation}
}
\end{rema}
One can also prove a suitable local well-posedness in the case $s=1/2$ but in this case the dependence of the existence time on the initial data is more involved. 
Here is a precise statement.
\begin{theo}\label{critical}
Consider the cubic defocusing wave equation
\begin{equation}\label{model_tris}
(\partial_t^2-\Delta)u+u^3=0\,,
\end{equation}
posed on $\T^3$.
For every 
$$
(u_0,u_1)\in H^{\frac{1}{2}}(\T^3)\times H^{-\frac{1}{2}}(\T^3)
$$
there exists a time $T>0$ and a unique solution of \eqref{model_tris} 
in
$$
L^4([0,T]\times\T^3)\times C([0,T];H^{\frac{1}{2}}(\T^3)),
$$
with initial data
$$
u(0,x)=u_0(x), \quad \partial_t u(0,x)=u_1(x)\,.
$$
\end{theo}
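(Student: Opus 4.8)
The plan is to rerun the Picard iteration of the proof of Theorem~\ref{prop.local.low}, but now at the scaling-critical regularity $s=\frac12$, where the time gain $T^{2s-1}$ degenerates to $1$; the smallness needed to close the fixed point must therefore be produced not by the length of the time interval but by the smallness of the free evolution in the critical Strichartz norm over a short interval. The relevant admissible couple is $(4,4)$, which satisfies $\frac14+\frac34=1=\frac32-\frac12$, so it is the end-point $p=2/s$ of the $\frac12$-admissible couples, and its conjugate $(\frac43,\frac43)$ is the dual of that end-point $(1-s)$-admissible couple for $s=\frac12$. Hence $X^{1/2}_T\hookrightarrow L^4([0,T]\times\T^3)$, and Theorem~\ref{cor.stri} with $s=\frac12$ gives, for every $T\in\,]0,1]$, the homogeneous bound $\|S(t)(u_0,u_1)\|_{X^{1/2}_T}\leq C(\|u_0\|_{H^{1/2}}+\|u_1\|_{H^{-1/2}})$ together with the inhomogeneous bound
\[
\Big\|\int_0^t\frac{\sin((t-\tau)\sqrt{-\Delta})}{\sqrt{-\Delta}}\big(w(\tau)\big)\,d\tau\Big\|_{X^{1/2}_T}\leq C\|w\|_{L^{4/3}([0,T]\times\T^3)},
\]
since $L^{4/3}([0,T]\times\T^3)\hookrightarrow Y^{1/2}_T$. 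Writing $u^3-v^3=(u-v)(u^2+uv+v^2)$ and using H\"older in space-time, $\|u^3-v^3\|_{L^{4/3}([0,T]\times\T^3)}\leq C\big(\|u\|_{L^4([0,T]\times\T^3)}^2+\|v\|_{L^4([0,T]\times\T^3)}^2\big)\|u-v\|_{L^4([0,T]\times\T^3)}$.

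First I would fix the existence time by a soft argument: since $S(t)(u_0,u_1)\in L^4([0,1]\times\T^3)$, dominated convergence yields $\|S(t)(u_0,u_1)\|_{L^4([0,T]\times\T^3)}\to 0$ as $T\to 0^+$, so one can choose a small absolute constant $\eta>0$ (depending only on $C$) and then $T=T(u_0,u_1)>0$ so small that $\|S(t)(u_0,u_1)\|_{L^4([0,T]\times\T^3)}\leq\eta$. On this interval, writing $\Phi=\Phi_{u_0,u_1}$, the estimates above show that $\Phi$ maps the closed ball $B=\{u\in L^4([0,T]\times\T^3):\|u\|_{L^4([0,T]\times\T^3)}\leq 2\eta\}$, complete for the $L^4([0,T]\times\T^3)$ metric, into itself ($\|\Phi(u)\|_{L^4}\leq\eta+C(2\eta)^3\leq 2\eta$) and is a contraction there ($\|\Phi(u)-\Phi(v)\|_{L^4}\leq C\eta^2\|u-v\|_{L^4}\leq\frac12\|u-v\|_{L^4}$), once $\eta$ is small enough. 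Banach's fixed point theorem produces $u\in B$ with $u=\Phi(u)$; then $u^3\in L^{4/3}([0,T]\times\T^3)$, so from the identity $u=S(t)(u_0,u_1)-\int_0^t\frac{\sin((t-\tau)\sqrt{-\Delta})}{\sqrt{-\Delta}}(u^3(\tau))\,d\tau$ and the two Strichartz bounds we get $u\in X^{1/2}_T$; in particular $u\in L^4([0,T]\times\T^3)\cap C([0,T];H^{1/2}(\T^3))$, and differentiating the Duhamel formula in $t$ gives $\partial_t u\in C([0,T];H^{-1/2}(\T^3))$.

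There remains uniqueness in the full class $L^4([0,T]\times\T^3)$, not merely inside the ball $B$. If $u,\tilde u$ are two solutions with the same data, then, since the maps $\delta\mapsto\|u\|_{L^4([0,\delta]\times\T^3)}$ and $\delta\mapsto\|\tilde u\|_{L^4([0,\delta]\times\T^3)}$ are continuous and vanish at $\delta=0$, one may choose $\delta>0$ with $C\big(\|u\|_{L^4([0,\delta]\times\T^3)}^2+\|\tilde u\|_{L^4([0,\delta]\times\T^3)}^2\big)<\frac12$; the difference bound applied on $[0,\delta]$ then forces $u=\tilde u$ there. Because the $L^4$ norms of $u$ and $\tilde u$ over subintervals are absolutely continuous functions of the endpoints, $[0,T]$ is covered by finitely many intervals on each of which the same reasoning applies, and the equality propagates to all of $[0,T]$.

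I expect the main obstacle to be precisely the scaling-critical feature: with no positive power of $T$ available to absorb the cubic term, the existence time cannot be taken to depend only on $\|u_0\|_{H^{1/2}}+\|u_1\|_{H^{-1/2}}$ — it genuinely depends on the profile of the data, through the non-quantitative rate at which $\|S(t)(u_0,u_1)\|_{L^4([0,T]\times\T^3)}$ tends to zero — and, correspondingly, uniqueness in the whole $L^4$ class is not a byproduct of the contraction on $B$ but needs the separate absolute-continuity and covering argument above.
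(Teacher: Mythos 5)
Your proposal is correct and follows essentially the same route as the paper: a fixed point argument for $\Phi_{u_0,u_1}$ in $L^4([0,T]\times\T^3)$ based on the Strichartz estimates of Theorem~\ref{cor.stri} with the $\tfrac12$-admissible pair $(4,4)$ and its dual $(\tfrac43,\tfrac43)$, with smallness supplied by $\|S(t)(u_0,u_1)\|_{L^4([0,T]\times\T^3)}\to 0$ as $T\to 0$ rather than by a power of $T$, and with the $C([0,T];H^{1/2})$ regularity recovered from the Duhamel formula since $u^3\in L^{4/3}$. You merely spell out details the paper leaves implicit (the dominated-convergence choice of $T$, the explicit small ball, and the covering argument for uniqueness in the full $L^4$ class), and your closing remark about the non-quantitative, profile-dependent existence time matches the paper's own remark following the theorem.
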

\begin{proof}
For $T>0$, using the Strichartz estimates of Theorem~\ref{cor.stri}, we get 
\begin{eqnarray*}
\|\Phi_{u_0,u_1}(u)\|_{L^{4}([0,T]\times\T^3)}
& \leq & 
 \|S(t)(u_0,u_1)\|_{L^{4}([0,T]\times\T^3)}+C\|u^3\|_{L^{4/3}([0,T]\times\T^3)}
\\
& = & 
 \|S(t)(u_0,u_1)\|_{L^{4}([0,T]\times\T^3)}
 + C\|u\|^3_{L^{4}([0,T]\times\T^3)}\,.
\end{eqnarray*}
Similarly, we get 
\begin{multline*}
\|\Phi_{u_0,u_1}(u)
-\Phi_{u_0,u_1}(v)
\|_{L^{4}([0,T]\times\T^3)}
\\
\leq C
  \big(\|u\|_{L^{4}([0,T]\times\T^3)}^2+\|v\|^2_{L^{4}([0,T]\times\T^3)}\big)\|u-v\|_{  L^{4}([0,T]\times\T^3) } \,.
 \end{multline*}
Therefore if $T$ is small enough then we can construct the solution by a fixed point argument  in $L^{4}([0,T]\times\T^3)$. 
In addition, the Strichartz estimates of Theorem~\ref{cor.stri} yield that the obtained solution belongs to $C([0,T];H^{\frac{1}{2}}(\T^3))$.
This completes the proof of Theorem~\ref{critical}.
\end{proof}
\begin{rema}
{\rm 
Observe that for data in $H^{\frac{1}{2}}(\T^3)\times H^{-\frac{1}{2}}(\T^3)$
we no longer have the small factor $T^{\kappa}$, $\kappa>0$ in the estimates for $\Phi_{u_0,u_1}$.
This makes that the dependence of the existence time $T$ on the data $(u_0,u_1)$ is much less explicit. 
In particular, we can no longer conclude that the existence time is the same for a fixed ball in $H^{\frac{1}{2}}(\T^3)\times H^{-\frac{1}{2}}(\T^3)$ 
and therefore we do not have the blow-up criterium \eqref{b-up} (with $s=1/2$). 
}
\end{rema}
\section{A constructive way of seeing the solutions}\label{picard}
In the proof of Theorem~\ref{prop.local.low},  we used the contraction mapping principle in order to construct the solutions. 
Therefore, one can define the solutions in a constructive way via the Picard iteration scheme. 
More precisely,  for $(u_0,u_1)\in H^s(\T^3)\times H^{s-1}(\T^3)$, we define the sequence $(u^{(n)})_{n\geq 0}$ as $u^{(0)}=0$ and for a given $u^{(n)}$, $n\geq 0$, we define $u^{(n+1)}$ as the solutions of the linear wave equation 
$$
(\partial_t^2-\Delta)u^{(n+1)}+(u^{(n)})^3=0,\quad u(0)=u_0,\,\, \partial_{t}u(0)=u_1. 
$$
Thanks to (the  proof of) Theorem~\ref{prop.local.low} the sequence $(u^{(n)})_{n\geq 0}$ is converging in $X^s_T$, and in particular in $C([0,T];H^s(\T^3))$ for 
$$
T\approx (\|u(0)\|_{H^s(\T^3)}+\|\partial_t u(0)\|_{H^{s-1}(\T^3)})^{-\gamma},\quad \gamma>0.
$$
One has that 
$$
u^{(1)}=S(t)(u_0,u_1)
$$
and for $n\geq 1$,
$$
u^{(n+1)}=u^{(1)}+{\mathcal T}(u^{(n)},u^{(n)},u^{(n)}),
$$
where the trilinear map ${\mathcal T}$ is defined as 
$$
{\mathcal T}(u,v,w)=
-\int_{0}^t \frac{\sin((t-\tau)\sqrt{-\Delta})}{\sqrt{-\Delta}}((u(\tau) v(\tau) w(\tau))d\tau.
$$
One then may compute 
$$
u^{(2)}=u^{(1)}+{\mathcal T}(u^{(1)},u^{(1)},u^{(1)}).
$$
The expression for $u^{(3)}$ is then
\begin{multline*}
u^{(3)}=u^{(1)}+{\mathcal T}(u^{(1)},u^{(1)},u^{(1)})
+3{\mathcal T}(u^{(1)},u^{(1)},{\mathcal T}(u^{(1)},u^{(1)},u^{(1)}))
\\
+3{\mathcal T}(u^{(1)},{\mathcal T}(u^{(1)},u^{(1)},u^{(1)}),{\mathcal T}(u^{(1)},u^{(1)},u^{(1)}))
\\
+
{\mathcal T}({\mathcal T}(u^{(1)},u^{(1)},u^{(1)}),{\mathcal T}(u^{(1)},u^{(1)},u^{(1)}),{\mathcal T}(u^{(1)},u^{(1)},u^{(1)})).
\end{multline*}
We now observe that for $n\geq 2$, the $n$th Picard iteration $u^{(n)}$ is a sum from $j=1$ to $j=3^{n-1}$ of $j$-linear expressions of $u^{(1)}$. Moreover the first $3^{n-2}$ terms of this sum contain the $(n-1)$th iteration. 
Therefore the solution can be seen as an infinite sum of multi-linear expressions of $u^{(1)}$. 
The Strichartz inequalities we proved can be used to show that for $s\geq 1/2$,
$$
\|{\mathcal T}(u,v,w)\|_{H^s(\T^3)}\lesssim \|u\|_{H^s(\T^3)}\|v\|_{H^s(\T^3)}\|w\|_{H^s(\T^3)}\,.
$$
The last estimate can be used to analyse the multi-linear expressions in the expansion and to show its convergence.
Observe that, we do not exploit any regularising effect in the terms of the expansion. The  ill-posedness result of the next section, will  basically show that such an effect is in fact not possible.
In our probabilistic approach in the next chapter, we will exploit that the trilinear term in the expression defining the solution is more regular in the scale of the Sobolev spaces than the linear one, 
almost surely with respect to a probability measure on $H^s$, $s<1/2$. 
In principle, it is not excluded that a regularising effect appears on the $5$-linear expression or further. 
However, it looks difficult to exploit further smoothing effects in the context of the cubic defocusing wave equation.
On the other hand, in the case of the 1d cubic Schr\"odinger equation such a phenomenon looks possible and does not seem  being fully understood (see \cite{COh}).
\section{Global well-posedness in $H^s\times H^{s-1}$,  for  some $s<1$}
One may naturally ask whether the solutions obtained in  Theorem~\ref{prop.local.low} can be extended globally in time. 
Observe that one can not use the argument of Theorem~\ref{prop.global} because there is no a priori bound available at the $H^s$, $s\neq 1$ regularity. 
One however has the following partial answer.
\begin{theo}[low regularity global well-posedness]\label{I-method}
Let $s>13/18$. Then the local solution obtained in  Theorem~\ref{prop.local.low}  can be extended globally in time. 
\end{theo}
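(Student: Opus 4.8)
\medskip
\noindent\emph{Proof proposal.}
I would globalise by an almost conservation law of ``$I$-method'' type, leaning on the fact that the problem is globally well posed and energy conserving at the $H^{1}\times L^{2}$ level (Theorem~\ref{prop.global}) and on the subcritical local theory and Strichartz bounds already available below $H^{1}$ (Theorem~\ref{prop.local.low}, Theorem~\ref{cor.stri}). Fix $s\in(13/18,1)$, data with $\|u_{0}\|_{H^{s}}+\|u_{1}\|_{H^{s-1}}\le\Lambda$, and a finite target time $T$; by the blow-up alternative \eqref{b-up} it is enough to produce a finite bound for $\|(u(t),\partial_{t}u(t))\|_{H^{s}\times H^{s-1}}$ on $[0,T]$. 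First I would introduce, for a large dyadic $N$, the smooth radial Fourier multiplier $I=I_{N}$ with symbol $m_{N}(n)=1$ for $|n|\le N$ and $m_{N}(n)=(|n|/N)^{s-1}$ for $|n|\ge 2N$, so that $\|If\|_{H^{1}}\lesssim N^{1-s}\|f\|_{H^{s}}$, $\|f\|_{H^{s}}\lesssim\|If\|_{H^{1}}$, and, since $I$ commutes with $S(t)$ and is bounded on every Strichartz space, $Iu$ enjoys the same estimates as $u$. Then I would rescale, $u_{\lambda}(t,x)=\lambda^{-1}u(t/\lambda,x/\lambda)$ on the dilated torus $\lambda\T^{3}$ (Theorem~\ref{cor.stri} holds there with constants uniform in $\lambda\ge 1$ over unit time intervals, by finite propagation speed and locality of the estimates), and choose $\lambda=\lambda(N,\Lambda)$, a positive power of $N$, so that the modified energy
\beq\label{eq:modE}
\mathcal{E}(v)\equiv\frac12\int\Big((\partial_{t}Iv)^{2}+|\nabla Iv|^{2}+\tfrac12(Iv)^{4}\Big)\,dx
\eeq
of the rescaled data satisfies $\mathcal{E}(u_{\lambda})(0)\le\tfrac12$; this uses crucially that $s>1/2$, which makes $\|u_{\lambda}(0)\|_{\dot H^{s}}$ a negative power of $\lambda$, so $\|Iu_{\lambda}(0)\|_{\dot H^{1}}\lesssim N^{1-s}\|u_{\lambda}(0)\|_{\dot H^{s}}$ can be forced $\lesssim 1$ by taking $\lambda$ large.

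The heart of the matter is the almost conservation law: on any interval $[t_{0},t_{0}+1]$ on which the rescaled solution exists with $\mathcal{E}(u_{\lambda})(t)\le 1$ --- a unit-length interval, by the $H^{1}\times L^{2}$ local theory of Proposition~\ref{prop.local} --- one should have
\beq\label{eq:acl}
\big|\mathcal{E}(u_{\lambda})(t_{0}+1)-\mathcal{E}(u_{\lambda})(t_{0})\big|\lesssim N^{-\beta}
\eeq
for a suitable $\beta=\beta(s)>0$. Differentiating \eqref{eq:modE} along the flow and integrating by parts, the exact defect is
\beq\label{eq:defect}
\frac{d}{dt}\mathcal{E}(u_{\lambda})=\int\partial_{t}(Iu_{\lambda})\,\big[(Iu_{\lambda})^{3}-I(u_{\lambda}^{3})\big]\,dx,
\eeq
so it is governed by the commutator $(Iv)^{3}-I(v^{3})$, which vanishes when all three inputs have frequency $\le N$ and otherwise carries a quantitative gain from the oscillation of the symbol $m_{N}$. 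Estimating the time integral of \eqref{eq:defect} is then a quadrilinear computation: Littlewood--Paley decompose the factors, note that at least one frequency is $\gtrsim N$, extract the symbol gain, and close with the Strichartz bounds of Theorem~\ref{cor.stri} and the $L^{4}_{t,x}$ control of the quartic term, everything being dominated on the unit interval by $\mathcal{E}(u_{\lambda})\lesssim 1$. The naive version of this produces \eqref{eq:acl} with $\beta$ slightly below $1$; to reach the threshold $13/18$ one should in addition refine the scheme --- by modifying $\mathcal{E}$ with a correction term, or via an adapted linear--nonlinear (high/low) decomposition --- so as to cancel the worst part of \eqref{eq:defect} and gain the extra power of $N$ that pushes $\beta$ past $1$.

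The bootstrap then runs as usual: from $\mathcal{E}(u_{\lambda})(0)\le\tfrac12$, iterate \eqref{eq:acl} over consecutive unit intervals; the modified energy stays $\le 1$, and the solution persists, as long as the number $M$ of steps obeys $M\cdot CN^{-\beta}\le\tfrac12$, i.e. up to rescaled time $\sim N^{\beta}$, hence up to physical time $\sim N^{\beta}/\lambda$. Since $\lambda$ is a fixed power of $N$, $N^{\beta}/\lambda\to\infty$ as $N\to\infty$ exactly when $\beta$ exceeds a certain explicit function of $s$, and tracking the exponents this holds for all $s>13/18$ precisely for the value of $\beta$ supplied by the refined multilinear estimate. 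Choosing $N$ large (depending on $T,\Lambda$) then shows the rescaled, hence the original, solution exists on $[0,T]$ with finite $H^{s}\times H^{s-1}$ norm there (no uniform-in-$T$ bound is claimed); time reversibility handles negative times, and $T\to\infty$ gives the global solution.

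I expect the main obstacle to be the sharp multilinear estimate for the defect \eqref{eq:defect} with the correct power of $N$: arranging the frequency interactions so the derivative loss falls on the most favourable factor, and marrying the Strichartz bounds to the correction/Morawetz input without squandering the symbol gain, since every $\varepsilon$ lost there degrades the regularity threshold. A lesser, bookkeeping nuisance is making the rescaling on $\lambda\T^{3}$ uniform in $\lambda\ge 1$ and peeling off the zero Fourier mode, which contributes only an elementary linearly growing term, from the high-frequency analysis.
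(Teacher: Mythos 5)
Your proposal follows the same strategy the paper itself adopts: the $I$-method of \cite{CKSTT}, i.e.\ an almost conservation law for the modified energy built from the multiplier $I_N$ acting as $1$ for $|n|\le N$ and as $N^{1-s}|n|^{s-1}$ for $|n|\ge 2N$. Be aware, though, that the paper does not give a proof at all: it only sketches the idea and refers to \cite{GP, KPV, R} for the actual argument, and the threshold $13/18$ comes precisely from the refined estimate in \cite{R}. Where you genuinely differ is in the globalisation bookkeeping. You use the original rescaling scheme: dilate to $\lambda\T^3$ with $\lambda$ a power of $N$ so that the modified energy of the data is $\le 1/2$ (using $s>1/2$ so that $\|u_\lambda(0)\|_{\dot H^s}\sim\lambda^{\frac12-s}$ decays), iterate the almost conservation law $|\Delta\mathcal{E}|\lesssim N^{-\beta}$ over $\sim N^{\beta}$ unit intervals, and undo the scaling; this forces you to justify Strichartz estimates on dilated tori uniformly in $\lambda$ (your appeal to finite propagation speed does handle this, but it is an extra step). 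The paper's scheme avoids rescaling altogether: it stays on $\T^3$, takes $N=T^{\gamma}$, accepts a modified energy of size $T^{\gamma(1-s)}$, runs local existence on intervals of length $T^{-\beta}$, and closes under the condition $T^{1+\beta}T^{-\alpha}\lesssim T^{\gamma(1-s)}$, which is what encodes the restriction on $s$. The two bookkeepings are equivalent in substance; yours is closer to the Euclidean literature, the paper's is better adapted to the fixed torus. Finally, note that in both your write-up and the paper the decisive ingredient --- the quadrilinear commutator estimate for $\frac{d}{dt}\mathcal{E}$ with the exponent large enough to reach $s>13/18$, including the correction-term or adapted linear--nonlinear refinement --- is asserted rather than proved, so your proposal is a correct outline at the same level of completeness as the paper's own presentation, not a self-contained proof.
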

For the proof of  Theorem~\ref{I-method}, we refer to \cite{GP, KPV, R}. Here, we only present the main idea (introduced in \cite{CKSTT}). 
Let $(u_0,u_1)\in H^s(\T^3)\times H^{s-1}(\T^3)$ for some $s\in(1/2,1)$. Let $T\gg 1$. For $N\geq 1$, we define a smooth Fourier multiplier acting as $1$ for frequencies $n\in\Z^3$ 
such that $|n|\leq N$ and acting as $N^{1-s}|n|^{s-1}$ for frequencies $|n|\geq 2N$. A concrete choice of $I_N$ is 
$
I_{N}(D)=I\big((-\Delta)^{1/2}/N\big),
$
where $I(x)$ is a smooth function which equals $1$ for $x\leq 1$ and which equals $x^{s-1}$ for $x\geq 2$. In other words $I(x)$ is one  for $x$ close to zero and decays like $x^{s-1}$ for $x\gg 1$.  
We choose $N=N(T)$ such that for the times of the local existence the modified energy (which is well-defined in $H^s\times H^{s-1}$)
$$
\int_{\T^3}\Big(
(\partial_t I_N u)^2+(\nabla I_N u)^2+\frac{1}{2}(I_N u)^4
\Big)
$$
does not vary much. This allows to extend the local solutions up to time $T\gg 1$. 
The analysis contains two steps, a local existence argument for $I_N u$ under the assumption that the modified energy remains below a fixed size and an energy increase estimate which is the substitute of the energy 
conservation used in the proof of  Theorem~\ref{prop.global}.
More precisely, we choose $N$ as $N=T^{\gamma}$ for some $\gamma=\gamma(s)\gg 1$. 
With this choice of $N$ the initial size of the modified energy is $T^{\gamma(1-s)}$.
The local well-posedness argument assures that $I_N u$ (and thus $u$ as well) exists on time of size $T^{-\beta}$ for some $\beta>0$ as far as the modified energy remains $\lesssim T^{\gamma(1-s)}$.
The main part of the analysis is to get an energy increase estimate showing that on the local existence time the modified energy does not increase more then $T^{-\alpha}$ for some $\alpha>0$. 
In order to arrive at time $T$ we need to iterate $\approx T^{1+\beta}$ times the local existence argument. In order to ensure that at each step of the iteration the modified energy remains $\lesssim T^{\gamma(1-s)}$, we need to impose the condition 
\begin{equation}\label{cond_I}
T^{1+\beta}\, T^{-\alpha}\lesssim T^{\gamma(1-s)}, \quad T\gg 1.
\end{equation}
As far as \eqref{cond_I} is satisfied, we can extend the local solutions globally in time. The condition  \eqref{cond_I} imposes the lower bound on $s$ involved in the statement of Theorem~\ref{I-method}.
One may conjecture that the global well-posedness in  Theorem~\ref{I-method} holds for any $s>1/2$. 
\section{Local ill-posedness in $H^s\times H^{s-1}$, $s\in (0,1/2)$}
It turns out that the restriction $s>1/2$ in Theorem~\ref{prop.local.low} is optimal. 
Recall that the classical notion of well-posedness in the Hadamard sense requires the existence, the uniqueness and the continuous dependence with respect to the initial data. 
A very classical example of contradicting the continuous dependence with respect to the initial data for a PDE is the initial value problem for the Laplace equation with data in Sobolev spaces.
Indeed, consider 
\begin{equation}\label{laplace}
(\partial_t^2+\partial_x^2)v=0,\quad v\,:\, \R_t\times \T_{x}\longrightarrow \R.
\end{equation}
The equation \eqref{laplace} has the explicit solution
$$
v_{n}(t,x)=e^{-\sqrt{n}}{\rm sh}(nt)\cos(nx).
$$
Then for every $(s_1,s_2)\in\R^2$, $v_n$ satisfies
$$
\|(v_n(0),\partial_t v_n(0))\|_{H^{s_1}(\T)\times H^{s_2}(\T)}\lesssim  e^{-\sqrt{n}}n^{\max(s_1,s_2+1)}\longrightarrow 0,
$$
as $n$ tends to $+\infty$ but for $t\neq 0$,
$$
\|(v_n(t),\partial_t v_n(t))\|_{
H^{s_1}(\T)\times H^{s_2}(\T)
}\gtrsim  e^{n|t|}\,e^{-\sqrt{n}}
n^{\min(s_1,s_2+1)}
\longrightarrow +\infty,
$$
as $n$ tends to $+\infty$.
Consequently \eqref{laplace} in not well-posed in
$
H^{s_1}(\T)\times H^{s_2}(\T)
$
or every $(s_1,s_2)\in\R^2$
because of the lack of continuous dependence with respect to the initial data $(0,0)$. 

It turns out that a similar phenomenon happens in the context of the cubic defocusing wave equation with low regularity initial data.
As we shall see below the mechanism giving the lack of continuous dependence is however quite different compared to \eqref{laplace}.
Here is the precise statement.
\begin{theo}\label{ill}
Let us fix $s\in (0,1/2)$ and $(u_0,u_1)\in C^{\infty}(\T^3)\times C^\infty(\T^3)$.
Then there exist $\delta >0$, a sequence  $(t_n)_{n=1}^{\infty}$ 
of positive numbers tending to zero and a sequence $(u_n(t,x))_{n=1}^{\infty}$ of $C(\R;C^{\infty}(\T^3))$ functions
such that
$$
(\partial_{t}^{2}-\Delta)u_n+u_n^3=0
$$
with
$$
\|(u_{n}(0)-u_0,\partial_t u_n(0)-u_1)\|_{H^s(\T^3)\times H^{s-1}(\T^3)}
\leq C [\log(n)]^{-\delta}\rightarrow_{n\rightarrow + \infty} 0
$$
but
$$
\|(u_{n}(t_n),\partial_t u_n(t_n))\|_{H^s(\T^3)\times H^{s-1}(\T^3)}\geq C[ \log(n)]^{\delta}\rightarrow_{n\rightarrow + \infty} +\infty.
$$
In particular, for every $T>0$,
$$
\lim_{n\rightarrow + \infty} \|(u_n(t),\partial_t u_n(t))\|_{L^\infty([0,T];H^s(\T^3)\times H^{s-1}(\T^3))}=+\infty.
$$
\end{theo}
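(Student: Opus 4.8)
The strategy I would follow is the classical \emph{norm inflation driven by the dispersionless dynamics}. The plan is to add to the fixed smooth data $(u_0,u_1)$ a perturbation that is small in $H^s\times H^{s-1}$ but tall and spatially concentrated, so that on a short time interval the flow is well approximated by the solution of the $x$-dependent ODE obtained by dropping the Laplacian; since the period of that ODE depends on the local amplitude, the solution rapidly develops fine spatial oscillations and its $H^s$ norm blows up while the time stays small. The logarithmic loss in the statement reflects how crude the error control in this scheme has to be.

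\textbf{Reduction and choice of data.} First I would let $u_\infty\in C(\R;C^\infty(\T^3))$ be the global smooth solution of \eqref{3dNLW} with data $(u_0,u_1)$, which exists by Theorem~\ref{prop.global}, and write $u_n=u_\infty+v_n$, so that $v_n$ solves
$$
(\partial_t^2-\Delta)v_n+v_n^3+3u_\infty v_n^2+3u_\infty^2 v_n=0 ,
$$
a cubic defocusing wave equation with lower order terms (linear and quadratic in $v_n$, with smooth bounded coefficients) that will be harmless below. Fix $x_0\in\T^3$ and $\Phi\in C_0^\infty(\R^3)$ with $\Phi\geq0$, $\Phi(0)=\max\Phi=1$, and take, with $\lambda_n\to0$ and $\omega_n\to+\infty$ to be chosen,
$$
v_n(0,x)=\omega_n\,\Phi\big((x-x_0)/\lambda_n\big),\qquad \partial_t v_n(0,x)=0 ,
$$
which lies in $C^\infty(\T^3)$ once $\lambda_n$ is small, so that $u_n\in C(\R;C^\infty(\T^3))$ by Theorem~\ref{prop.global}. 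Since $\|v_n(0)\|_{H^s}\approx\omega_n\lambda_n^{3/2-s}$ (the frequencies of $v_n(0)$ sit near $\lambda_n^{-1}\gg1$), choosing $\omega_n\sim\log n$ and $\lambda_n\sim(\log n)^{-(1+\delta)/(3/2-s)}$ yields $\|(u_n(0)-u_0,\partial_t u_n(0)-u_1)\|_{H^s\times H^{s-1}}\lesssim(\log n)^{-\delta}$, as wanted.

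\textbf{The dispersionless comparison.} Let $G$ be the fixed periodic solution of $\ddot G+G^3=0$, $G(0)=1$, $\dot G(0)=0$, and set $V_n(t,x)=A_n(x)\,G(A_n(x)t)$ with $A_n(x)=\omega_n\Phi((x-x_0)/\lambda_n)$; then $V_n$ solves $\partial_t^2V_n+V_n^3=0$ with the same data as $v_n$, is smooth, and stays supported in a fixed small ball about $x_0$. Differentiating in $x$ brings down factors $t\,\nabla A_n\sim t\omega_n/\lambda_n$, so $V_n(t,\cdot)$ carries spatial frequency of order $t\omega_n/\lambda_n$ and $\|V_n(t)\|_{\dot H^s}\approx(t\omega_n/\lambda_n)^s\,\omega_n\lambda_n^{3/2}$ as long as $1\ll t\omega_n$ and $t\ll\lambda_n$. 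I would then pick $t_n$ in that window, e.g.\ $t_n=\lambda_n^{1/4}\omega_n^{-3/4}$ (so $t_n\omega_n\to\infty$, $t_n/\lambda_n\to0$, $t_n\to0$), and check from the above scalings that $\|V_n(t_n)\|_{\dot H^s}\to+\infty$, in fact like $(\log n)^{\delta'}$ for some $\delta'=\delta'(s)>0$ which is $>\delta$ when $\delta$ is chosen small; the same elementary computation (together with $\partial_t V_n\sim\omega_n^2$) controls $V_n$ and $\partial_t V_n$ in every norm that will appear.

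\textbf{The error estimate, which is the hard part.} It remains to show $\|(v_n-V_n)(t_n)\|_{H^s}+\|\partial_t(v_n-V_n)(t_n)\|_{H^{s-1}}\ll\|V_n(t_n)\|_{\dot H^s}$. Set $r_n=v_n-V_n$: it has zero data and solves a wave equation forced by $\Delta V_n$ (the dropped term), by $-(v_n^3-V_n^3)$ and by the $u_\infty$ terms; and since the data of $v_n$ is \emph{small} in $H^s$, the local theory (Theorem~\ref{prop.local.low}) keeps $v_n$, hence $r_n$, defined on $[0,t_n]$. The delicate point is that a crude Gronwall bound is not enough: the factor $3(v_n^2+v_nV_n+V_n^2)$ multiplying $r_n$ has size $\omega_n^2$, and since $t_n\omega_n\to\infty$ the bump runs through many ODE periods, so a naive estimate would lose a factor $e^{c(t_n\omega_n)^{\kappa}}$ and destroy the argument. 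The hard part will be to exploit the rapid oscillation of $V_n$ — by peeling off further correction terms of a geometric--optics expansion, or by measuring $r_n$ in a norm adapted to the characteristics of the oscillating profile — so that the $O(\omega_n^2)$ potential produces no secular growth; the choice $\omega_n\sim\log n$ is dictated exactly by the requirement that, after this refined bookkeeping, the number $\sim t_n\omega_n$ of periods stays only poly-logarithmic and the residual error is $\lesssim(\log n)^{\delta''}$ with $\delta''<\delta'$. Granting $\|r_n(t_n)\|_{H^s}\lesssim(\log n)^{\delta''}$, the triangle inequality gives
$$
\|(u_n(t_n),\partial_t u_n(t_n))\|_{H^s\times H^{s-1}}\ \geq\ \|V_n(t_n)\|_{\dot H^s}-\|r_n(t_n)\|_{H^s}-\|u_\infty(t_n)\|_{H^s}\ \geq\ C(\log n)^{\delta}
$$
for $\delta$ small, while the bound on the data was built in; and since $t_n\to0$, for every $T>0$ one has $t_n\in[0,T]$ eventually, so the final assertion of the theorem follows. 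Everything outside this error estimate — the reduction via finite propagation speed, the explicit dispersionless solution, and the scaling arithmetic — is routine.
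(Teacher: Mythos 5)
Your overall strategy (perturb the smooth data by a tall concentrated bump, approximate the evolution on a short time by the dispersionless ODE $\ddot G+G^3=0$ whose amplitude-dependent period creates fast spatial oscillations, then show the $H^s$ norm of the ODE profile inflates while the error stays smaller) is exactly the mechanism used in the paper. But the step you yourself flag as "the hard part" — the estimate on $r_n=v_n-V_n$ — is not proved, and with your choice of parameters it cannot be fixed by the vague devices you invoke. You took $\omega_n\sim\log n$ and $\lambda_n$ only logarithmically small, so \emph{every} quantity in your scheme (the inflation $(\log n)^{\delta'}$, the forcing $\Delta V_n$, the number of periods $t_n\omega_n$) is a power of $\log n$, while the Gronwall factor coming from the potential $3(v_n^2+v_nV_n+V_n^2)\sim\omega_n^2$ is $e^{Ct_n\omega_n^2}$ with $t_n\omega_n^2=\lambda_n^{1/4}\omega_n^{5/4}\gtrsim\log n$, i.e.\ at least $n^{C}$. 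A polynomial (or worse) loss can never be beaten by a poly-logarithmic gain, and you offer no actual substitute — "peeling off a geometric-optics expansion" or "a norm adapted to the characteristics" is a wish, not an argument. So as written the proof has a genuine gap at its central point. (A smaller gap: the lower bound $\|V_n(t_n)\|_{H^s}\gtrsim(\log n)^{\delta'}$ also needs a real argument — in the paper this is an interpolation between $H^1$ and $H^2$ bounds plus a non-stationary-phase lemma showing $\|\phi\,V(\lambda\phi)\|_{L^2}\gtrsim 1$ — which you gloss over.)

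The missing idea is the choice of scaling regime, and it is what makes the paper's error estimate elementary rather than delicate. The paper perturbs by $\kappa_n n^{3/2-s}\varphi(nx)$ with $\kappa_n=[\log n]^{-\delta_1}$: the amplitude and the concentration scale are \emph{powers of $n$}, and only the prefactor is logarithmic; smallness of the data in $H^s$ is then automatic ($\approx\kappa_n$), the amplification time $t_n=[\log n]^{\delta_2}n^{-(3/2-s)}$ is a negative power of $n$, and the number of ODE periods $t_n\kappa_n n^{3/2-s}=[\log n]^{\delta_2-\delta_1}$ is poly-logarithmic. Measuring $w_n=u_n-S(t)(u_0,u_1)-v_n$ with the semiclassical energy $E_n$ (weights $n^{-(1-s)}$, $n^{-(2-s)}$), the source terms are negative powers of $n$ (e.g.\ $n^{s-1/2}$, using $s<1/2$) and the crude Gronwall exponential is only $e^{C[\log n]^{3\delta_2}}$, which is sub-polynomial; so a plain energy/Gronwall argument closes, no secular-growth cancellation or oscillation-exploiting norm is needed. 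In short: your misdiagnosis that the exponential loss must be removed by finer analysis should be replaced by the observation that one can choose the parameters so that the loss is sub-polynomial in $n$ while the errors are polynomially small in $n$, the inflation alone being logarithmic. With that change of regime (and the interpolation plus Lemma-type lower bound for the ODE profile), your outline becomes the paper's proof.
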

\begin{proof}[Proof of Theorem~\ref{ill}]
We follow \cite{CCT,BT1/2, xia}.
Consider 
\begin{equation}\label{concon}
(\partial_{t}^{2}-\Delta)u+u^3=0
\end{equation}
subject to initial conditions
\begin{equation}\label{dataa}
(u_0(x)+\kappa_{n}n^{\frac{3}{2}-s}\varphi(nx), u_1(x)),\qquad n\gg 1\,,
\end{equation}
where $\varphi$ is a nontrivial bump function on $\R^3$ and
$$
\kappa_{n}\equiv [\log(n)]^{-\delta_1},
$$
with $\delta_1>0$ to be fixed later. Observe that for $n\gg 1$, we can see $\varphi(nx)$ as a $C^\infty$ function on $\T^3$.

Thanks to Theorem~\ref{prop.global}, we obtain that 
(\ref{concon}) with data given by (\ref{dataa}) has a unique global smooth solution which we denote by $u_{n}$.
Moreover $u_n\in C(\R;C^{\infty}(\T^3))$ thanks the propagation of the higher Sobolev regularity and the Sobolev embeddings. 

Next, we consider the ODE
\begin{equation}\label{V}
V''+V^3=0,\quad V(0)=1,\,\, V'(0)=0.
\end{equation}
\begin{lemm}\label{ode_V}
The Cauchy problem \eqref{V} has a global smooth (non constant) solution $V(t)$ which is periodic.
\end{lemm}
\begin{proof}
One defines locally in time the solution of \eqref{V} by an application of the Cauchy-Lipschitz theorem.
In order to extend the solutions globally in time, we multiply (\ref{V}) by $V'$. This gives that the solutions of \eqref{V} satisfy
$$
\frac{d}{dt}\big(
(V'(t))^2+\frac{1}{2}(V(t))^4
\big)=0
$$
and therefore taking into account the initial conditions, we get
\begin{equation}\label{relrel}
(V'(t))^2+\frac{1}{2}(V(t))^4=\frac{1}{2}\,.
\end{equation}
The relation \eqref{relrel} implies that $(V(t),V'(t))$ can not go to infinity in finite time. Therefore the local solution of \eqref{V} is defined globally in time. 
Let us finally show that $V(t)$ is periodic in time. 
We first observe that thanks to \eqref{relrel}, $|V(t)|\leq 1$ for all times $t$.
Therefore $t=0$ is a local maximum of $V(t)$. We claim that there is $t_0>0$ such that $V'(t_0)=0$. Indeed, otherwise $V(t)$ is decreasing on $[0,+\infty)$ which implies that $V'(t)\leq 0$ and from \eqref{relrel}, we deduce 
$$
V'(t)=-\sqrt{\frac{(1-(V(t)))^4}{2}}\,.
$$
Integrating the last relation between zero and a positive $t_0$ gives
$$
t_0=\sqrt{2}\int_{V(t_0)}^{1}\frac{dv}{\sqrt{1-v^4}} \, .
$$
Therefore 
$$
t_0\leq \sqrt{2}\int_{-1}^{1}\frac{dv}{\sqrt{1-v^4}}
$$
and we get a contradiction for $t_0\gg 1$. Hence, we indeed have that there is $t_0>0$ such that $V'(t_0)=0$. Coming back to \eqref{relrel} and using that $V(t_0)<1$, we deduce that $V(t_0)=-1$. 
Therefore $t=t_0$ is a local minimum of $V(t)$.
We now can show exactly as before that there exists $t_1>t_0$ such that $V'(t_1)=0$ and $V(t_1)>-1$. Once again using \eqref{relrel}, we infer that $V(t_1)=1$, i.e. $V(0)=V(t_1)$ and $V'(0)=V'(t_1)$. By the uniqueness part of the Cauchy-Lipschitz theorem, we obtain that $V$ is periodic with period $t_1$.
This completes the proof of Lemma~\ref{ode_V}.
\end{proof}
We next denote by $v_n$ the solution of
\begin{equation}\label{ode}
\partial_{t}^2v_{n}+v_{n}^{3}=0,\quad (v_{n}(0),\partial_{t}v_{n}(0))=
(\kappa_{n}n^{\frac{3}{2}-s}\varphi(nx), 0).
\end{equation}
It is now clear that
$$
v_{n}(t,x)=\kappa_{n}n^{\frac{3}{2}-s}\varphi(nx)V\big(t\kappa_{n}n^{\frac{3}{2}-s}\varphi(nx)\big).
$$  
In the next lemma, we collect the needed bounds on $v_n$.
\begin{lemm}\label{v_n}
Let 
$$
t_{n}\equiv [\log(n)]^{\delta_2}n^{-(\frac{3}{2}-s)}
$$
for some $\delta_2>\delta_1$. Then,  we have the following bounds for $t\in [0,t_n]$,
\begin{equation}\label{parvo_vn}
\|\Delta(v_n)(t,\cdot)\|_{H^1(\T^3)}\leq C[\log(n)]^{3\delta_2}n^{3-s},
\end{equation}
\begin{equation}\label{vtoro_vn}
\| \Delta(v_n)(t,\cdot)\|_{L^2(\T^3)}\leq C[\log(n)]^{2\delta_2}n^{2-s}\,,
\end{equation}
\begin{equation}\label{ii}
\|\nabla^{k}v_{n}(t,\cdot)\|_{L^{\infty}(\T^3)}\leq C[\log(n)]^{k\delta_2}n^{\frac{3}{2}-s+k}\, ,k=0,1,\cdots.
\end{equation}
Finally,  there exists $n_0\gg 1$ such that for $n\geq n_0$, 
\begin{equation}\label{convex}
\|v_{n}(t_n,\cdot)\|_{H^{s}(\T^3)}\geq C\kappa_{n}(t_n\kappa_{n}n^{\frac{3}{2}-s})^{s}
=C[\log(n)]^{-(s+1)\delta_1+s\delta_2}\,.
\end{equation}
\end{lemm}
\begin{proof}
Estimates \eqref{parvo_vn} and \eqref{vtoro_vn} follow from the general bound
\begin{equation}\label{general_bound}
\|v_n(t,\cdot)\|_{H^\sigma(\T^3)}\leq C\kappa_n 
(t_n\kappa_{n}n^{\frac{3}{2}-s})^{\sigma}n^{\sigma-s}\,,
\end{equation}
where $t\in [0,t_n]$ and $\sigma\geq 0$.
For integer values of $\sigma$, the bound \eqref{general_bound} is a direct consequence of the definition of $v_n$. For fractional values of $\sigma$ one needs to invoke an elementary interpolation inequality in the Sobolev spaces. 
Estimate \eqref{ii} follows directly from the definition of $v_n$. The proof of \eqref{convex} is slightly more delicate. 
We first observe that for $n\gg 1$, we have the lower bound
\begin{equation}\label{dolna}
\|v_n(t_n,\cdot)\|_{H^1(\T^3)}\geq c\kappa_n 
(t_n\kappa_{n}n^{\frac{3}{2}-s})n^{1-s}\,.
\end{equation}
Now, we can obtain \eqref{convex} by invoking \eqref{general_bound} (with $\sigma=2$),  the lower bound \eqref{dolna} and the interpolation inequality 
$$
\|v_{n}(t_n,\cdot)\|_{H^1(\T^3)}\leq \|v_{n}(t_n,\cdot)\|^{\theta}_{H^s(\T^3)}
 \|v_{n}(t_n,\cdot)\|^{1-\theta}_{H^2(\T^3)}
$$
for some $\theta>0$. It remains therefore to show \eqref{dolna}. 
After differentiating once the expression defining $v_n$, we see that \eqref{dolna} follows from the following statement. 
\begin{lemm}\label{lem.expl}
Consider a smooth not identically zero periodic function $V$ and  a non trivial bump function 
$\phi \in C^\infty_0( \mathbb{R}^d)$.
Then there exist $c>0$ and $\lambda_0\geq 1$ such that for every $\lambda >\lambda_0$ 
$$ 
\| \phi (x) V(\lambda \phi(x))\|_{L^2(\mathbb{R}^d)} \geq c \,.
$$
\end{lemm}
\begin{proof} 
We can suppose that the period of $V$ is $2\pi L$ for some $L>0$. 
Consider the Fourier expansion of $V$,
$$ 
V(t) = \sum_{n\in \mathbb{Z}}v_n e^{i\frac{n}{L} t},\quad |v_n| \leq C_N (1+|n|)^{-N}\,.
$$
We can assume that there is an open ball $B$ of $\R^d$ such that for some $c_0>0$, $|\partial_{x_1}\phi(x)|\geq c_0$ on $B$.
Let $0\leq \psi\leq 1$ be a non trivial $C^\infty_0(B)$  function. 
We can write 
$$
\| \phi (x) V(\lambda \phi(x))\|_{L^2(\mathbb{R}^d)}^2
\geq  
\| \psi(x)\phi (x) V(\lambda \phi(x))\|_{L^2(B)}^2
=
I_1+I_2,
$$
where 
$$
I_{1}= \sum_{n\in \mathbb{Z}}
|v_n|^2
\int_{B}(\psi(x)\phi(x))^2 dx,
$$
and  
$$
I_{2}= \sum_{n_1\neq n_2}v_{n_1}\overline{v_{n_2}}\int_{B}  e^{i\lambda\frac{n_1-n_2}{L}\phi(x)}\, (\psi(x)\phi(x))^2  \,dx.
$$
Clearly $I_1>0$ is independent of $\lambda$. 
On the other hand
$$
e^{i\lambda\frac{n_1-n_2}{L}\phi(x)}
=
\frac{L}{i\lambda (n_1-n_2)\partial_{x_1}\phi(x)}
\partial_{x_1}\big(
e^{i\lambda\frac{n_1-n_2}{L}\phi(x)}
\big).
$$
Therefore, after an integration by parts, we obtain that $|I_{2}|\lesssim \lambda^{-1}$.
This completes the proof of Lemma~\ref{lem.expl}.
\end{proof}
This completes the proof of Lemma~\ref{v_n}.
\end{proof}
We next consider the semi-classical energy
\begin{multline*}
E_{n}(u)\equiv n^{-(1-s)}\big(\|\partial_{t}u\|_{L^{2}(\T^3)}^{2}+
\|\nabla u\|_{L^{2}(\T^3)}^{2}\big)^{\frac{1}{2}}+
\\
n^{-(2-s)}\big(\|\partial_{t}u\|_{H^{1}(\T^3)}^{2}+\|\nabla
u\|_{H^{1}(\T^3)}^{2}\big)^{\frac{1}{2}}\,.
\end{multline*}
We are going to show that for very small times $u_n$ and $v_n+S(t)(u_0,u_1)$ are close with respect to $E_n$
but these small times are long enough to get the needed amplification of the $H^s$ norm.  We emphasise that this
amplification is a phenomenon only related to the solution of (\ref{ode}).
Here is the precise statement.
\begin{lemm}\label{compar}
There exist $\varepsilon>0$, $\delta_2>0$ and $C>0$ such that for $\delta_1<\delta_2$, if we set
$$
t_{n}\equiv [\log(n)]^{\delta_2}n^{-(\frac{3}{2}-s)}
$$
then for every $n\gg 1$, every $t\in [0,t_n]$,
$$
E_{n}\big(
u_{n}(t)-v_{n}(t)-S(t)(u_0,u_1)
\big)
\leq Cn^{-\varepsilon}\,.
$$
Moreover, 
\begin{equation}\label{s}
\|u_{n}(t)-v_n(t)-S(t)(u_0,u_1)\|_{H^{s}(\T^3)}\leq Cn^{-\varepsilon}\,.
\end{equation}
\end{lemm}
\begin{proof}
Set $u_{L}=S(t)(u_0,u_1)$ and $w_{n}=u_{n}-u_{L}-v_{n}$. 
Then $w_n$ solves the equation
\begin{equation}\label{eq_wn}
(\partial_{t}^{2}-\Delta)w_n=\Delta v_n-3v_{n}^{2}(u_L+w_{n})-3v_{n}(u_L+w_{n})^{2}-(u_L+w_{n})^{3},
\end{equation}
with initial data 
\begin{equation*}
(w_{n}(0,\cdot),\partial_{t}w_{n}(0,\cdot))=(0,0)\,.
\end{equation*}
Set
$$
F\equiv \Delta v_n-3v_{n}^{2}(u_L+w_{n})-3v_{n}(u_L+w_{n})^{2}-(u_L+w_{n})^{3}\,.
$$
Multiplying the equation \eqref{eq_wn} with $\partial_t w_n$ and integrating over $\T^3$ gives
\begin{equation*}
\Big|
\frac{d}{dt}
\big(\|\partial_{t}w_n(t)\|_{L^{2}(\T^3)}^{2}+
\|\nabla w_n(t)\|_{L^{2}(\T^3)}^{2}\big)
\Big|
\lesssim
\|\partial_t w_n(t)\|_{L^2(\T^3)}
\|F(t)\|_{L^2(\T^3)}
\end{equation*}
which in turn implies 
\begin{equation}\label{Energy1}
\Big|
\frac{d}{dt}
\big(\|\partial_{t}w_n(t)\|_{L^{2}(\T^3)}^{2}+
\|\nabla w_n(t)\|_{L^{2}(\T^3)}^{2}\big)^{\frac{1}{2}}
\Big|
\lesssim 
\|F(t)\|_{L^2(\T^3)}\,.
\end{equation}
Similarly, by first differentiating \eqref{eq_wn} with respect to the spatial variables, we get the bound
 \begin{equation}\label{Energy2}
\Big|\frac{d}{dt}\big(\|\partial_{t}w_n(t)\|_{H^{1}(\T^3)}^{2}+\|\nabla w_n(t)\|_{H^{1}(\T^3)}^{2}\big)^{\frac{1}{2}}\Big|
\lesssim 
\|F(t)\|_{H^1(\T^3)}\,.
\end{equation}
Now, using \eqref{Energy1} and \eqref{Energy2}, we obtain the estimate 
\begin{equation*}
\Big|\frac{d}{dt}\Big(E_{n}(w_n(t))\Big)\Big|
\leq C n^{-(2-s)}\|F(t)\|_{H^1(\T^3)}+Cn^{-(1-s)}\|F(t)\|_{L^2(\T^3)}\,.
\end{equation*}
Therefore using \eqref{parvo_vn}, \eqref{vtoro_vn}, we get 
\begin{multline}\label{energia}
\Big|\frac{d}{dt}\Big(E_{n}(w_n(t))\Big)\Big|
\leq 
C\Big([\log(n)]^{3\delta_2}n
\\
+n^{-(2-s)}\|G(t,\cdot)\|_{H^1(\T^3)}+n^{-(1-s)}\|G(t,\cdot)\|_{L^2(\T^3)}\Big)\,,
\end{multline}
where $G\equiv G_1+G_2$ with
$$
G_1= -3v_n^2 u_L- 3v_n u_L^2-u_L^3
$$
and
$$
G_2 = -3(u_L+v_n)^2 w_n - 3(u_L+v_n) w_n^2-w_n^3.
$$
Since $u_L\in C^{\infty}(\mathbb{R}\times \T^3)$ is independent of $n$, using (\ref{ii}) and (\ref{general_bound}) we can estimate $G_1$ as follows 
$$ 
n^{-(l-s)}\|G_1(t,\cdot))\|_{H^{l-1}(\T^3)}\lesssim [\log n]^{\delta_2}n^{\frac{1}{2}-s}
\lesssim 
[\log(n)]^{3\delta_2}n,\quad l=1,2.
$$
Writing for $t\in [0,t_n]$,
$$ w_n(t,x) = \int_0^t \partial_t w_n(\tau, x) d\tau,
$$
we obtain
\begin{equation}\label{i}
\|w_{n}(t,\cdot)\|_{H^k(\T^3)}\leq C[\log(n)]^{\delta_2}n^{-(\frac{3}{2}-s)}
\sup_{0\leq \tau\leq t}\|\partial_{t}w_{n}(\tau,\cdot)\|_{H^{k}(\T^3)}\,.
\end{equation}
Set
$$
e_{n}(w_n(t))\equiv \sup_{0\leq \tau\leq t}E_{n}(w_n(\tau))\,.
$$
Observe that $e_{n}(w_n(t))$ is increasing. 
Using (\ref{i}) (with $k=0,1$), (\ref{ii}) and the Leibniz rule, we get that for   $t\in [0,t_n]$ and for $l=1,2$,
\begin{equation*}
n^{-(l-s)}\|(u_{L}(t)+ v_{n}(t))^{2}w_{n}(t)\|_{H^{l-1}(\T^3)}\leq C[\log(n)]^{l\delta_2}n^{\frac{3}{2}-s}
e_{n}(w_n(t))\,.
\end{equation*}
Thanks to the Gagliardo-Nirenberg inequality, and \eqref{i} with $k=0$, we get for  $t\in [0,t_n]$,
\begin{eqnarray}
\label{iii}
\|w_{n}(t,\cdot)\|_{L^{\infty}(\T^3)}
& \leq & C\|w_{n}(t,\cdot)\|_{H^2(\T^3)}^{\frac{3}{4}}\|w_n(t,\cdot)\|_{L^2(\T^3)}^{\frac{1}{4}}
\\
\nonumber
& \leq & Cn^{\frac{3}{2}-s}e_{n}(w_{n}(t))\,.
\end{eqnarray}
Hence, we can use \eqref{iii} to treat the quadratic and cubic terms in $w_n$ and  to get the bound 
\begin{equation*}
n^{-(l-s)}\| G_2(t,\cdot)\|_{H^{l-1}(\T^3)}\leq C[\log(n)]^{l\delta_2}n^{\frac{3}{2}-s}
\big(e_{n}(w_n(t))+[e_{n}(w_n(t))]^3\big)\,.
\end{equation*}
Therefore, coming back to (\ref{energia}), we get for $t\in [0,t_n]$,
\begin{multline*}
\Big|\frac{d}{dt}\Big(E_{n}(w_n(t))\Big)\Big|\leq 
C[\log(n)]^{3\delta_2}n
\\
+C[\log(n)]^{2\delta_2}n^{\frac{3}{2}-s}\big(e_{n}(w_n(t))+[e_{n}(w_n(t))]^3\big)\,.
\end{multline*}
We now observe that
$$
\frac{d}{dt}
\big(e_{n}(w_n(t))\big)
\leq 
\Big|\frac{d}{dt}\Big(E_{n}(w_n(t))\Big)\Big|
$$
is resulting directly from the definition.
Therefore, we have the bound
\begin{multline}
\label{eee_mm}
\frac{d}{dt}\Big(
e_{n}(w_n(t))
\Big)
\leq 
C[\log(n)]^{3\delta_2}n
\\
+C[\log(n)]^{2\delta_2}n^{\frac{3}{2}-s}\big(e_{n}(w_n(t))+[e_{n}(w_n(t))]^3\big)\,.
\end{multline}
We first suppose that $e_{n}(w_n(t))\leq 1$. This property holds for  small values of $t$ since 
$$
E_{n}(w_{n}(0))\lesssim n^{-(1-s)}\,.
$$
In addition, the estimate for $e_{n}(w_n(t))$ we are looking for is much stronger than $e_{n}(w_n(t))\leq 1$.
Therefore, once we prove the desired estimate for $e_n(w_n(t))$ under the assumption $e_{n}(w_n(t))\leq 1$, 
we can use a bootstrap argument to get the estimate without the assumption $e_{n}(w_n(t))\leq 1$.

Estimate \eqref{eee_mm} yields that for $t\in [0,t_n]$,
$$
\frac{d}{dt}
(e_{n}(w_{n}(t)))
\leq
C[\log(n)]^{3\delta_2}n
+C[\log(n)]^{2\delta_2}n^{\frac{3}{2}-s}
e_{n}(w_{n}(t))
$$
and consequently 
$$
\frac{d}{dt}
\Big(
e^{
-Ct [\log(n)]^{2\delta_2}n^{\frac{3}{2}-s}
}
e_{n}(w_{n}(t))
\Big)
\leq
C[\log(n)]^{3\delta_2}\, n\,
e^{
-Ct [\log(n)]^{2\delta_2}n^{\frac{3}{2}-s}
}\,.
$$
An integration of the last estimate gives that for  $t\in [0,t_n]$,
\begin{eqnarray*}
e_{n}(w_{n}(t))
& \leq &
C \big(n^{-(1-s)}
+
[\log(n)]^{\delta_2}n^{s-\frac{1}{2}}
\big)
e^{Ct[\log(n)]^{2\delta_2}n^{\frac{3}{2}-s}}
\\
& \leq & 
C 
\big(
n^{-(1-s)}
+
[\log(n)]^{\delta_2}n^{s-\frac{1}{2}}
\big)
e^{C[\log(n)]^{3\delta_2}}\,.
\end{eqnarray*}
(one should see $\delta_2$ as $3\delta_2-2\delta_2$ and $s-1/2$ as $1-(3/2-s)$).
Since $s<1/2$, by taking $\delta_2>0$ small enough, we obtain that there exists $\varepsilon>0$ such that for $t\in [0,t_n]$,
$$
E_{n}(w_n(t))\leq Cn^{-\varepsilon}
$$
and in particular one has for $t\in [0,t_n]$,
\begin{equation}\label{parvo}
\|\partial_{t}w_{n}(t,\cdot)\|_{L^2(\T^3)}+\|\nabla w_{n}(t,\cdot)\|_{L^2(\T^3)}\leq
Cn^{1-s-\varepsilon}\,.
\end{equation}
We next estimate $\|w_{n}(t,\cdot)\|_{L^2}$. 
We may write for $t\in[0,t_n]$,
$$
\|w_{n}(t,\cdot)\|_{L^2(\T^3)}= \|\int_0^t \partial_tw_{n}(\tau,\cdot) d\tau  \|_{L^2(\T^3)}\leq ct_{n}\sup_{0\leq\tau\leq t}\|\partial_{t}w_{n}(\tau,\cdot)\|_{L^2(\T^3)}\,.
$$
Thanks to (\ref{parvo}) and the definition of $t_n$, we get
$$
\|w_{n}(t,\cdot)\|_{L^2(\T^3))}\leq
C[\log(n)]^{\delta_2}n^{-(\frac{3}{2}-s)}n^{1-s}n^{-\varepsilon}\,.
$$
Therefore, since $s<1/2$, 
\begin{equation}\label{vtoro}
\|w_{n}(t,\cdot)\|_{L^2(\T^3)}\leq Cn^{-s-\varepsilon}\,.
\end{equation}
An interpolation between (\ref{parvo}) and (\ref{vtoro}) yields (\ref{s}). 
This completes the proof of Lemma~\ref{compar}.
\end{proof}
Using Lemma~\ref{compar}, we may write
$$
\|u_{n}(t_n,\cdot)\|_{H^{s}(\T^3)}\geq \|v_{n}(t_n,\cdot)\|_{H^{s}(\T^3)} -C-Cn^{-\varepsilon}\,.
$$
Recall that \eqref{convex} yields
\begin{equation*}
\|v_{n}(t_n,\cdot)\|_{H^{s}(\T^3)}\geq C[\log(n)]^{-(s+1)\delta_1+s\delta_2}\,,
\end{equation*}
provided $n\gg 1$.
Therefore, by choosing $\delta_1$ small enough (depending on $\delta_2$ fixed in Lemma~\ref{compar}), 
we obtain that the exists $\delta>0$ such that
$$
\|v_n(t_n,\cdot)\|_{H^s(\T^3)}\geq C [\log(n)]^{\delta},\quad n\gg 1
$$
which in turn implies that
$$
\|u_n(t_n,\cdot)\|_{H^s(\T^3)}\geq C [\log(n)]^{\delta},\quad n\gg 1\,.
$$
This completes the proof of Theorem~\ref{ill}.
\end{proof}
Theorem~\ref{ill} implies that the Cauchy problem associated with the cubic focusing wave equation,
$$
(\partial_{t}^{2}-\Delta)u+u^3=0
$$
is ill-posed in $H^s(\T^3)\times H^{s-1}(\T^3)$ for $s<1/2$ because of the lack of continuous dependence for any $C^{\infty}(\T^3)\times C^{\infty}(\T^3)$ initial data.  

For future references, we also state the following consequence of  Theorem~\ref{ill}.
\begin{theo}\label{ill_bis}
Let us fix $s\in (0,1/2)$, $T>0$ and 
$$
(u_0,u_1)\in H^s(\T^3)\times H^{s-1}(\T^3)\,.
$$
Then there exists  a sequence $(u_n(t,x))_{n=1}^{\infty}$ of $C(\R;C^{\infty}(\T^3))$ functions
such that
$$
(\partial_{t}^{2}-\Delta)u_n+u_n^3=0
$$
with
$$
\lim_{n\rightarrow + \infty} \|(u_{n}(0)-u_0,\partial_t u_n(0)-u_1)\|_{H^s(\T^3)\times H^{s-1}(\T^3)}=0
$$
but 
$$
\lim_{n\rightarrow + \infty} \|(u_n(t),\partial_t u_n(t))\|_{L^\infty([0,T];H^s(\T^3)\times H^{s-1}(\T^3))}=+\infty.
$$
\end{theo}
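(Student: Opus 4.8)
The plan is to deduce Theorem~\ref{ill_bis} from Theorem~\ref{ill} by a density argument followed by a diagonal extraction; no new analysis is needed. The only obstruction to applying Theorem~\ref{ill} directly is that it requires the reference data to be smooth, whereas here $(u_0,u_1)$ lies merely in $H^s\times H^{s-1}$, so I will first replace it by a smooth approximant.

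First I would invoke the density of $C^\infty(\T^3)\times C^\infty(\T^3)$ in $H^s(\T^3)\times H^{s-1}(\T^3)$ to choose, for each integer $k\geq 1$, a smooth pair $(u_0^{(k)},u_1^{(k)})$ with
\[
\|(u_0^{(k)}-u_0,u_1^{(k)}-u_1)\|_{H^s(\T^3)\times H^{s-1}(\T^3)}<\frac1k.
\]
For each fixed $k$, Theorem~\ref{ill} applied with the smooth data $(u_0^{(k)},u_1^{(k)})$ furnishes a sequence $(u_{k,n})_{n\geq1}\subset C(\R;C^\infty(\T^3))$ of solutions of $(\partial_t^2-\Delta)u+u^3=0$ together with some $\delta_k>0$ and $C_k>0$ (depending on $k$ through the chosen smooth data) such that, as $n\to\infty$,
\[
\|(u_{k,n}(0)-u_0^{(k)},\partial_t u_{k,n}(0)-u_1^{(k)})\|_{H^s\times H^{s-1}}\leq C_k[\log n]^{-\delta_k}\longrightarrow 0,
\]
while
\[
\|(u_{k,n}(t),\partial_t u_{k,n}(t))\|_{L^\infty([0,T];H^s(\T^3)\times H^{s-1}(\T^3))}\longrightarrow+\infty.
\]

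Then I would extract a diagonal sequence: for each $k$, the two limits above allow me to fix an index $n(k)$ so large that simultaneously
\[
C_k[\log n(k)]^{-\delta_k}\leq\frac1k
\quad\text{and}\quad
\|(u_{k,n(k)}(t),\partial_t u_{k,n(k)}(t))\|_{L^\infty([0,T];H^s\times H^{s-1})}\geq k.
\]
Setting $u_k:=u_{k,n(k)}$, we have $u_k\in C(\R;C^\infty(\T^3))$ solving the cubic defocusing wave equation, and by the triangle inequality
\[
\|(u_k(0)-u_0,\partial_t u_k(0)-u_1)\|_{H^s\times H^{s-1}}\leq C_k[\log n(k)]^{-\delta_k}+\|(u_0^{(k)}-u_0,u_1^{(k)}-u_1)\|_{H^s\times H^{s-1}}\leq\frac2k\to 0,
\]
while $\|(u_k(t),\partial_t u_k(t))\|_{L^\infty([0,T];H^s\times H^{s-1})}\geq k\to+\infty$. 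This is exactly the claimed statement.

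The argument is entirely soft, so there is no genuine obstacle; the only point that requires attention is the order of the quantifiers. The constants supplied by Theorem~\ref{ill} depend on the smooth approximant one feeds in, so one must freeze $(u_0^{(k)},u_1^{(k)})$ first, then choose $n(k)$ as a function of $k$, and only afterwards let $k\to\infty$; the diagonal sequence $(u_k)_k$ then does the job. All of the real difficulty — the norm inflation itself — is already packaged inside Theorem~\ref{ill}, hence ultimately in the ODE analysis of Lemma~\ref{v_n} and the energy comparison of Lemma~\ref{compar}.
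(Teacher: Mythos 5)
Your argument is correct and is essentially the paper's own proof: approximate $(u_0,u_1)$ by smooth data, apply Theorem~\ref{ill} to each approximant, and extract a diagonal sequence via the triangle inequality, with the quantifiers handled in the same order as in the paper. No gap to report.
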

\begin{proof}
Let $(u_{0,m}, u_{1,m})_{m=1}^{\infty}$ be a sequence of $C^{\infty}(\T^3)\times C^{\infty}(\T^3)$ functions such that
$$
\lim_{m\rightarrow + \infty} \|(u_0-u_{0,m},u_1-u_{1,m})\|_{H^s(\T^3)\times H^{s-1}(\T^3)}=0\,.
$$
For a fixed $m$, we apply Theorem~\ref{ill} in order to find a sequence 
 $(u_{m,n}(t,x))_{n=1}^{\infty}$ of $C(\R;C^{\infty}(\T^3))$ functions
such that
$$
(\partial_{t}^{2}-\Delta)u_{m,n}+u_{m,n}^3=0
$$
with
$$
\lim_{n\rightarrow + \infty} \|(u_{m,n}(0)-u_{0,m},\partial_t u_{m,n}(0)-u_{1,m})\|_{H^s(\T^3)\times H^{s-1}(\T^3)}=0
$$
and for every $m\geq 1$,
\begin{equation}\label{mn}
\lim_{n\rightarrow + \infty} \|(u_{m,n}(t),\partial_t u_{m,n}(t))\|_{L^\infty([0,T];H^s(\T^3)\times H^{s-1}(\T^3))}=+\infty.
\end{equation}
Now, using the triangle inequality, we obtain that for every $l\geq 1$ there is $M_0(l)$ such that for every $m\geq M_0(l)$ there is $N_0(m)$ such that for every $n\geq N_0(m)$,
$$
 \|(u_{m,n}(0)-u_{0},\partial_t u_{m,n}(0)-u_{1})\|_{H^s(\T^3)\times H^{s-1}(\T^3)}<\frac{1}{l}\,.
$$
Thanks to \eqref{mn}, we obtain that for every $m\geq 1$ there exists $N_1(m)\geq N_0(m)$ such that for every $n\geq N_1(m)$, 
$$
\|(u_{m,n}(t),\partial_t u_{m,n}(t))\|_{L^\infty([0,T];H^s(\T^3)\times H^{s-1}(\T^3))} >l\,.
$$
We now observe that 
$$
u_{l}(t,x)\equiv u_{M_0(l),N_1(M_0(l))}(t,x),\quad l=1,2, 3,\cdots
$$
is a sequence of solutions of the cubic defocusing wave equation satisfying the conclusions of  Theorem~\ref{ill_bis}.
\end{proof}
\begin{rema}
{\rm
It is worth mentioning that we arrive without too much complicated technicalities to a sharp local well-posedness result in the context of the cubic wave equation because  
we do not need a smoothing effect to recover derivative losses 
neither in the nonlinearity nor in the non homogeneous Stricahartz estimates. The $X^{s,b}$ spaces of Bourgain are an efficient tool to deal with these two difficulties. These developments go beyond the scope of these lectures. 
}
\end{rema}
\section{Extensions to more general nonlinearities}
One may consider the wave equation with a more general nonlinearity than the cubic one. Namely, let us consider the nonlinear wave equation
\begin{equation}\label{alphaa}
(\partial_{t}^{2}-\Delta)u+|u|^{\alpha}u=0,
\end{equation}
posed on $\T^3$
where $\alpha>0$ measures the "degree" of the nonlienarity. 
If $u(t,x)$ is a solution of \eqref{alphaa} posed on $\R^3$,  than so is $u_{\lambda}(t,x)=\lambda^{\frac{2}{\alpha}}u(\lambda t,\lambda x)$.
Moreover 
$$
\|u_{\lambda}(t,\cdot)\|_{H^s}\approx \lambda^{\frac{2}{\alpha}}\lambda^{s}\lambda^{-\frac{3}{2}}\|u(\lambda t,\cdot)\|_{H^s}
$$
which implies that $H^s$ with $s=\frac{3}{2}-\frac{2}{\alpha}$ is the critical Sobolev  regularity for \eqref{alphaa}.
Based on this scaling argument one may wish to expect that for $s>\frac{3}{2}-\frac{2}{\alpha}$ the Cauchy problem associated with \eqref{alphaa} is well-posed in $H^s\times H^{s-1}$ and that for $s<\frac{3}{2}-\frac{2}{\alpha}$ it is ill-posed in $H^s\times H^{s-1}$. In this chapter, we verified that this is indeed the case for $\alpha=2$.
For $2<\alpha<4$, a small modification of the proof of  Theorem~\ref{prop.local.low} shows that \eqref{alphaa} is locally well-posed in $H^s\times H^{s-1}$ for $s\in (\frac{3}{2}-\frac{2}{\alpha},\alpha)$.
Then, as in the proof of  Theorem~\ref{prop.global}, we can show that \eqref{alphaa} is globally well-posed in $H^1\times L^2$. 
Moreover a small modification of the proof of Theorem~\ref{ill} shows that for $s\in (0,\frac{3}{2}-\frac{2}{\alpha})$ the Cauchy problem for \eqref{alphaa} is locally ill-posed in $H^s\times H^{s-1}$. 
 For $\alpha=4$, we can prove a local well-posedness statement for \eqref{alphaa} as in  Theorem~\ref{critical}. The global well-posedness in $H^1\times L^2$ for $\alpha=4$ is  much more delicate than the globalisation argument of 
 Theorem~\ref{prop.global}. It is however possible to show that \eqref{alphaa} is globally well-posed in $H^1\times L^2$ (see \cite{Gri1,Gri2,ShStru1,ShStru2}). 
 The new global infirmation for $\alpha=4$,  in addition to the conservation of the energy, is the Morawetz estimate which is a quantitative way to contradict the blow-up criterium in the case $\alpha=4$.
For $\alpha>4$ the Cauchy problem associated with \eqref{alphaa} is still locally well-posed in  $H^s\times H^{s-1}$ for  some $s>\frac{3}{2}-\frac{2}{\alpha}$. 
The global well-posedness (i.e. global existence, uniqueness and propagation of regularity) of \eqref{alphaa}  for $\alpha>4$ is an outstanding open problem. 
For $\alpha>4$, the argument used in Theorem~\ref{ill}  may allow to construct weak solutions in $H^1\times L^2$ with initial data in $H^\sigma$ for $1<\sigma<\frac{3}{2}-\frac{2}{\alpha}$ which are losing their $H^\sigma$ regularity. 
See \cite{L} for such a result for \eqref{alphaa}, posed on $\R^3$.
\chapter[Probabilistic well-posedness]{Probabilistic global well-posedness for the $3d$ cubic wave equation in $H^s$, $s\in [0,1]$ }\label{Chapter2}
\section{Introduction}
Consider again the Cauchy problem for the cubic defocusing wave equation
\begin{equation}\label{NLW_prob}
\begin{gathered}
(\partial_t^2-\Delta) u+u^3=0,\quad u:\R \times \T^3\rightarrow \R,
\\ 
u|_{t=0}=u_0,\,\, \partial_t u|_{t=0}=u_1,\quad\quad (u_0,u_1)\in {\mathcal H}^s(\T^3),
 \end{gathered}
\end{equation}
where 
$$
 {\mathcal H}^s(\T^3)\equiv H^s(\T^3)\times H^{s-1}(\T^3)\,.
 $$
 In the previous chapter, we have shown that \eqref{NLW_prob} is (at least locally in time) well-posed in ${\mathcal H}^s(\T^3)$, $s\geq 1/2$. The main ingredient in the proof for $s\in[1/2,1)$ was the Strichartz estimates for the linear wave equation. We have also shown that for $s\in (0,1/2)$ the Cauchy problem  \eqref{NLW_prob}  is ill-posed in  ${\mathcal H}^s(\T^3)$.
 
One may however ask whether some sort of well-posedness for \eqref{NLW_prob} survives for $s<1/2$. 
We will show bellow that this is indeed possible, if we accept to "randomise" the initial data.
This means that we will endow  ${\mathcal H}^s(\T^3)$, $s\in (0,1/2)$ with suitable probability measures and we will show that the Cauchy problem  \eqref{NLW_prob}  is well-posed in a suitable sense for initial data $(u_0,u_1)$ on a set of full measure.

Let us now describe these measures.  Starting from 
$(u_0,u_1)\in {\mathcal H}^s$ given by their Fourier series 
$$ 
u_{j}(x)=a_{j}+\sum_{n\in\Z^3_{\star}}\Big(b_{n,j}\,\cos(n\cdot x)+c_{n,j}\sin(n\cdot x)\Big), \quad j=0,1,
$$
we define $u_{j}^\omega$ by
\begin{equation}\label{coord}
u_{j}^\omega(x)=\alpha_{j}(\omega)a_{j}+\sum_{n\in\Z^3_{\star}}\Big(\beta_{n,j}(\omega)b_{n,j}\,\cos(n\cdot x)+\gamma_{n,j}(\omega)c_{n,j}\sin(n\cdot x)\Big),
\end{equation}
where $(\alpha_{j}(\omega),\beta_{n,j}(\omega),\gamma_{n,j}(\omega))$, $n\in\Z^3_{\star}$, $j=0,1$ 
is a sequence of real random variables on a probability space $(\Omega,p,{\mathcal F})$.
We assume that the random variables $(\alpha_j,\beta_{n,j},\gamma_{n,j})_{n\in\Z^3_{\star},j=0,1}$ 
are independent identically distributed real random variables with a distribution $\theta$ satisfying 
\begin{equation}\label{subgauss}
\exists\, c>0,\quad \forall\,\gamma\in\R,\quad
\int_{-\infty}^{\infty}e^{\gamma x}d\theta(x)
\leq e^{c\gamma^2}
\end{equation}
(notice that under the assumption \eqref{subgauss} the random variables are necessarily of mean zero).
Typical examples (see Remark~\ref{exx} bellow) of random variables satisfying \eqref{subgauss} are the standard Gaussians, i.e. 
$$
d\theta(x)=(2\pi)^{-\frac{1}{2}}e^{-\frac{x^2}{2}}dx
$$ 
(with an identity in \eqref{subgauss}) or the Bernoulli variables 
$$
d\theta(x)=\frac{1}{2}(\delta_{-1}+\delta_{1})\,.
$$
An advantage of the Bernoulli randomisation is that it keeps the ${\mathcal H}^s$ norm of the original function.
The Gaussian randomisation has the advantage to "generate" a dense set in ${\mathcal H}^s$ via the map 
\begin{equation}\label{eq.proba}
 \omega \in \Omega \longmapsto (u_0^\omega, u_1^\omega)\in{\mathcal H}^s
 \end{equation}
for most of $(u_0,u_1)\in {\mathcal H}^s$ (see Proposition~\ref{prop.1.2} below).
\begin{defi} 
{\rm
For fixed $(u_0, u_1) \in \mathcal{H}^s$, the map \eqref{eq.proba} is a measurable map from $(\Omega,{\mathcal F})$ to ${\mathcal H}^s$ endowed with the Borel sigma algebra 
since the partial sums form a Cauchy sequence
in $L^2(\Omega;{\mathcal H}^s)$. Thus \eqref{eq.proba} endows the space ${\mathcal H}^s(\T^3)$ with a probability measure which is the direct image of $p$. Let us denote this measure by $\mu_{(u_0, u_1)}$. Then
$$\forall\, A \subset \mathcal{H}^s,\,\, 
\mu_{(u_0, u_1)} (A)= p ( \omega\in \Omega\,:\, (u_0^\omega, u_1^\omega) \in A).
$$
Denote by ${\mathcal M}^s$ the set of measures obtained following this construction :
$${\mathcal M}^s= \bigcup_{(u_0, u_1) \in \mathcal{H}^s} \{ \mu_{(u_0, u_1)}\}\,.
$$
}
\end{defi}
Here are two basic properties of these measures.
\begin{prop} \label{prop.1.2}
For any $s'>s$, if 
$(u_0, u_1) \notin \mathcal{H}^{s'}$, then 
$$ 
\mu_{(u_0, u_1)}( \mathcal{H}^{s'})=0\,.
$$
In other words,  the randomisation \eqref{eq.proba} does not regularise in the scale
of the $L^2$-based Sobolev spaces (this fact is obvious for the Bernoulli randomisation). 
Next, if $(u_0,u_1)$ have all their Fourier coefficients different from zero and if
${\rm supp}(\theta)=\R$ then {\rm supp}($\mu_{(u_0, u_1)})={\mathcal H^s}$.
In other words, under these assumptions, for any $(w_0, w_1)\in \mathcal{H}^s$ and any~$\epsilon >0$, 
\begin{equation}\label{eq.dense}
\mu_{(u_0, u_1)} ( \{ (v_0, v_1)\in\mathcal{H}^s\,:\, \| (w_0, w_1) - (v_0, v_1)\|_{\mathcal{H}^s} < \epsilon \}) >0,
\end{equation} 
or in yet other words,  any  set of full $\mu_{(u_0, u_1)}$-measure is dense in $\mathcal{H}^s$ .
\end{prop}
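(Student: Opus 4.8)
The plan is to establish the two assertions in turn: the first by a tail‑event (Kolmogorov zero--one law) argument, the second by a low/high frequency decomposition of the randomised data.

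\emph{Non-regularisation.} Fix $s'>s$ with $(u_0,u_1)\notin\mathcal{H}^{s'}$, and for $\omega\in\Omega$ set
\[
S_\infty(\omega)\equiv a_0^2\alpha_0(\omega)^2+a_1^2\alpha_1(\omega)^2+\sum_{j=0,1}\ \sum_{n\in\Z^3_{\star}}\langle n\rangle^{2(s'-j)}\big(b_{n,j}^2\beta_{n,j}(\omega)^2+c_{n,j}^2\gamma_{n,j}(\omega)^2\big),
\]
so that $S_\infty(\omega)$ is comparable to $\|(u_0^\omega,u_1^\omega)\|_{\mathcal{H}^{s'}}^2$ and the desired conclusion $\mu_{(u_0,u_1)}(\mathcal{H}^{s'})=0$ is exactly $p(S_\infty<\infty)=0$. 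I would combine two facts. First, $\{S_\infty<\infty\}$ is a tail event for the independent family $(\alpha_j,\beta_{n,j},\gamma_{n,j})$ (discarding finitely many terms does not affect convergence of a series of nonnegative terms), so by Kolmogorov's zero--one law $p(S_\infty<\infty)\in\{0,1\}$. Second, the partial sums $S_N$ of $S_\infty$ truncated to $|n|\le N$ have expectation $\E[S_N]=\big(\int x^2\,d\theta(x)\big)$ times a truncation of the divergent series defining $\|(u_0,u_1)\|_{\mathcal{H}^{s'}}^2$, where $\int x^2\,d\theta(x)>0$ because $\theta$ is non-degenerate and is finite by \eqref{subgauss}; hence $\E[S_N]$ increases to $+\infty$, and by monotone convergence $\E[S_\infty]=+\infty$, so $p(S_\infty=+\infty)>0$. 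The two facts together force $p(S_\infty<\infty)=0$.

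\emph{Full support.} Since $\mu_{(u_0,u_1)}$ is carried by $\mathcal{H}^s$ we have $\mathrm{supp}\,\mu_{(u_0,u_1)}\subset\mathcal{H}^s$, so it suffices to prove \eqref{eq.dense}, i.e.\ that every open ball of $\mathcal{H}^s$ has positive measure. Fix $(w_0,w_1)\in\mathcal{H}^s$ and $\epsilon>0$, and split the randomised data along the frequency projections $\Pi_{\le N}$ and $1-\Pi_{\le N}$. I would first choose $N$ large enough that $\|(1-\Pi_{\le N})(w_0,w_1)\|_{\mathcal{H}^s}<\epsilon/3$ and, using $\E\big[\|(1-\Pi_{\le N})(u_0^\omega,u_1^\omega)\|_{\mathcal{H}^s}^2\big]\asymp\big(\int x^2\,d\theta(x)\big)\|(1-\Pi_{\le N})(u_0,u_1)\|_{\mathcal{H}^s}^2\to0$ (here $(u_0,u_1)\in\mathcal{H}^s$ is used) together with Chebyshev's inequality, large enough that $p(B_{\mathrm{hi}})\ge 1/2$ for the high-frequency event $B_{\mathrm{hi}}=\{\|(1-\Pi_{\le N})(u_0^\omega,u_1^\omega)\|_{\mathcal{H}^s}<\epsilon/3\}$. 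On the finite-dimensional space of frequencies $|n|\le N$, the linear map $T$ sending the finite family $(\alpha_j,(\beta_{n,j},\gamma_{n,j})_{0<|n|\le N})$ to $\Pi_{\le N}(u_0^\omega,u_1^\omega)$ is an isomorphism, precisely because all the coefficients $a_j,b_{n,j},c_{n,j}$ are nonzero; let $x^\star$ be the $T$-preimage of $\Pi_{\le N}(w_0,w_1)$. By continuity of $T^{-1}$ on this finite-dimensional space there is $\delta>0$ such that on the event $B_{\mathrm{lo}}$ that each of the finitely many variables $\alpha_j,\beta_{n,j},\gamma_{n,j}$ with $|n|\le N$ lies within $\delta$ of the corresponding component of $x^\star$, one has $\|\Pi_{\le N}(u_0^\omega,u_1^\omega)-\Pi_{\le N}(w_0,w_1)\|_{\mathcal{H}^s}<\epsilon/3$; and $p(B_{\mathrm{lo}})$ is a finite product of quantities $\theta\big((x^\star_k-\delta,x^\star_k+\delta)\big)$, each strictly positive since $\mathrm{supp}\,\theta=\R$. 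The events $B_{\mathrm{lo}}$ and $B_{\mathrm{hi}}$ involve disjoint subfamilies of the i.i.d.\ variables, hence are independent, so $p(B_{\mathrm{lo}}\cap B_{\mathrm{hi}})=p(B_{\mathrm{lo}})\,p(B_{\mathrm{hi}})>0$; on this intersection the triangle inequality gives $\|(u_0^\omega,u_1^\omega)-(w_0,w_1)\|_{\mathcal{H}^s}<\epsilon$, which is \eqref{eq.dense}. The final density statement is then immediate: a set of full measure cannot avoid any open ball, and every open ball has positive measure.

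\emph{Main obstacle.} The norm identities and the finite-dimensional continuity argument are routine. The two points that genuinely need care are, for the first assertion, recognising $\mathcal{H}^{s'}$ as a tail event so that the zero--one law upgrades ``infinite in expectation'' to ``almost surely infinite''; and, for the second, making the high-frequency tail uniformly small with positive probability (this is where $(u_0,u_1)\in\mathcal{H}^s$ and the second-moment bound from \eqref{subgauss} enter) and then exploiting the independence of the low- and high-frequency blocks to combine $B_{\mathrm{lo}}$ and $B_{\mathrm{hi}}$.
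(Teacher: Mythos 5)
Your second assertion (full support) is argued correctly, and in essentially the standard way: the paper itself gives no proof of Proposition~\ref{prop.1.2} but defers to the appendices of \cite{BT1/2} and \cite{BT-JEMS}, where the same low/high frequency splitting appears — Chebyshev on the high modes using $(u_0,u_1)\in\mathcal{H}^s$ and the finite second moment from \eqref{subgauss}, positivity of a finite product of $\theta$-measures of intervals on the low modes, and independence of the two blocks. Two cosmetic remarks: you need continuity of $T$ (not of $T^{-1}$) to choose $\delta$, and because of the $\pm n$ redundancy in \eqref{coord} the map $T$ is only surjective onto the span of the low modes, not injective; surjectivity is all you use, so neither point matters.

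The first assertion, however, contains a genuine gap. From $\E[S_N]\to\infty$ you deduce $\E[S_\infty]=\infty$ and then assert $p(S_\infty=\infty)>0$. That implication is false for nonnegative random variables in general — a variable can be a.s.\ finite with infinite mean — and it fails even for sums of independent nonnegative terms: take $X_k\geq 0$ independent with $p(X_k=2^k)=2^{-k}$ and $X_k=0$ otherwise; then $\E\big[\sum_k X_k\big]=\infty$, yet by Borel--Cantelli the sum is a.s.\ finite. So the expectation computation plus the zero--one law does not close the argument; you must exploit the specific structure $S_\infty=\sum_k c_k^2 g_k^2$ with $(g_k)$ i.i.d. Two standard repairs: (i) Paley--Zygmund: by \eqref{subgauss} the fourth moments of the $g_k$ are bounded, so $\E[S_N^2]\leq C(\E[S_N])^2$ uniformly in $N$, hence $p\big(S_N\geq \tfrac12\E[S_N]\big)\geq c_0>0$ uniformly in $N$; since $S_\infty\geq S_N$ and $\E[S_N]\to\infty$ this yields $p(S_\infty=\infty)\geq c_0$, which your tail-event observation then upgrades to probability one. (ii) Laplace transform: by independence $\E[e^{-S_\infty}]=\prod_k\E[e^{-c_k^2 g_k^2}]$, and non-degeneracy of $\theta$ gives $\E[e^{-c^2g^2}]\leq 1-\eta\big(1-e^{-\epsilon^2 c^2}\big)$ for some $\epsilon,\eta>0$, so the product vanishes whenever $\sum_k c_k^2=\infty$, giving $S_\infty=\infty$ a.s.\ without any appeal to the zero--one law. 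Finally, as you implicitly note, some non-degeneracy of $\theta$ beyond \eqref{subgauss} must be assumed ($\theta=\delta_0$ satisfies \eqref{subgauss} and makes the first statement false); this is implicit in the proposition, so flagging it is fine, but the expectation-to-positive-probability step is the part that genuinely needs to be replaced.
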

We have the following global existence and uniqueness result for typical data with respect to an element of ${\mathcal M}^s$. 
\begin{theo}[existence and uniqueness]\label{main}
Let us fix $s\in(0,1)$ and  $\mu\in {\mathcal M}^s$. 
Then, there exists a full $\mu$ measure set $\Sigma \subset  {\mathcal H}^s(\T^3)$ 
such that for every $(v_0, v_1)\in \Sigma$, there exists a unique global solution $v$  of the nonlinear wave equation
 \begin{equation}\label{valna}
 (\partial_t^2-\Delta)v+v^3=0,\quad (v(0),\partial_t v(0))=(v_0,v_1)
 \end{equation}
satisfying
 $$
(v(t),\partial_t v(t)) \in \big(S(t)(v_0, v_1),\partial_t S(t)(v_0, v_1)\big)+ C(\R; H^1(\T^3) \times L^2(\T^3)).
 $$ 
Furthermore, if we denote by 
$$\Phi(t) (v_0, v_1)\equiv (v(t),\partial_t v(t))$$ 
the flow thus defined, the set $\Sigma$ is invariant by the map $\Phi(t)$, namely
$$
\Phi(t)(\Sigma)=\Sigma,\qquad \forall\, t\in \R.
$$
\end{theo}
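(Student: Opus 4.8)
The plan is to follow the Burq--Tzvetkov method: remove the rough linear evolution of the randomised data and solve for the smooth nonlinear remainder. Given $(v_0,v_1)\in\mathcal H^s$, write the solution of \eqref{valna} as $v=f+w$ with $f:=S(t)(v_0,v_1)$; then $w$ must solve
$$(\partial_t^2-\Delta)w+(f+w)^3=0,\qquad (w(0),\partial_t w(0))=(0,0),$$
so the theorem reduces to producing, for $(v_0,v_1)$ in a full-measure set, a unique global solution $w$ with $(w,\partial_t w)\in C(\R;H^1(\T^3)\times L^2(\T^3))$ --- precisely the class in the statement. The gain of a full derivative is possible because, although $f$ lies only in $\mathcal H^s$ with $s<1$, its randomisation makes it almost surely far more integrable: combining the Khinchin inequality (available thanks to \eqref{subgauss}) with the square-function argument behind Theorem~\ref{Strichartz} one gets, almost surely and on every interval $[-T,T]$, both $f\in L^3([-T,T];L^6(\T^3))$ and $\|f\|_{L^\infty([-T,T];L^q(\T^3))}\lesssim\sqrt q$ for every finite $q$ (probabilistic Strichartz estimates, which will be proved in the next section; I take them as input). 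I would let $\Sigma$ be the set of $(v_0,v_1)\in\mathcal H^s$ for which a global solution in the above class exists; the construction below shows $\Sigma$ contains the set on which these almost-sure bounds hold, so $\mu(\Sigma)=1$ for every $\mu\in\mathcal M^s$.

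The local theory for $w$ is a routine contraction, of exactly the kind in Proposition~\ref{prop.local}, run on the Duhamel formulation in $C([t_0,t_0+\tau];H^1)$. Expanding $(f+w)^3=f^3+3f^2w+3fw^2+w^3$, the source $f^3$ is a fixed function in $L^1_tL^2_x$ (indeed $\|f^3\|_{L^1_tL^2_x}=\|f\|_{L^3_tL^6_x}^3<\infty$ a.s.), so by \eqref{wave_regularity} the corresponding Duhamel term lies in $C_tH^1$; every remaining term contains at least one factor $w\in C_tH^1\subset C_tL^6$ and, after H\"older in time against the a.s.\ finite space-time norms of $f$, carries a positive power of $\tau$. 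Hence for $\tau$ small --- depending only on $\|(w(t_0),\partial_t w(t_0))\|_{H^1\times L^2}$ and on the $f$-norms over the interval --- the map is a contraction, and uniqueness in this class follows as in Chapter~\ref{Chapter1} (contraction on short intervals, then a covering argument). By the usual blow-up alternative, $w$ persists as long as $\|(w,\partial_t w)(t)\|_{H^1\times L^2}$ stays bounded.

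The heart of the matter, and the step I expect to be the main obstacle, is a global-in-time a priori bound on $E(w)(t):=\tfrac12\|\partial_t w(t)\|_{L^2}^2+\tfrac12\|\nabla w(t)\|_{L^2}^2+\tfrac14\|w(t)\|_{L^4}^4$. Multiplying the equation for $w$ by $\partial_t w$ and integrating over $\T^3$ gives
$$\frac{d}{dt}E(w)=-\int_{\T^3}\big(f^3+3f^2w+3fw^2\big)\,\partial_t w\,dx .$$
The trilinear term $\int fw^2\partial_t w$ is the dangerous one: estimated crudely it is cubic in $E(w)^{1/2}$, producing only a Riccati inequality that does not exclude finite-time blow-up. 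The way around this is to use the full probabilistic integrability of $f$: writing $\int fw^2\partial_t w\le\|f\|_{L^q_x}\|w\|_{L^{2r}_x}^2\|\partial_t w\|_{L^2_x}$ with $\tfrac1q+\tfrac1r+\tfrac12=1$ and interpolating $\|w\|_{L^{2r}_x}$ between $L^4$ (where $\|w\|_{L^4}^2\lesssim E^{1/2}$, controlled by the potential energy) and $L^6$ (controlled by $\|\nabla w\|_{L^2}$), one is led to a bound of the form $\|f(t)\|_{L^q_x}(1+E)^{1+3/q}$; choosing the exponent $q=q(t)\sim\log(2+E(t))$ converts this, via $\|f(t)\|_{L^q_x}\lesssim\sqrt q$, into $\lesssim\sqrt{\log(2+E)}\,(1+E)$. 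Since the other two terms are at most $\big(\|f(t)\|_{L^6}^3+\|f(t)\|_{L^6}^2\big)(1+E)$ with an $L^1_t$ coefficient, one obtains on $[0,T]$
$$\frac{d}{dt}(2+E)\ \lesssim\ \big(h(t)+C_T\sqrt{\log(2+E)}\big)(2+E),\qquad h\in L^1([-T,T]) ,$$
which is of Osgood type: dividing by $(2+E)\sqrt{\log(2+E)}$ and integrating gives $\sqrt{\log(2+E(t))}\le C+C\|h\|_{L^1}+C\,C_T\,t$, so $\sup_{[0,T]}E(w)<\infty$ (a double exponential bound in $T$, which is all we need). Fed back into the blow-up alternative, this yields the global solution $w$, hence $v=f+w$, for every $(v_0,v_1)$ satisfying the almost-sure bounds; the negative time direction is identical by time-reversibility. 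In particular $\Sigma$ contains that full-measure set, as claimed above.

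Finally, the dynamical statements. The semigroup law $\Phi(t+\tau)=\Phi(t)\circ\Phi(\tau)$ follows from the uniqueness just established, since the time-translate $v(\cdot+\tau)$ of a global solution $v$ is a global solution with data $(v(\tau),\partial_t v(\tau))=\Phi(\tau)(v_0,v_1)$. For the invariance $\Phi(t)(\Sigma)=\Sigma$, note that if $(v_0,v_1)\in\Sigma$ and $v=f+w$ is the corresponding global solution, then for any $\tau$,
$$v(t+\tau)=S(t)\big(v(\tau),\partial_t v(\tau)\big)+\Big(w(t+\tau)-S(t)\big(w(\tau),\partial_t w(\tau)\big)\Big) ,$$
using $S(t+\tau)(v_0,v_1)=S(t)\big(f(\tau),\partial_t f(\tau)\big)$ and linearity; the bracketed term lies in $C(\R;H^1)$ (and its time derivative in $C(\R;L^2)$) because $w(\cdot+\tau)\in C_tH^1$ and $S(\cdot)(w(\tau),\partial_t w(\tau))\in C_tH^1$, the latter since $(w(\tau),\partial_t w(\tau))\in H^1\times L^2$. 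Hence $v(\cdot+\tau)$ is a global solution with data $\Phi(\tau)(v_0,v_1)$ in the stated class, i.e.\ $\Phi(\tau)(v_0,v_1)\in\Sigma$, so $\Phi(\tau)(\Sigma)\subset\Sigma$; the reverse inclusion comes from time-reversibility applied to $\Phi(-\tau)$. This gives $\Phi(t)(\Sigma)=\Sigma$ for every $t\in\R$ and completes the proof.
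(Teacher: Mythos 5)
Your architecture is the same as the paper's (write $v=S(t)(v_0,v_1)+w$, feed the probabilistic Strichartz bounds into a contraction for $w$ in $C_t(H^1\times L^2)$, then an energy estimate on $\mathcal{E}(w)$ and a Gronwall-type globalisation, invariance via invariance of the good set under the free flow), but two points are genuine gaps as written. The first is the definition of $\Sigma$ and the uniqueness claim. You take $\Sigma$ to be the set of all data for which a global solution in the prescribed class exists; this makes existence and the inclusion $\Phi(\tau)(\Sigma)\subset\Sigma$ automatic, but the theorem also asserts uniqueness on $\Sigma$, and your invariance argument itself refers to ``the corresponding global solution'', i.e.\ it presupposes that $\Phi(t)$ is well defined on all of $\Sigma$. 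Your uniqueness proof (Gronwall on the difference of two remainders) needs the free part $f=S(t)(v_0,v_1)$ to enjoy the probabilistic integrability (say $f\in L^{3}_{loc}L^6$, or an $L^1_{loc}L^\infty$-type bound), and mere membership in your $\Sigma$ supplies no such information: for $s<1/2$ even $f^3$ need not be locally integrable, so on the part of your $\Sigma$ lying outside the good set neither uniqueness nor the flow property can be established with the stated tools. The paper fixes exactly this by taking $\Sigma=\Theta+\mathcal{H}^1$, where $\Theta$ is the full-measure set of data whose free evolution has the required space-time integrability: every point of $\Sigma$ then carries a decomposition (good free part) $+$ ($\mathcal{H}^1$ part), which gives both existence and uniqueness after rerunning your local and energy arguments for $w$ with nonzero $\mathcal{H}^1$ data (only $\mathcal{E}(w(0))<\infty$ is used), and invariance follows since $S(t)\Theta=\Theta$ while the nonlinear evolution only adds a $C_t\mathcal{H}^1$ remainder. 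You should adopt this definition; your time-translation identity for $v(\cdot+\tau)$ then fits in verbatim.

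The second point is the globalisation step. Since $s>0$, the paper chooses $\sigma\in(0,s]$ and $q$ with $\sigma>3/q$, uses $W^{\sigma,q}(\T^3)\subset L^\infty(\T^3)$ and Theorem~\ref{lll_bis} to get, almost surely, $\|S(t)(v_0,v_1)\|_{L^\infty(\T^3)}\in L^1_{loc}(dt)$; the dangerous term $\int f w^2\partial_t w$ is then bounded by $f(t)\,\mathcal{E}(w)$ with $f\in L^1_{loc}$, and plain Gronwall closes the argument. Your Osgood/Yudovich variant (choosing $q(t)\sim\log(2+E(t))$ and invoking $\|f(t)\|_{L^q}\lesssim\sqrt{q}$) is the device needed at $s=0$ (cf.\ \cite{BT-JEMS}), so it is a legitimate alternative route, but the input you ``take as given'' is stronger than what the probabilistic Strichartz estimates of the next section actually provide: Theorem~\ref{lll} gives $L^{p_1}_tL^{p_2}_x$ bounds with $p_1<\infty$ and, for each fixed pair $(p_1,p_2)$, an almost sure statement, whereas your pointwise-in-$t$ choice of $q(t)$ requires a single random constant with $\sup_{|t|\le T}\|f(t)\|_{L^q_x}\leq C(\omega,T)\sqrt{q}$ for all $q$ simultaneously. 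This can be arranged (e.g.\ the $p=q$ moment bound plus a Borel--Cantelli argument over integer $q$, with extra care to upgrade to $L^\infty_t$, or by rearranging the Osgood step so that only $L^{p_1}_tL^q_x$ norms with finite $p_1$ and one $q$ per time interval are used), but it is not automatic; and since $s>0$ here, the simpler $L^1_tL^\infty_x$ bound used by the paper avoids the issue entirely and yields a single-exponential rather than double-exponential growth of the energy.
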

The next statement gives quantitative bounds on the solutions.
\begin{theo}[quantitative bounds]\label{bornes}
Let us fix $s\in(0,1)$ and  $\mu\in {\mathcal M}^s$.  Let $\Sigma$ be the set constructed in  Theorem~\ref{main}.
Then for every $\varepsilon>0$ there exist $C, \delta >0$ such that for every $(v_0, v_1)\in \Sigma$, 
there exists $M>0$ such that the global solution to~\eqref{valna} constructed in Theorem~\ref{main} satisfies 
$$
v(t)= S(t) \Pi_0^{\perp}(v_0, v_1)+ w(t),
$$
with
\begin{equation*}
\|(w(t), \partial_t w(t)) \|_{\mathcal{H}^1(\T^3)} \leq 
C (M+ |t|)^{\frac {1-s} s + \varepsilon} 
\end{equation*}
and 
\begin{equation*}
\mu ((v_0,v_1) \,:\,M>\lambda) \leq  C e^{-\lambda^\delta}. 
\end{equation*}
\end{theo}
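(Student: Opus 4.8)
The plan is to quantify the global solution of Theorem~\ref{main}. Put $v_{\mathrm{lin}}(t)=S(t)\Pi_0^\perp(v_0,v_1)$ and $w=v-v_{\mathrm{lin}}$; by Theorem~\ref{main}, and since the zero Fourier mode of $S(t)(v_0,v_1)$ is the affine (hence smooth, linearly growing) function $a_0+ta_1$, one has $(w,\partial_t w)\in C(\mathbb R;\mathcal H^1(\T^3))$, and $w$ solves the forced defocusing cubic wave equation
\[
(\partial_t^2-\Delta)w+(w+v_{\mathrm{lin}})^3=0,\qquad (w,\partial_t w)|_{t=0}=\Pi_0(v_0,v_1),
\]
the forcing $(w+v_{\mathrm{lin}})^3-w^3=3w^2v_{\mathrm{lin}}+3wv_{\mathrm{lin}}^2+v_{\mathrm{lin}}^3$ involving only the rough datum $v_{\mathrm{lin}}\in\mathcal H^s$, $s<1$. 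The point is that $v_{\mathrm{lin}}$, being a randomised linear evolution, has much better space-time integrability than its Sobolev regularity suggests, so I would split the proof into (i) quantitative almost-sure bounds on $v_{\mathrm{lin}}$, which define the random constant $M$, and (ii) a deterministic a priori estimate for $w$ driven by those bounds.

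For (i), I would first fix the finitely many space-time norms $Z$ that the nonlinear estimates of step (ii) will use --- Strichartz-type norms $L^p_t W^{\sigma,q}_x$ with $\sigma<s$ on the extremal admissible pairs, together with $L^\infty_{t,x}$, which is available because $s>0$ and the randomisation recovers almost $s$ spatial derivatives in every $L^q_x$. The subgaussian hypothesis \eqref{subgauss} yields Khinchin-type inequalities with constants $O(\sqrt r)$ in $L^r$, hence for every $j\in\mathbb Z$ a Gaussian large-deviation bound $\mu\big(\|v_{\mathrm{lin}}\|_{Z([j,j+1])}>\lambda\big)\le Ce^{-c\lambda^2}$. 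I would then set
\[
M=\big|\Pi_0(v_0,v_1)\big|+\sup_{j\in\mathbb Z}\,\langle\log(2+|j|)\rangle^{-1/2}\,\|v_{\mathrm{lin}}\|_{Z([j,j+1])},
\]
so that summing the above over $j$ gives $\mu(M>\lambda)\le Ce^{-\lambda^\delta}$, the logarithmic weight being exactly what makes the series converge with a stretched-exponential rate, at the harmless cost of $\log$ factors in the growth bound below; the interplay between the growth of this weight, which costs powers of time, and the summability of the series is the origin of the $\varepsilon$--$\delta$ trade-off in the statement. In particular $\|v_{\mathrm{lin}}\|_{Z([0,T])}\lesssim M\,T^{1/p}(\log T)^{O(1)}$ whenever the time exponent $p$ of $Z$ is finite.

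For (ii), I would run an energy estimate for the $\mathcal H^1\times L^4$-type energy $\mathcal E(t)=\frac12\|(\partial_t w,\nabla w)(t)\|_{L^2}^2+\frac14\|w(t)\|_{L^4}^4$, augmented by lower-order correction terms involving $v_{\mathrm{lin}}$ chosen so as to neutralise the interactions that are linear in $\mathcal E$. Multiplying the $w$-equation by $\partial_t w$, integrating, and bounding the forcing with Hölder's inequality, the embedding $H^1(\T^3)\subset L^6(\T^3)$, and the Strichartz norms of $w$ on a bounded time interval (controlled by $\mathcal E$ via a local well-posedness argument for the $w$-equation in the spirit of Theorem~\ref{prop.local.low}), one should reach a closed inequality
\[
\sup_{[0,T]}\mathcal E\;\le\;C\big(1+|\Pi_0(v_0,v_1)|^{4}\big)+C\,\|v_{\mathrm{lin}}\|_{Z([0,T])}^{\,a}\Big(1+\sup_{[0,T]}\mathcal E\Big)^{\theta}
\]
with a subcritical power $\theta=\theta(s)<1$ coming from the Strichartz bookkeeping ($\theta=0$ is attainable at $s=\frac12$, and $\theta\uparrow1$ as $s\downarrow0$). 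Since $\mathcal E$ is a priori finite on $[0,T]$ by Theorem~\ref{main}, Young's inequality then yields $\sup_{[0,T]}\mathcal E\lesssim1+\|v_{\mathrm{lin}}\|_{Z([0,T])}^{\,a/(1-\theta)}$; choosing the admissible pairs so that $a/\big(p(1-\theta)\big)$ matches $(1-s)/s$, and using $\|(w,\partial_t w)\|_{\mathcal H^1}^2\lesssim1+\mathcal E$ together with the step-(i) bound on $\|v_{\mathrm{lin}}\|_{Z([0,T])}$, gives $\|(w,\partial_t w)(t)\|_{\mathcal H^1}\le C(M+|t|)^{(1-s)/s+\varepsilon}$ (negative times by time reversibility). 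Finally, the full-measure set $\Sigma$ of Theorem~\ref{main} may be taken inside $\{M<\infty\}$, and the step-(i) tail of $M$ is the second asserted bound.

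The step I expect to be the real obstacle is the energy estimate of (ii), specifically the interactions quadratic in $v_{\mathrm{lin}}$: the natural integrations by parts that would cancel $\int w\,v_{\mathrm{lin}}^2\,\partial_t w$ reintroduce $\partial_t v_{\mathrm{lin}}$, which for $s<1$ is a genuine distribution with no useful integrability even after randomisation, so the correction terms --- or a more refined energy-type quantity exploiting $\partial_t^2 v_{\mathrm{lin}}=\Delta v_{\mathrm{lin}}$ --- must be designed to avoid $\partial_t v_{\mathrm{lin}}$ altogether while still producing a power $\theta<1$. It is precisely here that one must invest the probabilistic gain of almost $s$ derivatives on $v_{\mathrm{lin}}$ against the one-derivative deficit relative to the $\mathcal H^1$ energy, and it is in this bookkeeping --- which admissible pairs enter, and the verification that the Strichartz norms of $w$ on a bounded interval are genuinely dominated by $\mathcal E$ --- that the threshold on $s$ and the exponent $(1-s)/s$ are pinned down.
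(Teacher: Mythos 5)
The first thing to note is that the paper itself contains no proof of Theorem~\ref{bornes}: the Notes section explicitly defers the quantitative bounds (and the case $s=0$) to \cite{BT-JEMS}, so there is no in-paper argument to compare with and your proposal has to stand on its own. Your step (i) is fine and is essentially the paper's Theorem~\ref{lll}: defining $M$ through weighted suprema of unit-interval space-time norms of $S(t)\Pi_0^\perp(v_0,v_1)$, with a union bound over the intervals, does produce a random constant with stretched-exponential tails, and the logarithmic weight only costs an $\varepsilon$ in the final exponent.

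The gap is in step (ii), and it is exactly the point you yourself flag as ``the real obstacle''. The closed inequality $\sup_{[0,T]}\mathcal E\le C+C\|v_{\mathrm{lin}}\|_{Z([0,T])}^{a}\bigl(1+\sup_{[0,T]}\mathcal E\bigr)^{\theta}$ with $\theta<1$ is asserted, not proved, and it cannot be reached by the H\"older/Sobolev/Strichartz bookkeeping you describe. The interaction $3\int_{\T^3}v_{\mathrm{lin}}\,w^2\,\partial_t w$ (linear in $v_{\mathrm{lin}}$, quadratic in $w$) can only be paired against the energy as $\|v_{\mathrm{lin}}\|_{L^\infty_x}\|w\|_{L^4}^2\|\partial_t w\|_{L^2}\lesssim \|v_{\mathrm{lin}}\|_{L^\infty_x}\,\mathcal E$: any H\"older split putting $v_{\mathrm{lin}}$ in $L^q_x$ with $q<\infty$ forces $w\in L^p_x$ with $p>4$, and interpolating $L^p$ between $L^4$ and $L^6\subset H^1$ gives a power of $\mathcal E$ equal to $1+\lambda/2\ge 1$, which only increases as $q$ decreases. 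So the best exponent available by these means is $\theta=1$, and Gronwall then yields a factor $\exp\bigl(C\int_0^T\|v_{\mathrm{lin}}\|_{L^\infty_x}\bigr)\sim e^{CMT}$, i.e. exponential growth rather than $(M+|t|)^{\frac{1-s}{s}+\varepsilon}$; integrating by parts in time reintroduces $\partial_t v_{\mathrm{lin}}\in H^{s-1}$, as you note, and no correction term is exhibited. Thus the heart of the theorem is missing.

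For orientation, the proof the paper points to in \cite{BT-JEMS} does not try to beat $\theta=1$; it shortens the time over which the Gronwall factor acts by a frequency decomposition of the random datum at a scale depending on the target time. One keeps $S(t)\Pi_0^\perp P_{\le N}(v_0,v_1)$ together with the finite-energy unknown (its $\mathcal H^1$ norm is a random multiple of $N^{1-s}$) and treats only the high-frequency part $S(t)\Pi_0^\perp P_{>N}(v_0,v_1)$ as forcing; the flexibility $\sigma\le s$ in Theorem~\ref{lll} makes its Strichartz-type norms of size $O(MN^{-(s-\sigma)})$, so that $\int_0^T f$ remains $O(1)$ for $T\lesssim N^{s-}$ and the exponential factor is harmless. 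Balancing $N^{1-s}$ against $T\sim N^{s}$, i.e. $N\sim |t|^{1/s}$, produces precisely the exponent $\frac{1-s}{s}+\varepsilon$, and also explains the restriction $s>0$ and the appearance of $\delta=\delta(\varepsilon)$ in the tail bound after re-absorbing powers of the random constant. Some mechanism of this kind, replacing your unproved $\theta<1$ estimate, is what your write-up still needs.
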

\begin{rema}
{\rm
Recall that $\Pi_0$ is the orthogonal projector on the zero Fourier mode and $\Pi_0^{\perp} = {\text{Id}} - \Pi_0$.
}
\end{rema}
We now further discuss the uniqueness of the obtained solutions. For $s>1/2$, we have the following statement. 
\begin{theo}[unique limit of smooth solutions for $s>1/2$]\label{uniqueness_1}
Let $s\in (1/2,1)$.
With the notations of the statement of Theorem~\ref{main}, let  us fix an initial datum $(v_0, v_1)\in \Sigma$ with a corresponding global solution $v(t)$.
Let $(v_{0,n},v_{1,n})_{n=1}^{\infty}$ be a sequence of ${\mathcal H}^1(\T^3)$ such that
$$
\lim_{n\rightarrow\infty}\|(v_{0,n}-v_0,v_{1,n}-v_1)\|_{{\mathcal H}^s(\T^3)}=0\,.
$$
Denote by $v_n(t)$ the solution of the cubic defocusing wave equation with data $(v_{0,n}, v_{1,n})$ defined in
Theorem~\ref{prop.global}. Then for every $T>0$,
$$
\lim_{n\rightarrow\infty}\|(v_{n}(t)-v(t),\partial_t v_{n}(t)-\partial_t v(t))\|_{L^\infty([0,T]; {\mathcal H}^s(\T^3))}=0\,.
$$
\end{theo}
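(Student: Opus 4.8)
Since $s>1/2$ lies in the deterministic well-posedness range of Chapter~\ref{Chapter1}, this is essentially a quantitative continuous-dependence statement. The plan is to show that on any bounded interval $[0,T]$ the solutions $v$ and $v_n$ are produced by the \emph{same} short-time perturbative scheme — the one behind the proof of Theorem~\ref{prop.local.low} — so that the difference estimate \eqref{mouh} transports the (vanishing) data difference $d_n:=\|(v_{0,n}-v_0,v_{1,n}-v_1)\|_{{\mathcal H}^s}$ across finitely many time steps. The only ingredient not already in Chapter~\ref{Chapter1} is that the global probabilistic solution $v$ belongs, on every bounded interval, to the deterministic uniqueness class; this will be read off from its structure $v=S(t)(v_0,v_1)+w$ with $(w,\partial_t w)\in C(\R;{\mathcal H}^1)$, granted by Theorem~\ref{main}.

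\textbf{Step 1: $v$ is admissible on $[0,T]$.} Fix $T>0$. Since $\bar S(t)$ is bounded on ${\mathcal H}^s$ and $(w,\partial_t w)$ is continuous with values in ${\mathcal H}^1$, the quantity $R_0:=1+\sup_{t\in[0,T]}\|(v(t),\partial_t v(t))\|_{{\mathcal H}^s}$ is finite. I would also check $v\in L^{p^\star}([0,T];L^{q^\star}(\T^3))$ for the admissible couple $(p^\star,q^\star)$ of Corollary~\ref{sun}: the free part $S(t)(v_0,v_1)$ is controlled on each unit subinterval by Theorem~\ref{Strichartz} (using $\bar S(t+\ell)=\bar S(t)\circ\bar S(\ell)$ and the ${\mathcal H}^s$-boundedness of $\bar S(\ell)$ to reset the data), then summed over the $\lesssim T$ subintervals; the $w$-part is handled by H\"older in time and the embedding $H^1\subset L^{q^\star}$ (note $q^\star=6/(2-s)\le 6$). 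Hence $v|_{[0,T]}$ lies in the uniqueness class of Corollary~\ref{sun}, and so does $v_n$, because $v_n\in C([0,T];H^1)\subset L^{p^\star}L^{q^\star}$ on every subinterval.

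\textbf{Steps 2--3: propagation along a uniform grid.} Let $c,\gamma,C$ be the constants of Theorem~\ref{prop.local.low}, set $R:=4R_0$, $\tau:=cR^{-\gamma}$, and cover $[0,T]$ by $N:=\lceil T/\tau\rceil$ intervals $I_k=[k\tau,(k+1)\tau]\cap[0,T]$, $0\le k\le N-1$. Let $C_1$ be the constant of \eqref{mouh}. I would prove by induction on $k$ that, whenever $C_1^{\,N}d_n\le R_0$,
\begin{equation*}
\|(v_n-v,\partial_t v_n-\partial_t v)\|_{L^\infty(I_k;{\mathcal H}^s)}\le C_1^{\,k+1}d_n .
\end{equation*}
Indeed, by the induction hypothesis (or, for $k=0$, since $(v(0),\partial_t v(0))=(v_0,v_1)$) one has $\|(v_n(k\tau),\partial_t v_n(k\tau))\|_{{\mathcal H}^s}\le R_0+C_1^{\,k}d_n\le R$, so Theorem~\ref{prop.local.low} with initial time $k\tau$ and $\Lambda=R$ (using time-translation invariance as in Proposition~\ref{prop.local}) yields solutions on all of $I_k$ with data $(v(k\tau),\partial_t v(k\tau))$, resp. $(v_n(k\tau),\partial_t v_n(k\tau))$, bounded by $CR$ in $X^s_{I_k}$; by Corollary~\ref{sun} and Step~1 these coincide with $v|_{I_k}$, resp. $v_n|_{I_k}$; and \eqref{mouh} together with the companion bound for the time derivative (obtained by differentiating the Duhamel formula) gives the claimed inequality. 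Summing over the finitely many $k$ gives $\|(v_n-v,\partial_t v_n-\partial_t v)\|_{L^\infty([0,T];{\mathcal H}^s)}\le C_1^{\,N}d_n$, and since $N=N(T,(v_0,v_1),s)$ is fixed while $d_n\to 0$, the right-hand side tends to $0$ as $n\to\infty$, which is the assertion.

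\textbf{Main difficulty.} The substantive point — and the only place the probabilistic construction enters — is Step~1: one must know that the solution produced by Theorem~\ref{main} is an honest element of the deterministic solution spaces of Chapter~\ref{Chapter1}, with finite norm on every bounded time interval, so that the short-time contraction behind Theorem~\ref{prop.local.low} applies to it exactly as to a smooth solution; this is precisely what the splitting $v=S(t)(v_0,v_1)+({\mathcal H}^1\text{-valued remainder})$ buys. A secondary point requiring care is that, since $(v_0,v_1)\notin{\mathcal H}^1$ in general, the ${\mathcal H}^1$ global theory cannot be used for $v_n$ with constants uniform in $n$: the grid spacing $\tau$ must be chosen from the ${\mathcal H}^s$ norm of the \emph{limit} $v$ alone, and the induction is exactly what keeps $\|(v_n(k\tau),\partial_t v_n(k\tau))\|_{{\mathcal H}^s}$ trapped below $R$ along the whole grid.
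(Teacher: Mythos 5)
Your proposal is correct and follows essentially the same route as the paper: first check that the probabilistic solution $v$ lies in the deterministic class $C([0,T];{\mathcal H}^s)\cap L^{p^\star}([0,T];L^{q^\star})$ via the decomposition $v=S(t)(v_0,v_1)+w$ from Theorem~\ref{main} (the paper invokes the probabilistic Strichartz bounds of Theorem~\ref{lll} for the free part, while your deterministic bound via Theorem~\ref{cor.stri}, estimate \eqref{Xs}, rather than Theorem~\ref{Strichartz} alone, works just as well since $(p^\star,q^\star)$ is $s$-admissible and $(v_0,v_1)\in{\mathcal H}^s$), and then iterate Theorem~\ref{prop.local.low} together with the uniqueness of Corollary~\ref{sun} and the continuous-dependence estimate \eqref{mouh} over a grid of step $\tau$ determined by $\sup_{[0,T]}\|(v,\partial_t v)\|_{{\mathcal H}^s}$. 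Your explicit induction keeping $\|(v_n(k\tau),\partial_t v_n(k\tau))\|_{{\mathcal H}^s}$ below $R$ is exactly the bookkeeping the paper leaves implicit.
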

Thanks to  Theorem~\ref{ill_bis}, we know that for $s\in (0,1/2)$ the result of Theorem~\ref{uniqueness_1} cannot hold true !
We only have a partial statement. 
\begin{theo}[unique limit of particular smooth solutions for $s<1/2$]\label{uniqueness_2}
Let $s\in (0,1/2)$.
With the notations of the statement of Theorem~\ref{main}, let  us fix an initial datum $(v_0, v_1)\in \Sigma$ with a corresponding global solution $v(t)$.
Let $(v_{0,n},v_{1,n})_{n=1}^{\infty}$ be the sequence of $C^\infty(\T^3)\times C^\infty(\T^3)$ defined as the usual regularisation by convolution, i.e. 
$$
v_{0,n}=v_{0}\star \rho_n,\quad  v_{1,n}=v_{1}\star \rho_n\,,
$$
where $(\rho_n)_{n=1}^{\infty}$ is an approximate identity. 
Denote by $v_n(t)$ the solution of the cubic defocusing wave equation with data $(v_{0,n}, v_{1,n})$ defined in
Theorem~\ref{prop.global}. Then for every $T>0$,
$$
\lim_{n\rightarrow\infty}\|(v_{n}(t)-v(t),\partial_t v_{n}(t)-\partial_t v(t))\|_{L^\infty([0,T]; {\mathcal H}^s(\T^3))}=0\,.
$$
\end{theo}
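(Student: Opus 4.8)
The plan is to exploit the one feature that distinguishes the convolution regularisation from a general $\mathcal{H}^s$-approximation (which, by Theorem~\ref{ill_bis}, cannot work): convolution in $x$ commutes with the free propagator. Write $f(t)\equiv S(t)(v_0,v_1)$ and $w\equiv v-f$, so that by Theorem~\ref{main} one has $(w,\partial_t w)\in C(\R;\mathcal{H}^1(\T^3))$ and $w$ solves the forced equation $(\partial_t^2-\Delta)w+(f+w)^3=0$ with zero Cauchy data. Since $S(t)$ is a Fourier multiplier, $S(t)(v_0\star\rho_n,v_1\star\rho_n)=\rho_n\star f(t)=:f_n(t)$, and hence $w_n\equiv v_n-f_n$ (with $v_n$ the global smooth solution of Theorem~\ref{prop.global}) solves the same forced equation with $f$ replaced by $f_n$, still with zero data. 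It therefore suffices to prove $f_n\to f$ in $C([0,T];H^s(\T^3))$ and $w_n\to w$ in $C([0,T];H^1(\T^3))$ and then add the two, since $v_n-v=(f_n-f)+(w_n-w)$ and $\|\cdot\|_{H^s}\lesssim\|\cdot\|_{H^1}$; negative times follow from time reversibility.

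First I would treat the free parts. Convergence $f_n\to f$ in $C([0,T];H^s)$ is the standard fact that $\rho_n\star g\to g$ in $H^s$ for $g\in H^s$, applied with $g=f(t)$, the uniformity in $t\in[0,T]$ coming from dominated convergence in the frequency variable together with the $t$-uniform bound $\|f(t)\|_{H^s}\lesssim\|v_0\|_{H^s}+\|v_1\|_{H^{s-1}}$. The crucial additional point, and the reason the statement survives for $s<1/2$, is that the same convergence holds in the probabilistic Strichartz space $X$ in which $f$ almost surely lives when $(v_0,v_1)\in\Sigma$ (the $L^q_tL^r_x$-type space built in the proof of Theorem~\ref{main}): for fixed $t$ one has $\|\rho_n\star f(t)-f(t)\|_{L^r_x}\to0$, and $\|\rho_n\star f(t)\|_{L^r_x}\le\|f(t)\|_{L^r_x}$ by Young's inequality, so dominated convergence in $t$ gives $f_n\to f$ in $X$; in particular $\sup_n\|f_n\|_X<\infty$.

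Next I would transfer this to $w_n$. Since $v_n$ is a global smooth solution and $f_n$ is smooth (convolution of $f$ with a smooth kernel), $w_n=v_n-f_n$ is a smooth solution of its forced equation, so the energy identity obtained by multiplying by $\partial_t w_n$ is licit; the defocusing cubic term is absorbed into $\tfrac12\|\partial_t w_n\|_{L^2}^2+\tfrac12\|\nabla w_n\|_{L^2}^2+\tfrac14\|w_n\|_{L^4}^4$, and the mixed terms $\int f_n^2w_n\partial_t w_n$, $\int f_nw_n^2\partial_t w_n$, $\int f_n^3\partial_t w_n$ are estimated by Hölder and $H^1\hookrightarrow L^6$ in terms of the $X$-norm of $f_n$ and of $\|(w_n,\partial_t w_n)\|_{\mathcal{H}^1}$. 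Using $\sup_n\|f_n\|_X<\infty$ and Gronwall's lemma I expect the bound $\sup_n\|(w_n,\partial_t w_n)\|_{L^\infty([0,T];\mathcal{H}^1)}=:R_T<\infty$. Then $z_n\equiv w_n-w$ has zero data and solves $(\partial_t^2-\Delta)z_n=-[(f_n+w_n)^3-(f+w)^3]$; factoring $(f_n+w_n)^3-(f+w)^3=[(f_n-f)+z_n]\,Q_n$ with $Q_n$ quadratic in $f_n,w_n,f,w$, the energy estimate for $z_n$ (adding $\|z_n\|_{L^2}$ to the energy and using $\|z_n\|_{L^6}\lesssim\|z_n\|_{H^1}$) leads to an inequality of the form $\|(z_n,\partial_t z_n)(t)\|_{\mathcal{H}^1}^2\lesssim\varepsilon_n^2+\int_0^t\kappa_n(\tau)\|(z_n,\partial_t z_n)(\tau)\|_{\mathcal{H}^1}^2\,d\tau$, where $\varepsilon_n\lesssim\|f_n-f\|_X\to0$ and $\sup_n\int_0^T\kappa_n<\infty$. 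Gronwall's lemma then gives $\|(z_n,\partial_t z_n)\|_{L^\infty([0,T];\mathcal{H}^1)}\to0$, which finishes the argument.

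The main obstacle I anticipate is the globalisation hidden in the last two steps: the Gronwall bounds must hold on the \emph{whole} fixed interval $[0,T]$ and uniformly in $n$, whereas a naive fixed-point argument only gives a short time, so one has to track how the bounds for $w_n$ and $z_n$ grow with $T$, using the quantitative control of Theorem~\ref{bornes} and the uniform $X$-bounds on $f_n$. The conceptually delicate point is that $\|v_n\|_{L^\infty_tH^1}$ itself \emph{diverges} as $n\to\infty$ (since $v_0\notin H^1$ when $s<1/2$), so all the uniformity must be extracted from the good part $w_n=v_n-f_n$, and one must never estimate $v_n$ directly.
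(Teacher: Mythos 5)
Your proposal is correct and follows essentially the same route as the paper: the same decomposition $v_n=S(t)(v_{0,n},v_{1,n})+w_n$ with the key identity $S(t)(v_0\star\rho_n,v_1\star\rho_n)=\rho_n\star S(t)(v_0,v_1)$, an energy estimate plus Gronwall giving an $n$-uniform bound on $\|(w_n,\partial_t w_n)\|_{L^\infty([0,T];\mathcal{H}^1)}$ from the uniformly integrable forcing norms, then a second energy/Gronwall estimate for $w_n-w$ driven by $\|S(t)(v_0-v_{0,n},v_1-v_{1,n})\|_{L^6}\to0$ in the time-integrated sense, and finally adding the convergence of the free parts in $\mathcal{H}^s$. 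Your anticipated obstacles (uniformity on all of $[0,T]$, never estimating $v_n$ directly) are exactly the points the paper's argument handles in the same way.
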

\begin{rema}
{\rm 
We emphasise that the result of  Theorem~\ref{ill_bis} applies for the elements of $\Sigma$. More precisely, thanks to   Theorem~\ref{ill_bis}, we have that 
for every $(v_0, v_1)\in \Sigma$ there is a sequence  $(v_{0,n},v_{1,n})_{n=1}^{\infty}$  of elements of $C^\infty(\T^3)\times C^\infty(\T^3)$
such that 
$$
\lim_{n\rightarrow\infty}\|(v_{0,n}-v_0, v_{1,n}-v_1)\|_{{\mathcal H}^s(\T^3)}=0
$$
but such that if we denote by  $v_n(t)$ the solution of the cubic defocusing wave equation with data $(v_{0,n}, v_{1,n})$ defined in
Theorem~\ref{prop.global} then for every $T>0$, 
$$
\lim_{n\rightarrow\infty}\|(v_{n}(t),\partial_t v_{n}(t))\|_{L^\infty([0,T];  {\mathcal H}^s(\T^3))}=\infty\,.
$$
Therefore the choice of the particular regularisation of the initial data in Theorem~\ref{uniqueness_2} is of key importance.
It would be interesting to classify the  "admissible type of regularisations"  allowing to get a statement such as  Theorem~\ref{uniqueness_2} .
}
\end{rema}
\begin{rema}
{\rm 
We can also see the solutions constructed in Theorem~\ref{main} as  the (unique) limit as $N$ tends to infinity of the solutions of the following truncated versions of the cubic defocusing wave equation.
$$
 (\partial_t^2-\Delta)S_N u+S_{N}((S_N u)^3)=0,
$$
where $S_N$ is a Fourier multiplier localising on modes  of size $\leq N$. 
The convergence of a subsequence can be obtained by a compactness argument (cf. \cite{BTT-smf}).
The convergence of the whole sequence however requires strong solutions techniques. 
}
\end{rema}
The next question is whether some sort of continuous dependence with respect to the initial data survives in the context of  Theorem~\ref{main}.
In order to state our result concerning the continuous dependence with respect to the initial data, we  
recall that for any event $B$ (of non vanishing probability) the conditioned probability $p( \cdot \vert B)$ is the natural probability measure supported by $B$, defined by 
$$ p( A\vert B) = \frac{ p (A\cap B) } {p( B)}\,.
$$ 
We have the following statement.
\begin{theo}[conditioned continuous dependence]\label{th_continuity}
Let us fix  $s\in (0,1)$,  let $A>0$, let $B_A\equiv (V\in {\mathcal H}^s : \|V\|_{{\mathcal H}^s}\leq A)$  
be the closed ball of radius $A$ centered at the origin of ${\mathcal H}^s$ and let $T>0$. 
Let  $\mu\in {\mathcal M}^s$ and suppose that $\theta$ (the law of our random variables) is symmetric.
Let $\Phi(t)$ be the flow of the cubic wave equations defined $\mu$ almost everywhere in Theorem~\ref{main}.
Then for $\varepsilon, \eta>0$,  we have the bound 
\begin{multline}\label{dimanche}
 \mu\otimes\mu\Big((V,V')\in {\mathcal H}^s\times {\mathcal H}^s\,:
  \| \Phi(t) (V) - \Phi(t) (V') \|_{X_T} >\varepsilon  \Bigm {\vert} 
  \\
  \| V-V'\|_{\mathcal{H}^s}< \eta \,\,{\rm and}\,\,( V,V')\in B_A\times B_A   \Big) \leq  g(\varepsilon,\eta),
\end{multline}
where
$
X_{T}\equiv (C ([0,T]; \mathcal{H}^s)\cap L^4([0,T]\times\T^3))\times C([0,T];H^{s-1})
$
and  $g(\varepsilon,\eta)$ is such that
$$
\lim_{\eta\rightarrow 0}g(\varepsilon,\eta)=0,\qquad \forall\,\varepsilon>0.
$$
Moreover, if for $s\in(0,1/2)$  we assume in addition that the support of $\mu$ is the whole ${\mathcal H}^s$ (which is true if in the definition of the measure $\mu$, we have $a_{i}, b_{n,j}, c_{n,j} \neq 0,  \forall n \in \mathbb{Z}^d$ and the support of the distribution function of the random variables is $\mathbb{R}$), then there exists 
$\varepsilon>0$ such that for every $\eta>0$ the left hand-side in \eqref{dimanche} is positive.
\end{theo}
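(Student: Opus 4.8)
We prove the two assertions separately, working throughout with the decomposition furnished by Theorem~\ref{bornes}.

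\textbf{The conditional bound \eqref{dimanche}.} The guiding idea is that, as $\eta\to 0$, the probability measure $\mu\otimes\mu$ conditioned on $\{\|V-V'\|_{\mathcal H^s}<\eta\}\cap(B_A\times B_A)$ concentrates on the diagonal $\{V=V'\}$, on which the flow difference vanishes; the point is to make this quantitative despite the absence of continuity of $\Phi(t)$ on $\mathcal H^s$ for $s<1/2$. By Theorem~\ref{bornes} and the construction of $\Sigma$ in Theorem~\ref{main}, for $V=(v_0,v_1)\in\Sigma$ we write $\Phi(t)(V)=(v(t),\partial_t v(t))$ with $v(t)=S(t)\Pi_0^{\perp}(v_0,v_1)+w(t)$, $(w,\partial_t w)\in C([0,T];\mathcal H^1(\T^3))$, the norm $\|(w,\partial_t w)\|_{C([0,T];\mathcal H^1)}$ and the $L^4([0,T]\times\T^3)$ norm of $v$ being bounded by a random variable with stretched exponential tails under $\mu$ (for the $L^4$ bound one uses $\mathcal H^1\subset L^6$ and the probabilistic Strichartz bound for $S(t)\Pi_0^\perp(v_0,v_1)$, i.e. the $\T^3$ analogue of Remark~\ref{beat_sob}). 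For $V,V'\in\Sigma$, the difference $w-w'$ solves a wave equation with zero Cauchy data and source a sum of trilinear expressions in $v,v',v-v'$; expanding $v=S(t)\Pi_0^\perp V+w$ and using the H\"older inequality together with the inhomogeneous Strichartz estimate \eqref{eq.inhomog}, one obtains a continuous increasing function $\Psi$ so that, on the event
$$
E_R\equiv\big\{(V,V')\,:\,\text{the Strichartz norms of }S(t)\Pi_0^\perp V\text{ and }S(t)\Pi_0^\perp V'\text{ on }[0,T]\text{ do not exceed }R\big\},
$$
one has, for $(V,V')\in E_R\cap(B_A\times B_A)\cap(\Sigma\times\Sigma)$,
$$
\|\Phi(t)(V)-\Phi(t)(V')\|_{X_T}\leq\Psi(R,A,T)\Big(\|V-V'\|_{\mathcal H^s}+\big\|S(t)\Pi_0^\perp(V-V')\big\|_{L^4([0,T]\times\T^3)}\Big),
$$
while the probabilistic estimates underlying Theorem~\ref{main} give $(\mu\otimes\mu)(E_R^{\,c})\leq Ce^{-cR^{\delta}}$.

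It then remains to control $\|S(t)\Pi_0^\perp(V-V')\|_{L^4}$ on the conditioning set. Since $\theta$ is symmetric, the Fourier coefficients of $V-V'$ are, mode by mode, differences of two independent $\theta$-distributed variables, hence an i.i.d. symmetric family still satisfying \eqref{subgauss} (with $c$ replaced by $2c$); thus $V-V'$ is again a randomisation of $(u_0,u_1)$ in the sense of \eqref{coord}, and $\{\|V-V'\|_{\mathcal H^s}<\eta\}$ forces these coefficients to be collectively of size $<\eta$. Splitting $V-V'$ into frequencies $\leq N$ and $>N$ by a smooth cutoff, Theorem~\ref{Strichartz} at the admissible exponent $(p,q)=(4,4)$ gives $\|S(t)P_{\leq N}(V-V')\|_{L^4([0,T]\times\T^3)}\leq CN^{\frac12-s}\|V-V'\|_{\mathcal H^s}<CN^{\frac12-s}\eta$, while the probabilistic Strichartz estimate applied to the high frequency randomisation bounds $\|S(t)P_{>N}(V-V')\|_{L^4([0,T]\times\T^3)}$ by a random quantity whose conditional law given $\{\|V-V'\|_{\mathcal H^s}<\eta\}$ concentrates near $0$ when $N$ is large. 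Given $\varepsilon>0$, one lets $N=N(\eta)\to\infty$ and $R=R(\eta)\to\infty$ tend to infinity suitably slowly, and bounds the numerator of the conditional probability in \eqref{dimanche} by the $\mu\otimes\mu$-measure of the union of three failure sets: $E_{R(\eta)}^{\,c}$; the event that $\|S(t)P_{>N(\eta)}(V-V')\|_{L^4}$ exceeds a threshold $\delta_0(\eta)\to0$; and, via the stability estimate above, the residual event on which $\Psi(R(\eta),A,T)\big(\eta+N(\eta)^{\frac12-s}\eta+\delta_0(\eta)\big)\geq\varepsilon$, which is empty once the parameters are balanced and $\eta$ is small. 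Dividing by $(\mu\otimes\mu)\big(\{\|V-V'\|_{\mathcal H^s}<\eta\}\cap(B_A\times B_A)\big)$ gives \eqref{dimanche} with $g(\varepsilon,\eta)\to0$ as $\eta\to0$. \emph{The main difficulty is precisely this last step}: since the conditioning set itself has vanishing measure as $\eta\to 0$, one must show that \emph{inside} it the fraction of pairs on which one of these ingredients fails still tends to $0$, which is where the interplay between the conditioning (forcing the high frequencies of $V-V'$ to be small) and the probabilistic Strichartz bounds — together with the freedom to let $N,R,\delta_0$ depend on $\eta$ — is used in an essential way.

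\textbf{Positivity of the left-hand side of \eqref{dimanche} for $s\in(0,1/2)$.} Choose $A$ so large that the open ball $\{V:\|V\|_{\mathcal H^s}<A\}$ has positive $\mu$-measure, and take $\varepsilon=1$. For $\mu$-almost every $V$ in this ball lying in $\Sigma$ one has $\|\Phi(t)(V)\|_{X_T}<\infty$ (by Theorem~\ref{bornes} and the probabilistic Strichartz bound). Fix such a $V$ and consider, for $n\gg1$, the concentrating perturbation $h_n=(\kappa_n n^{\frac32-s}\varphi(nx),0)$ from the proof of Theorem~\ref{ill}, with $\|h_n\|_{\mathcal H^s}=\kappa_n\to0$. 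Its amplitude $\kappa_n n^{\frac32-s}\to\infty$ dominates every perturbation of fixed size $\eta$, so the concentration mechanism of Theorem~\ref{ill} (estimate \eqref{convex} and comparison with the ODE $V''+V^3=0$) is \emph{stable}: a probabilistic refinement of the proof of Theorem~\ref{ill}, in which the background $S(t)\Pi_0^\perp V$ of the present solution plays the role of $u_L$ there and its almost sure $L^p$-Strichartz bounds replace the $L^\infty$ bounds used there, shows that for every $V'$ with $\|V'-(V+h_n)\|_{\mathcal H^s}<\eta/2$ one still has $\|\Phi(t)(V')\|_{X_T}\to\infty$ as $n\to\infty$. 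Taking $n=n(\eta,V)$ large enough that $\kappa_n<\eta/2$ and that this ball meets $\{\|\Phi(t)(V')\|_{X_T}>\|\Phi(t)(V)\|_{X_T}+1\}$, we obtain a set $\mathcal B_V\equiv\{V':\|V'-(V+h_n)\|_{\mathcal H^s}<\eta/2\}\cap\Sigma$, contained in $\{\|V'-V\|_{\mathcal H^s}<\eta\}\cap B_A\cap\Sigma$ (for $A$ slightly enlarged), on which $\|\Phi(t)(V)-\Phi(t)(V')\|_{X_T}>\varepsilon$; moreover $\mu(\mathcal B_V)>0$ because $\mathrm{supp}(\mu)=\mathcal H^s$. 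Integrating in $V$ over a positive-measure subset of $\Sigma\cap\{\|V\|_{\mathcal H^s}<A\}$ gives
$$
(\mu\otimes\mu)\Big(\{\|\Phi(t)V-\Phi(t)V'\|_{X_T}>\varepsilon\}\cap\{\|V-V'\|_{\mathcal H^s}<\eta\}\cap(B_A\times B_A)\Big)>0,
$$
so the conditioned probability in \eqref{dimanche} is positive. The technical core here is the stability of the ill-posedness mechanism of Theorem~\ref{ill} under non-smooth (random) backgrounds; everything else follows from the full support of $\mu$.
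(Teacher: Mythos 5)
Your outline of the first part has the right architecture (probabilistic a priori bounds for each solution, a deterministic stability estimate for the difference, and a control of the free evolution of $V-V'$ on the conditioning set), but it is missing the one ingredient that actually makes the theorem work: a \emph{conditional} large deviation estimate of the type of Proposition~\ref{ochak}, namely that
$\mu\otimes\mu\big(\|\langle t\rangle^{-\delta}S(t)(V-V')\|_{L^{p_1}_tL^{p_2}_x}>\lambda \,\big\vert\, \|V-V'\|_{\mathcal H^s}<\eta,\ (V,V')\in B_A\times B_A\big)\leq Ce^{-c\lambda^2/\eta^2}$.
Your observation that the coefficients of $V-V'$ are i.i.d.\ symmetric variables still satisfying \eqref{subgauss} only gives the \emph{unconditional} bound of Theorem~\ref{lll}, with the fixed norm $\|(u_0,u_1)\|_{\mathcal H^\sigma}$ in the exponent, not $\eta$; and the sentence asserting that the conditional law of the high-frequency Strichartz norm ``concentrates near $0$'' is exactly the statement that needs proof — conditioning on $\{\|V-V'\|_{\mathcal H^s}<\eta\}$ is a conditioning on the \emph{same} random variables that enter the Strichartz norm, and one cannot treat the small coefficients as deterministic data for Theorem~\ref{lll}. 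The paper resolves this by the symmetrization device: since the ${\mathcal H}^s$ norms of $V\pm V'$ depend only on the moduli of the Fourier coefficients, one may insert independent Bernoulli signs (Lemma~\ref{ll2}) without changing the law, and then Lemma~\ref{lem.cond} reduces the conditional probability to a supremum over deterministic coefficient sequences of size $\leq\eta$, to which Theorem~\ref{lll} (with Bernoulli variables) applies. This is precisely where the symmetry of $\theta$ is used, no frequency splitting is needed, and without this step (or an equivalent) your bookkeeping with $N(\eta)$, $R(\eta)$, $\delta_0(\eta)$ cannot be closed. (For the deterministic stability step the paper also proceeds slightly differently — energy/Gronwall estimates on $w_0-w_1$ under the thresholds \eqref{yu1}--\eqref{yu2}, with a triple-logarithmic choice of the cutoff so that the Gronwall exponential is beaten by $\eta^{1/2}$ — but your Strichartz-based variant could in principle be made to work once the conditional large deviation bound is available.)

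For the second part your route is genuinely different from the paper's, and its key step is not justified. You want to run the instability construction of Theorem~\ref{ill} around a typical rough datum $V$, replacing the smooth background $u_L=S(t)(u_0,u_1)$ by $S(t)\Pi_0^{\perp}V$ and its a.s.\ $L^p$ bounds. But the comparison argument (Lemma~\ref{compar}) estimates the source terms $G_1=-3v_n^2u_L-3v_nu_L^2-u_L^3$ in $H^{l-1}(\T^3)$ for $l=1,2$ inside the semi-classical energy $E_n$; this requires one and even two derivatives of the background, which a $\mu$-typical $V\in\mathcal H^s$, $s<1/2$, does not possess (its free evolution is a.s.\ in $L^p$/$W^{\sigma,q}$ spaces with $\sigma<s$ only). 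So the claimed ``probabilistic refinement'' of Theorem~\ref{ill} with rough random backgrounds is a substantial open-ended assertion, not a routine adaptation, and the uniform lower bound $\|\Phi(t)(V')\|_{X_T}$ large for \emph{all} $V'$ in a ball around $V+h_n$ does not follow from what is written. The paper avoids this difficulty entirely by a soft contradiction argument: if the left-hand side of \eqref{dimanche} vanished for every $\varepsilon$, one would get a.s.\ uniform continuity of $\Phi(t)$ on $B_A$, which together with the full support of $\mu$ allows one to extend $\Phi(t)$ by density to a uniformly continuous map $\overline{\Phi(t)}$ on $B_A$; Lemma~\ref{rex1} (via the $L^4$--$L^{4/3}$ Strichartz uniqueness) shows $\overline{\Phi(t)}$ agrees with the classical flow on smooth data, and applying this to the smooth sequence of Theorem~\ref{ill} yields the contradiction. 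If you wish to keep a direct construction, you would need to prove the instability lemma with a background that is only controlled in Strichartz-type norms, which is a genuinely new estimate; otherwise the contradiction scheme is the economical path.
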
 
A probability measure $\theta$ on $\R$ is called symmetric if 
$$
\int_{\R}f(x)d\theta(x)=\int_{\R}f(-x)d\theta(x),\quad \forall\,\, f\in L^1(d\theta).
$$
A real random variable is called symmetric if its distribution is a symmetric measure on $\R$.

The result of Theorem~\ref{th_continuity} is saying that as soon as $\eta \ll\varepsilon$, among the initial data which are $\eta$-close to each other, the probability of finding two for which the corresponding solutions to~\eqref{NLW_prob} do not remain $\varepsilon$ close to each other, is very small. 
The last part of the statement is saying that  the deterministic version of 
the uniform continuity property \eqref{dimanche} does not hold and somehow that one cannot get rid of a probabilistic approach in the question concerning
 the continuous dependence (in ${\mathcal H}^s$, $s<1/2$) with respect to the data. The  ill-posedenss result of Theorem~\ref{ill} will be of importance in the proof of the last part of Theorem~\ref{th_continuity}.
\section{Probabilistic Strichartz estimates}
\begin{lemm}\label{lem1}
Let $(l_n(\omega))_{n=1}^{\infty}$ 
be a sequence of real, independent random variables with associated sequence of
distributions $(\theta_{n})_{n=1}^{\infty}$.
Assume that $\theta_{n}$ satisfy the property
\begin{equation}\label{property}
\exists\, c>0\,:\, \forall\,\gamma\in\R,\,\forall\, n\geq 1,\,
\Big|\int_{-\infty}^{\infty}e^{\gamma x}d\theta_{n}(x)\Big|\leq e^{c\gamma^{2}}\,.
\end{equation}
Then there exists $\alpha>0$ such that 
for every $\lambda >0$, every sequence $(c_n)_{n=1}^{\infty}\in l^2$ of real numbers,
\begin{equation}\label{khin}
p\Big(\omega\,:\,\big|\sum_{n=1}^{\infty} c_n l_n(\omega) \big|>\lambda\Big)\leq
2 e^{-\frac{\alpha\lambda^2}{\sum_{n}c_n^2}}
\, .
\end{equation}
As a consequence there exists $C>0$ such that for every $p\geq 2$, every $(c_n)_{n=1}^{\infty}\in l^2$,
\begin{equation}\label{khinbis}
\big\|\sum_{n=1}^{\infty} c_n l_n(\omega) \big\|_{L^{p}(\Omega)}
\leq C\sqrt{p}\big(\sum_{n=1}^{\infty}c_n^2\big)^{1/2}
.
\end{equation}
\end{lemm}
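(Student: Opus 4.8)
The plan is to prove the exponential tail bound \eqref{khin} first, by the classical exponential Chebyshev (Bernstein) argument, and then to deduce the $L^p$ estimate \eqref{khinbis} from it via the layer-cake formula together with Stirling's bound on the Gamma function. I would begin with a preliminary reduction to finite sums: writing $S_N(\omega)=\sum_{n=1}^N c_n l_n(\omega)$, one notes that expanding \eqref{property} near $\gamma=0$ forces each $l_n$ to have mean zero and a uniformly bounded second moment (alternatively, $2\cosh(\gamma x)\geq\gamma^2x^2$ gives $\E[l_n^2]\leq 2e^{c\gamma^2}\gamma^{-2}$ for every $\gamma\neq 0$). By independence and mean zero, $\E[(S_N-S_M)^2]=\sum_{M<n\leq N}c_n^2\,\E[l_n^2]\to 0$, so $(S_N)$ is Cauchy in $L^2(\Omega)$ and converges there, and along a subsequence almost surely, to the random variable $S$ appearing in \eqref{khin}--\eqref{khinbis}. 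It then suffices to establish both estimates for the finite sums $S_N$ with constants independent of $N$ and pass to the limit: for \eqref{khin} one lets $N\to\infty$ along the a.s.\ convergent subsequence after replacing $\lambda$ by $\lambda'<\lambda$, applies Fatou's lemma, and lets $\lambda'\uparrow\lambda$; for \eqref{khinbis} Fatou's lemma applies directly.

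For the tail bound I would set $\sigma^2=\sum_n c_n^2$ and, for $t>0$, use independence together with \eqref{property} applied with $\gamma=tc_n$ to write
$$
\E\big[e^{tS_N}\big]=\prod_{n=1}^N\E\big[e^{tc_nl_n}\big]\leq\prod_{n=1}^N e^{c\,t^2c_n^2}\leq e^{c\,t^2\sigma^2}.
$$
Markov's inequality then gives $p(S_N>\lambda)\leq e^{-t\lambda+c\,t^2\sigma^2}$ for every $t>0$, and the choice $t=\lambda/(2c\sigma^2)$ yields $p(S_N>\lambda)\leq e^{-\lambda^2/(4c\sigma^2)}$. Applying the same reasoning to $-S_N$---legitimate since $-tc_n$ again ranges over all of $\R$ and \eqref{property} is postulated for every $\gamma\in\R$---gives the matching lower-tail estimate, and adding the two proves \eqref{khin} with $\alpha=1/(4c)$.

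To go from \eqref{khin} to \eqref{khinbis} I would integrate the tail bound by the layer-cake formula:
$$
\E\big[|S|^p\big]=p\int_0^\infty\lambda^{p-1}\,p\big(|S|>\lambda\big)\,d\lambda\leq 2p\int_0^\infty\lambda^{p-1}e^{-\alpha\lambda^2/\sigma^2}\,d\lambda .
$$
The substitution $u=\alpha\lambda^2/\sigma^2$ turns the right-hand side into $p\,\Gamma(p/2)\,(\sigma^2/\alpha)^{p/2}$, so $\|S\|_{L^p(\Omega)}\leq\big(p\,\Gamma(p/2)\big)^{1/p}\alpha^{-1/2}\sigma$. Since Stirling's formula gives $\Gamma(p/2)\leq C_0^{\,p}(p/2)^{p/2}$ for all $p\geq 2$, while $p^{2/p}$ stays bounded on $[2,\infty)$, one obtains $\big(p\,\Gamma(p/2)\big)^{1/p}\leq C\sqrt p$ with a universal constant $C$, which is exactly \eqref{khinbis}.

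I expect no genuine obstacle here; the only points requiring a little care are the passage from finite to infinite sums, which is handled by the $L^2$-Cauchy property and Fatou's lemma, and the verification that the constant in \eqref{khinbis} can be chosen independent of $p$---precisely where Stirling's estimate on $\Gamma(p/2)$ is used.
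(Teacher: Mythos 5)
Your proof is correct and follows essentially the same route as the paper: the exponential moment bound via independence and \eqref{property}, Markov's inequality with the choice $t=\lambda/(2c\sum_n c_n^2)$ and the symmetric argument for the lower tail, then the layer-cake formula with a Gaussian-type integral to get \eqref{khinbis}. The only additions (the $L^2$-limit reduction to finite sums and the explicit Stirling estimate for $\Gamma(p/2)$) are careful spellings-out of steps the paper treats implicitly.
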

\begin{rema}
{\rm
The property \eqref{property} is equivalent to assuming that $\theta_n$ are of zero mean and assuming that 
\begin{equation}\label{compp}
\exists\, c>0,\, C>0\,:\, \forall\,\gamma\in\R,\,\forall\, n\geq 1,\,
\Big|\int_{-\infty}^{\infty}e^{\gamma x}d\theta_{n}(x)\Big|\leq C\, e^{c\gamma^{2}}\,.
\end{equation}
}
\end{rema}
\begin{rema}\label{exx} 
{\rm
Let us notice that (\ref{property}) is readily satisfied if $(l_n(\omega))_{n=1}^{\infty}$ are standard 
real Gaussian or standard Bernoulli variables. Indeed in the case of Gaussian
$$
\int_{-\infty}^{\infty}e^{\gamma x}d\theta_{n}(x)=
\int_{-\infty}^{\infty}e^{\gamma x}\, e^{-x^2/2}\frac{dx}{\sqrt{2\pi}}
=e^{\gamma^2/2}\,.
$$
In the case of  Bernoulli variables one can obtain that (\ref{property}) 
is satisfied by invoking the inequality
$$
\frac{e^{\gamma }+e^{-\gamma }}{2}\leq e^{\gamma^2/2},\quad \forall\, \gamma\in\R.
$$
More generally, we can observe that \eqref{compp} holds if $\theta_n$ is compactly supported. 
}
\end{rema}
\begin{rema}
{\rm
In the case of Gaussian we can see Lemma~\ref{lem1} as a very particular case of a $L^p$ smoothing 
properties of the Hartree-Foch heat flow (see e.g. \cite[Section~3]{Tz3} for more details on this issue).
}
\end{rema}
\begin{proof}[Proof of Lemma~\ref{lem1}.]
For $t>0$ to be determined later, using the independence and (\ref{property}), we obtain 
\begin{eqnarray*}
\int_{\Omega}\, e^{t\sum_{n\geq 1}c_n l_n(\omega)}dp(\omega)
& = & \prod_{n\geq 1}\int_{\Omega}e^{t c_n l_n(\omega)}dp(\omega)
\\
& = &  \prod_{n\geq 1}\int_{-\infty}^{\infty}e^{tc_n x}\, d\theta_{n}(x)
\\
& \leq & 
\prod_{n\geq 1}e^{c(t c_n)^2}= e^{(ct^2)\sum_{n}c_n^2}\, .
\end{eqnarray*}
Therefore
$$
e^{(ct^2)\sum_{n}c_n^2}\geq e^{t\lambda}\,\,\,  p\,(\omega\,:\,\sum_{n\geq 1} c_n l_n(\omega)>\lambda)
$$
or equivalently,
$$
p\,(\omega\,:\,\sum_{n\geq 1} c_n l_n(\omega)>\lambda)\leq e^{(ct^2)\sum_{n}c_n^2}\,\,\,
e^{-t\lambda}\, .
$$
We choose $t$ as
$$
t\equiv \frac{\lambda}{2c \sum_{n}c_n^2}\,.
$$
Hence
$$
p\,(\omega\,:\,\sum_{n\geq 1} c_n l_n(\omega)>\lambda)\leq
e^{-\frac{\lambda^2}{4c\sum_{n}c_n^2}}\, .
$$
In the same way (replacing $c_n$ by $-c_n$), we can show that
$$
p\,(\omega\,:\,\sum_{n\geq 1} c_n l_n(\omega)<-\lambda)\leq
e^{-\frac{\lambda^2}{4c\sum_{n}c_n^2}}
$$
which completes the proof of~\eqref{khin}. To deduce~\eqref{khinbis}, we  write
\begin{eqnarray*}
\|\sum_{n=1}^{\infty} c_n l_n(\omega)\|^p_{L^p(\Omega)} & = & 
{p} \int_0^{+\infty} p(\omega\,:\, |\sum_{n=1}^{\infty} c_n l_n(\omega)|>\lambda ) \lambda^{p-1} d \lambda
\\
& \leq  &
Cp \int_0^{+\infty} \lambda^{p-1} e^{-\frac{c \lambda^2}{\sum_n c_n^2}} d\lambda
\\
& \leq & Cp  (C\sum_n c_n^2)^{\frac p 2}\int_0^{+\infty} \lambda^{p-1} e^{-\frac{\lambda^2} 2} d\lambda
\\
& \leq & C  (Cp \sum_n c_n^2)^{\frac p 2}
\end{eqnarray*}
which completes the proof of Lemma~\ref{lem1}.
\end{proof}
As a consequence of  Lemma~\ref{lem1}, we get the following "probabilistic" Strichartz estimates.
\begin{theo}\label{lll}
Let us fix $s\in (0,1)$ and let  $\mu\in {\mathcal M}^s$ be induced via the map \eqref{eq.proba} from the couple $(u_0,u_1)\in {\mathcal H}^s$.
Let us also fix  $\sigma\in (0,s]$, $2\leq p_1<+\infty$, $2\leq p_2 \leq + \infty$ and $\delta >1+ \frac 1 {p_1}$.
Then there exists a positive constant $C$ such that for every $p\geq 2$,
\begin{equation}\label{kando1}
\Big\|
\|\langle t \rangle ^{- \delta} S(t) (v_0,v_1)\|_{L^{p_1} (  \R_t ; L^{p_2}( \T^3))}
\Big\|_{L^p({\mu})}
\leq  C\sqrt{p}
\|(u_0, u_1)\|_{\mathcal{H}^{\sigma}(\T^3)}\,.
\end{equation}
As a consequence for every $T>0$ and $p_1\in [1,\infty)$, $p_2\in [2,\infty]$,
\begin{equation}\label{stri_prob}
 \|S(t) (v_0,v_1)\|_{L^{p_1}([0,T] ; L^{p_2}( \T^3))}<\infty ,\quad \mu{\rm\,-\,almost\,\, surely.}
\end{equation}
Moreover, there exist two positive constants $C$ and $c$ such that for
every $\lambda>0$,
\begin{multline}\label{kando2}
\mu\Big((v_0,v_1)\in {\mathcal H}^s\,:\,
\|\langle t \rangle ^{- \delta} S(t) (v_0,v_1)\|_{L^{p_1} (  \R_t ; L^{p_2}( \T^3))}> \lambda 
\Big)\leq 
\\
C\exp\Bigl(- \frac {c\lambda^2} {\|(u_0, u_1)\|^2_{\mathcal{H}^{\sigma}(\T^3)}}\Bigr)\,.
\end{multline}
\end{theo}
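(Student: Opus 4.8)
The plan is to deduce everything from the one-dimensional Khinchin bound \eqref{khinbis} of Lemma~\ref{lem1}, the bridge being Minkowski's integral inequality. The basic observation is that, by the definition \eqref{coord} of the randomization together with the explicit formula for $S(t)$, for each fixed $(t,x)$ the real number $S(t)(v_0^\omega,v_1^\omega)(x)$ is a (convergent) linear combination of the i.i.d.\ real random variables $(\alpha_j(\omega),\beta_{n,j}(\omega),\gamma_{n,j}(\omega))$ with deterministic coefficients: products of $a_j,b_{n,j},c_{n,j}$ with the bounded factors $\cos(t|n|),\ \sin(t|n|)/|n|,\ \cos(n\cdot x),\ \sin(n\cdot x)$, plus the single unbounded coefficient $t\,a_1$ coming from the zero mode $t\,\widehat{u_1^\omega}(0)$.

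I would first establish \eqref{kando1} for $p\geq\max(p_1,p_2)$ in the case $p_2<\infty$; throughout, one carries out the manipulations for the trigonometric-polynomial truncations of $(v_0^\omega,v_1^\omega)$ obtained by restricting the Fourier series to $|n|\le N$, and passes to the limit $N\to\infty$ with Fatou's lemma and the uniform-in-$N$ bound. Writing $F(\omega)=\|\langle t\rangle^{-\delta}S(t)(v_0^\omega,v_1^\omega)\|_{L^{p_1}(\R_t;L^{p_2}(\T^3))}$, Minkowski's inequality (legitimate since $p\geq p_1$ and $p\geq p_2$) pushes the $L^p(\mu)$-norm inside the $L^{p_1}_tL^{p_2}_x$-norm, and then \eqref{khinbis} applied pointwise in $(t,x)$ bounds the inner $L^p(\mu)$-norm by $C\sqrt p\,h(t,x)$, where $h(t,x)^2$ is the sum of the squares of the deterministic coefficients above. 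Estimating $\|h(t,\cdot)\|_{L^{p_2}_x}$ by Minkowski in $L^{p_2/2}_x$, using $|\cos(t|n|)|\le1$, $|\sin(t|n|)/|n||\lesssim\langle n\rangle^{-1}$, the uniform bound $\|\cos(n\cdot x)\|_{L^{p_2}(\T^3)}+\|\sin(n\cdot x)\|_{L^{p_2}(\T^3)}\lesssim1$ and the inequalities $1\le\langle n\rangle^\sigma$, $\langle n\rangle^{-1}\le\langle n\rangle^{\sigma-1}$ (valid because $\sigma\ge0$), one obtains $\|h(t,\cdot)\|_{L^{p_2}_x}\lesssim\langle t\rangle\,\|(u_0,u_1)\|_{\mathcal H^\sigma(\T^3)}$, the factor $\langle t\rangle$ being due to the zero mode. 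It then remains to note that $\|\langle t\rangle^{1-\delta}\|_{L^{p_1}(\R)}$ is finite exactly when $(\delta-1)p_1>1$, i.e.\ when $\delta>1+1/p_1$, which is the hypothesis. This gives \eqref{kando1} for $p\geq\max(p_1,p_2)$; for $2\le p<\max(p_1,p_2)$ it follows from the case $p=\max(p_1,p_2)$ by H\"older on the probability space.

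For the endpoint $p_2=\infty$ I would reduce to the finite-exponent case by Sobolev embedding: choose $q$ so large that $3/q<\sigma$, and $s_0\in(3/q,\sigma)$, so that $W^{s_0,q}(\T^3)\hookrightarrow L^\infty(\T^3)$; since the Fourier multiplier $\langle D\rangle^{s_0}$ commutes with $S(t)$ and carries the randomization of $(u_0,u_1)$ to that of $(\langle D\rangle^{s_0}u_0,\langle D\rangle^{s_0}u_1)\in\mathcal H^{\sigma-s_0}$ with $\sigma-s_0\in(0,s)$, the case already treated applies and $\|(\langle D\rangle^{s_0}u_0,\langle D\rangle^{s_0}u_1)\|_{\mathcal H^{\sigma-s_0}}=\|(u_0,u_1)\|_{\mathcal H^\sigma}$. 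The almost-sure statement \eqref{stri_prob} then follows from \eqref{kando1} with $p=2$ (an element of $L^2(\mu)$ is $\mu$-a.s.\ finite), after observing that on $[0,T]$ the weight $\langle t\rangle^{-\delta}$ is comparable to $1$ and that for $p_1\in[1,2)$ one passes from $L^2_t$ to $L^{p_1}_t$ on the bounded interval $[0,T]$ by H\"older. Finally \eqref{kando2} is the standard Chebyshev-and-optimize step: \eqref{kando1} gives $\mu(F>\lambda)\le\big(C\sqrt p\,\|(u_0,u_1)\|_{\mathcal H^\sigma}/\lambda\big)^p$ for every $p\ge2$, and choosing $p$ of size $\lambda^2/\|(u_0,u_1)\|_{\mathcal H^\sigma}^2$ whenever this is $\ge2$ yields the Gaussian-type decay; for the remaining (bounded) range of $\lambda/\|(u_0,u_1)\|_{\mathcal H^\sigma}$ the estimate is trivial after enlarging $C$.

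I do not expect a genuine obstacle here: the argument is the by-now-classical marriage of Minkowski's inequality with the Khinchin-type Lemma~\ref{lem1}. The one point that really needs attention is the zero Fourier mode — the term $t\,\widehat{u_1^\omega}(0)$ in $S(t)(v_0^\omega,v_1^\omega)$ grows linearly in $t$, which is precisely what forces the weight $\langle t\rangle^{-\delta}$ into the statement and pins down the sharp threshold $\delta>1+1/p_1$. A secondary, purely technical, matter is the $p_2=\infty$ reduction, which costs a sliver of Sobolev regularity but is affordable since only $\sigma>0$ is assumed.
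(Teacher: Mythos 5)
Your proposal is correct and follows essentially the same route as the paper: Minkowski's inequality combined with the Khinchin-type bound of Lemma~\ref{lem1}, a Sobolev embedding to handle sup norms in $x$, the zero Fourier mode isolated as the source of the $\langle t\rangle$ growth and hence of the condition $\delta>1+\frac{1}{p_1}$, and Chebyshev plus optimization in $p$ for \eqref{kando2}. The only (harmless) difference is organizational: the paper funnels every $p_2$ through the $L^\infty_x$/Sobolev reduction to $W^{\sigma,q}$, while you treat $p_2<\infty$ directly and invoke the Sobolev embedding only at the endpoint $p_2=\infty$.
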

\begin{rema}
{\rm
Observe that \eqref{stri_prob} applied for $p_2=\infty$ displays an improvement of $3/2$ derivatives with respect to the Sobolev embedding which is stronger than the improvement obtained by the (deterministic) Strichartz estimates  (see Remark~\ref{beat_sob}). The proof of Theorem~\ref{lll} exploits the random oscillations of the initial data while the proof of the deterministic Strichartz estimates exploits in a crucial (and subtle) manner the time oscillations of  $S(t)$. In the proof of Theorem~\ref{lll}, we simply neglect these times oscillations. 
}
\end{rema}
\begin{rema}
{\rm
In the proof of Theorem~\ref{lll}, we shall make use of the Sobolev spaces $W^{\sigma,q}(\T^3)$, $\sigma\geq 0$, $q\in (1,\infty)$, defined via the norm
$$
\|u\|_{W^{\sigma,q}(\T^3)}=\|(1-\Delta)^{\sigma/2}u\|_{L^q(\T^3)}\,.
$$
}
\end{rema}
\begin{proof}[Proof of Theorem~\ref{lll}]
We have that
$$
\Big\|
\|\langle t \rangle ^{- \delta}\Pi_0  S(t) (v_0,v_1)\|_{L^{p_1} (  \R_t ; L^{p_2}( \T^3))}
\Big\|_{L^p({\mu})}
$$
equals
\begin{equation}\label{expp1}
\Big\|
\|\langle t \rangle ^{- \delta}
(\alpha_{0}(\omega)a_0+t\alpha_1(\omega) a_1)
\|_{L^{p_1} (  \R_t ; L^{p_2}( \T^3))}
\Big\|_{L^p_{\omega}}\,.
\end{equation}
A trivial application of  Lemma~\ref{lem1} implies that 
$$
\|\alpha_j(\omega)\|_{L^p_{\omega}}
\leq C\sqrt{p},\quad j=0,1.
$$
Therefore, using that $\delta>1+1/p_1$ the expression \eqref{expp1} can be bounded by
$$
(2\pi)^{\frac{3}{p_2}}
\Big\|
\|\langle t \rangle ^{- \delta}
(\alpha_{0}(\omega)a_0+t\alpha_1(\omega) a_1)
\|_{L^{p_1} (  \R_t )}
\Big\|_{L^p_{\omega}}
\leq 
C\sqrt{p}(|a_0|+|a_1|)\,.
$$
Therefore, it remains to estimate 
$$
\Big\|
\|\langle t \rangle ^{- \delta}\Pi_0^{\perp}  S(t) (v_0,v_1)\|_{L^{p_1} (  \R_t ; L^{p_2}( \T^3))}
\Big\|_{L^p({\mu})}\,.
$$
By a use of the H\"older inequality on $\T^3$, we observe that it suffices to estimate 
$$
\Big\|
\|\langle t \rangle ^{- \delta}\Pi_0^{\perp}  S(t) (v_0,v_1)\|_{L^{p_1} (  \R_t ; L^{\infty}( \T^3))}
\Big\|_{L^p({\mu})}\,.
$$
Let $q<\infty$ be such that $\sigma>3/q$. Then by the Sobolev embedding $W^{\sigma,q}(\T^3)\subset C^0(\T^3)$, we have
$$
\|\Pi_0^{\perp}  S(t) (v_0,v_1)\|_{L^\infty(\T^3)}\leq C
\|(1-\Delta)^{\sigma/2}\Pi_0^{\perp}  S(t) (v_0,v_1)\|_{L^q(\T^3)}\,.
$$
Therefore, we need to estimate 
$$
\Big\|
\|\langle t \rangle ^{- \delta}
(1-\Delta)^{\sigma/2}
\Pi_0^{\perp}  S(t) (v_0,v_1)\|_{L^{p_1} (  \R_t ; L^{q}( \T^3))}
\Big\|_{L^p({\mu})}
$$
which equals 
\begin{equation}\label{pl1}
\Big\|
\|\langle t \rangle ^{- \delta}
(1-\Delta)^{\sigma/2}
\Pi_0^{\perp}  S(t) (u_0^\omega,u_1^\omega)\|_{L^{p_1} (  \R_t ; L^{q}( \T^3))}
\Big\|_{L^p_{\omega}}\,.
\end{equation}
By using the H\"older inequality in $\omega$, we observe that it suffices to evaluate the last quantity only for $p>\max(p_1,q)$.
For such values of $p$, using the Minkowski inequality, we can estimate \eqref{pl1} by
\begin{equation}\label{pl2}
\Big\|
\big\|
\langle t \rangle ^{- \delta}
(1-\Delta)^{\sigma/2}
\Pi_0^{\perp}  S(t) (u_0^\omega,u_1^\omega)
\big\|_{L^p_{\omega}}
\Big\|_{L^{p_1} (  \R_t ; L^{q}( \T^3))}\,.
\end{equation}
Now, we can write $(1-\Delta)^{\sigma/2}
\Pi_0^{\perp}  S(t) (u_0^\omega,u_1^\omega)$ as
\begin{multline*}
\sum_{n\in\Z^3_{\star}}
\langle n\rangle^{\sigma}\Big(\big(\beta_{n,0}(\omega)b_{n,0}\cos(t|n|)+\beta_{n,1}(\omega)b_{n,1}\frac{\sin(t|n|)}{|n|}\big)\cos(n\cdot x)
\\
+\big(\gamma_{n,0}(\omega)c_{n,0}\cos(t|n|)+\gamma_{n,1}(\omega)c_{n,1}\frac{\sin(t|n|)}{|n|}\big)\sin(n\cdot x)\Big),
\end{multline*}
with
$$
\sum_{n\in\Z^3_{\star}}
\langle n\rangle^{2\sigma}\Big(|b_{n,0}|^2+|c_{n,0}|^2+|n|^{-2}(|b_{n,1}|^2+|c_{n,1}|^2)\Big)
\leq C\|(u_0, u_1)\|_{\mathcal{H}^\sigma(\T^3)}^2\,.
$$
Now using \eqref{khinbis} of Lemma~\ref{lem1} and the boundedness of $\sin$ and $\cos$ functions, we obtain that \eqref{pl2} can be bounded by 
\begin{equation}\label{pl3}
C\Big\|
\langle t \rangle ^{- \delta}
C\sqrt{p}\|(u_0, u_1)\|_{\mathcal{H}^\sigma(\T^3)}
\Big\|_{L^{p_1} (  \R_t ; L^{q}( \T^3))}\,.
\end{equation}
Since $\delta>1+1/p_1$, we can estimate \eqref{pl3} by 
$$
C\sqrt{p}\|(u_0, u_1)\|_{\mathcal{H}^\sigma(\T^3)}\,.
$$
This completes the proof of  \eqref{kando1} 
Let us finally show how \eqref{kando1} implies \eqref{kando2}.
Using the Tchebichev inequality and \eqref{kando1}, we have that
$$
\mu\Big((v_0,v_1)\in {\mathcal H}^s\,:\,
\|\langle t \rangle ^{- \delta} S(t) (v_0,v_1)\|_{L^{p_1} (  \R_t ; L^{p_2}( \T^3))}> \lambda 
\Big)\
$$
is bounded by
$$
\lambda^{-p}
\Big\|
\|\langle t \rangle ^{- \delta} S(t) (v_0,v_1)\|_{L^{p_1} (  \R_t ; L^{p_2}( \T^3))}
\Big\|_{L^p({\mu})}^p\leq
\big(C\lambda^{-1}\sqrt{p}\|(u_0, u_1)\|_{\mathcal{H}^\sigma(\T^3)}
\big)^p
$$
We now choose $p$ as  
\begin{equation*}
C\lambda^{-1}\sqrt{p}\|(u_0, u_1)\|_{\mathcal{H}^\sigma(\T^3)}= \frac 1 2
 \Leftrightarrow  p= \frac{ \lambda^2 \|(u_0, u_1)\|^{-2}_{\mathcal{H}^\sigma(\T^3)} } {4C^2} ,
\end{equation*}
which yields \eqref{kando2}.
This completes the proof of  Theorem~\ref{lll}.
\end{proof}
The proof of Theorem~\ref{lll}  also implies the following statement. 
\begin{theo}\label{lll_bis}
Let us fix $s\in (0,1)$ and let  $\mu\in {\mathcal M}^s$ be induced via the map \eqref{eq.proba} from the couple $(u_0,u_1)\in {\mathcal H}^s$.
Let us also fix  $p\geq 2$, $\sigma\in (0,s]$ and $q<\infty$ such that $\sigma>3/q$.
Then for every $T>0$,
\begin{equation}\label{stri_prob_bis}
 \|S(t) (v_0,v_1)\|_{L^{p}([0,T] ; W^{\sigma,q}( \T^3))}<\infty ,\quad \mu{\rm\,-\,almost\,\, surely.}
\end{equation}
\end{theo}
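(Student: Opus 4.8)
The plan is to repeat, essentially verbatim, the proof of Theorem~\ref{lll}, simply stopping one step earlier. In that proof one first uses the H\"older inequality on $\T^3$ together with the Sobolev embedding $W^{\sigma,q}(\T^3)\subset C^0(\T^3)$ in order to replace an $L^{p_2}$ (or $L^\infty$) norm by a $W^{\sigma,q}$ norm, and it is precisely the resulting $W^{\sigma,q}$ estimate, the one displayed in \eqref{pl1}--\eqref{pl3}, which already contains \eqref{stri_prob_bis}. Concretely, fix $T>0$; since a countable intersection of full measure sets is again of full measure, it suffices to treat this fixed $T$, and since $[0,T]$ is bounded we may simply drop the decaying weight $\langle t\rangle^{-\delta}$ used in Theorem~\ref{lll}. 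It then suffices, for some fixed exponent $r\geq 1$, to establish the moment bound
\begin{equation*}
\Big\|\,\|S(t)(v_0,v_1)\|_{L^{p}([0,T];W^{\sigma,q}(\T^3))}\,\Big\|_{L^r(\mu)}<\infty ,
\end{equation*}
since such a bound forces $\mu$-almost sure finiteness of $\|S(t)(v_0,v_1)\|_{L^{p}([0,T];W^{\sigma,q}(\T^3))}$.

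As in the proof of Theorem~\ref{lll}, I would split $S(t)=\Pi_0 S(t)+\Pi_0^\perp S(t)$. The zero-mode part equals $\alpha_0(\omega)\,\Pi_0 u_0+t\,\alpha_1(\omega)\,\Pi_0 u_1$, whose $L^{p}([0,T];W^{\sigma,q})$ norm is bounded by $C(T)\big(|\alpha_0(\omega)|\,|\Pi_0 u_0|+|\alpha_1(\omega)|\,|\Pi_0 u_1|\big)$, so the bound $\|\alpha_j(\omega)\|_{L^r_\omega}\leq C\sqrt r$ supplied by Lemma~\ref{lem1} handles it. For $\Pi_0^\perp S(t)$, writing the $W^{\sigma,q}$ norm as the $L^q$ norm of $(1-\Delta)^{\sigma/2}(\,\cdot\,)$, one is reduced to bounding the $L^r_\omega$ norm of $\|(1-\Delta)^{\sigma/2}\Pi_0^\perp S(t)(u_0^\omega,u_1^\omega)\|_{L^{p}([0,T];L^q(\T^3))}$, that is the quantity \eqref{pl1} with $\langle t\rangle^{-\delta}$ deleted, $p_1$ replaced by $p$, and the outer exponent called $r$. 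From here the argument follows the proof of Theorem~\ref{lll} word for word: use the H\"older inequality in $\omega$ to reduce to $r\geq\max(p,q)$, the Minkowski inequality to exchange $L^{r}_\omega$ with $L^{p}_tL^q_x$ (this is \eqref{pl2}), expand $(1-\Delta)^{\sigma/2}\Pi_0^\perp S(t)(u_0^\omega,u_1^\omega)$ in Fourier series, and apply the Khinchin-type inequality \eqref{khinbis} pointwise in $(t,x)$, noting that each of the independent random variables $\beta_{n,j}(\omega),\gamma_{n,j}(\omega)$ occurs exactly once. Since $\cos(t|n|)$, $\sin(t|n|)/|n|$, $\cos(n\cdot x)$ and $\sin(n\cdot x)$ are bounded by $1$ for $n\in\Z^3_{\star}$, the corresponding $\ell^2$ sum of coefficients is dominated, uniformly in $(t,x)$, by $C\|(u_0,u_1)\|_{\mathcal H^\sigma(\T^3)}^2$; hence \eqref{khinbis} yields (exactly as in \eqref{pl3}) a bound $C\sqrt{r}\,\|(u_0,u_1)\|_{\mathcal H^\sigma(\T^3)}$ which is constant in $(t,x)$, and integrating this constant over $[0,T]\times\T^3$ gives the finite bound $C\,T^{1/p}(2\pi)^{3/q}\sqrt{r}\,\|(u_0,u_1)\|_{\mathcal H^\sigma(\T^3)}$. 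Since $\sigma\leq s$ we have $\|(u_0,u_1)\|_{\mathcal H^\sigma(\T^3)}\leq\|(u_0,u_1)\|_{\mathcal H^s(\T^3)}<\infty$, which closes the argument.

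I do not expect a genuine obstacle: the whole computation already occurs inside the proof of Theorem~\ref{lll}, and the one point that must be checked is that the constant produced by the Khinchin inequality is independent of $(t,x)$ --- which holds precisely because $\cos(t|n|)$, $\sin(t|n|)/|n|$ and the spatial characters $\cos(n\cdot x),\sin(n\cdot x)$ are uniformly bounded. Let me also remark that the hypothesis $\sigma>3/q$, used in Theorem~\ref{lll} solely to invoke the Sobolev embedding $W^{\sigma,q}(\T^3)\subset C^0(\T^3)$, is in fact not needed for the conclusion \eqref{stri_prob_bis}; it is retained in the statement only because it is in this form that Theorem~\ref{lll_bis} will be used in the sequel.
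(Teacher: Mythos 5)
Your proposal is correct and coincides with the paper's intended argument: Theorem~\ref{lll_bis} is obtained exactly by running the proof of Theorem~\ref{lll} and stopping at the $W^{\sigma,q}$ estimate contained in \eqref{pl1}--\eqref{pl3} (Minkowski plus the Khinchin bound \eqref{khinbis}, with the trigonometric factors bounded uniformly in $(t,x)$), the zero mode and the passage from a moment bound to almost sure finiteness being handled just as you describe. Your side remark is also accurate: the hypothesis $\sigma>3/q$ plays no role in this computation and is only kept because the Sobolev embedding $W^{\sigma,q}(\T^3)\subset C^0(\T^3)$ is what the statement is used for later in the globalisation argument.
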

\section{Regularisation effect in the Picard iteration expansion}
Consider the Cauchy problem
\begin{equation}\label{NLW_prob_kik}
(\partial_t^2-\Delta) u+u^3=0,\quad 
u|_{t=0}=u_0,\,\, \partial_t u|_{t=0}=u_1,
\end{equation}
where $(u_0, u_1)$ is a typical element on the support of $\mu\in {\mathcal M}^s$, $s\in (0,1)$.
According to the discussion in Section~\ref{picard} of the previous chapter, for small times depending on $(u_0,u_1)$, we can hope to represent the solution of \eqref{NLW_prob_kik} as 
$$
u=\sum_{j=1}^{\infty}Q_{j}(u_0,u_1),
$$
where $Q_j$ is homogeneous of order $j$ in $(u_0,u_1)$. 
We have that 
\begin{eqnarray*}
Q_1(u_0,u_1) & = & S(t)(u_0,u_1),
\\
Q_{2}(u_0,u_1) & = & 0,
\\
Q_{3}(u_0,u_1) & = &-\int_{0}^t \frac{\sin((t-\tau)\sqrt{-\Delta})}{\sqrt{-\Delta}}
\big(
S(\tau)(u_0,u_1)
\big)^3d\tau,
\end{eqnarray*}
etc. 
We have that $\mu$ a.s. $Q_1\notin H^\sigma$ for $\sigma>s$. 
However, using the probabilistic Strichartz estimates of the previous section, we have that for $T>0$,
$$
\|Q_3(u_0,u_1)\|_{L^\infty_TH^1(\T^3)}\lesssim \| S(t)(u_0,u_1)\|_{L^3_T L^6(\T^3)}^3<\infty,\quad \mu{\rm\,-\,almost\,\, surely.}
$$
Therefore the second non trivial term in the formal expansion defining the solution is more regular than the initial data !
The strategy will therefore be to write the solution of \eqref{NLW_prob_kik} as 
$$
u=Q_1(u_0,u_1)+v,
$$
where $v\in H^1$ and solve the equation for $v$ by the methods described in the previous chapter. In the case of the cubic nonlinearity the deterministic analysis used to solve the equation for $v$ is particularly simple, it is in fact very close to the analysis in the proof of Proposition~\ref{prop.local}. 
For more complicated problems the analysis of the equation for $v$ could involve more advanced deterministic arguments. 
We refer to \cite{B2}, where a similar strategy is used in the context of the nonlinear Schr\"odinger  equation and to \cite{DD} where it is used in the context of stochastic PDE's.

This argument is not particularly restricted to $Q_3$. One can imagine  situations when for some $m>3$, $Q_m$ is the first element in the expansion whose regularity fits well in a deterministic analysis.
Then we can equally well look for the solutions under the form
\begin{equation}\label{bbnn}
u=\sum_{j=1}^{m-1} Q_j(u_0,u_1)+v, 
\end{equation}
and treat $v$ by a deterministic analysis.  It is worth noticing that such  a situation occurs in the work on parabolic PDE's with a singular random source term \cite{GIP, H1, H2}. 
In these works in expansions of type \eqref{bbnn} the random initial data $(u_0,u_1)$ should be replaced by the random source term (the white noise). Let us also mention that in the case of parabolic equations the deterministic smoothing comes from elliptic regularity estimates while in the context of the wave equation we basically rely on the smoothing estimate \eqref{wave_regularity}. 
\section{The local existence result}
\begin{prop}\label{prop.local_pak}
Consider the problem
\begin{equation}\label{model_pak}
(\partial_t^2-\Delta)v+(f+v)^3=0\,.
\end{equation}
There exists a constant $C$ such that for every time interval $I=[a,b]$ of size $1$, 
every $\Lambda\geq 1$, every 
$
(v_0,v_1,f)\in H^1\times L^2\times L^3(I,L^6)
$
satisfying
$$
\|v_0\|_{H^1}+\|v_1\|_{L^2}+\|f\|^3_{L^3(I,L^6)}\leq \Lambda
$$
there exists a unique solution on the time interval $[a,a+C^{-1}\Lambda^{-2}]$ of \eqref{model_pak} with initial data
$$
v(a,x)=v_0(x), \quad \partial_t v(a,x)=v_1(x)\,.
$$
Moreover the solution satisfies
$
\|(v,\partial_t v)\|_{L^\infty([a,a+C^{-1} \Lambda^{-2}],H^1\times L^2)}\leq C\Lambda,
$
$(v,\partial_t v)$ is unique in the class $L^\infty([a,a+C^{-1} \Lambda^{-2}],H^1\times L^2)$ and the dependence in time is continuous.
\end{prop}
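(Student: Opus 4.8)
The plan is to run the same contraction‑mapping argument as in the proof of Proposition~\ref{prop.local}, now treating $f$ as an inhomogeneous perturbation. First I would reduce to $a=0$: if $v$ solves \eqref{model_pak} on $[a,a+T]$ with forcing $f$, then $w(t,x)=v(t+a,x)$ solves the same equation on $[0,T]$ with forcing $\tilde f(t,x)=f(t+a,x)$, and $\|\tilde f\|_{L^3([0,T];L^6)}=\|f\|_{L^3([a,a+T];L^6)}$; note also that $[a,a+C^{-1}\Lambda^{-2}]\subset I$ since $C^{-1}\Lambda^{-2}\le 1$. By the analysis of the inhomogeneous linear wave equation, a solution on $[0,T]$ is a fixed point of
\[
\Phi(v)(t)=S(t)(v_0,v_1)-\int_0^t\frac{\sin((t-\tau)\sqrt{-\Delta})}{\sqrt{-\Delta}}\big((f+v)^3(\tau)\big)\,d\tau,
\]
which I would look for in $X_T=C([0,T];H^1(\T^3))$ with $T=C^{-1}\Lambda^{-2}$, inside the ball $B=\{v:\|v\|_{X_T}\le R\Lambda\}$ for an appropriate $R$.

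For the self‑mapping property, combining the boundedness of $\bar S$ with the smoothing estimate \eqref{wave_regularity} gives $\|\Phi(v)\|_{X_T}\le C(\|v_0\|_{H^1}+\|v_1\|_{L^2}+\|(f+v)^3\|_{L^1([0,T];L^2)})$, and then, using $\|g^3\|_{L^2_x}=\|g\|_{L^6_x}^3$, Hölder in time, the Sobolev embedding $H^1(\T^3)\subset L^6(\T^3)$ and $\|v\|_{L^3_TL^6}\le T^{1/3}\|v\|_{X_T}$,
\[
\|(f+v)^3\|_{L^1_TL^2}=\|f+v\|_{L^3_TL^6}^3\lesssim \|f\|_{L^3_TL^6}^3+\|v\|_{L^3_TL^6}^3\lesssim \Lambda+T\|v\|_{X_T}^3 .
\]
With $T=C^{-1}\Lambda^{-2}$ the cubic term is $\lesssim C^{-1}\Lambda$, so for $C$ large $\Phi$ maps $B$ into itself. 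For the contraction, writing $(f+v)^3-(f+\tilde v)^3=(v-\tilde v)\big((f+v)^2+(f+v)(f+\tilde v)+(f+\tilde v)^2\big)$ and applying Hölder in $x$ (exponents $6,6,6$) and in time (exponents $3,\tfrac32$),
\[
\|\Phi(v)-\Phi(\tilde v)\|_{X_T}\lesssim \|v-\tilde v\|_{L^3_TL^6}\big(\|f+v\|_{L^3_TL^6}^2+\|f+\tilde v\|_{L^3_TL^6}^2\big)\lesssim \big(T^{1/3}\Lambda^{2/3}+T\Lambda^2\big)\|v-\tilde v\|_{X_T}.
\]
With $T=C^{-1}\Lambda^{-2}$ the prefactor is $\lesssim C^{-1/3}+C^{-1}$, hence $\le\tfrac12$ for $C$ large. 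Banach's fixed point theorem then produces the unique solution in $B$, the bound $\|v\|_{L^\infty_TH^1}\le C\Lambda$, and, after differentiating the Duhamel formula in $t$, the $L^2$ bound on $\partial_t v$; continuity in time is read directly off the Duhamel formula.

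For uniqueness in the full class $L^\infty([0,T];H^1\times L^2)$ (without the ball restriction) I would argue as in Proposition~\ref{prop.local}: for two such solutions $v,\tilde v$, on a subinterval of length $\tau$ the same difference estimate gives $\|v-\tilde v\|_{X_\tau}\le C\tau^{1/3}\big(\|f+v\|_{L^3([0,T];L^6)}^2+\|f+\tilde v\|_{L^3([0,T];L^6)}^2\big)\|v-\tilde v\|_{X_\tau}$, and since the quantity in parentheses is a finite constant depending only on the two solutions (here one uses $v,\tilde v\in L^\infty(H^1)\subset L^\infty(L^6)$ and $f\in L^3(I;L^6)$), a small enough $\tau$ forces $v=\tilde v$ on $[0,\tau]$; covering $[0,T]$ by finitely many such intervals and iterating yields uniqueness on all of $[0,T]$.

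The main obstacle is not any single hard inequality but the bookkeeping of Hölder exponents so that the fixed‑point constants close: there is no smallness available from $\|f\|_{L^3_TL^6}$ itself, which is only bounded by $\Lambda$, so in the contraction estimate every occurrence of $f$ must be paired with a power of $T$ supplied by a factor of $v-\tilde v$ (the $T^{1/3}$ above). Because the nonlinearity is cubic this balances exactly as in the unperturbed case, so the whole scheme goes through with $T\sim\Lambda^{-2}$.
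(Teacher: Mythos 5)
Your proposal is correct and takes essentially the same route as the paper's proof: a contraction argument for the Duhamel map in $C([0,T];H^1(\T^3))$ with $T\sim \Lambda^{-2}$, absorbing $f$ through its $L^3_t L^6_x$ norm and using the Sobolev embedding $H^1(\T^3)\subset L^6(\T^3)$, followed by differentiation of the Duhamel formula for $\partial_t v$. The only difference is that you write out explicitly the contraction estimate and the uniqueness-by-subdivision argument, which the paper dispatches by saying the proof is "very similar" to that of Proposition~\ref{prop.local}.
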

\begin{proof}
The proof is very similar to the proof of Proposition~\ref{prop.local}.
By translation invariance in time, we can suppose that $I=[0,1]$.
We can rewrite the problem as
\begin{equation}\label{Duhamel_pak}
v(t)=S(t)(v_0,v_1)-\int_{0}^t \frac{\sin((t-\tau)\sqrt{-\Delta})}{\sqrt{-\Delta}}((f(\tau)+v(\tau))^3d\tau\,.
\end{equation}
Set
$$
\Phi_{v_0,v_1,f}(v)\equiv  S(t)(v_0,v_1)-\int_{0}^t \frac{\sin((t-\tau)\sqrt{-\Delta})}{\sqrt{-\Delta}}((f(\tau)+v(\tau))^3d\tau.
$$
Then for $T\in (0,1]$, using the Sobolev embedding $H^1(\T^3)\subset L^6(\T^3)$, we get
\begin{multline*}
\|\Phi_{v_0,v_1,f}(v)\|_{L^\infty([0,T],H^1)} 
\\
 \leq   C\big(\|v_0\|_{H^1}+\|v_1\|_{L^2}+\int_0^T\|f(\tau)\|_{L^6}^3d\tau\big)+ T\sup_{\tau\in[0,T]}\|v(\tau)\|_{L^6}^3
\\
\leq  C\big(\|v_0\|_{H^1}+\|v_1\|_{L^2}+\|f\|^3_{L^3(I,L^6)}\big)+T\|v\|^3_{L^\infty([0,T],H^1)}.
\end{multline*}
It is now clear that for $T\approx \Lambda^{-2}$ the map $\Phi_{u_0,u_1,f}$ send the ball
$$
\{v:\|v\|_{L^\infty([0,T],H^1)}\leq C\Lambda\}
$$
into itself. Moreover by a similar argument, we obtain that this map is a contraction on the
same ball. Thus we obtain the existence part and the bound on $v$ in $H^1$. The estimate of $\|\partial_t v\|_{L^2}$ follows by differentiating
in $t$ the Duhamel formula \eqref{Duhamel_pak}. 
This completes the proof of Proposition~\ref{prop.local_pak}.
\end{proof}
\section{Global existence}
In this section, we complete the proof of Theorem~\ref{main}.
We search $v$ under the form $v(t)= S(t) (v_0, v_1) +w(t)$. Then $w$ solves
\begin{equation}\label{eq.1}
(\partial_t^2-\Delta)w+(S(t)(v_0, v_1)+w)^3=0,\quad w\mid_{t=0} =0, \quad \partial_t w\mid_{t=0} = 0.
\end{equation}
Thanks to Theorem~\ref{lll} and  Theorem~\ref{lll_bis},  we have that $\mu$-almost surely, 
\begin{equation}\label{eq.borne}
\begin{aligned}
g(t) &= \|S(t)(v_0, v_1)\|^3_{L^6( \T^3)}\in L^1_{\text{loc}} ( \R_t),
\\
f(t) &=  \|S(t)(v_0, v_1)\|_{W^{\sigma,q}( \T^3)}\in L^1_{\text{loc}} ( \R_t),
\end{aligned}
\end{equation}
$\sigma>3/q$. The local existence for \eqref{eq.1}  follows from Proposition~\ref{prop.local_pak} and the first estimate in~\eqref{eq.borne}. 
We also deduce from Proposition~\ref{prop.local_pak}, 
that as long as the $H^1\times L^2$ norm of $(w, \partial_t w)$ remains bounded, the solution $w$ of~\eqref{eq.1} exists. Set
$$ 
{\mathcal E}(w(t)) = \frac 1 2 \int_{\T^3}\big( (\partial_t w)^2 + |\nabla_x w|^2 + \frac 1 2 w^4\big) dx\,.
$$
Using the equation solved by $w$, we now compute 
\begin{eqnarray*}
\frac{d} {dt} {\mathcal E}(w(t)) &=  &\int_{\T^3}\big( \partial_t w \partial_t^2 w + \nabla_x \partial _t w \cdot \nabla_x w + \partial_t w\, w ^3 \big)dx
\\
& =  &\int_{\T^3} \partial_t w \Bigl(\partial_t^2 w -\Delta w + w^3\Bigr) dx
\\
& =  &\int_{\T^3} \partial_t w \Bigl(w^3-  (S(t)(v_0, v_1)+ w)^3\Bigr) dx.
\end{eqnarray*}
Now, using the Cauchy-Schwarz inequality, the H\"older inequalities and the Sobolev embedding  $W^{\sigma,q}(\T^3)\subset C^0(\T^3)$, we can write
\begin{equation*}
\begin{aligned}
\frac{d} {dt} {\mathcal E}(w(t))  &\leq C \big({\mathcal E}(w(t))\big)^{1/2}  \|w^3-  (S(t)(v_0, v_1)+ w)^3\|_{L^2( \T^3)}
\\
&\leq C \big({\mathcal E}(w(t))\big)^{1/2} 
\Bigl(\|S(t)(v_0, v_1)\|^3_{L^6( \T^3)} + \|S(t)(v_0, v_1)\|_{L^\infty( \T^3)} \|w^2\|_{L^2( \T^3)} \Bigr)\\
&\leq C \big({\mathcal E}(w(t))\big)^{1/2} 
\Bigl(\|S(t)(v_0, v_1)\|^3_{L^6( \T^3)} + \|S(t)(v_0, v_1)\|_{W^{\sigma,q}( \T^3)} \|w^2\|_{L^2( \T^3)} \Bigr)\\
&\leq 
C \big({\mathcal E}(w(t))\big)^{1/2} 
 \Big(g(t) + f(t) \big({\mathcal E}(w(t))\big)^{1/2}  \Big)
\end{aligned}
\end{equation*}
and consequently, according to Gronwall inequality and~\eqref{eq.borne}, $w$ exists globally in time.

This completes the proof of the existence and uniqueness part of Theorem~\ref{main}.
Let us now turn to the construction of an invariant set. 
Define the sets
\begin{multline*}
\Theta\equiv
\big\{
(v_0,v_1)\in {\mathcal H}^s\,:\,\|S(t)(v_0, v_1)\|^3_{L^6( \T^3)}\in L^1_{\text{loc}} ( \R_t),
\\
 \|S(t)(v_0, v_1)\|_{W^{\sigma,q}( \T^3)}\in L^1_{\text{loc}} ( \R_t)
\big\}
\end{multline*}
and
$
\Sigma\equiv\Theta+{\mathcal H}^1.
$
Then $\Sigma$ is of full $\mu$ measure for every $\mu\in {\mathcal H}^s$, since so is $\Theta$.
We have the following proposition.
\begin{prop}\label{th.1.bis}
Assume that $s>0$ and let us fix $\mu\in {\mathcal M}^s$.
Then, for every $(v_0, v_1) \in \Sigma$, there exists a unique global solution 
$$
(v(t),\partial_t v(t)) \in (S(t)(v_0, v_1),\partial_t S(t)(v_0, v_1))+ C(\R; H^1(\T^3) \times L^2(\T^3))$$ 
of the nonlinear wave equation
\begin{equation}\label{shart}
(\partial_t^2-\Delta)v+v^3=0,\quad (v(0,x),\partial_{t} v(0,x))=(v_0(x),v_1(x))\, .
\end{equation}
Moreover for every $t\in\R$, 
$
(v(t),\partial_t v(t))\in\Sigma
$
and thus by the time reversibility $\Sigma$ is invariant under the flow of \eqref{shart}.
\end{prop}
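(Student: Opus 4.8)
The plan is to reduce the statement to the global existence argument carried out just above, together with the group property of the free evolution and the translation invariance of $L^1_{\mathrm{loc}}$. Unwinding the definition $\Sigma = \Theta + \mathcal{H}^1$, I would start by fixing, for a given $(v_0,v_1)\in\Sigma$, a decomposition $(v_0,v_1) = (\theta_0,\theta_1) + (h_0,h_1)$ with $(\theta_0,\theta_1)\in\Theta$ and $(h_0,h_1)\in\mathcal{H}^1$, and setting $z(t)\equiv S(t)(\theta_0,\theta_1)$. Since $\bar{S}(t)(h_0,h_1)\in C(\R;\mathcal{H}^1)$, the affine class $(S(t)(v_0,v_1),\partial_t S(t)(v_0,v_1)) + C(\R;H^1\times L^2)$ coincides with $(z(t),\partial_t z(t)) + C(\R;H^1\times L^2)$, so one may look for $v$ in the form $v(t) = z(t) + w(t)$ with $(w,\partial_t w)\in C(\R;\mathcal{H}^1)$ and $(w(0),\partial_t w(0)) = (h_0,h_1)$; then $w$ solves $(\partial_t^2-\Delta)w + (z+w)^3 = 0$. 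The gain is that the rough profile is now $z$, for which $g(t)\equiv\|z(t)\|^3_{L^6(\T^3)}$ and $f(t)\equiv\|z(t)\|_{W^{\sigma,q}(\T^3)}$ lie in $L^1_{\mathrm{loc}}(\R)$ by the very definition of $\Theta$ (with $q<\infty$, $\sigma\in(0,s]$, $\sigma>3/q$, so that $W^{\sigma,q}(\T^3)\subset C^0(\T^3)$).

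With this reduction, local existence and uniqueness for $w$ in $\mathcal{H}^1$ follow from Proposition~\ref{prop.local_pak} applied on unit time intervals $I$ with data $(h_0,h_1)$ and source $z$ (note $\|z\|^3_{L^3(I,L^6)} = \int_I g(t)\,dt<\infty$), and the same proposition shows that $w$ extends as long as $\|(w,\partial_t w)\|_{\mathcal{H}^1}$ stays bounded. For the a priori bound I would reproduce verbatim the modified energy estimate of the previous section: with $\mathcal{E}(w(t)) = \frac{1}{2}\int_{\T^3}\big((\partial_t w)^2 + |\nabla_x w|^2 + \frac{1}{2}w^4\big)dx$, the Cauchy--Schwarz and H\"older inequalities together with $W^{\sigma,q}\subset C^0$ give
\[
\frac{d}{dt}\mathcal{E}(w(t)) \leq C\,\mathcal{E}(w(t))^{1/2}\Big(g(t) + f(t)\,\mathcal{E}(w(t))^{1/2}\Big),
\]
which for $y = \mathcal{E}^{1/2}$ becomes $y' \leq \frac{C}{2}g + \frac{C}{2}fy$; since $g,f\in L^1_{\mathrm{loc}}(\R)$, Gronwall's lemma bounds $\mathcal{E}(w(t))$ on every compact interval, and because $\mathcal{E}(w)$ controls $\|(w,\partial_t w)\|_{\mathcal{H}^1}$ (using $\|w\|_{L^2(\T^3)}\lesssim 1 + \|w\|^2_{L^4(\T^3)}$ on the torus) this makes $w$ global. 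Uniqueness in the prescribed affine class then amounts to uniqueness of $w$ in $C(\R;\mathcal{H}^1)$, which follows from the local uniqueness of Proposition~\ref{prop.local_pak} by covering $\R$ with intervals of controlled length.

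For the invariance of $\Sigma$, I would write $(v(t),\partial_t v(t)) = \bar{S}(t)(\theta_0,\theta_1) + \big(\bar{S}(t)(h_0,h_1) + (w(t),\partial_t w(t))\big)$; the bracketed term lies in $\mathcal{H}^1$, so it suffices to prove $\bar{S}(t)(\theta_0,\theta_1)\in\Theta$ for all $t\in\R$. By the group law $\bar{S}(\tau)\circ\bar{S}(t) = \bar{S}(\tau+t)$ we get $S(\tau)\big(\bar{S}(t)(\theta_0,\theta_1)\big) = S(\tau+t)(\theta_0,\theta_1)$, so the maps $\tau\mapsto\|S(\tau)(\bar{S}(t)(\theta_0,\theta_1))\|^3_{L^6}$ and $\tau\mapsto\|S(\tau)(\bar{S}(t)(\theta_0,\theta_1))\|_{W^{\sigma,q}}$ are merely the $t$-translates of $g$ and $f$, hence still in $L^1_{\mathrm{loc}}(\R)$; therefore $\bar{S}(t)(\theta_0,\theta_1)\in\Theta$ and $(v(t),\partial_t v(t))\in\Theta + \mathcal{H}^1 = \Sigma$. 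This gives $\Phi(t)(\Sigma)\subseteq\Sigma$ for every $t\in\R$; applying this with $t$ and with $-t$ and using that the wave equation is time reversible, so that $\Phi(-t)$ is well defined and $\Phi(t)\circ\Phi(-t) = \mathrm{Id}$, one gets $\Sigma\subseteq\Phi(t)(\Sigma)$ as well, whence $\Phi(t)(\Sigma) = \Sigma$.

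The only step I expect to be genuinely delicate is the reduction itself: for a general $(v_0,v_1)\in\Sigma$ the profile $S(t)(v_0,v_1)$ need not belong to $L^3_{\mathrm{loc}}(\R;L^6)$ or $L^1_{\mathrm{loc}}(\R;W^{\sigma,q})$, so one must carefully transfer the roughness onto $z(t) = S(t)(\theta_0,\theta_1)$ and check that the nonlinear contribution $(z+w)^3 - w^3$ is controlled precisely by the two quantities $g$ and $f$ supplied by membership in $\Theta$. Everything else is a routine repetition of the global existence proof above combined with the elementary fact that a translate of an $L^1_{\mathrm{loc}}(\R)$ function is again in $L^1_{\mathrm{loc}}(\R)$.
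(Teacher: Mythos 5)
Your proposal is correct and follows essentially the same route as the paper: decompose $(v_0,v_1)$ into a $\Theta$ part plus an $\mathcal{H}^1$ part, seek $v=S(t)(\theta_0,\theta_1)+w$ with $(w(0),\partial_t w(0))=(h_0,h_1)$, run the modified energy estimate with Gronwall using $g,f\in L^1_{\mathrm{loc}}$, and conclude invariance from $S(t)(\Theta)=\Theta$ (your translation argument is exactly why this holds) together with time reversibility. The step you flag as delicate is precisely the reduction the paper performs, and your treatment of it is the intended one.
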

\begin{proof}
By assumption, we can write $(v_0,v_1)=(\tilde{v}_0,\tilde{v}_1)+(w_0,w_1)$ with
$(\tilde{v}_0,\tilde{v}_1)\in\Theta$ and $(w_0,w_1)\in {\mathcal H}^1$.
We search $v$ under the form
 $$
 v(t)= S(t) (\tilde{v}_0, \tilde{v}_1) +w(t)\,.
 $$ 
 Then $w$ solves
\begin{equation*}
(\partial_t^2-\Delta_{\T^3})w+(S(t)(\tilde{v}_0, \tilde{v}_1)+w)^3=0,\quad w\mid_{t=0} =w_0, \quad \partial_t w\mid_{t=0} = w_1\,.
\end{equation*}
Now, exactly as before, we obtain that
$$
\frac{d} {dt} {\mathcal E}(w(t)) \leq 
C \big({\mathcal E}(w(t))\big)^{1/2} 
 \Big(g(t) + f(t) \big({\mathcal E}(w(t))\big)^{1/2}  \Big),
 $$
 where
$$
g(t)= \|S(t)(\tilde{v}_0, \tilde{v}_1)\|^3_{L^6( \T^3)},\quad f(t) =  \|S(t)(\tilde{v}_0, \tilde{v}_1)\|_{W^{\sigma,q}( \T^3)}.
$$
Therefore thanks to the Gronwall lemma, using that ${\mathcal E}(w(0))$ is well defined, we obtain the global existence for $w$.
Thus the solution of \eqref{shart} can be written as
 $$
 v(t)= S(t) (\tilde{v}_0, \tilde{v}_1) +w(t),\quad (w,\partial_t w)\in C(\R;{\mathcal H}^1).
 $$
Coming back to the definition of $\Theta$, we observe that 
$$
S(t)(\Theta)=\Theta.
$$
Thus $(v(t),\partial_t v(t))\in \Sigma$.
\end{proof}
This completes the proof of Theorem~\ref{main}.
\section{Unique limits of smooth solutions}
In this section, we present the proofs of Theorem~\ref{uniqueness_1} and Theorem~\ref{uniqueness_2}. 
\begin{proof}[Proof of Theorem~\ref{uniqueness_1}]
Thanks to Theorem~\ref{main}, the Sobolev embeddings and Theorem~\ref{lll}  we obtain that 
$$
(v,\partial_t  v)\in C(\R;{\mathcal H}^s(\T^3))
$$
and 
$$
v\in L^{p^\star}_{loc}(\R;L^{q^{\star}}(\T^3))\,,
$$
where $(p^\star,q^\star)$ are as in  Corollary~\ref{sun} (observe that $q^\star\leq 6$).
Once, we have this information the proof of Theorem~\ref{uniqueness_1} follows from Theorem~\ref{prop.local.low} (here we use the assumption $s>1/2$) and Corollary~\ref{sun}. 
Indeed, let us fix $T>0$ and let $\Lambda$ be such that
$$
\sup_{0\leq t\leq T}\|(v(t),\partial_t v(t))\|_{{\mathcal H}^s(\T^3)}<\Lambda-1\,.
$$
Let $\tau>0$ be the time of existence associated with $\Lambda$ in Theorem~\ref{prop.local.low}.
We now cover the interval $[0,T]$ with intervals of size $\tau$ and using iteratively the continuous dependence statement of  Theorem~\ref{prop.local.low} and the uniqueness statement given by Corollary~\ref{sun}, we obtain that 
$$
\lim_{n\rightarrow\infty}\|(v_{n}(t)-v(t),\partial_t v_{n}(t)-\partial_t v(t))\|_{L^\infty([0,T]; {\mathcal H}^s(\T^3))}=0\,.
$$
This completes the proof of  Theorem~\ref{uniqueness_1}.
\end{proof}
We now turn to the proof of Theorem~\ref{uniqueness_2} which is slightly more delicate.
\begin{proof}[Proof of Theorem~\ref{uniqueness_2}]
For $(v_0,v_1)\in \Sigma$ we decompose the solution as
$$
v(t)=S(t)(v_0,v_1)+w(t),\quad w(0)=0,\, \partial_t w(0)=0.
$$
Similarly, we decompose the solutions issued from $(v_{0,n},v_{1,n})$ as
$$
v_n(t)=S(t)(v_{0,n},v_{1,n})+w_n(t),\quad w_n(0)=0,\, \partial_t w_n(0)=0.
$$
Using the energy estimates of the previous section, we obtain that
$$
\frac{d} {dt} {\mathcal E}(w_n(t)) \leq 
C \big({\mathcal E}(w_n(t))\big)^{1/2} 
 \Big(g_n(t) + f_n(t) \big({\mathcal E}(w(t))\big)^{1/2}  \Big),
$$
where 
$$
g_n(t) = \|S(t)(v_{0,n}, v_{1,n})\|^3_{L^6( \T^3)},\quad f_n(t) =  \|S(t)(v_{0,n}, v_{1,n})\|_{W^{\sigma,q}( \T^3)}.
$$
Therefore
$$
 ({\mathcal E}(w_n(t)))^{1/2}\leq 
 C
\big( \int_{0}^t  g_n(\tau)d\tau \big)
 e^{\int_0^t f_{n}(\tau) d\tau} \,.
 $$
Using that 
\begin{equation}\label{veneta2}
S(t)(v_{0,n}, v_{1,n})=\rho_n \star \big(S(t)(v_0,v_1)\big),
\end{equation}
and the fact that $(v_0,v_1)\in \Sigma$, we obtain that
$$
\lim_{n\rightarrow\infty}
\int_0^t g_n(\tau)d\tau=\int_0^t g(\tau)d\tau,\quad 
\lim_{n\rightarrow\infty}
\int_0^t f_n(\tau)d\tau=
\int_0^t f(\tau)d\tau,
$$
where $g(t)$ and $f(t)$ are defined in \eqref{eq.borne}.
Therefore, we obtain that for every $T>0$ there is $C>0$ such that for every $n$,
\begin{equation}\label{veneta1}
\sup_{0\leq t\leq T}\|(w_n(t),\partial_t w_n(t))\|_{{\mathcal H}^1(\T^3)}\leq C.
\end{equation}
Next, we observe that $w$ and $w_n$ solve the equations
$$
(\partial_t^2-\Delta)w+(S(t)(v_0,v_1)+w)^3=0
$$
and 
$$
(\partial_t^2-\Delta)w_n+(S(t)(v_{0,n},v_{1,n})+w_n)^3=0.
$$
Therefore
$$
(\partial_t^2-\Delta)(w-w_n)=-\big((S(t)(v_0,v_1)+w)^3-S(t)(v_{0,n},v_{1,n})+w_n)^3\big).
$$
We multiply the last equation by $\partial_t(w-w_n)$, and by using the Sobolev embedding $H^1(\T^3)\subset L^6(\T^3)$  and the H\"older inequality, we arrive at the bound
\begin{multline*}
\frac{d}{dt}
\|
(w-w_n,\partial_ t w -\partial_t w_n)
\|_{{\mathcal H}^1(\T^3)}
\\
\leq 
C\big(
\|S(t)(v_0-v_{0,n}, v_1-v_{1,n})\|_{L^6(\T^3)}+\|w-w_n\|_{H^1(\T^3)}
\big)
\\
\times
\Big(
\|S(t)(v_{0}, v_{1})\|^2_{L^6(\T^3)}
+
\|S(t)(v_{0,n}, v_{1,n})\|^2_{L^6(\T^3)}
\\
+
\|w\|^2_{H^1(\T^3)}+\|w_n\|^2_{H^1(\T^3)}
\Big).
\end{multline*}
Using \eqref{veneta1} and the properties of the solutions obtained in Theorem~\ref{main}, we obtain
\begin{multline*}
\frac{d}{dt}
\|
(w-w_n,\partial_ t w -\partial_t w_n)
\|_{{\mathcal H}^1(\T^3)}
\\
\leq 
C\big(
\|S(t)(v_0-v_{0,n}, v_1-v_{1,n})\|_{L^6(\T^3)}+\|w-w_n\|_{H^1(\T^3)}
\big)
\\
\times
\Big(
\|S(t)(v_{0}, v_{1})\|^2_{L^6(\T^3)}
+
\|S(t)(v_{0,n}, v_{1,n})\|^2_{L^6(\T^3)}
+
C
\Big).
\end{multline*}
The last inequality implies the following bound for $t\in [0,T]$,
\begin{multline}\label{veneta3}
\|
(w(t)-w_n(t),\partial_ t w(t) -\partial_t w_n(t))
\|_{{\mathcal H}^1}
\\
\leq C
\int_{0}^t
\|S(\tau)(v_0-v_{0,n}, v_1-v_{1,n})\|_{L^6}
\\
\big(\|S(\tau)(v_{0}, v_{1})\|^2_{L^6}
+
\|S(\tau)(v_{0,n}, v_{1,n})\|^2_{L^6}
+
C\big)
d\tau
\\
\exp\left(
\int_{0}^t
(\|S(\tau)(v_{0}, v_{1})\|^2_{L^6}
+
\|S(\tau)(v_{0,n}, v_{1,n})\|^2_{L^6}
+
C)d\tau
\right).
\end{multline}
More precisely, we used that if $x(t)\geq 0$ satisfies the differential inequality
$$
\dot{x}(t)\leq C z(t)(y(t)+x(t)),\quad x(0)=0,
$$
for some $z(t)\geq 0$ and $y(t)\geq 0$ then
$$
x(t)\leq C\int_{0}^t y(\tau)z(\tau) d\tau 
\exp\big(\int_{0}^t z(\tau)d\tau\big)\,.
$$
Coming back to \eqref{veneta3} and using the H\"older inequality, we get for $t\in [0,T]$,
\begin{multline}\label{veneta4}
\|(w(t)-w_n(t),\partial_ t w(t) -\partial_t w_n(t))\|_{{\mathcal H}^1}
\\
\leq C
\|S(t)(v_0-v_{0,n}, v_1-v_{1,n})\|_{L^2_{T}L^6}
\\
\times\big(
\|S(t)(v_{0}, v_{1})\|^2_{L^4_T L^6}
+
\|S(t)(v_{0,n}, v_{1,n})\|^2_{L^4_TL^6}
+
C\big)
\\
\times\exp\left(
\int_{0}^t
(\|S(\tau)(v_{0}, v_{1})\|^2_{L^6}
+
\|S(\tau)(v_{0,n}, v_{1,n})\|^2_{L^6}
+
C)d\tau
\right).
\end{multline}
Recalling \eqref{veneta2}, we obtain that for $1<p<\infty$,
$$
\lim_{n\rightarrow\infty}\int_{0}^T
\|S(\tau)(v_0-v_{0,n}, v_1-v_{1,n})\|^p_{L^6(\T^3)}d\tau=0.
$$
Therefore \eqref{veneta4} implies that 
$$
\lim_{n\rightarrow\infty}
\|(w(t)-w_n(t),\partial_ t w(t) -\partial_t w_n(t))
\|_{ L^\infty([0,T]; {\mathcal H}^1(\T^3))}=0\,.
$$
Recall that 
$$
v(t)=S(t)(v_0,v_1)+w(t),\quad 
v_n(t)=S(t)(v_{0,n},v_{1,n})+w_n(t).
$$
Using once again  \eqref{veneta2}
and
$$
\partial_t S(t)(v_{0,n}, v_{1,n})=\rho_n \star \big(\partial_t S(t)(v_0,v_1)\big)
$$
we get 
\begin{multline*}
\lim_{n\rightarrow\infty}
\|
(
S(t)(v_0,v_1)-S(t)(v_{0,n},v_{1,n}),
\\
\partial_t S(t)(v_0,v_1)- \partial_t S(t)(v_{0,n},v_{1,n})
)
\|_{L^\infty([0,T];  {\mathcal H}^s(\T^3))}=0
\end{multline*}
and consequently 
$$
\lim_{n\rightarrow\infty}
\|(v(t)-v_n(t),\partial_ t v(t) -\partial_t v_n(t))
\|_{L^\infty([0,T];  {\mathcal H}^s(\T^3))}=0\,.
$$
This completes the proof of  Theorem~\ref{uniqueness_2}.
\end{proof}
\begin{rema}
{\rm
In the proof of Theorem~\ref{uniqueness_2}, we essentially used that the regularisation by convolution works equally well in $H^s$ and $L^p$ ($p<\infty$) and that it commutes with the Fourier multipliers such as the free evolution $S(t)$.
Any other regularisation respecting these two properties would produce smooth solutions converging to the singular dynamics constructed in Theorem~\ref{main}.
}
\end{rema}
\section{Conditioned large deviation bounds}
In this section, we prove conditioned large deviation bounds which are the main tool in the proof of the Theorem~\ref{th_continuity}.
\begin{prop}\label{ochak}
Let $\mu\in {\mathcal M}^s$, $s\in(0,1)$ and suppose that the real random variable with distribution $\theta$, involved in the definition of $\mu$ is
symmetric. Then for $\delta> 1+ \frac{ 1} {p_1} $, $2\leq p_1<\infty$ and $2\leq p_2\leq\infty$ there exist positive constants
$c,C$ such that for every positive $\varepsilon,\lambda,\Lambda$ and $A$,
\begin{multline}\label{eq.cond}
\mu\otimes\mu\Big(
((v_0,v_1),(v'_0,v'_1))\in {\mathcal H}^s\times {\mathcal H}^s\,:\,\
\\
\|\langle t \rangle ^{- \delta} S(t) (v_0- v'_0,v_1- v'_1)\|_{L^{p_1} (  \R_t ; L^{p_2}( \T^3))}> 
\lambda
\\
{\,\rm\,or\,}
\|\langle t \rangle ^{- \delta} S(t) (v_0+ v'_0,v_1+ v'_1)\|_{L^{p_1} (  \R_t ; L^{p_2}( \T^3))}> 
\Lambda
\Big\vert
 \|(v_0- v'_0,v_1- v'_1)\|_{\mathcal{H}^s(\T^3)}\leq \varepsilon
 \\
 {\rm\,and\,} \|(v_0+ v'_0,v_1+ v'_1)\|_{\mathcal{H}^s(\T^3)}\leq A  \Big) 
\leq 
C\Big(e^{-c\frac{\lambda^2}{\varepsilon^2}}+
e^{-c\frac{\Lambda^2}{A^2}}\Big).
\end{multline}
\end{prop}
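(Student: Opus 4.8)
The plan is to lift everything to the product space $(\Omega\times\Omega,\,p\otimes p)$, on which $\mu\otimes\mu$ is the image, under $(\omega,\omega')\mapsto\bigl((u_0^\omega,u_1^\omega),(u_0^{\omega'},u_1^{\omega'})\bigr)$, of two independent copies $(\alpha_j,\beta_{n,j},\gamma_{n,j})(\omega)$ and $(\alpha_j,\beta_{n,j},\gamma_{n,j})(\omega')$ of the symmetric i.i.d.\ family defining $\mu$. Write $D$ for the family of difference coordinates $(\alpha_j-\alpha_j',\ \beta_{n,j}-\beta_{n,j}',\ \gamma_{n,j}-\gamma_{n,j}')$ and $\Sigma$ for the family of sum coordinates $(\alpha_j+\alpha_j',\dots)$. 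Since $S(t)$ is linear, the random function $S(t)(v_0-v_0',v_1-v_1')$, the datum $(v_0-v_0',v_1-v_1')$, hence the event $E_{\mathrm{diff}}=\{\|\langle t\rangle^{-\delta}S(t)(v_0-v_0',v_1-v_1')\|_{L^{p_1}(\R_t;L^{p_2}(\T^3))}>\lambda\}$ and the set $\{\|(v_0-v_0',v_1-v_1')\|_{\mathcal H^s}\le\varepsilon\}$ are all measurable functions of $D$ alone; symmetrically for the sum and the event $E_{\mathrm{sum}}$. Denoting by $F$ the full conditioning event, i.e.\ the intersection $\{\|(v_0-v_0',v_1-v_1')\|_{\mathcal H^s}\le\varepsilon\}\cap\{\|(v_0+v_0',v_1+v_1')\|_{\mathcal H^s}\le A\}$, a union bound reduces \eqref{eq.cond} to the two separate estimates $\mu\otimes\mu(E_{\mathrm{diff}}\cap F)\lesssim e^{-c\lambda^2/\varepsilon^2}\,\mu\otimes\mu(F)$ and $\mu\otimes\mu(E_{\mathrm{sum}}\cap F)\lesssim e^{-c\Lambda^2/A^2}\,\mu\otimes\mu(F)$, written in this product form so as to remain trivially valid when $\mu\otimes\mu(F)=0$.

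The crucial point -- and the place where symmetry of $\theta$ enters, the Gaussian case being easy precisely because there $D$ and $\Sigma$ are independent -- is the following elementary randomisation identity. Let $(\epsilon_k)$ be a Rademacher family indexed as $D$, independent of $(D,\Sigma)$. For two independent copies $X,Y$ of a symmetric law one has $(X-Y,\,X+Y)\overset{d}{=}(\epsilon(X-Y),\,X+Y)\overset{d}{=}(X-Y,\,\epsilon(X+Y))$, obtained by conditioning on $\epsilon$ and using that the law of $(X,Y)$ is invariant under $(X,Y)\mapsto(Y,X)$ and under $(X,Y)\mapsto(-Y,-X)$; applying this coordinatewise, with independence across $k$, yields
\[
(D,\Sigma)\ \overset{d}{=}\ (\epsilon_\bullet D,\,\Sigma)\ \overset{d}{=}\ (D,\,\epsilon_\bullet\Sigma),
\]
where $\epsilon_\bullet D$ denotes the componentwise product. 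Two features will be used: $\epsilon_\bullet D$ differs from $D$ only by signs of Fourier coefficients, so all $\mathcal H^\sigma$ norms are left unchanged; and, conditionally on $(D,\Sigma)$, the difference datum built from $\epsilon_\bullet D$ is exactly the randomisation \eqref{coord} of the now-deterministic couple $(v_0-v_0',v_1-v_1')$ with the independent Rademacher family $(\epsilon_k)$ playing the role of the random variables -- an element of the class $\mathcal M^s$ to which Theorem~\ref{lll} applies.

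To conclude for $E_{\mathrm{diff}}$: replace $(D,\Sigma)$ by $(\epsilon_\bullet D,\Sigma)$ (which leaves $F$ unchanged), condition on $(D,\Sigma)$, and use that $(\epsilon_k)$ is independent of it; the inner probability is then, by \eqref{kando2} of Theorem~\ref{lll} applied with the triple $(p_1,p_2,\delta)$ as in the statement and with exponent $\sigma\in(0,s]$ to the Rademacher randomisation of $(v_0-v_0',v_1-v_1')$, at most $C\exp\bigl(-c\lambda^2/\|(v_0-v_0',v_1-v_1')\|_{\mathcal H^\sigma}^2\bigr)$, which on $F$ is $\le Ce^{-c\lambda^2/\varepsilon^2}$ because $\|(v_0-v_0',v_1-v_1')\|_{\mathcal H^\sigma}\le\|(v_0-v_0',v_1-v_1')\|_{\mathcal H^s}\le\varepsilon$; integrating over $(D,\Sigma)$ restricted to $F$ gives $\mu\otimes\mu(E_{\mathrm{diff}}\cap F)\le Ce^{-c\lambda^2/\varepsilon^2}\mu\otimes\mu(F)$. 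The estimate for $E_{\mathrm{sum}}$ is identical, using instead $(D,\Sigma)\overset{d}{=}(D,\epsilon_\bullet\Sigma)$ and \eqref{kando2} for the Rademacher randomisation of $(v_0+v_0',v_1+v_1')$, whose $\mathcal H^\sigma$ norm is $\le A$ on $F$. Summing the two bounds yields \eqref{eq.cond}. The single genuine obstacle is the non-independence of $D$ and $\Sigma$ for general symmetric $\theta$, and it is exactly this that the symmetrisation identity removes; everything else is a direct application of the probabilistic Strichartz estimate of Theorem~\ref{lll}.
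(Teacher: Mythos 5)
Your proof is correct and is essentially the paper's argument: you re-randomise the difference (resp.\ sum) data with independent Bernoulli signs using the symmetry of $\theta$, observe that the $\mathcal{H}^s$-norm conditioning events are insensitive to these signs, and then apply the probabilistic Strichartz bound \eqref{kando2} of Theorem~\ref{lll} (with Bernoulli variables) to the now-fixed data of norm at most $\varepsilon$ (resp.\ $A$), finishing with a union bound. The only cosmetic differences are that the paper multiplies the difference and sum by the \emph{same} Bernoulli family (Lemma~\ref{ll2}) and handles the conditioning via the essential-supremum Lemma~\ref{lem.cond}, whereas you flip the two separately and integrate the conditional estimate over the conditioning set; these are equivalent implementations of the same idea.
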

We shall make use of the following elementary lemmas. 
\begin{lemm}\label{lem.cond}
For $j=1,2$, let $E_j$ be two Banach spaces endowed with measures $\mu_j$. Let $f: E_1\times E_2\rightarrow\C$ and $g_1,g_2:E_2\rightarrow \C$ be three measurable functions. Then
\begin{multline*}
\mu_1\otimes\mu_2
\Big(
(x_1,x_2)\in E_1\times E_2\,:\, |f(x_1,x_2)|> \lambda\Big\vert|\, g_1(x_2)|\leq \varepsilon,\,\,
\\
 |g_2(x_2)|\leq A
\Big)
\leq
\sup_{x_2\in E_2, |g_1(x_2)|\leq \varepsilon, |g_2(x_2)|\leq A }
\mu_1(x_1\in E_1\,:\, |f(x_1,x_2)|>\lambda)\,,
 \end{multline*}
 where by $\sup$ we mean the essential supremum.
 \end{lemm}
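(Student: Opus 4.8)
The plan is to unwind the definition of the conditioned probability and then apply Tonelli's theorem, using crucially that the conditioning set is a cylinder over the second factor. Put
\[
B\equiv\{x_2\in E_2\,:\,|g_1(x_2)|\leq\varepsilon,\ |g_2(x_2)|\leq A\}\,,
\]
which is a measurable subset of $E_2$ since $g_1$ and $g_2$ are measurable. If $\mu_2(B)=0$ the conditioned quantity on the left is undefined and there is nothing to prove, so assume $\mu_2(B)>0$. Since the conditioning event is $E_1\times B$, which is product-measurable with $(\mu_1\otimes\mu_2)(E_1\times B)=\mu_2(B)$, the left-hand side equals $\mu_2(B)^{-1}(\mu_1\otimes\mu_2)(S)$, where $S\equiv\{(x_1,x_2):|f(x_1,x_2)|>\lambda\}\cap(E_1\times B)$ is product-measurable because $f$ is measurable.

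Next I would invoke Tonelli's theorem for the finite (hence $\sigma$-finite) measures $\mu_1,\mu_2$: the section probability $x_2\mapsto\mu_1(\{x_1\in E_1:(x_1,x_2)\in S\})$ is a $\mu_2$-measurable function of $x_2$ and
\[
(\mu_1\otimes\mu_2)(S)=\int_{B}\mu_1\big(x_1\in E_1:|f(x_1,x_2)|>\lambda\big)\,d\mu_2(x_2)\,,
\]
since the $x_2$-section of $S$ is empty for $x_2\notin B$ and equals $\{x_1:|f(x_1,x_2)|>\lambda\}$ for $x_2\in B$. Bounding the integrand, for $\mu_2$-almost every $x_2\in B$, by its essential supremum over $B$, this integral is at most
\[
\Big(\sup_{x_2\in B}\mu_1\big(x_1\in E_1:|f(x_1,x_2)|>\lambda\big)\Big)\,\mu_2(B)\,,
\]
where $\sup$ again denotes the essential supremum and $\sup_{x_2\in B}$ is exactly the supremum written on the right-hand side of the statement. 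Dividing by $\mu_2(B)>0$ yields the asserted inequality.

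I do not expect a genuine obstacle, since the statement is soft measure theory. The only two points worth a sentence are the $\mu_2$-measurability of the section probability $x_2\mapsto\mu_1(x_1:|f(x_1,x_2)|>\lambda)$, which is precisely part of the conclusion of Tonelli's theorem applied to the product-measurable set $\{|f|>\lambda\}$, and the degenerate case $\mu_2(B)=0$, which is simply excluded from the statement. This lemma will then be used, in combination with the tail bound \eqref{kando2} of Theorem~\ref{lll} applied with the conditioning variable frozen, to reduce the conditioned large-deviation estimate \eqref{eq.cond} of Proposition~\ref{ochak} to an unconditioned one.
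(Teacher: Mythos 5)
Your argument is correct and is exactly the standard one; the paper states this lemma as "elementary" and gives no proof, and your Tonelli/essential-supremum computation is precisely the intended justification. The only implicit point worth noting is that the identity $(\mu_1\otimes\mu_2)(E_1\times B)=\mu_2(B)$ uses $\mu_1(E_1)=1$, i.e.\ that the $\mu_j$ are probability measures, which is the case in the application (and is what the paper's definition of conditioned probability presupposes).
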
 
 \begin{lemm}\label{ll2}
 Let $g_1$ and $g_2$ be two independent identically distributed real random variables with symmetric distribution. Then  $g_1\pm g_2$ have  symmetric distributions.
 Moreover if $h$ is a Bernoulli random variable independent of $g_1$ then $hg_1$ has the same distribution as $g_1$.
 \end{lemm}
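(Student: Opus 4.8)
The plan is to reduce both assertions to the defining characterisation of symmetry, namely that a real random variable $X$ has a symmetric distribution if and only if $X$ and $-X$ have the same law, together with the standard fact that a vector of independent random variables has as its joint law the product of the marginal laws; consequently the law of any measurable function of such a vector is determined by the marginals alone.

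First I would observe that $g_1-g_2$ is symmetric using only that $g_1$ and $g_2$ are independent and identically distributed: the pairs $(g_1,g_2)$ and $(g_2,g_1)$ have the same (product) law, hence so do their images under the continuous map $(x,y)\mapsto x-y$, i.e. $g_1-g_2$ and $g_2-g_1=-(g_1-g_2)$ have the same law. For $g_1+g_2$ I would then invoke the symmetry hypothesis: since $g_2$ is symmetric, $-g_2$ has the same law as $g_2$, and being a function of $g_2$ it is again independent of $g_1$; thus $g_1$ and $-g_2$ are independent and identically distributed, and the previous step applied to this pair gives that $g_1-(-g_2)=g_1+g_2$ is symmetric. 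Equivalently, symmetry of $g_1$ and of $g_2$ together with independence shows that $(-g_1,-g_2)$ has the same law as $(g_1,g_2)$, whence $-(g_1+g_2)$ and $g_1+g_2$ agree in law.

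For the last statement, let $h$ take the values $\pm1$, each with probability $1/2$, independently of $g_1$. Conditioning on the value of $h$ and using independence, for every Borel set $B\subset\R$ I would write
\[
p\big(hg_1\in B\big)=\frac{1}{2}\,p\big(g_1\in B\big)+\frac{1}{2}\,p\big(-g_1\in B\big)=p\big(g_1\in B\big),
\]
where the last equality is exactly the symmetry of $g_1$. Hence $hg_1$ and $g_1$ have the same distribution. There is essentially no obstacle in this lemma; the only point deserving a little care is to make sure that when one passes from the law of a pair of variables to the law of a function of them one genuinely uses independence (equivalently, works with the product law), which is harmless here since all the maps involved — addition, subtraction and multiplication by $\pm1$ — are continuous.
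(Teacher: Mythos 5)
Your proof is correct. The paper states Lemma~\ref{ll2} without proof, treating it as elementary, and your argument — exchangeability of the i.i.d.\ pair for $g_1-g_2$, reduction of $g_1+g_2$ to that case via symmetry and independence of $-g_2$, and conditioning on the Bernoulli sign for $hg_1$ — is exactly the standard verification being taken for granted there, with the symmetry of $g_1$ used precisely where it is needed.
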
 
\begin{proof}[Proof of Proposition~\ref{ochak}.]
 Define
 $$
 {\mathcal E}\equiv \R\times \R^{\Z^3_{\star}}\times \R^{\Z^3_{\star}},
 $$ 
 equipped with the natural Banach space structure coming from the $l^\infty$ norm. 
 We endow ${\mathcal E}$ with a probability measure $\mu_0$ defined via the map
 $$ 
 \omega\mapsto 
 \Big(
 k_{0}(\omega), \big(l_{n}(\omega)\big)_{n\in\Z^3_{\star}},
 \big(h_{n}(\omega)\big)_{n\in\Z^3_{\star}}
 \Big) ,
 $$
where $(k_0,l_{n}, h_n)$ is a system of independent Bernoulli variables. 
 
For $h=\big(x,(y_n)_{n\in\Z^3_{\star}},(z_n)_{n\in\Z^3_{\star}}\big)\in{\mathcal E}$ and 
$$
u(x)=a+\sum_{n\in\Z^3_{\star}}\Big(b_{n}\cos(n\cdot x)+c_{n}\sin(n\cdot x)\Big),
$$ 
we define the operation $\odot$ by
$$
h\odot u\equiv
ax+\sum_{n\in\Z^3_{\star}}\Big(b_{n} y_{n}\cos(n\cdot x)+c_{n}z_{n}\sin(n\cdot x)\Big).
$$
Let us first evaluate the quantity
\begin{multline}\label{ven_1}
\mu\otimes\mu\Big(
((v_0,v_1),(v'_0,v'_1))\in {\mathcal H}^s\times {\mathcal H}^s\,:\,\
\\
\|\langle t \rangle ^{- \delta} S(t) (v_0- v'_0,v_1- v'_1)\|_{L^{p_1} (  \R_t ; L^{p_2}( \T^3))}> 
\lambda
\Big\vert
\\
 \|(v_0- v'_0,v_1- v'_1)\|_{\mathcal{H}^s(\T^3)}\leq \varepsilon
 {\rm\,and\,} \|(v_0+ v'_0,v_1+ v'_1)\|_{\mathcal{H}^s(\T^3)}\leq A  \Big).
\end{multline}
Observe that, thanks to Lemma~\ref{ll2}, \eqref{ven_1} equals
\begin{multline}\label{ven_2}
\mu\otimes\mu\otimes\mu_0\otimes\mu_0\Big(
((v_0,v_1),(v'_0,v'_1), (h_0,h_1))\in {\mathcal H}^s\times {\mathcal H}^s\times {\mathcal E}\times {\mathcal E}\,:\,\
\\
\|\langle t \rangle ^{- \delta} S(t) (h_0\odot (v_0-v'_0),h_1\odot (v_1- v'_1))\|_{L^{p_1} (  \R_t ; L^{p_2}( \T^3))}> 
\lambda
\Big\vert
\\
 \|(h_0\odot(v_0- v'_0),h_1\odot(v_1- v'_1))\|_{\mathcal{H}^s(\T^3)}\leq \varepsilon
 {\rm\,and\,} 
 \\
 \|(h_0\odot(v_0+ v'_0),h_1\odot(v_1+ v'_1))\|_{\mathcal{H}^s(\T^3)}\leq A  \Big). 
\end{multline}
Since the $H^s(\T^3)$ norm of a function $f$ depends only on the absolute value of its Fourier coefficients, we deduce that \eqref{ven_2} equals
\begin{multline}\label{ven_3}
\mu\otimes\mu\otimes\mu_0\otimes\mu_0\Big(
((v_0,v_1),(v'_0,v'_1), (h_0,h_1))\in {\mathcal H}^s\times {\mathcal H}^s\times {\mathcal E}\times {\mathcal E}\,:\,\
\\
\|\langle t \rangle ^{- \delta} S(t) (h_0\odot (v_0-v'_0),h_1\odot (v_1- v'_1))\|_{L^{p_1} (  \R_t ; L^{p_2}( \T^3))}> 
\lambda
\Big\vert
\\
 \|(v_0- v'_0,v_1- v'_1)\|_{\mathcal{H}^s(\T^3)}\leq \varepsilon
 {\rm\,and\,} \|(v_0+ v'_0,v_1+ v'_1)\|_{\mathcal{H}^s(\T^3)}\leq A  \Big).
\end{multline}
We now apply Lemma~\ref{lem.cond}  with $\mu_1=\mu_0\otimes\mu_0$ and $\mu_2=\mu\otimes\mu$
to get that \eqref{ven_3} is bounded by
\begin{multline}\label{ven_4}
\sup_{ \|(v_0- v'_0,v_1- v'_1)\|_{\mathcal{H}^s(\T^3)}\leq \varepsilon}
\mu_0\otimes\mu_0\Big(
 (h_0,h_1)\in  {\mathcal E}\times {\mathcal E}\,:\,\
\\
\|\langle t \rangle ^{- \delta} S(t) (h_0\odot (v_0-v'_0),h_1\odot (v_1- v'_1))\|_{L^{p_1} (  \R_t ; L^{p_2}( \T^3))}> 
\lambda \Big).
\end{multline}
We now apply Theorem~\ref{lll} (with Bernoulli variables) to obtain that \eqref{ven_1} is bounded by
$
C\exp(-c\frac{\lambda^2}{\varepsilon^2}).
$
A very similar argument gives that
\begin{multline*}
\mu\otimes\mu\Big(
((v_0,v_1),(v'_0,v'_1))\in {\mathcal H}^s\times {\mathcal H}^s\,:\,\
\\
\|\langle t \rangle ^{- \delta} S(t) (v_0+ v'_0,v_1+ v'_1)\|_{L^{p_1} (  \R_t ; L^{p_2}( \T^3))}> 
\Lambda
\Big\vert
\\
 \|(v_0- v'_0,v_1- v'_1)\|_{\mathcal{H}^s(\T^3)}\leq \varepsilon
 {\rm\,and\,} \|(v_0+ v'_0,v_1+ v'_1)\|_{\mathcal{H}^s(\T^3)}\leq A  \Big) 
\end{multline*}
is bounded by 
$
C\exp(-c\frac{\Lambda^2}{A^2}).
$
This completes the proof of Proposition~\ref{ochak}.
\end{proof}
\section{End of the proof of the conditioned continuous dependence}
In this section, we complete  the proof of Theorem~\ref{th_continuity}.
According to (a variant of) Proposition~\ref{ochak}, we have that for any 
$$
2\leq p_1<+\infty,\,
 2\leq p_2 \leq + \infty,\,
  \delta > 1+ \frac 1 {p_1},\,  
  \eta\in(0,1),\, 
$$
one has 
\begin{multline*}
\mu\otimes\mu\Big(
(V_0,V_1)\in {\mathcal H}^s\times {\mathcal H}^s\,:\,\
\|\langle t \rangle ^{- \delta} S(t) (V_0- V_1)\|_{L^{p_1} (  \R_t ; L^{p_2}( \T^3))}> 
\eta^{\frac{1}{2}}
\\
{\,\rm\,or\,}
\|\langle t \rangle ^{- \delta} S(t) (V_0 )\|_{L^{p_1} (  \R_t ; L^{p_2}( \T^3))}> 
\log \log\log(\eta^{-1})
\\
{\,\rm\,or\,}
\|\langle t \rangle ^{- \delta} S(t) (V_1 )\|_{L^{p_1} (  \R_t ; L^{p_2}( \T^3))}> 
\log \log\log(\eta^{-1})
\Big\vert
\\
 \|V_0- V_1\|_{\mathcal{H}^s(\T^3)}< \eta
 {\rm\,\,\,and\,\,\,} \|V_j\|_{\mathcal{H}^s(\T^3)}\leq A,\,j=0,1  \Big) 
\longrightarrow 0,
\end{multline*}
as $\eta\rightarrow 0$. Therefore, we can also suppose that 
\begin{equation}\label{yu1}
\|\langle t \rangle ^{- \delta} S(t) (V_0- V_1)\|_{L^{p_1} (  \R_t ; L^{p_2}( \T^3))}\leq
\eta^{\frac{1}{2}}
\end{equation}
and
\begin{equation}\label{yu2}
\|\langle t \rangle ^{- \delta} S(t) (V_j )\|_{L^{p_1} (  \R_t ; L^{p_2}( \T^3))}\leq
\log \log\log(\eta^{-1}),\, j=0,1,
\end{equation}
when we estimate the needed conditional probability.

We therefore need to estimate the difference of two solutions under the assumptions \eqref{yu1} and \eqref{yu2}, in the regime $\eta\ll 1$.
Let 
$$
v_{j}(t)=S(t)(V_j)+w_{j}(t), \quad j=0,1
$$
be two solutions of the cubic wave equation with data $V_j$. We thus have
$$
(w_{j}(0),\partial_{t}w_{j}(0))=(0,0).
$$
Applying the energy estimate, performed several times in this chapter,  for $j=0,1$, we get the bound
$$
\frac{d}{dt}\mathcal{E}^{1/2}(w_{j}(t))\leq
C\Big(
\|S(t)(V_j)\|^3_{L^{6}(\T^3)}
+
\|S(t)(V_j)\|_{L^{\infty}(\T^3)}\mathcal{E}^{1/2}(w_{j}(t))
\Big),
$$
and therefore, under the assumptions \eqref{yu1} and \eqref{yu2}, for $t\in [0,T]$ one has
\begin{eqnarray}\label{kam}
\mathcal{E}^{1/2}(w_{j}(t))
& \leq & 
C_{T}\,e^{C_T\log \log\log(\eta^{-1})}
(\log\log\log(\eta^{-1}))^3
\\
\nonumber
& \leq &
C_{T}[\log(\eta^{-1})]^{\frac{1}{20}},
\end{eqnarray}
where here and in the sequel we denote by $C_T$ different constants depending only on $T$ (but independent of $\eta$).
\par
We next estimate the difference $w_0-w_1$. Using the equations solved by $w_0$, $w_1$, we infer that
\begin{multline}\label{veneta7}
\frac{d}{dt}\|w_0(t,\cdot)-w_1(t,\cdot)\|^2_{{\mathcal H}^1(\T^3)}
\\
 \leq  2
\Big|
\int_{\T^3}\partial_{t}(w_0(t,x)-w_1(t,x))(\partial_t^2-\Delta)(w_0(t,x)-w_1(t,x))dx
\Big|
\\
 \leq 
C
\|w_0(t,\cdot)-w_1(t,\cdot)\|_{{\mathcal H}^1(\T^3)}
\\
\|(w_0+S(t)(V_0))^3-(w_1+S(t)(V_1))^3\|_{L^2(\T^3)}\,,
\end{multline}
where for shortness we denote $\|(u, \partial_t u)\|_{{\mathcal H}^1}$ simply by $\|u\|_{{\mathcal H}^1}$.

Thanks to \eqref{veneta7} and the Sobolev embedding $H^1(\T^3)\subset L^6(\T^3)$, we get that
$$
\frac{d}{dt}\|w_0(t,\cdot)-w_1(t,\cdot)\|_{{\mathcal H}^1(\T^3)}
$$
is bounded by
\begin{multline*}
C\Big(\|w_0(t,\cdot)-w_1(t,\cdot)\|_{{\mathcal H}^1(\T^3)}+\|S(t)(V_0-V_1)\|_{L^6(\T^3)}\Big)
\\
\Big(\|w_0(t,\cdot)\|_{H^1(\T^3)}^2+\|w_1(t,\cdot)\|_{H^1(\T^3)}^2
\\
+\|S(t)(V_0)\|_{L^6(\T^3)}^2+\|S(t)(V_1)\|_{L^6(\T^3)}^2\Big).
\end{multline*}
Therefore, using \eqref{kam} and the Gronwall lemma, under the assumptions \eqref{yu1} and \eqref{yu2}, for $t\in [0,T]$,
\begin{eqnarray*}
\|w_0(t,\cdot)-w_1(t,\cdot)\|_{{\mathcal H}^1(\T^3)} & \leq & C_{T}\eta^{\frac{1}{2}}[\log(\eta^{-1})]^{\frac{1}{10}}\,
e^{C_{T}[\log(\eta^{-1})]^{\frac{1}{10}}}
\\
& \leq &
C_{T}\eta^{\frac{1}{4}}\,.
\end{eqnarray*}
In particular by the Sobolev embedding
$$
\|w_0-w_1\|_{L^4([0,T]\times\T^3)}\leq C_{T}\eta^{\frac{1}{4}},
$$
and therefore under the assumption \eqref{yu1},
$$
\|v_0-v_1\|_{L^4([0,T]\times\T^3)}\leq C_{T}\eta^{\frac{1}{4}}\,.
$$
In summary, we obtained that for a fixed $\varepsilon>0$, the $\mu\otimes\mu$ measure of $V_0$, $V_1$ such that
$$
\|\Phi(t)(V_0)-\Phi(t)(V_1)\|_{X_{T}}>\varepsilon
$$
under the conditions \eqref{yu1}, \eqref{yu2}  and $\|V_0-V_1\|_{{\mathcal H}^s}<\eta$  is zero, as far as $\eta>0$ is sufficiently small.
Therefore, we obtain that the left hand side of \eqref{dimanche} tends to zero as $\eta\rightarrow 0$.
This ends the proof of the first part of Theorem~\ref{th_continuity}.
\par
For the second part of the proof of Theorem~\ref{th_continuity}, we argue by contradiction. 
Suppose thus that for every $\varepsilon>0$ there exist $\eta>0$ and $\Sigma$ of full $\mu\otimes\mu$ measure such that 
\begin{multline*}
\forall\, (V,V')\in\Sigma\cap (B_{A}\times B_{A}),\, \|V-V'\|_{{\mathcal H}^s}<\eta
\implies\,\,
\\
 \| \Phi(t) ( V) - \Phi(t) ( V') \|_{X_T} <\varepsilon.  
\end{multline*}
Let us apply the previous affirmation with $\varepsilon=1/n$, $n=1,2,3\dots$ which produces full measure sets $\Sigma(n)$. 
Set
$$
\Sigma_1\equiv\bigcap_{n=1}^{\infty}\Sigma(n).
$$
Then $\Sigma_1$ is of full $\mu\otimes\mu$ measure and we have that
\begin{multline}\label{eq.unif-cont}
\forall\,\varepsilon>0,\,
\exists\,\eta>0,\,\,\,
\forall\, (V,V')\in\Sigma_1\cap (B_{A}\times B_{A}),\, 
\\
\|V-V'\|_{{\mathcal H}^s}<\eta
\implies\,\,
 \| \Phi(t) (V) - \Phi(t) ( V') \|_{X_T} <\varepsilon.  
\end{multline}
Next for $V\in {\mathcal H}^s$ we define ${\mathcal A}(V)\subset {\mathcal H}^s$  by
$$
{\mathcal A}(V)\equiv \{V'\in {\mathcal H}^s\,:\, (V,V')\in \Sigma_1\}.
$$
According to Fubini Theorem, 
there exists   ${\mathcal E}\subset {\mathcal H}^s$  a set of full $\mu$ measure such that for every $V\in {\mathcal E}$ the set ${\mathcal A}(V)$ is a full $\mu$ measure.

We are going to extend $\Phi(t)$ to a uniformly continuous map on $B_A$.
For that purpose, we first extend  $\Phi(t)$ to a uniformly continuous map on dense set of $B_A$.
Let $\{(V_j)_{j\in \mathbb{N}}\}$ be a dense set of  $B_A$ for the $\mathcal{H}^s$ topology.
For $j\in \mathbb{N}$, we can construct by induction  a sequence $(V_{j,n})$ such that 
\begin{equation*}
V_{j,n}\in B_{A}\cap {\mathcal E}\cap\, \bigcap_{m<n}{\mathcal A}(V_{j,m})
\cap\,  \bigcap_{l<j, q\in\N}{\mathcal A}(V_{l,q})
,\quad
\|V_{j,n}-V_j\|_{{\mathcal H}^s}<1/n.
\end{equation*}
Indeed, 
the induction assumption guarantees that  the set 
$$
{\mathcal E}\cap\, \bigcap_{m<n}{\mathcal A}(V_{j,m})
 \bigcap_{l<j, q\in \N}{\mathcal A}(V_{l,q})
$$ 
has measure $1$ (as an intersection of sets of measure $1$) and consequently is dense.  
Notice that by construction, we have 
\begin{equation}
\label{eq.unif}
 (V_{k,n}, V_{l,m}) \in \Sigma_1, \forall\, k< l, \forall \,n,m \in \mathbb{N}, \text{ and } \forall\, k=l, n<m.
\end{equation}
 Using~\eqref{eq.unif} for $k=l$, we obtain according to~\eqref{eq.unif-cont} that  for any fixed $k$, the sequence $\Phi(t)(V_{k,n})_{n\in \mathbb{N}}$ is a Cauchy sequence in $X_T$ and we can can define $\overline{\Phi}(t)(V_j)$ as its limit. Using again~\eqref{eq.unif}, for $k\neq l$, we see according to~\eqref{eq.unif-cont} that the map $\overline{\Phi}(t) $ is uniformly continuous on the set  $\{(V_j)_{j\in \mathbb{N}}\}$. 
Therefore $\Phi(t)$ can be extended by density to a 
uniformly continuous map, on the whole $B_A$.
Let us denote by $\overline{\Phi(t)}$ the extension of $\Phi(t)$ to $B_A$. 
 We therefore have
\begin{multline}\label{rex}
\forall\,\varepsilon>0,\,
\exists\,\eta>0,\,\,\,
\forall\, V,V'\in
B_{A},\, 
\\
\|V-V'\|_{{\mathcal H}^s}<\eta
\implies\,\,
\| \overline{\Phi(t)} ( V) - \overline{\Phi(t)} ( V') \|_{X_T} <\varepsilon.  
\end{multline}
We have the following lemma.
\begin{lemm}\label{rex1}
For $V\in (C^{\infty}(\T^3)\times C^{\infty}(\T^3))\cap B_A$, we have that $\overline{\Phi(t)}(V)=(u,u_t)$, where $u$ is the unique classical solution on $[0,T]$ of
$$
(\partial_{t}^2-\Delta)u+u^3=0,\quad (u(0),\partial_{t}u(0))=V. 
$$
\end{lemm}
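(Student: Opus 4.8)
The plan is as follows. Since $C^\infty(\T^3)\times C^\infty(\T^3)\subset\mathcal H^1\subset\Sigma$, Theorem~\ref{main} already produces a solution with data $V$; writing $V=(0,0)+V$ with $(0,0)\in\Theta$ and $V\in\mathcal H^1$ in Proposition~\ref{th.1.bis} and invoking Theorem~\ref{prop.global}, this solution is the (global, smooth) classical solution $(u,\partial_t u)$, which moreover lies in $C([0,T];\mathcal H^1)\subset X_T$. Thus the real content of the lemma is the identity $\overline{\Phi(t)}(V)=(u,\partial_t u)$ for the density--extension $\overline{\Phi(t)}$, and I would establish it by recognising the abstract $X_T$--limit defining $\overline{\Phi(t)}(V)$ as a genuine solution and appealing to a deterministic uniqueness theorem.

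First I would extract a good approximating sequence. By density of $\{V_j\}_{j\in\N}$ in $B_A$ and the uniform continuity \eqref{rex}, choose $V_{j_k}\to V$ in $\mathcal H^s$ with $\overline{\Phi(t)}(V_{j_k})\to\overline{\Phi(t)}(V)$ in $X_T$; since by construction $\overline{\Phi(t)}(V_{j_k})=\lim_{n}\Phi(t)(V_{j_k,n})$, a diagonal extraction produces $Z_k\equiv V_{j_k,n_k}$, lying in the full--measure set on which $\Phi(t)$ is defined (which we may assume contained in $\Sigma$), with $Z_k\to V$ in $\mathcal H^s$ and $\Phi(t)(Z_k)\to\overline{\Phi(t)}(V)=:(v_\infty,\partial_t v_\infty)$ in $X_T$. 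Each $\Phi(t)(Z_k)=(v_k,\partial_t v_k)$ solves \eqref{shart} with data $Z_k$, so $(\partial_t^2-\Delta)v_k=-v_k^3$, and $v_k\to v_\infty$ in $C([0,T];H^s)$ and in $L^4([0,T]\times\T^3)$ while $\partial_t v_k\to\partial_t v_\infty$ in $C([0,T];H^{s-1})$.

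Next I would pass to the limit. From $\|v_k^3-v_\infty^3\|_{L^{4/3}([0,T]\times\T^3)}\lesssim\|v_k-v_\infty\|_{L^4([0,T]\times\T^3)}\big(\|v_k\|_{L^4([0,T]\times\T^3)}^2+\|v_\infty\|_{L^4([0,T]\times\T^3)}^2\big)$ one gets $v_k^3\to v_\infty^3$ in $L^{4/3}([0,T]\times\T^3)$; letting $k\to\infty$ in $(\partial_t^2-\Delta)v_k=-v_k^3$ shows that $v_\infty$ is a distributional solution of \eqref{shart} with data $V$ (the data converge because $Z_k\to V$ in $\mathcal H^s$ and $v_k\to v_\infty$ in $C([0,T];\mathcal H^s)$). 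To upgrade its regularity, write $v_\infty=S(t)V+r$; then $r$ solves the inhomogeneous linear wave equation with source $-v_\infty^3\in L^{4/3}([0,T]\times\T^3)$ and zero data, so by the nonhomogeneous Strichartz estimate of Theorem~\ref{cor.stri} with the value $1/2$ of the regularity parameter (note that $(4,4)$ is $1/2$-admissible, with conjugate couple $(4/3,4/3)$) one gets $r\in X^{1/2}_T$; since $S(t)V$ is smooth this yields $v_\infty\in C([0,T];H^{1/2})\cap L^4([0,T]\times\T^3)$.

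Finally I would conclude by uniqueness: the smooth datum $V$ belongs to $\mathcal H^{1/2}$, and both $v_\infty$ and the classical solution $u$ solve \eqref{shart} with data $V$ in the class $L^4([0,T]\times\T^3)\times C([0,T];H^{1/2})$, so Theorem~\ref{critical} forces $v_\infty=u$ on $[0,T]$, that is $\overline{\Phi(t)}(V)=(u,\partial_t u)$; since $T>0$ is arbitrary the lemma follows. The main obstacle is precisely this passage to the limit together with the regularity upgrade that feeds the uniqueness step: because continuous dependence in $\mathcal H^s$ fails for $s<1/2$ (Theorem~\ref{ill_bis}), one cannot simply claim that the solutions with data $Z_k$ converge to the solution with data $V$, and one is forced to argue indirectly --- identify the $X_T$--limit as a low--regularity solution, gain half a derivative on its Duhamel part through the inhomogeneous Strichartz inequality, and only then invoke the $L^4$ uniqueness theorem.
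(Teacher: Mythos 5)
Your argument is correct, and its first half (diagonal extraction of a sequence $Z_k$ in the full measure set with $\Phi(t)(Z_k)\to\overline{\Phi(t)}(V)$ in $X_T$, convergence of the cubes in $L^{4/3}([0,T]\times\T^3)$, identification of the limit as a distributional solution with data $V$ satisfying the Duhamel formulation) is exactly what the paper does. Where you genuinely diverge is the uniqueness step. The paper never upgrades the regularity of the limit $v$: it writes the equation for $u-v$, applies the $L^4$--$L^{4/3}$ inhomogeneous estimate \eqref{strichartz} on subintervals $I\subset[0,T]$ to get $\|u-v\|_{L^4(I\times\T^3)}\leq C(\|u\|_{L^4(I\times\T^3)}^2+\|v\|_{L^4(I\times\T^3)}^2)\|u-v\|_{L^4(I\times\T^3)}$, and concludes by partitioning $[0,T]$ into finitely many intervals on which the prefactor is $<1/2$; this is a self-contained uniqueness-in-$L^4$ argument. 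You instead gain half a derivative on the Duhamel part (correctly, since $(4,4)$ is $1/2$-admissible so $L^{4/3}\subset Y^{1/2}_T$ and Theorem~\ref{cor.stri} gives $r\in X^{1/2}_T$) and then quote the uniqueness class of Theorem~\ref{critical}. That works, but note that Theorem~\ref{critical} is a local statement whose existence time is inexplicit (no blow-up criterion at $s=1/2$, as the remark following it stresses), so "forces $v_\infty=u$ on $[0,T]$" hides an iteration/continuity argument over the interval $[0,T]$ --- which, once written out, is essentially the same partition-of-$[0,T]$ argument the paper carries out directly on the difference. So your route buys a cleaner reduction to an already-stated theorem at the cost of an extra regularity upgrade and an implicit continuation step; the paper's route is more elementary and only uses that both solutions lie in $L^4([0,T]\times\T^3)$.
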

\begin{proof}
Let us first show that that first component of
$$
\overline{\Phi(t)}(V)\equiv(\overline{\Phi_1(t)}(V),\overline{\Phi_2(t)}(V))
$$ 
is a solution of the cubic wave equation.
Observe that by construction, necessarily $\overline{\Phi_2(t)}(V)=\partial_t \overline{\Phi_1(t)}(V)$ in the distributional sense (in ${\mathcal D}'((0,T)\times\T^3)$).

Again by construction, we have that 
$$
V=\lim_{n\rightarrow\infty}V_{n}\,,
$$
in ${\mathcal H}^s$ where  $V_n$ are such that
\begin{equation}\label{limit}
(\partial_{t}^2-\Delta)(\Phi_1(t)(V_n))+(\Phi_1(t)(V_n))^3=0, 
\end{equation}
with the notation $\Phi(t)=(\Phi_1(t),\Phi_2(t))$. In addition, 
$$
\overline{\Phi(t)}(V)=\lim_{n\rightarrow\infty}\Phi(t)(V_{n})\,,
$$
in $X_T$.
We therefore have that
$$
(\partial_{t}^2-\Delta)(\overline{\Phi_1(t)}(V))=\lim_{n\rightarrow\infty}(\partial_{t}^2-\Delta)(\Phi_1(t)(V_n)),
$$
in the distributional sense. Moreover, coming back to the definition of $X_T$, we also obtain that
$$
(\overline{\Phi_1(t)}(V))^3=\lim_{n\rightarrow\infty}(\Phi_1(t)(V_n))^3,
$$
in $L^{4/3}([0,T]\times\T^3)$.
Therefore, passing into the limit $n\rightarrow\infty$ in (\eqref{limit}), we obtain that $\overline{\Phi_1(t)}(V)$ solves the cubic wave equation (with data $V$).
Moreover, since $(\overline{\Phi_1(t)}(V))^3\in L^{4/3}([0,T]\times\T^3)$, it also satisfies the Duhamel formulation of the equation.
\par
Let us denote by $u(t)$, $t\in [0,T]$ the classical solution of 
$$
(\partial_{t}^2-\Delta)u+u^3=0,\quad (u(0),\partial_{t}u(0))=V, 
$$
defined by Theorem~\ref{prop.global}.  Set $v\equiv \overline{\Phi_1(t)}(V)$. Since our previous analysis has shown that $v$ is a solution of the cubic wave equation, we have that 
\begin{equation}\label{razlika}
(\partial_{t}^2-\Delta)(u-v)+u^3-v^3=0,\quad (u(0),\partial_{t}u(0))=(0,0)\,.
\end{equation}
We now invoke the $L^4-L^{4/3}$ non homogenous estimates for the three dimensional wave equation.
Namely,  thanks to Theorem~\ref{cor.stri}, we have that there exists a constant (depending on $T$)
such that for every interval $I\subset [0,T]$, the solution of the wave equation
$$
(\partial_{t}^2-\Delta)w=F,\quad (u(0),\partial_{t}u(0))=(0,0)
$$
satisfies
\begin{equation}\label{strichartz}
\|w\|_{L^4(I\times\T^3)}\leq C\|F\|_{L^{4/3}(I\times\T^3)}\,.
\end{equation}
Applying \eqref{strichartz} in the context of \eqref{razlika} together with the H\"older inequality yields the bound
\begin{equation}\label{strichartz2}
\|u-v\|_{L^4(I\times\T^3)}\leq C
\big(\|u\|_{L^{4}(I\times\T^3)}^2+\|v\|_{L^4(I\times\T^3)}^2\big)
\|u-v\|_{L^{4}(I\times\T^3)}\,.
\end{equation}
Since $u,v\in L^{4}(I\times\T^3)$, we can find a partition of intervals $I_1,\dots,I_{l}$ of $[0,T]$ such that
$$
C\big(\|u\|_{L^{4}(I_j\times\T^3)}^2+\|v\|_{L^4(I_j\times\T^3)}^2\big)<\frac{1}{2},\quad j=1,\dots, l.
$$
We now apply \eqref{strichartz2} with $I=I_j$, $j=1,\dots, l$ to conclude that $u=v$ on $I_1$, then on $I_2$ and so on up to $I_l$ which gives that $u=v$ on 
$[0,T]$. 
Thus $u=\overline{\Phi_1(t)}(V)$ and therefore also $\partial_t u=\overline{\Phi_2(t)}(V)$.
This completes the proof of Lemma~\ref{rex1}.
\end{proof}
It remains now to apply Lemma~\ref{rex1} to the sequence of smooth data in the statement of Theorem~\ref{ill} to get a contradiction with \eqref{rex}.
More precisely, if $(U_n)$ is the sequence involved in the statement of Theorem~\ref{ill}, the result of Theorem~\ref{ill} affirms that 
$\overline{\Phi(t)}(U_n)$ tends to infinity in $L^{\infty}([0,T];{\mathcal H}^s)$ while \eqref{rex} affirms that the same sequence tends to zero in the same space 
$L^{\infty}([0,T];{\mathcal H}^s)$. 
This completes the proof of Theorem~\ref{th_continuity}.
\section{Extensions to more general nonlinearities}
In the remarkable work by Oh-Pocovnicu \cite{OhP1,OhP2} (based on the previous contributions \cite{BOP, Poc}) it is shown that the result of Theorem~\ref{main} can be extended to the energy critical equation
$$
(\partial_t^2-\Delta)v+v^5=0\,.
$$
This equation is $H^1$ critical and the data is a typical element with respect to $\mu \in {\mathcal M}^s$, $s>1/2$.
We refer also to \cite{LM,xia_sun} for extensions of  Theorem~\ref{main}  to nonlinearities between cubic and quintic. 
\section{Notes} 
For the case $s=0$ and the proof of the quantitative  bounds displayed by Theorem~\ref{bornes}, we refer to  \cite{BT-JEMS}. 
For the proof of Proposition~\ref{prop.1.2}, we refer to \cite[Appendix~B]{BT1/2} and  \cite[Appenidix~B2]{BT-JEMS}.
The probabilistic part of our analysis only relies on linear bounds such as  Lemma~\ref{lem1}. In other situations multi-linear versions of these bounds are of importance (see \cite{B2,COh, NS}).
The above mentioned work by Oh-Pocovnicu relies on a much more complicated deterministic analysis (such as the concentration compactness) and also on a significant extension of the probabilistic energy bound used in the proof of 
Theorem~\ref{main}. 

Our starting point and main motivation toward the probabilistic well-posedness results presented in this chapter was the ill-posedness result of Theorem~\ref{ill} of the previous chapter. 
As already mentioned the method of proof has some similarities with the earlier work \cite{DD} or  with the even earlier work of Bourgain \cite{B2}
on the invariance of the Gibbs measure associated with the nonlinear Schr\"odinger equation
\begin{equation}\label{NLS}
(i\partial_t+\Delta)u=|u|^2 u,
\end{equation}
posed on the two dimensional torus. 
The main purpose of \cite{B2} is to show the invariance of the Gibbs measure and as a byproduct one gets the global existence and uniqueness of solutions of 
\eqref{NLS} with a suitable random data belonging a.s. to $H^{-\varepsilon}(\T^2)$ for every $\varepsilon>0$ but missing a.s. $L^2(\T^2)$. 
In  the time of writing of \cite{B2} statement such as  Theorem ~\ref{ill} or Theorem~\ref{ill_bis} were not known in the context of \eqref{NLS}.
In the recent work \cite{Oh_new}, the analogue of  Theorem ~\ref{ill} and Theorem~\ref{ill_bis} in the context of \eqref{NLS} is obtained. 
Most likely, the analysis of \cite{B2} can be adapted in order to get the analogue of Theorem~\ref{uniqueness_2} in the context of \eqref{NLS}.
As a consequence, it looks that we can see from the same view point \eqref{NLS}  with data on the support of the Gibbs measure and the cubic defocusing wave equation with random data of super-critical regularity presented in these lectures. We plan to address this issue in a future work. 
\chapter
[Globalisation via invariant measures]
{Random data global well-posedness with data of supercritical regularity via invariant measures}\label{Chapter3}
In the previous chapter, we presented a method to construct global in time solutions for the cubic defocusing wave equation posed on the three dimensional torus with random data of supercritical regularity ($H^s(\T^3)\times H^{s-1}(\T^3)$,  $s\in (0,1/2)$).
These solutions are unique in a suitable sense and depend continuously (in a conditional sense) on the initial data. 
The method we used is based on a local in time result showing that even if the data is of supercritical regularity, we can 
find a local solution written as "free evolution" (keeping the Sobolev regularity of the initial data) plus "a remainder of higher regularity". The term of higher regularity is then regular enough to allow us to deal it with the deterministic methods to treat the equation. 
The globalisation was then done by establishing an energy bound for the remainder in a probabilistic manner, here of course the energy conservation law is the key structure allowing to perform the analysis.
Moreover, we have shown that the problem is ill-posed with data of this supercritical regularity and this in turn implied the impossibility to see the constructed flow as the unique extension of the regular solutions flow. 
\\

In this chapter, we will show another method for global in time solutions for a defocusing wave equation with data of supercritical regularity. The construction of local solutions will be based on the same principle as in the previous chapter, i.e. we shall again see the solution as a  "free evolution"  plus "a remainder of higher regularity". However the globalisation will be done by a different argument (due to Bourgain \cite{B,B2}) based on
exploiting the invariance of the Gibbs measure associated with the equation. The Gibbs measure is constructed starting from the energy conservation law and therefore this energy conservation law is again the key structure allowing to perform the global in time analysis. This method of globalisation by invariant measures is working only for very particular choice of the initial data and in this sense it is much less general than the method presented in the previous chapter. On the other hand the method based on exploiting invariant measures gives a strong macroscopic information about the constructed flow, 
namely one has a precise information on the measure evolution along the time.
The method presented in the previous chapter gives essentially no information about the evolution in time under the constructed flow of the measures in ${\mathcal M}^s$. We shall come back to this issue in the next chapter. 
\\

Our model to present the method of globalisation via invariant measures will be the radial nonlinear wave equation posed on the unit ball of $\R^3$, with Dirichlet boundary conditions. 
Let $\Theta$ be the unit ball of $\R^3$. Consider the nonlinear wave equation with Dirichlet boundary condition  posed on $\Theta$,
\begin{equation}\label{6.1}
(\partial_{t}^{2}-\mathbf{\Delta})w+|w|^{\alpha}w=0,\quad (w,\partial_t w)|_{t=0}=(f_1,f_2), \qquad \alpha>0,
\end{equation}
subject to Dirichlet boundary conditions 
$$
u\mid_{\R_t \times \partial \Theta} =0,
$$
with radial real valued initial data $(f_1,f_2)$. 
\\

We now make some algebraic manipulations on (\ref{6.1}) allowing 
to write it as a first order equation in $t$. Set $u\equiv w+i{\sqrt{ - \mathbf{\Delta}}^{-1}}{\partial_t w}$. 
Observe that $ \mathbf{\Delta}^{-1}$ is well-defined because $0$ is not in the spectrum of the Dirichlet Laplacian. 
Then we have that $u$ solves the equation  
\begin{equation}\label{2}
(i\partial_{t}-\sqrt{ - \mathbf{\Delta}})u-\sqrt{ - \mathbf{\Delta}}^{-1}\big(|\Re(u)|^{\alpha}\Re(u)\big)
=0,\quad u|_{t=0}=u_{0},
\end{equation}
with $u|_{\R\times\partial\Theta}=0$,
where $u_0=f_1+i\sqrt{ - \mathbf{\Delta}}^{-1}f_2$.
We consider \eqref{2} for data in the (complex) Sobolev spaces $H^s_{rad}(\Theta)$ of radial functions. 

Equation~\eqref{2} is (formally) an Hamiltonian equation on $L^2(\Theta)$ with Hamiltonian, 
\begin{equation}\label{HHH}
\frac{1}{2}\|\sqrt{ - \mathbf{\Delta}} (u)\|_{L^2(\Theta)}^{2}+
\frac{1}{\alpha+2}\|\Re(u)\|_{L^{\alpha+2}(\Theta)}^{\alpha+2}
\end{equation}
which is (formally) conserved by the flow of (\ref{2}).
\\

Let us next discuss the measure describing the initial data set. 
For  $s<1/2$, we define the measure $\mu$ on $H^{s}_{rad}(\Theta)$ as the image measure under the map from a probability space
$(\Omega,{\mathcal A},p)$ to $H^{s}_{rad}(\Theta)$ equipped with the Borel sigma algebra, defined by
\begin{equation}\label{mapp}
\omega\longmapsto \sum_{n=1}^{\infty}\frac{ h_{n}(\omega)+i l_{n}(\omega)  }{n\pi}e_{n}\,,
\end{equation}
where $((h_{n},l_{n}))_{n=1}^{\infty}$ is a sequence of independent standard real Gaussian random variables. 
In \eqref{mapp}, the functions $(e_n)_{n=1}^{\infty}$ are the radial eigenfunctions of the Dirichlet Laplacian on $\Theta$,  associated with eigenvalues $(\pi n)^2.$
The eigenfunctions $e_n$ have the following explicit form
$$
e_n(r)=\frac{\sin(n\pi r)}{r},\quad 0\leq r\leq 1.
$$
They are the analogues of $\cos(n\cdot x)$ and $\sin(n\cdot x)$, $n\in\Z^3$ used in the analysis on $\T^3$ in the previous section.  
One has that $\mu(H^{1/2}_{rad}(\Theta))=0$.
By the method described in the previous chapter one may show that \eqref{2} is ill-posed in $H^s_{rad}(\Theta)$ for $s<\frac{3}{2}-\frac{2}{\alpha}$. 
Therefore for $\alpha>2$ the map \eqref{mapp} describes functions of supercritical Sobolev regularity (i.e. $H^s_{rad}(\Theta)$ with $s$ smaller than  $\frac{3}{2}-\frac{2}{\alpha}$). 
The situation is therefore similar to the analysis of the cubic defocusing wave equation on $\T^3$ with data in $H^s\times H^{s-1}$, $s<1/2$ considered in the previous chapter. 
As in the previous chapter, we can still get global existence and uniqueness for \eqref{2}, almost surely with respect to $\mu$.
\begin{theo}\label{thh1}
Let $s<1/2$.
Suppose that $\alpha<3$. Let us fix a real number $p$ such that $\max(4,2\alpha)<p<6$.
Then there exists a full $\mu$ measure set $\Sigma \subset  H^s_{rad}(\Theta)$ such that 
for every $u_0\in \Sigma$ there exists a unique global solution of \eqref{2}
$$
u\in C(\R,H^s_{rad}(\Theta))\cap L^p_{loc}( \mathbb{R}_t;L^p(\Theta))\,.
$$
The solution can be written as 
$$
u(t)=S(t)(u_0)+v(t),
$$
where  $S(t) = e^{-it\sqrt{ -\mathbf{\Delta}}}$ is the free evolution and $v(t)\in H^\sigma_{rad}(\Theta)$ for some $\sigma>1/2$.
Moreover
$$
\|u(t)\|_{H^s(\Theta)} \leq 
C(s) \big(\log(2+|t|)\big)^{\frac 1 2}\,.
$$
\end{theo}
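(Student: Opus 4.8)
The plan is to follow Bourgain's invariant measure globalisation scheme, adapted to the wave equation on the ball via the decomposition ``free evolution plus smoother remainder'' already used in the previous chapter. First I would set up the local theory: writing $u(t)=S(t)(u_0)+v(t)$, the remainder $v$ solves an equation with forcing $-\sqrt{-\mathbf\Delta}^{-1}(|\Re(S(t)u_0+v)|^\alpha\Re(S(t)u_0+v))$ and zero Cauchy data. The key inputs are (i) the probabilistic Strichartz estimates of the type proved in Theorem~\ref{lll} (here in the eigenfunction basis $(e_n)$ of the Dirichlet Laplacian on $\Theta$, using the Gaussian randomisation \eqref{mapp}), which give $S(t)(u_0)\in L^p_{loc}(\R_t;L^p(\Theta))$ almost surely for $p<6$, together with a large-deviation tail of the form $\mu(\|S(t)u_0\|_{L^p([0,T];L^p)}>\lambda)\le Ce^{-c\lambda^2/T^{\theta}}$; and (ii) the one-derivative smoothing estimate \eqref{wave_regularity} for the inhomogeneous wave equation, which places $v$ in $H^\sigma_{rad}$ for some $\sigma>1/2$ as long as the nonlinear term is controlled. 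Since $\alpha<3$ and $\max(4,2\alpha)<p<6$, Hölder in space and time together with the Sobolev embedding $H^\sigma(\Theta)\subset L^p(\Theta)$ for $\sigma$ slightly above $1/2$ let the contraction close on a short interval whose length depends only on the size of $\|S(t)u_0\|_{L^p_{t,x}}$ on that interval. This yields local existence and uniqueness in $C([0,T_0];H^s)\cap L^p([0,T_0];L^p)$ for data on a full-measure set, with $v\in C([0,T_0];H^\sigma)$.

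Next I would construct the invariant (Gibbs) measure. The measure $\mu$ in \eqref{mapp} is the Gaussian measure associated with the quadratic part $\tfrac12\|\sqrt{-\mathbf\Delta}\,u\|_{L^2}^2$ of the Hamiltonian \eqref{HHH}; the Gibbs measure is $d\rho=Z^{-1}e^{-\frac{1}{\alpha+2}\|\Re u\|_{L^{\alpha+2}}^{\alpha+2}}d\mu$. One must check $e^{-\frac{1}{\alpha+2}\|\Re u\|_{L^{\alpha+2}}^{\alpha+2}}\in L^1(d\mu)$: this follows since $\Re u\in L^{\alpha+2}(\Theta)$ $\mu$-a.s.\ (again by the randomised Sobolev/Strichartz bounds, as $\alpha+2<6$) and the potential is nonnegative and bounded, so $\rho$ is a well-defined probability measure mutually absolutely continuous with $\mu$; in particular $\rho$ and $\mu$ have the same null sets, so it suffices to build the invariant dynamics $\rho$-a.s. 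To get invariance I would use the standard finite-dimensional approximation: truncate to the first $N$ eigenmodes, obtaining a finite-dimensional Hamiltonian ODE system whose flow $\Phi_N(t)$ preserves the truncated Gibbs measure $\rho_N$ by Liouville's theorem plus conservation of the truncated Hamiltonian; then pass to the limit $N\to\infty$ using the local well-posedness bounds to show $\Phi_N(t)\to\Phi(t)$ on suitable sets and $\rho_N\to\rho$ weakly, yielding $\rho$-invariance of $\Phi(t)$ on its domain of definition.

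The globalisation then proceeds by the Bourgain argument. Fix $T$ large and $\epsilon>0$; using invariance of $\rho$ and the large-deviation bound on $\|S(t)u_0\|_{L^p_{t,x}}$ on unit time intervals, one shows that outside an exceptional set of $\rho$-measure $\lesssim e^{-c\log(T/\epsilon)^{?}}$ — more precisely, choosing the threshold $\lambda\sim(\log(T/\epsilon))^{1/2}$ — the local solution can be continued on each of the $\sim T$ unit subintervals of $[0,T]$ because on each the data stays in a good set; by invariance the ``bad'' fraction at each step is controlled by the same small number, and summing over $\sim T$ steps and optimising gives a full-measure set of data for which the solution exists on $[0,T]$, hence (letting $T\to\infty$ along a sequence and intersecting) globally. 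The same bookkeeping produces the logarithmic growth bound: on $[0,T]$ one has $\|S(t)u_0\|_{L^p_{t,x}}\lesssim(\log T)^{1/2}$ off the exceptional set, and since $v$ stays bounded in $H^\sigma\subset H^{1/2}\subset H^s$ while the free part $S(t)u_0$ has $H^s$ norm governed by the same Gaussian tail, one gets $\|u(t)\|_{H^s(\Theta)}\le C(s)(\log(2+|t|))^{1/2}$. The main obstacle I anticipate is the interplay between the two time scales in the continuation argument — ensuring that the local existence time, which depends on the $L^p_{t,x}$ size of the free evolution, does not degrade too fast as one iterates across $\sim T$ intervals, and that the measure of the bad set accumulated over all iterations is summable; this is exactly where the quantitative large-deviation estimate (with its $e^{-c\lambda^2}$ tail) and the invariance of $\rho$ must be combined with care, and where the restriction $\alpha<3$ (equivalently $2\alpha<6$, leaving room for an admissible $p$) is used decisively.
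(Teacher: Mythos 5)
Your outline follows essentially the same route as the paper: the local theory of Proposition~\ref{lwp} (free evolution plus a remainder in $H^{\sigma}$, $\sigma=\frac{3}{2}-\frac{4}{p}>\frac12$, closed via probabilistic Strichartz bounds and the one-derivative smoothing), the Gibbs measure $\rho$ built from \eqref{density} (integrability is indeed trivial since the density is $\leq 1$; its a.s.\ positivity, i.e.\ the a.s.\ finiteness of the $L^{\alpha+2}$ norm, is what the paper takes from \cite{AT}), the truncated flows $\Phi_N(t)$ of \eqref{2_N} with $\rho_N$ invariant by Liouville and rotation invariance of the Gaussians, and the Bourgain iteration with $R\sim(\log(T/\varepsilon))^{1/2}$ followed by the double limit $T=2^j$, $\varepsilon 2^{-j}$.

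There is, however, a genuine gap in the logical order of your globalisation. You claim $\rho$-invariance of $\Phi(t)$ ``on its domain of definition'' and then feed ``invariance of $\rho$'' into the continuation argument. At that stage $\Phi(t)$ is only defined up to a random local time, so its invariance is neither available nor a meaningful input to the iteration; using it there is circular, since global existence is exactly what the iteration must produce. The paper runs the whole argument at the truncated level: the good set is $B=\bigcap_{k=0}^{[T/\tau]}\Phi_N(-k\tau)(B_R)$ as in \eqref{veneta10}, the bound \eqref{to_propagate} is propagated for the flow $\Phi_N$ of \eqref{2_N}, the measure estimate $\rho_N(B^c)\lesssim T R^{\gamma}e^{-\kappa R^2}$ uses only the (rigorous) invariance of $\rho_N$ under $\Phi_N$, and the limit $N\to\infty$ is taken afterwards, interval by interval, using that on each interval of length $\tau\approx R^{-\gamma}$ the solutions of \eqref{2_N} converge to the solution of \eqref{2}; any invariance property of $\rho$ under the limit flow is a by-product, not an input. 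Two further points of bookkeeping: the iteration runs over $[T/\tau]\sim TR^{\gamma}$ intervals of length $\tau\approx R^{-\gamma}$, not over $\sim T$ unit intervals (this is where the factor $R^{\gamma}$ that must be beaten by the Gaussian tail comes from); and your accounting for the logarithmic bound is backwards: $\|S(t)u_0\|_{H^s}=\|u_0\|_{H^s}$ is constant in time, the remainder is only bounded by $CR\sim(\log(2+T))^{1/2}$ rather than uniformly, and simply iterating the local estimate $\|u(t)\|_{H^s}\leq\|u(k\tau)\|_{H^s}+CR$ across the $[T/\tau]$ steps would give growth linear in $T$. The $(\log)^{1/2}$ bound requires controlling $\|u(k\tau)\|_{H^s}$ at the grid times through membership in the good sets themselves (the Gaussian tail $e^{-cR^2}$ for the relevant norms being compatible with the same choice of $R$), as carried out in \cite{BT2}.
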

The proof of Theorem~\ref{thh1} is based on the following local existence result.
\begin{prop}\label{lwp}
For a given positive number $\alpha<3$ we choose a real number $p$ such that $\max(4,2\alpha)<p<6$.
Then we fix a real number $\sigma$ by $\sigma=\frac{3}{2}-\frac{4}{p}$.
There exist $C>0$, $c\in (0,1]$, $\gamma>0$ such that for every $R\geq 1$ if we set $T=c R^{-\gamma}$ 
then for every radially symmetric $u_0$ 
satisfying 
$$
\|S(t)u_0\|_{L^p((0,2)\times\Theta)}\leq R
$$ 
there exists a unique solution $u$ of \eqref{2} such that
$$
u(t)=S(t)u_0+v(t)
$$ 
with $v\in X^{\sigma}_{T}$ (the Strichartz spaces defined in the previous chapter). 
Moreover 
$$
\|v\|_{X^\sigma_T}\leq CR.
$$ 
In particular, since $S(t)$ is $2$ periodic and thanks to the Strichartz estimates,
$$
\sup_{t\in[-T,T]}\|S(\tau)u(t)\|_{L^p(\tau\in (0,2);L^p(\Theta))}\leq CR\,.
$$
In addition, if $u_0\in H^s(\Theta)$ (and thus $s<\sigma$) then
$$
\|u(t)\|_{H^s(\Theta)}\leq
\|S(t)u_0\|_{H^s(\Theta)}+\|v(t)\|_{H^s(\Theta)}\leq
\|u_0\|_{H^s(\Theta)}+CR\,.
$$
\end{prop}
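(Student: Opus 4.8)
The plan is to construct $v:=u-S(t)u_0$ by a contraction argument in the Strichartz space $X^{\sigma}_T$, the essential point being the gain of one derivative provided by the operator $\sqrt{-\mathbf{\Delta}}^{-1}$ standing in front of the nonlinearity in~\eqref{2}. Inserting $u=S(t)u_0+v$ into the Duhamel formulation of~\eqref{2}, one finds that $v$ must be a fixed point of
\[
\Psi(v)(t)=-i\int_0^t e^{-i(t-\tau)\sqrt{-\mathbf{\Delta}}}\,\sqrt{-\mathbf{\Delta}}^{-1}\Big(\big|\Re\big(S(\tau)u_0+v(\tau)\big)\big|^{\alpha}\Re\big(S(\tau)u_0+v(\tau)\big)\Big)\,d\tau ,
\]
with $\Psi(v)|_{t=0}=0$; radiality is preserved by $\Psi$, so the construction stays in the radial class. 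Note that with $\sigma=\frac{3}{2}-\frac{4}{p}$ the couple $(p,p)$ is $\sigma$-admissible (one has $\frac{1}{p}+\frac{3}{p}=\frac{3}{2}-\sigma$ and, since $p>4$, $\frac{2}{\sigma}\le p$), so that $X^{\sigma}_T\hookrightarrow L^p((0,T)\times\Theta)$; moreover $\sigma\in(\frac{1}{2},\frac{5}{6})$ for $p\in(4,6)$, hence $s<\sigma$ and the $C([0,T];H^{\sigma}_{rad}(\Theta))$ component of $X^{\sigma}_T$ controls both $\|v(t)\|_{H^\sigma}$ and $\|v(t)\|_{H^s}$.

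First I would record the linear input. Just as in the proof of Theorem~\ref{cor.stri} (writing $e^{-i\theta\sqrt{-\mathbf{\Delta}}}$ through $\cos$ and $\sin$ and invoking the Christ--Kiselev argument), the homogeneous and inhomogeneous Strichartz estimates~\eqref{Xs} and~\eqref{eq.inhomog} hold for the half-wave propagator $e^{-it\sqrt{-\mathbf{\Delta}}}$ on $\Theta$ with Dirichlet boundary conditions acting on radial data; the passage from the Euclidean estimates of Chapter~\ref{Chapter1} is via finite speed of propagation, with no zero-mode projector needed since $0\notin\mathrm{spec}(-\mathbf{\Delta})$ on $\Theta$. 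Combining the inhomogeneous estimate with the one-derivative smoothing carried by $\sqrt{-\mathbf{\Delta}}^{-1}$ yields, for $u=S(t)u_0+v$,
\[
\|\Psi(v)\|_{X^{\sigma}_T}\le C\,\big\|\,|\Re u|^{\alpha}\Re u\,\big\|_{Y^{1-\sigma}_T}.
\]

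The heart of the matter is the nonlinear estimate. I would bound $\big\|\,|\Re u|^{\alpha}\Re u\,\big\|_{Y^{1-\sigma}_T}$ by H\"older's inequality in the space variable --- using that $\Theta$ has finite measure --- and in the time variable --- using $T\le 1$ --- so as to estimate it in terms of $\|u\|_{L^p((0,T)\times\Theta)}$. Since $\|S(t)u_0\|_{L^p((0,T)\times\Theta)}\le\|S(t)u_0\|_{L^p((0,2)\times\Theta)}\le R$ and $\|v\|_{L^p((0,T)\times\Theta)}\le\|v\|_{X^{\sigma}_T}$, this should produce
\[
\|\Psi(v)\|_{X^{\sigma}_T}\le C\,T^{\theta}\big(R+\|v\|_{X^{\sigma}_T}\big)^{\alpha+1},\qquad
\|\Psi(v)-\Psi(\tilde v)\|_{X^{\sigma}_T}\le C\,T^{\theta}\big(R+\|v\|_{X^{\sigma}_T}+\|\tilde v\|_{X^{\sigma}_T}\big)^{\alpha}\|v-\tilde v\|_{X^{\sigma}_T}
\]
for some $\theta=\theta(\alpha,p)>0$. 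The existence of a strictly positive power $T^{\theta}$ is precisely where the hypotheses are consumed: the conditions $\max(4,2\alpha)<p<6$ amount to $\sigma\in(\frac{1}{2},\frac{5}{6})$ together with $\sigma>\frac{3}{2}-\frac{2}{\alpha}$, i.e. $\sigma$ strictly above the scaling-critical regularity of the nonlinearity $|w|^{\alpha}w$ (which forces $\alpha<3$), and this is the subcritical regime in which the deterministic theory --- carried out for the cubic case in Theorem~\ref{prop.local.low} --- closes with a gain in $T$. Verifying that the Lebesgue and Sobolev exponents fit together to yield $\theta>0$ throughout this range (which is where one uses the Strichartz estimates for radial functions on $\Theta$ and the finiteness of $|\Theta|$) is the main technical obstacle; the rest is routine.

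Granting these two bounds, I would fix $C$, then pick $\gamma=\gamma(\alpha,p)$ large and $c\in(0,1]$ small so that for $T=cR^{-\gamma}$ the map $\Psi$ sends $\{v\in X^{\sigma}_T:\|v\|_{X^{\sigma}_T}\le CR\}$ into itself and is a contraction there; its unique fixed point $v$ satisfies $\|v\|_{X^{\sigma}_T}\le CR$, and $u:=S(t)u_0+v$ is the asserted solution of~\eqref{2}, unique among functions equal to $S(t)u_0$ plus an element of $X^{\sigma}_T$. The remaining assertions are then immediate. Since the eigenvalues of $-\mathbf{\Delta}$ on $\Theta$ are $(\pi n)^2$, the propagator $e^{-it\sqrt{-\mathbf{\Delta}}}$ is $2$-periodic in $t$, so for $|t|\le T$,
\[
\|S(\tau)S(t)u_0\|_{L^p(\tau\in(0,2);L^p(\Theta))}=\|S(\tau)u_0\|_{L^p(\tau\in(t,t+2);L^p(\Theta))}=\|S(\tau)u_0\|_{L^p((0,2)\times\Theta)}\le R,
\]
while $\|S(\tau)v(t)\|_{L^p(\tau\in(0,2);L^p(\Theta))}\le C\|v(t)\|_{H^{\sigma}_{rad}(\Theta)}\le C\|v\|_{X^{\sigma}_T}\le CR$ by the homogeneous Strichartz estimate for the $\sigma$-admissible couple $(p,p)$; adding these gives $\sup_{|t|\le T}\|S(\tau)u(t)\|_{L^p(\tau\in(0,2);L^p(\Theta))}\le CR$. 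Finally $e^{-it\sqrt{-\mathbf{\Delta}}}$ is an isometry on $H^s_{rad}(\Theta)$, whence $\|u(t)\|_{H^s(\Theta)}\le\|S(t)u_0\|_{H^s(\Theta)}+\|v(t)\|_{H^s(\Theta)}=\|u_0\|_{H^s(\Theta)}+\|v(t)\|_{H^s(\Theta)}\le\|u_0\|_{H^s(\Theta)}+CR$, using $s<\sigma$.
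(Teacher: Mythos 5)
Your overall scheme --- writing $u=S(t)u_0+v$, exploiting the one-derivative gain of $\sqrt{-\mathbf{\Delta}}^{-1}$, and contracting for $v$ in $X^{\sigma}_T$ with $T\sim R^{-\gamma}$ --- is indeed the intended strategy (the text only sketches this and refers to \cite{BT2} for the details). But the two steps you treat as routine are exactly where the proof lives, and as written they do not hold up. First, the linear input: you cannot import the Strichartz estimates of Chapter~\ref{Chapter1} to $\Theta$ ``via finite speed of propagation''. Unlike $\T^3$, the ball has a boundary, and on a time interval of length $2$ every wave reflects off $\partial\Theta$; finite propagation speed identifies the Dirichlet evolution with the free $\R^3$ evolution only away from the boundary and for short times, so the torus/Euclidean argument does not transfer. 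The estimates actually needed are specific to \emph{radial} data on the ball and are obtained in \cite{BT1/2,BT2} through the explicit radial eigenfunctions $e_n(r)=\sin(n\pi r)/r$, i.e.\ by a one-dimensional reduction; they yield space-time bounds beyond the family of Definition~\ref{admi}, and this improvement is not a convenience but is what makes the stated range of exponents possible.

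Second, and more seriously, the key nonlinear bound $\|\Psi(v)\|_{X^{\sigma}_T}\le C T^{\theta}(R+\|v\|_{X^{\sigma}_T})^{\alpha+1}$ cannot be produced by the H\"older-plus-finite-measure argument you outline, in the whole range $\max(4,2\alpha)<p<6$. The only information on $u_L=S(t)u_0$ is its $L^p_{t,x}$ bound, so the pure term $|u_L|^{\alpha}u_L$, which lies only in $L^{p/(\alpha+1)}_tL^{p/(\alpha+1)}_x$, must be placed in some dual space $L^{a'}_TL^{b'}_x$ with $(a,b)$ a $(1-\sigma)$-admissible couple, forcing $(\alpha+1)b'\le p$. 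The smallest spatial exponent available in $Y^{1-\sigma}_T$ is $b'=\frac{2}{2-\sigma}=\frac{4p}{p+8}$, so one needs $4(\alpha+1)\le p+8$, i.e.\ $p\ge 4\alpha-4$; for $\alpha\in(\tfrac52,3)$ this forces $p>6$, contradicting $p<6$. (The energy route $L^1_TH^{\sigma-1}$ combined with the Sobolev embedding $L^{3p/(p+4)}\subset H^{\sigma-1}$ is even worse: it requires $p\ge 3\alpha-1$.) So the exponents do not ``fit together'' with the tools you invoke; closing the estimate under the mere hypotheses $p>2\alpha$, $\sigma=\tfrac32-\tfrac4p$ is precisely what the radial-specific estimates on $\Theta$ are for, and that is the part of the argument your proposal defers. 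The concluding assertions (the $2$-periodicity of $S(t)$, the $H^s$ bound via $s<\sigma$) are fine once the contraction is granted.
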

Using probabilistic Strichartz estimates for $S(t)$ as we did in the previous chapter, we can deduce the following corollary of Proposition~\ref{lwp}.
\begin{prop}\label{lwp_prop}
Under the assumptions of Proposition~\ref{lwp}  there is a set $\Sigma$ of full $\mu$ measure such that for every $u_0\in \Sigma$ there is $T>0$ and a unique solution of \eqref{2} on $[0,T]$ in 
$$
C([0,T],H^s_{rad}(\Theta))\cap L^p_{loc}( \mathbb{R}_t;L^p(\Theta)).
$$
Moreover for every $T\leq 1$ there is a set $\Sigma_T\subset\Sigma$ such that
$$
\mu(\Sigma_T)\geq 1-Ce^{-c/T^\delta},\quad c>0,\, \delta>0
$$
and such that for every $u_0\in \Sigma_T$ the time of existence is at least $T$.
\end{prop}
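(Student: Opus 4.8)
The plan is to deduce Proposition~\ref{lwp_prop} from the deterministic local existence result Proposition~\ref{lwp} combined with the probabilistic Strichartz estimates of the previous chapter (Theorem~\ref{lll} type bounds, transferred to the free wave group $S(t) = e^{-it\sqrt{-\mathbf{\Delta}}}$ on $\Theta$). First I would recall that the key deterministic hypothesis in Proposition~\ref{lwp} is $\|S(t)u_0\|_{L^p((0,2)\times\Theta)}\leq R$, and that with $T=cR^{-\gamma}$ one gets the solution up to time $T$. The probabilistic input is that, for $\mu$ the Gaussian measure defined via \eqref{mapp}, the random function $u_0^\omega$ satisfies $\|S(t)u_0^\omega\|_{L^p((0,2)\times\Theta)}<\infty$ $\mu$-a.s., together with the large deviation estimate
\begin{equation*}
\mu\big(u_0 : \|S(t)u_0\|_{L^p((0,2)\times\Theta)}>\lambda\big)\leq C e^{-c\lambda^2},
\end{equation*}
which follows by the same argument as Theorem~\ref{lll}: expand $u_0^\omega$ in the eigenbasis $(e_n)$, use that $S(t)$ acts by $e_n\mapsto e^{-i\pi n t}e_n$ so the space-time $L^p$ norm is controlled by a Gaussian chaos of order one in the $h_n,l_n$, apply Lemma~\ref{lem1} (the Khinchin-type bound) together with the Sobolev embedding needed to pass from $L^p_x$ to the $\ell^2$ summable spectral data, and conclude with the Chebyshev/optimisation trick used at the end of the proof of Theorem~\ref{lll}. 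Here one uses $p<6$ precisely so the relevant Sobolev exponent $\sigma=\frac32-\frac4p$ stays below $1/2$ and matches the a.s. regularity of $\mu$.

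Next I would assemble the two pieces. Define, for $R\geq 1$,
\begin{equation*}
\Sigma^{(R)}\equiv\{u_0\in H^s_{rad}(\Theta): \|S(t)u_0\|_{L^p((0,2)\times\Theta)}\leq R\},
\end{equation*}
and set $\Sigma\equiv\bigcup_{R\geq 1}\Sigma^{(R)}$, which by the a.s. finiteness above has full $\mu$ measure. For $u_0\in\Sigma^{(R)}$, Proposition~\ref{lwp} produces a unique solution $u=S(t)u_0+v$ with $v\in X^\sigma_T$, $T=cR^{-\gamma}$, living in $C([0,T],H^s_{rad}(\Theta))\cap L^p_{loc}(\mathbb{R}_t;L^p(\Theta))$ (the $L^p$ membership of $u$ itself coming from the $L^p$ bound on $S(t)u_0$ plus $v\in X^\sigma_T\subset L^p$ by Strichartz). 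This gives the first assertion. For the quantitative version, given $T\leq 1$ choose $R=R(T)$ so that $cR^{-\gamma}=T$, i.e. $R=(c/T)^{1/\gamma}$, and put $\Sigma_T\equiv\Sigma^{(R(T))}$. Then every $u_0\in\Sigma_T$ has time of existence at least $T$, and by the large deviation bound
\begin{equation*}
\mu(\Sigma_T)=1-\mu\big(u_0:\|S(t)u_0\|_{L^p((0,2)\times\Theta)}>R(T)\big)\geq 1-Ce^{-cR(T)^2}=1-Ce^{-c'/T^{2/\gamma}},
\end{equation*}
which is of the claimed form with $\delta=2/\gamma>0$.

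The main obstacle — really the only nontrivial point, since the packaging is routine once the ingredients are in place — is establishing the probabilistic Strichartz estimate for $S(t)$ on the ball $\Theta$ with Dirichlet boundary conditions and the correct exponents. Unlike the torus case in the previous chapter, here one works with the radial Dirichlet eigenfunctions $e_n(r)=\sin(n\pi r)/r$, and one must check that the $L^p((0,2)\times\Theta)$ norm of the randomised free evolution is indeed dominated, after the Khinchin averaging in $\omega$, by the $\ell^2$ norm of the Gaussian coefficients $(h_n+il_n)/(n\pi)$ — equivalently by an $H^{\sigma'}$-type norm with $\sigma'<1/2$. This requires a Sobolev embedding on $\Theta$ adapted to radial functions (or simply the global Sobolev embedding $W^{\sigma',q}(\Theta)\subset L^p(\Theta)$ for suitable $q$) together with the elementary bound $\sum_n n^{2\sigma'-2}<\infty$ valid exactly when $\sigma'<1/2$, which is why the constraint $\max(4,2\alpha)<p<6$ appears. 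Once this estimate is in hand, the rest follows verbatim from the scheme of Theorem~\ref{lll} and the deterministic Proposition~\ref{lwp}.
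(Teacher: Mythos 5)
Your proposal is correct and follows essentially the same route the paper intends: Proposition~\ref{lwp_prop} is deduced from the deterministic local theory of Proposition~\ref{lwp} together with a probabilistic Strichartz/large deviation bound for $S(t)$ on $\Theta$, exactly via the sets $\Sigma^{(R)}=\{\|S(t)u_0\|_{L^p((0,2)\times\Theta)}\leq R\}$, the union over $R$ for the full measure set, and the choice $R(T)\sim T^{-1/\gamma}$ giving $\mu(\Sigma_T)\geq 1-Ce^{-c/T^{2/\gamma}}$. One caveat on your sketch of the large deviation estimate: on the ball the argument is not a verbatim copy of Theorem~\ref{lll}, because the radial eigenfunctions $e_n(r)=\sin(n\pi r)/r$ are not uniformly bounded ($\|e_n\|_{L^q(\Theta)}$ is uniformly bounded only for $q<3$ and grows like $n^{1-3/q}$ for $q>3$), so your reduction to $\sum_n n^{2\sigma'-2}<\infty$ must be run with exponents respecting this (for instance $W^{\sigma',q}(\Theta)\subset L^p(\Theta)$ with $q<3$ and $\sigma'<1/2$, which is precisely what forces $p<6$), while the lower bound $p>\max(4,2\alpha)$ comes from the deterministic nonlinear estimates of Proposition~\ref{lwp}, not from the probabilistic part.
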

Let us  next define the Gibbs measures associated with \eqref{2}. 
Using \cite[Theorem~4]{AT}, we have that for $\alpha<4$ the quantity
\begin{equation}\label{antoine}
\|\sum_{n=1}^{\infty}\frac{ h_{n}(\omega)+i l_{n}(\omega)  }{n\pi}e_{n}\|_{L^{\alpha+2}(\Theta)}
\end{equation}
is finite almost surely. 
Moreover the restriction $\alpha<4$ is optimal because for $\alpha=4$ the quantity \eqref{antoine} is infinite almost surely.
Therefore, for $\alpha<4$, we can define a nontrivial measure $\rho$ as the image measure on
$H^s_{rad}(\Theta)$ by the map (\ref{mapp}) of the measure
\begin{equation}\label{density}
\exp\Big(-\frac{1}{\alpha+2}
\|\sum_{n=1}^{\infty}\frac{ h_{n}(\omega)}{n\pi}e_{n})\|_{L^{\alpha+2}(\Theta)}^{\alpha+2}\Big)dp(\omega)\,.
\end{equation}
The measure $\rho$ is the Gibbs measures associated with \eqref{2} and it can be formally seen as 
\begin{equation*}
\exp\Big(
-\frac{1}{2}\|\sqrt{ - \mathbf{\Delta}} (u)\|_{L^2(\Theta)}^{2}-
\frac{1}{\alpha+2}\|\Re(u)\|_{L^{\alpha+2}(\Theta)}^{\alpha+2}
\Big)du,
\end{equation*}
where a renormalisation of 
$$
\exp\Big(
-\frac{1}{2}\|\sqrt{ - \mathbf{\Delta}} (u)\|_{L^2(\Theta)}^{2}
\Big)du,
$$
corresponds to the measure $\mu$ and 
$$
\exp\Big(-\frac{1}{\alpha+2}\|\Re(u)\|_{L^{\alpha+2}(\Theta)}^{\alpha+2}\Big),
$$
corresponds to the density in \eqref{density}. Thanks to the conservation of the Hamiltonian \eqref{HHH}, the measure $\rho$ is expected to be invariant under the flow of \eqref{2}.
This expectation is also supported by the fact that the vector field defining \eqref{2} is (formally) divergence free.  This fact follows again from the Hamiltonian structure of \eqref{2}. 
\\

Observe that if a Borel set $A\subset H^s(\Theta)$ is of full $\rho$ measure
then $A$ is also of full $\mu$ measure. Therefore, it suffices to solve (\ref{2}) globally in time
for $u_0$ in a set of full $\rho$ measure.
\\

We now explain how the local existence result of Proposition~\ref{lwp} can be combined with invariant measure considerations in order to get global existence of the solution. 
The details can be found in \cite{BT2}.
Consider a truncated version of \eqref{2}
\begin{equation}\label{2_N}
(i\partial_{t}-\sqrt{ - \mathbf{\Delta}})u-
S_N\big(
\sqrt{ - \mathbf{\Delta}}^{-1}\big(|S_N \Re(u)|^{\alpha}S_N\Re(u)\big)
\big)
=0,
\end{equation}
where $S_N$ is a suitable "projector" tending to the identity as $N$ goes to infinity. 
Let us denote by $\Phi_N(t)$ the flow of \eqref{2_N}. This flow is well-defined for a fixed $N$ because for frequencies $\gg N$ it is simply the linear flow and for the remaining frequencies one can use that 
\eqref{2_N} has the preserved energy 
\begin{equation}\label{HHH_N}
\frac{1}{2}\|\sqrt{ - \mathbf{\Delta}} (u)\|_{L^2(\Theta)}^{2}+
\frac{1}{\alpha+2}\|S_N \Re(u)\|_{L^{\alpha+2}(\Theta)}^{\alpha+2}\,.
\end{equation}
The energy \eqref{HHH_N} allows us to define an approximated Gibbs measure $\rho_N$.
One has that $\rho_N$ is invariant under $\Phi_N(t)$ by the Liouville theorem and the invariance of complex Gaussians under rotations (for the frequencies $\gg N$).
In addition, $\rho_N$ converges in a strong sense to $\rho$ as $N\rightarrow\infty$. 

Let us fix $T\gg 1$ and a small $\epsilon>0$. Our goal is to find a set of $\rho$ residual measure $<\epsilon$ such that for initial data in this set we can solve \eqref{2} up to time $T$.

The local existence theory implies that as far as 
\begin{equation}\label{to_propagate}
 \|S(t)u\|_{L^p(\tau\in (0,2);L^p(\Theta))}\leq R,\quad R\geq 1
\end{equation}
we can define the solution of the true equation with datum $u$ for times of order $R^{-\gamma}$, $\gamma>0$, the bound \eqref{to_propagate} is propagated and moreover  {\it on the interval of existence
this solution is the limit  as $N\rightarrow\infty$ of the solutions of the truncated equation \eqref{2_N} with the same datum}. 

Our goal is to show that with a suitably chosen $R=R(T,\varepsilon)$ we can propagate the bound \eqref{to_propagate} for the solutions of the approximated equation \eqref{2_N}  (for $N\gg 1$) up to time $T$
for initial data in a set of residual $\rho$ measure $\lesssim \varepsilon$.

For $R>1$, we define the set $B_R$ as
$$
B_R=\{u\,:\,  
 \|S(t)u\|_{L^p(\tau\in (0,2);L^p(\Theta))}\leq R\}.
 $$
As mentioned the (large) number $R$ will be determined depending on $T$ and $\varepsilon$.
Thanks to the probabilistic Strichartz estimates for $S(t)$, we have the bound 
\begin{equation}\label{veneta9}
\rho(B_R^{c})<e^{-\kappa R^2}
\end{equation}
for some $\kappa >0$.  
Let $\tau\approx R^{-\gamma}$ be the local existence time associated to $R$ given by Proposition~\ref{lwp}.
Define the set $B$ by
\begin{equation}\label{veneta10}
B=\bigcap_{k=0}^{[T/\tau]}\Phi_N(-k\tau)(B_R)\, .
\end{equation}
Thanks to the local theory, we can propagate \eqref{to_propagate} for data in $B$ up to time $T$.
On the other hand, using the invariance of $\rho_N$ under $\Phi_N(t)$ and \eqref{veneta9}, we obtain that
$$
\rho_N(B^c)\lesssim T R^\gamma e^{-\kappa R^2}.
$$
We now choose $R$ so that 
$$
 T R^\gamma e^{-\kappa R^2}\sim \varepsilon.
 $$
 In other words 
 $$
 R \sim \Big(\log\big( \frac{T}{\varepsilon}\big)\Big)^{\frac{1}{2}}\,.
 $$
This fixes the value of $R$. 
With this choice of $R$, $\rho(B^c)<\varepsilon$, provided $N\gg 1$.
With this value of $R$ the set $B$ defined by \eqref{veneta10} is such that for data in $B$ we have the bound \eqref{to_propagate} up to time $T$ on a set of residual  $\rho$ measure $<\varepsilon$.
Now, we can pass to the limit $N\rightarrow \infty$ thanks to the above mentioned consequence of the local theory and hence defining the solution of the true equation \eqref{2} up to time $T$ for data in a set of $\rho$ residual measure $<\varepsilon$. 

We now apply the last conclusion with $T=2^j$ and $\varepsilon/2^j$. 
This produces a set $\Sigma_{j,\varepsilon}$ such that  $\rho((\Sigma_{j,\varepsilon})^c)<\varepsilon/2^j$ an for $u_0\in  \Sigma_{j,\varepsilon}$ we can solve \eqref{2} up to time $2^j$.
We next set
$$
\Sigma_{\varepsilon}=\bigcap_{j=1}^{\infty} \Sigma_{j,\varepsilon}\,.
$$
Clearly, we have $\rho((\Sigma_{\varepsilon})^c)<\varepsilon$ and for $u_0\in \Sigma_{\varepsilon}$, we can define a  global solution of \eqref{2}.
Finally
$$
\Sigma=\bigcup_{j=1}^\infty \Sigma_{2^{-j}}
$$
is a set of full $\rho$ measure on which we can define globally the solutions of \eqref{2}. 
The previous construction also keeps enough information allowing to get the claimed uniqueness property. 
\begin{rema}
{\rm
In \cite{BB}, the result of Theorem~\ref{thh1} was extended to $\alpha<4$ which is the full range of the definition of the measure $\rho$.
}
\end{rema}
\begin{rema}
{\rm
The previous discussion has shown that we have two methods to globalise the solutions in the context of random data well-posedness for the nonlinear wave equation.
The one of the previous chapter is based on energy estimates while the method of this chapter is based on invariant measures considerations. 
It is worth mentioning that these two methods are also employed in the context of singular stochastic PDE's.  More precisely in \cite{MW} the globalisation is done via the (more flexible) method of energy estimates while 
in \cite{HM} one globalises by exploiting invariant measures considerations.  
}
\end{rema}
\chapter{Quasi-invariant measures }
\section{Introduction}
\subsection{Motivation}
In Chapter~\ref{Chapter2},  for each $s\in (0,1)$ we introduced a family of measures ${\mathcal M}^s$ on the Sobolev space
${\mathcal H}^s(\T^3)=H^s(\T^3)\times H^{s-1}(\T^3)$. 
Then for each $\mu\in {\mathcal M}^s$, we succeeded to define a unique global flow $\Phi(t)$ of the cubic defocusing wave equation a.s. with respect to $\mu$. 
This result is of interest for the solvability of the Cauchy problem associated with the  cubic defocusing wave equation for data in ${\mathcal H}^s(\T^3)$, especially for $s<1/2$ because for these regularities this Cauchy problem is ill-posed in the Hadamard sense in ${\mathcal H}^s(\T^3)$. 
On the other hand the methods of Chapter~\ref{Chapter2} give no information about the transport by $\Phi(t)$ of the measures in ${\mathcal M}^s$, even for large $s$. Of course, ${\mathcal M}^s$ can be defined for any $s\in\R$ and for $s\geq 1$ the global existence a.s. with respect to an element of  ${\mathcal M}^s$  follows from Theorem~\ref{prop.global}.
The question of the transport of the measures of  ${\mathcal M}^s$ under  $\Phi(t)$  is of interest in the context of the macroscopic description of the flow of the cubic defocusing wave equation. 
It is no longer only a low regularity issue and the answer of this question is a priori  not clear at all for regular solutions either. 

On the other hand, in Chapter~\ref{Chapter3}, we constructed a very particular (Gaussian) measure $\mu$  on the Sobolev spaces of radial functions on the unit disc of $\R^3$ such that a.s. with respect to this measure the nonlinear defocusing wave equation with nonlinear term  $|u|^\alpha u$, $\alpha\in (2,3)$ has a well defined dynamics. The typical  Sobolev regularity on the support of this measure is supercritical and thus  again this result is of interest concerning the individual behaviour of the trajectories. This result is also of interest concerning the macroscopic description of the flow because, we can also prove by the methods of Chapter~\ref{Chapter3} that the transported measure by the flow is absolutely continuous with respect to $\mu$.  Unfortunately, the method of Chapter~\ref{Chapter3} is only restricted to a very particular initial distribution with data of low regularity.

Motivated by the previous discussion, a natural question to ask is what can be said for the transport of the measures of ${\mathcal M}^s$ under the flow of the cubic defocusing wave equation.  In this chapter we discuss some recent progress on this question.
\subsection{Statement of the result}
Consider the cubic defocusing wave equation
\begin{equation}\label{NLW}
(\partial_t^2 -\Delta) u+u^3=0,
\end{equation}
where $u :\R\times \T^d\rightarrow \R$. 
We rewrite \eqref{NLW} as the first order system
\begin{equation}\label{NLW-sys}
\partial_t u=v,\quad \partial_t v=\Delta u-u^3.
\end{equation}
As we already know, if $(u,v)$ is a smooth solution of \eqref{NLW-sys} then
$$
\frac{d}{dt}H(u(t),v(t))=0,
$$
where 
\begin{equation}\label{H}
H(u,v)=\frac{1}{2}\int_{\T^d}\big(v^2+|\nabla u|^2\big)+\frac{1}{4}\int_{\T^d}u^4\,.
\end{equation}
Thanks to Theorem~\ref{prop.global}, for $d\leq 3$ the Cauchy problem associated with \eqref{NLW-sys} is globally well-posed in  ${\mathcal H}^s(\T^d)= H^s(\T^d)\times H^{s-1}(\T^d),$ $s\geq 1$.
Denote by $\Phi(t):{\mathcal H}^s(\T^d)\rightarrow {\mathcal H}^s(\T^d)$ the resulting flow.  As we already mentioned, we are interested in the statistical description of $\Phi(t)$.
Let  $\mu_{s,d}$  be the measure  formally defined by
$$
 d \mu_{s,d} 
  = Z_{s,d}^{-1} e^{-\frac 12 \| (u,v)\|_{{\mathcal H}^{s+1}}^2} du dv
 $$ 
 or
 $$
 d \mu_{s,d} 
  =
 Z_{s,d}^{-1} \prod_{n \in \Z^2} 
 e^{-\frac 12 \langle n\rangle ^{2(s+1)} |\ft u_n|^2}   
  e^{-\frac 12 \langle n\rangle^{2s} |\ft v_n|^2}   
 d\ft u_n d\ft v_n\,,
$$
where $\ft u_n$ and $\ft v_n$ denote the Fourier transforms of $u$ and $v$ respectively. Recall that $\langle n\rangle=(1+|n|^2)^{\frac{1}{2}}$.

Rigorously one can define the Gaussian measure  $\mu_{s,d}$  as  the induced probability measure under the map
$$
\omega  \longmapsto (u^\omega(x), v^\omega(x))
$$
with 
\begin{equation}\label{series}
u^\omega(x) = \sum_{n \in \Z^d} \frac{g_n(\omega)}{\langle n\rangle^{s+1}}e^{in\cdot x},\quad
v^\omega(x) = \sum_{n \in \Z^d} \frac{h_n(\omega)}{\langle n\rangle^{s}}e^{in\cdot x} \,\,.
\end{equation}
In \eqref{series},  $( g_n )_{n \in \Z^d}$,  $( h_n )_{n \in \Z^d}$
are two sequences of "standard" complex Gaussian random variables, such that $g_n=\overline{g_{-n}}$,  $h_n=\overline{h_{-n}}$  and such that 
$
\{g_n, h_n\}
$
are independent, modulo the central symmetry.  The measures $\mu_{s,d}$ can be seen as special cases of the measures in ${\mathcal M}^s$ considered in Chapter~\ref{Chapter2}.
The partial sums of the series in \eqref{series} are a Cauchy sequence in 
$L^2(\Omega; \mathcal{H}^{\sigma}(\T^d))$ for every $\sigma<s+1-\frac{d}{2}$ and therefore one can see $\mu_{s,d}$ as a probability measure on ${\mathcal H}^\sigma$ for a fixed $\sigma<s+1-\frac{d}{2}$.
Therefore, thanks to the results of   Chapter~\ref{Chapter2},  for $d\leq 3$, the flow $\Phi(t)$ can be extended $\mu_{s,d}$ almost surely, provided $s>\frac{d}{2}-1$. 
We have the following result.
\begin{theo}\label{dim1}
Let $s\geq 0$ be an integer. 
Then the measure $\mu_{s,1}$ is quasi-invariant under the flow of \eqref{NLW-sys}.
\end{theo}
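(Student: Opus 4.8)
\textbf{Proof strategy for Theorem~\ref{dim1}.}

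The plan is to follow the now-standard scheme for proving quasi-invariance of Gaussian measures under Hamiltonian PDE flows, adapted to the one-dimensional cubic wave equation written as the first-order system \eqref{NLW-sys}. The key structural fact is that for $d=1$ and $s$ a nonnegative integer, the energy $H(u,v)$ from \eqref{H} is controlled by the $\mathcal{H}^{s+1}$-norm (which defines the Gaussian weight) plus lower-order terms, and that the flow $\Phi(t)$ is globally well-defined on $\mathcal{H}^1(\T)$ by Theorem~\ref{prop.global} and hence $\mu_{s,1}$-almost surely. First I would introduce the finite-dimensional truncations: let $\Pi_N$ be the projector onto Fourier modes $|n|\le N$, and consider the truncated system obtained by projecting the nonlinearity, whose flow $\Phi_N(t)$ leaves invariant the finite-dimensional Gaussian measure twisted by $e^{-\frac14\int (\Pi_N u)^4}$ (Liouville's theorem plus conservation of the truncated energy). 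The measure $\mu_{s,1}$ itself is the weak limit, as $N\to\infty$, of the truncated measures $d\mu_{N} = Z_N^{-1} e^{-\frac14\int(\Pi_N u)^4} \,d\mu_{s,1}$, and one checks the densities converge in $L^p(d\mu_{s,1})$ for all $p<\infty$ using that $\int (\Pi_N u)^4$ converges in $L^p$ — this is soft in $d=1$ since $u^\omega\in L^4$ a.s. and no renormalization is needed.

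Next I would set up the main differential inequality. For a measurable set $A$, write $\frac{d}{dt}\mu_{s,1}(\Phi_N(t)(A))$ and express it, via the invariance of the twisted truncated Gaussian under $\Phi_N(t)$ and the change of variables formula, as an integral over $\Phi_N(t)(A)$ of the "energy dissipation" term
$$
\frac{d}{dt}\Big|_{t'=t}\Big(\tfrac14\int_{\T}(\Pi_N u(t'))^4 - \tfrac14\int_{\T}(\Pi_N u(t))^4\Big) + (\text{Gaussian weight variation}).
$$
Because the full energy $H$ is conserved by $\Phi_N$ only up to the commutator error $[\Pi_N, \text{nonlinearity}]$, and because the Gaussian weight involves $\|\cdot\|_{\mathcal{H}^{s+1}}^2$ rather than $H$, the net time derivative equals an explicit multilinear expression in $u, v$ — schematically $\int (\partial_t$ of the $\mathcal{H}^{s+1}$ quadratic form along the flow$)$, which after using the equation reduces to integrals of the form $\int \langle\nabla\rangle^{s+1} u \cdot \langle\nabla\rangle^{s+1}(\text{lower order})$ paired against $v$ or $u^3$. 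The goal is to bound this by $C_R$ uniformly in $N$ on the set where $\|(u,v)\|_{\mathcal{H}^\sigma}\le R$, which gives $\frac{d}{dt}\mu_{s,1}(\Phi_N(t)(A)) \le C_R\, \mu_{s,1}(\Phi_N(t)(A))$ modulo a small-measure exceptional set, and then Gronwall plus passing $N\to\infty$ (using local well-posedness of the true flow as a limit of truncated flows, as in Chapter~\ref{Chapter3}) yields $\mu_{s,1}(\Phi(t)(A))\le e^{C_R t}\mu_{s,1}(A)$ for $A$ inside the ball, hence quasi-invariance.

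The main obstacle is the energy estimate controlling the time derivative: one must show that the worst term, which at first sight loses $s+1$ derivatives on a cubic nonlinearity with only $\mathcal{H}^\sigma$, $\sigma$ close to $s+1/2$, regularity available, is in fact bounded. The mechanism is an integration by parts / commutator argument: the top-order contribution $\int \langle\nabla\rangle^{2(s+1)} u \cdot u^3 \cdot v$ is symmetrized, and after moving derivatives around one is left with expressions where at most $s$ derivatives hit any single factor, or where a favorable cancellation (from the defocusing sign and the Hamiltonian structure) removes the apparent loss; this is where the hypothesis that $s$ is an \emph{integer} and $d=1$ enters, since one can then work with genuine derivatives $\partial_x^{s+1}$, apply the Leibniz rule, and exploit that $\int \partial_x^{s+1}u\,\partial_x^{s+1}(u^3 v)$ has its top piece $3\int (\partial_x^{s+1}u)(\partial_x^{s+1}u)u^2 v$ which is estimated by $\|u\|_{H^{s+1}}^2\|u\|_{L^\infty}^2\|v\|_{L^\infty}$ — finite and controlled by $R$, since $H^{s+1}(\T)\subset L^\infty$ for $s\ge 0$. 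The remaining lower-order commutator terms are handled by the fractional Leibniz (Kato–Ponce) inequality together with the a priori bound on $\|(u,v)\|_{\mathcal{H}^\sigma}$ coming from the globalization of Chapter~\ref{Chapter2}. I expect the bookkeeping of these multilinear terms, and verifying the uniformity in $N$ of all constants, to be the technically heaviest part, while the measure-theoretic skeleton (truncation, Liouville, Gronwall, limit) is routine.
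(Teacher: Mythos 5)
Your measure-theoretic skeleton (frequency truncation, Liouville theorem for the truncated flow with the quartic-weighted Gaussian, convergence of the truncated densities, approximation of $\Phi(t)$ by $\Phi_N(t)$) is indeed the skeleton used in the paper and in \cite{TzBBM}. The gap is in the key quantitative step, and it is fatal as stated. The measure $\mu_{s,1}$ is carried by $\mathcal{H}^\sigma(\T)$ only for $\sigma<s+\frac12$, and $\mu_{s,1}(\mathcal{H}^{s+1/2})=0$; in particular $\|u\|_{H^{s+1}}=+\infty$ almost surely (and $\|v\|_{L^\infty}=+\infty$ a.s.\ when $s\leq 1$). So your estimate of the ``worst term'' by $\|u\|_{H^{s+1}}^2\|u\|_{L^\infty}^2\|v\|_{L^\infty}$, claimed to be ``controlled by $R$'', is vacuous on the support of the measure, and there is no deterministic bound $C_R$ for the time derivative of the Gaussian weight on $\mathcal{H}^\sigma$-balls. (Moreover such balls are not invariant under the flow, so even granting a bound on them, the Gronwall step $\frac{d}{dt}\mu(\Phi_N(t)(A))\leq C_R\,\mu(\Phi_N(t)(A))$ does not follow; the set one can afford to condition on is a sublevel set of the conserved truncated energy, not an $\mathcal{H}^\sigma$-ball.) You have also misidentified the top-order term: differentiating the quadratic form $\frac12\|(u,v)\|_{\mathcal H^{s+1}}^2$ along the (truncated) flow, the wave parts cancel and one is left with $-\int_{\T}\partial_x^s v\,\partial_x^s(u^3)$ plus harmless terms, whose worst piece is $-3\int_{\T}\partial_x^s v\,u^2\,\partial_x^s u$ — only $s$ derivatives fall on any factor, not $s+1$.

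What actually closes the argument (this is the content of \cite{TzBBM}, and the same mechanism appears in the two-dimensional computation \eqref{psg} of the paper) is, first, an algebraic step you do not perform: write $-3\int \partial_x^s v\,u^2\partial_x^s u=-\frac32\partial_t\int(\partial_x^s u)^2u^2+3\int(\partial_x^s u)^2\,u\,v$, and absorb the exact time derivative into a \emph{modified} energy $\frac12\|(u,v)\|_{\mathcal H^{s+1}}^2+\frac32\int(\partial_x^su)^2u^2+\dots$ (in $d=1$ no renormalisation is needed, and one works with an equivalent Gaussian measure whose covariance matches the quadratic part of this modified energy). Second, the remaining terms, e.g.\ $3\int(\partial_x^su)^2uv$ and the lower-order Leibniz terms, are \emph{not} bounded pointwise on the support for low $s$; they are estimated in $L^p(d\mu_{s,1})$, restricted to an energy sublevel set, with a constant growing at most linearly in $p$ (Wiener chaos/hypercontractivity), uniformly in $N$ — the analogue of Theorem~\ref{key_key}. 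One then bounds the measure evolution by H\"older, obtaining $\dot x(t)\leq Cp\,x(t)^{1-1/p}$ for $x(t)$ the transported measure of a null set, integrates to $x(t)\leq (Ct)^p$, and lets $p\to\infty$. Without the correction-term cancellation and the $L^p$-with-constant-$Cp$ estimate replacing your deterministic $C_R$ bound, the proposal does not yield quasi-invariance.
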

We recall that given a measure space $(X, \mu)$,  we say that $\mu$ is {\it quasi-invariant} under a transformation $T:X \to X$ 
if  the transported measure $T_*\mu = \mu\circ T^{-1}$ and $\mu$ are equivalent, i.e.~mutually absolutely continuous with respect to each other.
The proof  of Theorem~\ref{dim1} is essentially contained in the analysis of \cite{TzBBM}. 

For $d=2$  the situation is much more complicated. Recently in \cite{OhTz}, we were able to prove the following statement.
\begin{theo}\label{dim2}
Let $s\geq 2$ be an even integer. 
Then the measure $\mu_{s,2}$ is quasi-invariant under the flow of \eqref{NLW-sys}.
\end{theo}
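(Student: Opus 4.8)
The plan is to follow the Cameron--Martin / energy-method strategy of \cite{TzBBM, OhTz}, adapted to the two-dimensional case. First I would work with the frequency-truncated dynamics: let $\Phi_N(t)$ be the flow of the system obtained by replacing the nonlinearity $u^3$ by $\pi_N((\pi_N u)^3)$, where $\pi_N$ is the projector onto frequencies $|n|\le N$. Since the truncated system decouples into a finite-dimensional Hamiltonian system on the low modes and a linear flow on the high modes, $\Phi_N(t)$ leaves invariant the Gibbs-type measure $d\rho_{s,2,N}$ built from the truncated energy, and it is globally well-posed. The key analytic quantity is the truncated modified energy
\[
E_N(u,v) = \frac12\|(u,v)\|_{\mathcal H^{s+1}}^2 + R_N(u,v),
\]
where the correction $R_N$ is chosen (by a normal-form / integration-by-parts procedure in time applied to $\frac{d}{dt}\|\langle\nabla\rangle^{s+1}u\|_{L^2}^2$) so that the time derivative of $E_N$ along $\Phi_N(t)$ is as regular as possible — only finitely many ``derivatives lost'' and, crucially, expressible as a multilinear expression that is integrable against the Gaussian measure. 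The restriction to $s$ an \emph{even} integer is exactly what makes the Leibniz rule for $\langle\nabla\rangle^{s+1}$ applied to $u^3$ produce a clean, finite sum of terms amenable to this procedure.

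Next I would establish the two quantitative estimates that the method needs, uniformly in $N$. The first is an \textbf{energy-increment estimate}: along the truncated flow,
\[
\Big|\frac{d}{dt} E_N(\Phi_N(t)(u,v))\Big| \lesssim \big(1 + \|(u,v)\|_{\mathcal H^\sigma}\big)^{K}
\]
for a suitable $\sigma < s+1$ and some fixed power $K$, with the right-hand side in $L^p(d\mu_{s,2})$ for all finite $p$ (this uses Gaussian hypercontractivity / Wick-ordering bounds on the multilinear terms and the probabilistic Strichartz estimates of Theorem~\ref{lll}). The second is a \textbf{change-of-variables / weighted Gaussian} computation: writing $d\rho_{s,2,N} = \mathcal F_N\, d\mu_{s,2}$ with $\mathcal F_N = Z_N^{-1} e^{-R_N - \frac14\int u^4 - \cdots}$ (the precise density coming from the difference between $E_N$ and the quadratic form defining $\mu_{s,2}$), I would show $\mathcal F_N \to \mathcal F$ in $L^p(d\mu_{s,2})$ and that $\mathcal F$ is a genuine, a.e.-positive density. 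Combining invariance of $\rho_{s,2,N}$ under $\Phi_N$ with the energy-increment bound gives, for any measurable $A$,
\[
\mu_{s,2}\big(\Phi_N(t)(A)\big) \le C\, e^{C|t|}\, \mu_{s,2}(A)^{\theta}
\]
for some $\theta\in(0,1)$, via a Yudovich-type iteration on the growth rate of $t\mapsto \mu_{s,2}(\Phi_N(t)(A))$ fed by the tail bounds on $dE_N/dt$.

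The final step is to pass to the limit $N\to\infty$. Here I would invoke the local well-posedness theory in $\mathcal H^\sigma$ ($\sigma<s+1-\tfrac d2$, so $\sigma$ can be taken $>1/2$ when $s\ge 2$), together with Theorem~\ref{main} for global existence $\mu_{s,2}$-a.s., to show that $\Phi_N(t)(u,v)\to\Phi(t)(u,v)$ in $\mathcal H^\sigma$ for $\mu_{s,2}$-a.e.\ $(u,v)$ and uniformly on compact time intervals. Then the uniform-in-$N$ bound $\mu_{s,2}(\Phi_N(t)(A))\le C e^{C|t|}\mu_{s,2}(A)^\theta$ survives the limit, giving $\mu_{s,2}(\Phi(t)(A))\le C e^{C|t|}\mu_{s,2}(A)^\theta$; applying this to $\Phi(-t)$ and to complements, and using that $\mu_{s,2}(A)=0 \Rightarrow \mu_{s,2}(\Phi(t)(A))=0$ in both time directions, yields mutual absolute continuity of $(\Phi(t))_*\mu_{s,2}$ and $\mu_{s,2}$, i.e.\ quasi-invariance.

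The main obstacle — and the reason $d=2$ is genuinely harder than $d=1$ — is constructing the correction term $R_N$ and controlling $dE_N/dt$: in two dimensions the naive energy derivative loses too much regularity and the correction must be iterated (a second, and possibly further, normal-form step), and one must simultaneously verify that every term produced is both (i) controllable in $\mathcal H^\sigma$ with $\sigma$ strictly below the critical threshold $s+1-\tfrac d2$ where the Gaussian measure lives, and (ii) integrable against $\mu_{s,2}$ with exponential or stretched-exponential tails. Keeping these two requirements compatible, uniformly in $N$, while the number of multilinear terms proliferates, is the technical heart of the argument; the evenness of $s$ is used repeatedly to keep the algebra finite and the Wick-ordering bookkeeping manageable.
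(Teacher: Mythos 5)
Your overall architecture (frequency truncation, a renormalised modified energy at regularity $s+1$, uniform-in-$N$ estimates, then passage to the limit using the a.s. global flow) is the same as the paper's, but the step that actually produces quasi-invariance is flawed as you state it. You only require the energy increment to lie in $L^p(d\mu_{s,2})$ for every finite $p$, and you claim this yields $\mu_{s,2}(\Phi_N(t)(A))\leq Ce^{C|t|}\mu_{s,2}(A)^{\theta}$ for some fixed $\theta\in(0,1)$. Neither is enough. What the method gives is a differential inequality of the form \eqref{yuyu}, $\dot{x}(t)\leq C_p\,x(t)^{1-\frac1p}$ with $x(0)=0$, for $x(t)$ the (weighted, cut-off) measure of the transported set; for a fixed $p$ (i.e.\ fixed $\theta$) this does \emph{not} propagate $x(0)=0$, since $\dot{x}=Cx^{\theta}$ has nontrivial solutions emanating from zero, and integration only yields $x(t)\leq (C_p t/p)^p>0$. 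One must let $p\to\infty$, and this closes only if $C_p$ grows at most \emph{linearly} in $p$ — this is precisely the content of the paper's key estimate (Theorem~\ref{key_key}), $\|\partial_t H_{s,N}\|_{L^p}\leq Cp$ on $\{H_N\leq r\}$ uniformly in $N$, and the paper stresses that $p^{\alpha}$ with $\alpha>1$ would ruin the argument. Your appeal to hypercontractivity alone cannot deliver this: the energy derivative contains chaoses of order at least four, for which hypercontractivity gives $p^{2}$ or worse. The paper's multi-scale case analysis is designed exactly so that Wiener chaos estimates are only ever applied to at-most-quadratic blocks, the remaining factors being absorbed deterministically through the cutoff $H_N(u,v)\leq r$ — a cutoff in the \emph{conserved} truncated Hamiltonian $H_N$, hence propagated by $\Phi_N$, unlike your bound in terms of $\|(u,v)\|_{\mathcal{H}^\sigma}$, which is not conserved.

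Two further points. First, your claim that $\Phi_N(t)$ leaves invariant the Gibbs-type measure built from the truncated modified energy is incorrect: $E_N$ is not conserved by $\Phi_N$ (its time derivative is exactly the quantity to be estimated); what is actually used is the Liouville theorem for the finite-dimensional low-mode system together with invariance of the Gaussian high modes under the linear flow, the failure of invariance of the weighted measure being quantified by $\partial_t E_N$. Second, in $d=2$ the correction term requires a genuine renormalisation which your sketch leaves implicit: the natural correction $\frac32\int_{\T^2}(D^su)^2u^2$ must come with the divergent counterterm $-\frac32\sigma_N\int_{\T^2}u^2$, where $\sigma_N=\E_{\widetilde\mu_s}\int_{\T^2}(D^s\pi_N u)^2\sim\log N$, and one must also replace $\mu_s$ by the equivalent Gaussian $\widetilde\mu_s$ containing the $\int u^2$ term, since the quadratic part of $H$ does not control $\|u\|_{L^2}$. (That the paper obtains the correction by spatial integration by parts via the Leibniz rule, rather than by a temporal normal form as you propose, is a stylistic difference; the missing $\log N$ renormalisation and, above all, the missing linear-in-$p$ bound are not.)
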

We expect that by using the methods of Chapter~\ref{Chapter2}, one can extend the result of Theorem~\ref{dim2} to all $s>0$, not necessarily an integer.
\\

It would be interesting to decide whether one can extend the result of Theorem~\ref{dim2} to the three dimensional case. It could be that the type of renormalisations employed in the context of singular stochastic PDE's or the QFT become useful in this context.  
\\

From now on we consider $d=2$ and we denote $\mu_{s,2}$ simply by $\mu_s$.
\subsection{Relation to Cameron-Martin type results}
In probability theory, there is an extensive literature on the transport property  of Gaussian measures  under linear and nonlinear transformations.
The statements of Theorem~\ref{dim1} and Theorem~\ref{dim2}  can be seen as such kind of results for the nonlinear transformation defined by the flow map of the cubic defocusing wave equation. 
The most classical result concerning the transport property  of Gaussian measures is the result of Cameron-Martin \cite{CM} giving an optimal answer concerning the shifts.  
The Cameron-Martin theorem  in the context of the measures $\mu_s$ is saying that for a fixed $(h_1,h_2)\in {\mathcal H}^{\sigma}$, $\sigma<s$,  the transport of $\mu_s$ under the shift
$$
(u,v)\longmapsto (u,v)+(h_1,h_2),
$$ 
is absolutely continuous with respect to $\mu_s$ if and only if $(h_1,h_2)\in {\mathcal H}^{s+1}$.

If we denote by $S(t)$ the free evolution  associated with \eqref{NLW-sys}  then for $(u,v)\in {\mathcal H}^\sigma$, we classically have that the flow of the nonlinear wave equation can be decomposed as 
\begin{equation}\label{wave_dec}
\Phi(t)(u,v)=S(t)\big((u,v)+(h_1,h_2)\big),
\end{equation}
where $(h_1,h_2)= (h_1(u,v),h_2(u,v)) \in {\mathcal H}^{\sigma+1}$. In other word there is  one derivative smoothing and no more.
Of course,  if $\sigma<s$ then $\sigma+1<s+1$ and therefore the result of Theorem~\ref{dim2} represents  a statement displaying fine properties of the vector field generating $\Phi(t)$. 
More precisely if in \eqref{wave_dec} $(h_1,h_2)\in  {\mathcal H}^{\sigma+1}$ were fixed (independent of $(u,v)$) then the transported measures would be singular with respect to $\mu_s$ !
\\

Let us next compare the result of Theorem~\ref{dim2} with a result of Ramer \cite{Ra}.
For $\sigma<s$, let us consider an invertible map $\Psi$ on ${\mathcal H}^{\sigma}(\T^2)$ of the form 
$$
\Psi(u,v)= (u,v)+F(u,v),
$$
where 
$F:{\mathcal H}^\sigma(\T^2)\rightarrow {\mathcal H}^{s+1}(\T^2)$.
Under some more assumptions, the most important being that
$$
DF(u,v):{\mathcal H}^{s+1}(\T^2)\rightarrow {\mathcal H}^{s+1}(\T^2)
$$ 
is a  Hilbert-Schmidt map, the analysis of \cite{Ra} implies that  $\mu_s$ is quasi-invariant under $\Psi$.
A  typical example for the $F$ is
$$
F(u,v)=\varepsilon (1-\Delta)^{-1-\delta}(u^2,v^2),\quad \delta>0,\,\, |\varepsilon|\ll 1,
$$
i.e. $2$-smoothing is needed in order to ensure the Hilbert-Schmidt assumption.  
Therefore the approach of Ramer is far from being applicable in the context of the flow map of the  nonlinear wave equation because for the nonlinear wave equation there is only $1$-smoothing. 
\\

Let us finally discuss the Cruzeiro generalisation of the Cameron-Martin theorem. In \cite{Cru}, Ana Bela Cruzeiro considered a general equation of the form
\begin{equation}\label{ABC}
\partial_t u=X(u),
\end{equation}
where $X$ is an infinite dimensional vector field. 
She proved that $\mu_{s}$ would be quasi-invariant under the flow of \eqref{ABC} if we suppose a number of assumptions, the most important being of type :
\begin{equation}\label{exp}
\int_{H^{\sigma}(\T^2)}e^{{\rm div} (X(u))}d\mu_s(u)<\infty\,.
\end{equation}
The problem is how to check the abstract assumption \eqref{exp} for concrete examples.
Very roughly speaking the result of Theorem~\ref{dim2} aims to verify assumptions of type \eqref{exp}  "in practice".
\section{Elements of the proof}
In this section, we present some of the key steps in the proof of Theorem~\ref{dim2}. 
\subsection{An equivalent Gaussian measure}
Since the quadratic part of \eqref{H} does not control the $L^2$ norm of $u$, we will prove the quasi-invariance for the equivalent measure 
$\widetilde{ \mu}_{s} $  defined as  the induced probability measure under the map
\begin{equation*}
\omega \in \Omega \longmapsto (u^\omega(x), v^\omega(x))
 \end{equation*}
with 
\begin{equation*}
u^\omega(x) = \sum_{n \in \Z^2} \frac{g_n(\omega)}
{
(1+|n|^2+|n|^{2s+2})^{\frac{1}{2}}
}e^{in\cdot x},\quad
v^\omega(x) = \sum_{n \in \Z^2} 
\frac{h_n(\omega)}{
(1+|n|^{2s})^{\frac{1}{2}}
}e^{in\cdot x}\,.
\end{equation*}
Formally $\widetilde{ \mu}_{s} $ can be seen as 
$$
Z^{-1} e^{-\frac 12 \int v^2- \frac 12  \int (D^s v)^2 
-  \frac 12 \int  u^2 
-  \frac 12 \int  |\nabla u|^2 
-  \frac 12 \int (D^{s+1} u)^2}du dv,
$$
where 
$$
D\equiv \sqrt{-\Delta}\,.
$$
As we shall see below, the expression
\begin{equation}\label{lili}
\frac 12 \int_{\T^2} v^2+\frac 12  \int_{\T^2} (D^s v)^2 +  \frac 12 \int_{\T^2}  u^2 + \frac 12 \int_{\T^2}  |\nabla u|^2 + \frac 12 \int_{\T^2} (D^{s+1} u)^2
\end{equation}
is the main part of the quadratic part of the renormalised energy in the context of the  nonlinear wave equation  \eqref{NLW-sys}.
Using the result of Kakutani \cite{Kakutani}, we can show that for $s>1/2$ the Gaussian measures $\mu_s$ and $\widetilde{ \mu}_{s} $ are equivalent. 
\subsection{The renormalised energies}
Consider the truncated wave equation 
\begin{equation}\label{NLW-sys_N}
\partial_t u=v,\quad \partial_t v=\Delta u-\pi_N((\pi_N u)^3),
\end{equation}
where $\pi_N$ is the Dirichlet projector on frequencies $n\in\Z^2$ such that $|n|\leq N$.
If $(u,v)$ is a solution of  \eqref{NLW-sys_N} then
\begin{equation*}
\partial_t \Big[\frac 12  \int_{\T^2} (D^s v_N)^2 + \frac 12 \int_{\T^2} (D^{s+1}  u_N)^2
\Big]  =  \int_{\T^2} (D^{2s} v_N)(-u_N^3)\,,
\end{equation*}
where 
$
(u_N,v_N)=(\pi_N u,\pi_N v).
$
Clearly $\partial_t u_N=v_N$. 
Observe that for $s=0$, we recover the conservation of the truncated energy $H_N(u,v)$, defined by
\begin{equation*}
H_N(u,v)\equiv H(\pi_N u,\pi_N v)\,.
\end{equation*}
For $s\geq 2$, an even integer, using the Leibniz rule, we get 
\begin{multline*}
\int_{\T^2} (D^{2s} v_N)(-u_N^3)
=
\\
-3\int_{\T^2} D^sv_N\, D^s u_N\, u_N^2+
\sum_{\substack{ |\alpha|+|\beta|+|\gamma|=s\\
|\alpha|,|\beta|,|\gamma|<s
}}
c_{\alpha,\beta,\gamma}
\int_{\T^2}
D^sv_N\,\partial^\alpha u_N \partial^\beta u_N \partial^\gamma u_N,
\end{multline*}
for some unessential constants $c_{\alpha,\beta,\gamma}$.

It will be convenient in the sequel to suppose that the integration on $\T^2$ is done with respect to a probability measure. Therefore the integrations will be done with respect to 
 the Lebesgue measure multiplied by $(2\pi)^{-2}$.  We can write 
\begin{multline}\label{psg}
-3\int_{\T^2} D^sv_N\, D^s u_N\, u_N^2= - \frac 32 \partial_t \bigg[\int_{\T^2} (D^s u_N)^2u_N^2 \bigg]
+ 3 \int_{\T^2} (D^s u_N)^2\, v_N \,u_N 
\\
=
  - \frac 32 \partial_t\bigg[ \int_{\T^2} \Pi_{0}^\perp [(D^su_N)^2] \,  \Pi_{0}^\perp[u_N^2] \bigg]
+ 3 \int_{\T^2} \Pi_{0}^\perp[(D^su_N)^2]\, \Pi_{0}^\perp[v_N \, u_N]  
\\
-   \frac 32 \partial_t \bigg[\int_{\T^2} (D^s u_N)^2 \int_{\T^2}  u_N^2\bigg]
+   3  \int_{\T^2} (D^s u_N)^2\int_{\T^2} v_N\,  u_N .
\end{multline}
where $\Pi_{0}^\perp$ is  again the projector on the nonzero frequencies, i.e.
$$
(\Pi_{0}^\perp(f))(x)=f(x)-\int_{\T^2} f(y)dy\,.
$$
The last two terms on the right-hand side of \eqref{psg} are problematic because 
$$
\lim_{N\rightarrow\infty} \E_{\widetilde{\mu}_s}
\bigg[\int_{\T^2} (D^s \pi_{N}u)^2\bigg]=+\infty\,.
$$
Therefore, we need to use a renormalisation in the definitions of the energies. 
Define $ \sigma_N$ by
\[ 
\sigma_N = \E_{\widetilde{\mu}_s}
\bigg[\int_{\T^2} (D^s \pi_{N}u)^2\bigg] = \sum_{\substack{n \in \Z^2\\|n|\leq N}} \frac{|n|^{2s}}{1+|n|^2+|n|^{2s+2}}\sim \log N \,.
\]
Then, we have
\begin{align*}
 -   \frac 32 & \partial_t \bigg[\int_{\T^2} (D^s u_N)^2 \int_{\T^2}  u_N^2\bigg]
+   3  \int_{\T^2} (D^s u_N)^2\int  v_N\, u_N \notag\\
& = 
 -   \frac 32 \partial_t \bigg[\bigg(\int_{\T^2} (D^s u_N)^2- \sigma_N\bigg)\int_{\T^2}  u_N^2\bigg]
+   3 \bigg( \int_{\T^2} (D^s u_N)^2 - \sigma_N\bigg)\int_{\T^2}  v_N\,  u_N.
\end{align*}
Now, the term
$$
\int_{\T^2} (D^s u_N)^2-  \sigma_N
$$
is a good term because thanks to Wiener chaos estimates, we have the bound 
$$
\Big\|
\int_{\T^2} (D^s \pi_N u)^2- \sigma_N
\Big\|_{L^p(d \widetilde{\mu}_s(u,v))}\leq C p,
$$
where the constant $C$ is independent of $p$ and $N$.
We define $\tilde{H}_{s, N}(u,v)$ by 
\begin{equation*}
\tilde{H}_{s, N}(u,v)  = \frac 12  \int_{\T^2} (D^s v)^2 + \frac 12 \int_{\T^2} (D^{s+1}  u)^2+ \frac 32  \int_{\T^2} (D^s u)^2  u^2 - \frac 32 \sigma_N  \int_{\T^2} u^2\notag\,.
\end{equation*}
We can summarise the previous analysis as follows :  if $(u,v)$ is a solution of \eqref{NLW-sys_N}  then
\begin{multline}\label{H8_pak}
\partial_t \tilde{H}_{s, N}(u_N,v_N) 
 =  
 3 \int_{\T^2}  \Pi_{0}^\perp[ (D^s u_N)^2  ] \, \Pi_{0}^\perp [v_N \, u_N ]
+ 
\\
\sum_{\substack{ |\alpha|+|\beta|+|\gamma|=s\\
|\alpha|,|\beta|,|\gamma|<s
}}
c_{\alpha,\beta,\gamma}
\int_{\T^2}
D^sv_N\,\partial^\alpha u_N \partial^\beta u_N \partial^\gamma u_N+
\\
  3 \bigg( \int_{\T^2} (D^s u_N)^2 - \sigma_N\bigg)\int_{\T^2}  v_N\,  u_N. 
\end{multline}
All terms in the right hand-side of \eqref{H8_pak} are suitable for a perturbative analysis.
We finally define  the full modified energy $H_{s, N}(u,v)$ as
$$
H_{s, N}(u,v)=\tilde{H}_{s, N}(u,v)+H(u,v)+ \frac{1}{2} \int_{\T^2} u^2,
$$
where $H$ is defined by \eqref{H}.
The quadratic part of $H_{s,N}$ (except the renormalisation term which is morally quartic) is now given by \eqref{lili}.
Therefore in order to prove the quasi-invariance it will be of crucial importance to study the variation in time of $H_{s,N}$.
Here is the main quantitative bound used in the proof of Theorem~\ref{dim2}.
\begin{theo}\label{key_key}
Let $s\geq 2$ be an even integer and let us denote by $\Phi_N(t)$ the flow of 
$$
\partial_t u=v,\quad \partial_t v=\Delta u-\pi_N((\pi_N u)^3)\,.
$$
Then  for every $r>0$ there is a constant $C$ such that for every $p\geq 2$ and every $N\geq 1$,
$$
\Big(\int_{H_N(u,v)\leq r} 
\Big|\partial_t H_{s, N}(\pi_N\Phi_N(t)(u,v))\vert_{t=0} \Big|^p d\widetilde{\mu}_s(u,v)\Big)^{\frac{1}{p}}\leq Cp.
$$
\end{theo}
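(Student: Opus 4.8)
The plan is to start from the pointwise identity \eqref{H8_pak} and first reduce the claim to a finite list of uniform-in-$N$ multilinear estimates. I would observe that the pieces of $H_{s,N}$ other than $\tilde{H}_{s,N}$ contribute nothing dangerous along the truncated flow: $\Phi_N(t)$ preserves $H(\pi_N u,\pi_N v)=H_N(u,v)$ (checked exactly as for the conservation of $H$, using $\pi_N^2=\pi_N$ and that $\pi_N$ commutes with $\Delta$), while $\frac{d}{dt}\int_{\T^2}(\pi_N u)^2=2\int_{\T^2}u_Nv_N$ along \eqref{NLW-sys_N}, where $(u_N,v_N)=(\pi_N u,\pi_N v)$. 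Hence $\partial_t H_{s,N}(\pi_N\Phi_N(t)(u,v))|_{t=0}$ equals the right-hand side of \eqref{H8_pak} plus the harmless term $\int_{\T^2}u_Nv_N$, and it suffices to bound the $L^p(\widetilde{\mu}_s)$-norm, restricted to $\{H_N\le r\}$, of each of these expressions. The constraint $H_N(u,v)\le r$ will be used only through its deterministic consequences: on $\{H_N\le r\}$ one has $\|\nabla u_N\|_{L^2}+\|u_N\|_{L^4}+\|v_N\|_{L^2}\lesssim r^{1/2}$, hence, by Sobolev embedding on $\T^2$, $\|u_N\|_{L^q}\lesssim_q r^{1/2}$ for every finite $q$.

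For each term I would split the factors into \emph{rough} ones, carrying the Gaussian randomness together with low (possibly negative) Sobolev regularity --- namely $(D^su_N)^2-\sigma_N$, $D^sv_N$, and the top-order derivatives $\partial^\alpha u_N$ --- and \emph{smooth} ones, controlled deterministically via the energy restriction (powers of $u_N$, $v_N$ in $L^q$). One then picks the H\"older and fractional-Leibniz exponents and the Besov indices so that the spatial integration closes in two dimensions (using, e.g., $H^{1-\varepsilon}\cdot H^{1-\varepsilon}\hookrightarrow H^{1-2\varepsilon-}$), keeping track of the renormalisation: the combination $\Pi_0^\perp[(D^su_N)^2]$ together with the counterterm $\sigma_N$ removes precisely the logarithmically divergent diagonal contribution, so that what remains is, after the dust settles, at most a \emph{second-order} Wiener chaos element in the Gaussians $(g_n)$, $(h_n)$, whose $L^2(\widetilde{\mu}_s)$-norm is bounded uniformly in $N$. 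Establishing this uniform $L^2$ bound is a direct computation with the Fourier series \eqref{series}: one writes out the relevant covariance sums and checks their convergence, using that $s\ge 2$ is even, that the $u$- and $v$-randomness are independent (so the mixed $D^sv_N$--$u_N$ contributions are automatically mean zero with summable variance), and that $|\alpha|,|\beta|,|\gamma|<s$ guarantees that in each term at least one $u_N$-factor keeps enough regularity to be absorbed by the $L^q$-control above.

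Once these uniform-in-$N$ bounds in $L^2(\widetilde{\mu}_s)$ (restricted to $\{H_N\le r\}$) are available, the stated $L^p$ bound would follow from the hypercontractivity / Wiener chaos inequality $\|P\|_{L^p}\le (p-1)^{k/2}\|P\|_{L^2}$ for a chaos of order $k$: since the surviving random objects are of order $k\le 2$, this gives the linear dependence $\le C(r)\,p$, and the deterministically bounded smooth factors are recombined by H\"older in $\omega$ without affecting the power of $p$.

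The hard part, I expect, will be the family of terms $\int_{\T^2}D^sv_N\,\partial^\alpha u_N\,\partial^\beta u_N\,\partial^\gamma u_N$ with $|\alpha|+|\beta|+|\gamma|=s$. There $D^sv_N$ has regularity only $-1-\varepsilon$ and the highest $\partial^\alpha u_N$ only $1-\varepsilon$, so a naive pairing loses derivatives, while a crude count of the chaos order would suggest $k=3$ and hence $p^{3/2}$. Both issues have to be defeated at once: (i) by redistributing derivatives via integration by parts and the Leibniz rule, exploiting that $s$ is even so that $D^s$ is a differential operator; and (ii) by using $|\alpha|,|\beta|,|\gamma|<s$ together with the independence of the $u$- and $v$-data to both lower the effective chaos order to $2$ and park one factor in the deterministically controlled $L^q$-class coming from $\{H_N\le r\}$. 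Making the derivative count, the chaos-order count, and the uniformity in $N$ all cooperate is the technical heart of the argument.
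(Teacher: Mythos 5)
Your reduction of the theorem to bounding the terms on the right-hand side of \eqref{H8_pak} together with $\int_{\T^2}u_Nv_N$, your deterministic use of the cut-off $\{H_N\leq r\}$ (conservation of $H_N$ along $\Phi_N$, the bound $\|u_N\|_{L^q}+\|v_N\|_{L^2}\lesssim_q r^{1/2}$), and your treatment of $Q_3$ by a second-order Wiener chaos estimate all coincide with the paper's argument. The gap is in the multilinear terms $Q_1$ and $Q_2$, which are the whole content of the theorem. Consider $Q_1=3\int_{\T^2}\Pi_{0}^\perp[(D^su_N)^2]\,\Pi_{0}^\perp[v_Nu_N]$ (note, incidentally, that here $\Pi_0^\perp$ alone removes the divergent diagonal contribution; the counterterm $\sigma_N$ is needed only in $Q_3$). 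The random factor $\Pi_0^\perp[(D^su_N)^2]$ is indeed a second-order chaos bounded in $L^2(\widetilde{\mu}_s;H^{-\varepsilon})$ uniformly in $N$ --- its $k$-th Fourier coefficient has variance $\sim\langle k\rangle^{-2}\log\langle k\rangle$ --- but it is bounded in no better space than $H^{-\varepsilon}$, $\varepsilon>0$. To pair it against $\Pi_0^\perp[v_Nu_N]$ you therefore need \emph{positive} Sobolev or Besov regularity of $v_Nu_N$, and the energy cut-off provides none: on $\{H_N\leq r\}$ you control only $\|v_N\|_{L^2}$, $\|\nabla u_N\|_{L^2}$, $\|u_N\|_{L^4}$, hence $v_Nu_N$ merely in $L^{2-}$. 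So the duality you invoke does not close with the smooth factors ``controlled deterministically''. If instead you borrow regularity of $v_N$ from the randomness, that is an additional first-order chaos factor costing $\sqrt p$, and the total loss becomes $p^{3/2}$ --- precisely the failure mode the paper warns against (``a purely probabilistic argument based on Wiener chaos estimates fails because the output power of $p$ is too large''). The same obstruction occurs in the $Q_2$ family, which you rightly flag as difficult but for which you only sketch a plan; moreover you misjudge $Q_1$ as the routine case, whereas it is the most delicate term.

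What replaces this in the paper is a genuine multi-scale argument: the four factors are dyadically localised at frequencies $N_1,N_2,N_3,N_4$ and a case analysis is run --- if $N_4\gtrsim(\max(N_1,N_2))^{1/100}$ one trades regularity between $D^su_N$ and $u_N$ and argues directly; otherwise $\max(N_1,N_2)$ dominates, and one uses a bilinear chaos estimate (yielding a regularity gain for the localised $\Pi_0^\perp[(D^su_N)^2]$) when $N_3\ll N_1^{1-a}$, and a trilinear chaos estimate on $\Pi_0^\perp[(D^su_N)^2]v_N$ when $N_3\gtrsim N_1^{1-a}$ --- arranging in every regime that at most a quadratic weight of chaos is used (hence at most one power of $p$) while the remaining factors are absorbed by the deterministic energy bound. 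Your proposal states this philosophy correctly, but it carries out none of the frequency-localised bilinear/trilinear estimates, and at the one point where you assert that the bookkeeping ``closes'' it does not as written. The key uniform-in-$N$ multilinear estimates therefore remain unproved in your proposal.
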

\subsection{On the proof of Theorem~\ref{key_key}}
Using  the equation \eqref{NLW-sys_N}, we have that
$$
\partial_t H_{s, N}(u_N,v_N) =\partial_t\tilde{H}_{s, N}(u_N,v_N) +\int_{\T^2} u_N v_N.
$$
Therefore, coming back to \eqref{H8_pak}, we obtain 
$$
\partial_t  \tilde{H}_{s, N}(\pi_N\Phi_N(t)(u,v))\vert_{t=0}= \int_{\T^2}\pi_N u\pi_N v+  Q_1(u,v)+Q_2(u,v)+Q_3(u,v),
$$
where 
\begin{multline*}
Q_1(u,v)=
3 \int_{\T^2}  \Pi_{0}^\perp[ (D^s \pi_N u)^2  ] \, \Pi_{0}^\perp [\pi_N v \, \pi_N u],
\\
Q_2(u,v)=
\sum_{\substack{ |\alpha|+|\beta|+|\gamma|=s\\
|\alpha|,|\beta|,|\gamma|<s
}}
c_{\alpha,\beta,\gamma}
\int_{\T^2}
D^s \pi_N v\,\partial^\alpha \pi_N u \partial^\beta \pi_N u \partial^\gamma \pi_N u,
\\
Q_3(u,v)=
 3 \bigg( \int_{\T^2} (D^s u_N)^2 - \sigma_N\bigg)\int_{\T^2}  \pi_N v\, \pi_N u.
 \end{multline*}
Let us first consider 
\begin{equation}\label{NEW}
\int_{\T^2} \pi_N  u \pi_N v\,.
\end{equation}
We need to estimate \eqref{NEW} under the restriction
\begin{equation}\label{restriction}
\int_{\T^2}(|\nabla\pi_N u|^2+(\pi_N v)^2+\frac{1}{2}(\pi_N u)^4)\leq 2r.
\end{equation}
Using the compactness of $\T^2$, one can see that under the restriction \eqref{restriction},
$$
\big|\int_{\T^2} \pi_N  u\pi_N v\big|\leq 
\| \pi_N u\|_{L^2(\T^2)}
\|\pi_N v\|_{L^2(\T^2)}
\leq  C\|\pi_N u\|_{L^4(\T^2)}\|\pi_N v\|_{L^2(\T^2)}\leq Cr^{\frac{3}{4}}\,.
$$
Let us next consider $Q_3(u,v)$. For $r>0$, we define $\mu_{s,r,N}$ as 
$$
d \mu_{s,r,N}(u,v)= \chi_{H_N(u,v) \leq r} \, d \widetilde{\mu}_s(u,v)\,,
$$
where $ \chi_{H_N(u,v) \leq r} $ stays for the characteristic function of the set 
$$
\{(u,v)\,:\, H_N(u,v)\leq r\}.
$$
The goal is to show that
$$
\|Q_3(u,v)\|_{L^p(d\mu_{s,r,N}(u,v))}\leq Cp,
$$
with a constant $C$ independent of $N$ and $p$. Since  we already checked that under \eqref{restriction},
$$
\Big|
\int_{\T^2}  \pi_N v\,  \pi_N u
\Big|
 \leq C_r,
$$
we obtain that
\begin{multline*}
\|Q_3(u,v)\|_{L^p(d\mu_{s,r,N}(u,v))}\leq C_r\, \Big\| \int_{\T^2} (D^s \pi_N u)^2 - \sigma_N\Big\|_{L^p(d\mu_{s,r,N}(u,v))}
\\
\leq C_r\, \Big\| \int_{\T^2} (D^s \pi_N u)^2 - \sigma_N\Big\|_{L^p(d\widetilde{\mu}_{s}(u,v))}\,.
\end{multline*}
On the other hand
$$
 \Big\| \int_{\T^2} (D^s \pi_N u)^2 - \sigma_N\Big\|_{L^p(d\widetilde{\mu}_{s}(u,v))}=
 \Big\|\sum_{\substack{n \in \Z^2\\|n|\leq N}} 
 \frac{(|g_n(\omega)|^2 - 1)|n|^{2s}}{1+|n|^2+|n|^{2s+2}}
  \Big\|_{L^p(\Omega)}
 $$
and by using Wiener chaos estimates, we have 
$$
 \Big\|\sum_{\substack{n \in \Z^2\\|n|\leq N}} 
 \frac{(|g_n(\omega)|^2 - 1)|n|^{2s}}{1+|n|^2+|n|^{2s+2}}
  \Big\|_{L^p(\Omega)}
  \leq Cp
 \Big\|\sum_{\substack{n \in \Z^2\\|n|\leq N}} 
 \frac{(|g_n(\omega)|^2 - 1)|n|^{2s}}{1+|n|^2+|n|^{2s+2}}
  \Big\|_{L^2(\Omega)}
  \leq Cp
$$
which provides the needed bound for $Q_3(u,v)$.

The analysis of 
$$
Q_1(u,v)=
3 \int_{\T^2}  \Pi_{0}^\perp[ (D^s \pi_N u)^2  ] \, \Pi_{0}^\perp [\pi_N v \, \pi_N u]
$$
is the most delicate part of the analysis and relies on subtle multi-linear arguments.  The analysis of $Q_2(u,v)$ follows similar lines. 

Basically, we are allowed to have outputs as
$$
\| D^\sigma u\|_{L^\infty(\T^2)},\quad \sigma<s
$$
with a loss $\sqrt{p}$ and $H_N(u,v)$ with no loss in $p$.
The outputs $H_N(u,v)$ follow from deterministic analysis and thus have no loss in $p$ but they are regularity consuming.
 
We observe that a naive H\"older inequality approach clearly fails.  
A purely probabilistic argument based on Wiener chaos estimates fails because the output power of $p$ is too large.
The basic strategy is  therefore to perform a multi-scale analysis redistributing properly the derivative losses by never having more then quadratic weight of the contribution of the Wiener chaos estimate. 

When analysing the $4$-linear expression defining $Q_1(u,v)$, we suppose that 
$$
D^s  \pi_N u,\,\, D^s  \pi_N u,\,\, \pi_N v,\,\, \pi_N u
$$ 
are localised at dyadic frequencies $N_1$, $N_2$, $N_3$, $N_4$ respectively. 

We first consider the case when $N_4\gtrsim (\max(N_1,N_2))^{\frac{1}{100}}$. In this case we exchange some regularity of $D^s  \pi_N u$ with this of $\pi_N u$ and we perform the naive linear analysis.

Therefore, in the analysis of $Q_1(u,v)$ we can suppose that  
$$
N_4\ll (\max(N_1,N_2))^{\frac{1}{100}}.
$$ 
In this case, we have that
$$
\max(N_1,N_2)\sim \max(N_j,\,  j=1,2,3,4).
$$
By symmetry, we can suppose that $N_1=\max(N_1,N_2)$. 
We next consider the case 
$$
N_3\ll N_1^{1-a},\quad a=a(s)\ll 1\,.
$$
In this case, we perform a bi-linear Wiener chaos estimate and we have some gain of regularity in the localisation of  $ \Pi_{0}^\perp[ (D^s \pi_N u)^2  ]$. 
Finally, we consider the case 
$$
N_1\sim \max(N_j,\,  j=1,2,3,4),\,\, N_4\ll (\max(N_1,N_2))^{\frac{1}{100}},\,\,
N_3\gtrsim N_1^{1-a}
$$
In this case, we perform a tri-linear Wiener chaos estimate and we have enough gain of regularity in the localisation of  
$$ 
\Pi_{0}^\perp[ (J^s \pi_N u)^2  ]\pi_N v\,.
$$
This essentially explains the argument leading to the key estimate of  Theorem~\ref{key_key}.
We refer to \cite{OhTz} for the details. 
\subsection{On the soft analysis}
We can observe that 
\begin{equation*}
H_{s, N}(u,v)  =\eqref{lili}
+ \frac 32  \int_{\T^2} (D^s u)^2  u^2 - \frac 32 \sigma_N  \int_{\T^2} u^2
+\int_{\T^2}u^4\,.
\end{equation*}
By classical arguments from QFT, we can define 
$$
\lim_{
N\rightarrow\infty
}
\Big(
\frac 32  \int (D^s \pi_N u)^2 (\pi_N u)^2 - \frac 32 \sigma_N  \int (\pi_N u)^2
\Big)
$$
in
$L^p(d\widetilde{\mu}_s(u,v))$, $p<\infty$. 
Denote this limit by $R(u)$. 
Essentially speaking, once we have the key estimate, we study the quasi-invariance of
\begin{equation}\label{renren}
\chi_{H(u,v) \leq r}\,  e^{-  R(u)-\int u^4}\,d\widetilde{\mu}_s(u,v)
\end{equation}
by soft analysis techniques. 

Let us  finally explain the importance of the loss $p$ in the key estimate of Theorem~\ref{key_key}. 
Denote by $x(t)$ the measure evolution of a set having zero measure with respect to \eqref{renren}.
Essentially speaking, using the key estimate and the arguments introduced in \cite{TV,TVjmpa}, we obtain that $x(t)$ satisfy the estimate 
\begin{equation}\label{yuyu}
\dot{x}(t)\leq Cp(x(t))^{1-\frac{1}{p}},\quad x(0)=0\,.
\end{equation}
Integrating the last estimate leads to $x(t)\leq (Ct)^p$. Taking the limit $p\rightarrow \infty$, we infer that $x(t)=0$ for $0\leq t<1/C$. Since $C$ is an absolute constant, we can iterate the argument and show that $x(t)$ is vanishing. 
Observe that this argument would not work if in \eqref{yuyu}, we have $p^{\alpha}$, $\alpha>1$ instead of $p$.
In order to make the previous reasoning rigorous, we need to use some more or less standard approximation arguments. We refer to  \cite{TzBBM} and \cite{OhTz} for the details of such type of reasoning. 
\subsection{Acknowledgement} 
I am grateful to Leonardo Tolomeo, Tadahiro Oh and Yuzhao Wang for their remarks on the manuscript.  
I am very grateful to Chenmin Sun for pointing our an error in a previous version of Lemma~\ref{compar}.
I am particularly indebted to Nicolas Burq and Tadahiro Oh since this text benefitted from the discussions we had on the topics discussed in the lectures. 
I am grateful to  Franco Flandoli  and  Massimiliano Gubinelli for inviting me to give these lectures.

\end{document}